\let\mathcal\mathscr
\DeclareRobustCommand{\SkipTocEntry}[5]{} 
\numberwithin{equation}{section}
\newtheorem{theorem}{Theorem}[section]
\newtheorem{lemma}[theorem]{Lemma}
\newtheorem{proposition}[theorem]{Proposition}
\newtheorem{corollary}[theorem]{Corollary}
\theoremstyle{definition}
\newtheorem*{ack}{Acknowledgements}
\newtheorem{rem}[theorem]{Remark}
\newtheorem{rems}[theorem]{Remarks}
\newtheorem*{rem*}{Remark}
\newtheorem{example}[theorem]{Example}
\newtheorem{definition}[theorem]{Definition}
\renewcommand{\d}{\,\mathrm{d}}
\newcommand{\vphi}{\varphi}
\renewcommand{\rho}{\varrho}
\newcommand{\1}{\mathbf{1}}
\newcommand{\ZZ}{\mathbb{Z}}
\newcommand{\NN}{\mathbb{N}}
\newcommand{\QQ}{\mathbb{Q}}
\newcommand{\RR}{\mathbb{R}}
\newcommand{\CC}{\mathbb{C}}
\DeclareMathOperator{\EE}{{\mathbb{E}}}
\DeclareMathOperator{\sgn}{{\mathrm{sgn}}}
\renewcommand{\leq}{\leqslant}
\renewcommand{\geq}{\geqslant}
\renewcommand{\bar}{\overline}
\newcommand{\x}{\mathbf{x}}
\renewcommand{\t}{\mathbf{t}}
\newcommand{\Mod}[1]{\;(\operatorname{mod}\,#1)}
\newcommand{\id}{\rm{id}}
\newcommand{\starsum}{\sideset{}{^{*}}{\sum}}
\DeclareMathOperator{\li}{li}
\DeclareMathOperator{\vol}{vol}
\DeclareMathOperator{\lcm}{lcm}
\DeclareMathOperator{\supp}{supp}
\newcommand{\eps}{\varepsilon}
\begin{document}

\title[Generalized Fourier coefficients of multiplicative functions]
{Generalized Fourier coefficients of\\ multiplicative functions}
\author{Lilian Matthiesen}
\address{KTH\\
Department of Mathematics\\
Lindstedtsvägen 25\\
10044 Stockholm\\
Sweden}
\email{lilian.matthiesen@math.kth.se}

\thanks{This work was partly supported through a postdoctoral fellowship of the 
Fondation Sciences Math{\'e}matiques de Paris, 
by the National Science Foundation (Grant No. DMS-1440140) while the author was in residence at MSRI in 
Spring 2017, and by the Swedish Research Council (Grant No. 2016-05198).}

\begin{abstract}
We introduce and analyse a general class of not necessarily bounded 
multiplicative functions, examples of which include the function
$n \mapsto \delta^{\omega (n)}$, where $\delta \in \RR \setminus\{0\}$ and where $\omega$ 
counts the number of distinct prime factors of $n$, as well as the function
$n \mapsto |\lambda_f(n)|$, where $\lambda_f(n)$ denotes the Fourier 
coefficients of a primitive holomorphic cusp form.

For this class of functions we show that after applying a `$W$-trick' their 
elements become orthogonal to polynomial nilsequences.
The resulting functions therefore have small uniformity 
norms of all orders by the Green--Tao--Ziegler inverse theorem, a consequence 
that will be used in a separate paper in order to asymptotically 
evaluate linear correlations of multiplicative functions from our class.
Our result generalises work of Green and Tao on the M\"obius function.

\end{abstract}

\maketitle

\tableofcontents

\section{Introduction}\label{s:introduction}
Let $f:\NN \to \CC$ be a multiplicative arithmetic function.
Daboussi showed (see Daboussi and Delange \cite{dab}) that if 
$|f|$ is bounded by $1$, then
\begin{equation}\label{eq:fourier-bound}
\frac{1}{x}\sum_{n\leq x} f(n) e^{2\pi i \alpha n} = o(x) 
\end{equation}
holds for every irrational $\alpha$.
A detailed proof of the following slightly strengthened version may be 
found in Daboussi and Delange \cite{DD82}:
Suppose that $f$ satisfies
\begin{equation}\label{eq:moment-cond}
 \sum_{n\leq x} |f(n)|^2 = O(x),
\end{equation}
then \eqref{eq:fourier-bound} holds for every irrational $\alpha$.
Montgomery and Vaughan \cite{mv77} give explicit error terms for the 
decay in \eqref{eq:fourier-bound} for multiplicative functions that 
satisfy, in addition to \eqref{eq:moment-cond}, a uniform bound at all primes, in 
the sense that $|f(p)| \leq H$ holds for some constant $H \geq 1$ and all primes 
$p$.

In this paper we will study the closely related question of bounding correlations 
of multiplicative functions with polynomial nilsequences in place of the 
exponential function $n \mapsto e^{2 \pi i \alpha n}$.
A chief concern in this work is to include unbounded multiplicative functions 
in the analysis. 
To this end we shall significantly weaken the moment condition 
\eqref{eq:moment-cond} by decomposing $f$ into a suitable Dirichlet convolution
$f=f_1 * \dots * f_t$ and analysing the correlations of the individual factors 
with exponentials, or rather nilsequences.
The benefit of such a decomposition is that we merely require control on the 
second moments of the individual factors of the Dirichlet convolution and not of 
$f$ itself. 
This essentially allows us to replace \eqref{eq:moment-cond} by 
the condition that there exists $\theta_f \in (0,1]$ such that
\begin{align} \label{eq:new-moment}
\sqrt{\frac{1}{x}\sum_{ n \leq x } |f_i(n)|^2}
\ll (\log x)^{1- \theta_f}
\frac{1}{x} \sum_{n \leq x} |f_i(n)|
\end{align}
for all $i \in \{1, \dots, t\}$.
To illustrate the difference between these two moment conditions, let us consider 
a simple example of a function that satisfies 
\eqref{eq:new-moment}, but neither \eqref{eq:moment-cond} nor
\begin{align}\label{eq:moment-2}
\sum_{n\leq x} |f(n)|^2 \ll \sum_{n\leq x} |f(n)|.
\end{align}

\begin{example} \label{ex:moments}
For any $t\in \NN$, let $d_t(n) = \1 * \dots * \1 (n)$ denote the general 
divisor function, which arises as a $t$-fold convolution of $\1$.
Choosing $f_i=\1$ for each $1\leq i \leq t$, it is clear that 
\eqref{eq:new-moment} holds with $\theta_f = 1$.
If $t>1$, then neither \eqref{eq:moment-cond} nor \eqref{eq:moment-2} hold, since 
$$
\frac{1}{x} \sum_{n \leq x} d_t(n) \asymp_t (\log x)^{t-1},
\quad \text{ but } \quad
\frac{1}{x} \sum_{n \leq x} d^2_t(n) \asymp_t (\log x)^{t^2-1}.
$$
Thus, the second moment is not controlled by the first.
\end{example}

In order to describe the three classes of multiplicative functions that we will 
be working with here, let us introduce some notation.
Throughout this paper, we write
$$
 S_f(x)
 = \frac{1}{x} \sum_{n\leq x } f(n)
\quad \text{and} \quad
 S_f(x;q,r)
 = \frac{q}{x} \sum_{\substack{n\leq x \\ x \equiv r \Mod{q}}} f(n)
$$ 
for $x \geq 1$ and integers $q, r \in \NN$.
We furthermore require the following functions $w$ and $W$:
\begin{definition}\label{d:w}
Let $w:\NN \to \RR$ be an increasing function such that 
$$\frac{\log \log x}{\log \log \log x} < w(x) \leq \log \log x$$
for all sufficiently large $x$, and set
$$
W(x) = \prod_{p\leq w(x)} p.
$$
\end{definition}

The basic class of function we will be interested in is the following:
\begin{definition} \label{def:M}
Given a positive integer $H \geq 1$, we let $\mathcal{M}_H$ 
denote the class of multiplicative arithmetic functions 
$f:\NN \to \CC$ such that: 
\begin{enumerate}
\item[(1)] $|f(p^k)| \leq H^k$ for all prime powers $p^k$.

\item[(2)] There is a positive constant $\alpha_f$ such that 
$$
\frac{1}{x} \sum_{p \leq x} |f(p)| \log p \geq \alpha_f
$$
for all sufficiently large $x$.
 \end{enumerate}
\end{definition}
For the purpose of our main result, Theorem \ref{t:non-corr}, it will be 
necessary to restrict attention to those functions $f$ that admit a so-called 
$W$-trick (see Section \ref{s:W}).
For this reason, we introduce the subset of elements of 
$\mathcal{M}_H$ that have stable mean values in certain arithmetic progressions:
\begin{definition} \label{d:F}
Let $\mathcal{F}_H \subset \mathcal{M}_H$ be the subset of multiplicative 
functions $f$ with the following property.
Let $x>1$ be a parameter.
Given any constant $C>0$, there exists a function $\varphi_C$ with 
$\varphi_C(x) \to 0$ as $x\to \infty$ such that, 
whenever $1\leq Q<(\log x)^C$ is a multiple of $W(x)$ and when $A \Mod{Q}$ is a reduced residue, then
\begin{equation} \label{eq:d-F}
S_{f}(x';Q,A)
= S_{f}(x;Q,A)
 + O\Bigg(\varphi_C(x)
 \frac{Q}{\phi(Q)} \frac{1}{\log x} 
 \prod_{\substack{p \leq x \\ p\nmid Q}} 
 \left(1 + \frac{|f(p)|}{p}\right)
 \Bigg)
\end{equation}
 for all $x' \in (x(\log x)^{-C},x)$.
\end{definition}

We will discuss this class of functions in detail in Section \ref{s:examples}, 
where we prove several sufficient conditions for $f \in \mathcal{M}_H$ to 
belong to $\mathcal{F}_H$, or to a related class that will be introduced below.
These sufficient conditions, recorded in Propositions \ref{p:sufficient-F}
and \ref{p:sufficient-2} and Lemmas \ref{l:appl-2} and \ref{l:appl-3}, prove to be 
much easier to verify in practice than the one given in the above definition, not at least 
because they take a form that allows for applications of the Selberg--Delange method as presented
in \cite{Tenen}.
As an application of Lemma \ref{l:appl-2} and \ref{l:appl-3} (see the remarks following their statements), 
we obtain the following simple criterion applicable to real-valued elements of $\mathcal{M}_H$:
\begin{proposition} \label{p:intro}
Suppose that $f \in \mathcal{M}_H$ is real-valued and that it is bounded away from zero at primes,
in the sense that there exists $\delta >0$ and a sign $\epsilon \in \{+,-\}$ such that
$$
\#\Big\{p\leq x: \epsilon f(p) \geq \delta \Big\} \geq \frac{(1+o(1))x}{\log x},
\qquad (\text{as } x \to \infty).
$$
Then $f \in \mathcal{F}_H$ if $f$ is non-negative or if, for every given $C>0$,
there exists a function $\psi_C:\RR_{\geq0} \to \RR_{\geq0}$ with $\psi_C(x) \to 0$ as $x\to \infty$ such that
$$S_{f \chi_0}(x) = O\bigg(\frac{\psi_C(x)}{\log x} \exp\Big(\sum_{p \leq x, p \nmid Q} \frac{|f(p)|}{p}\Big)\bigg),
\qquad (x>1)$$
for all trivial characters $\chi_0 \Mod{Q}$ with $Q \in (1,(\log x)^C)$ and $W(x)|Q$.
\end{proposition}
Observe, in particular, that this criterion may be applied to functions that take 
negative values at all primes, such as the M\"obius function.
In the latter case, the Prime-Number-Theorem-type estimate 
$S_{\mu}(x) \ll_B (\log x)^{-B}$, which holds for all $x\geq 2$ and $B>0$, implies that 
all conditions are satisfied; cf.\ Example \ref{ex:mobius-4.4}(i) for details.
As an easy consequence of the above proposition, it further follows that any function of the form 
$f(n)=\delta^{\omega(n)}$ for fixed $\delta >0 $ belongs to $\mathcal{F}_H$.
In Section \ref{ss:non-negative} we will show that the function
$n \mapsto |\lambda_f(n)|$ belongs to $\mathcal{F}_H$, where $\lambda_f(n)$ 
denotes the normalised Fourier coefficients of a primitive holomorphic cusp 
form.
This is an example which cannot be deduced from the above proposition.

In Section \ref{s:non-corr}, we will see that in the context of our main result
condition \eqref{eq:d-F} only needs to hold for slowly varying twists of $f$.
This allows us to slightly weaken the above definition 
and introduce the following intermediate class of functions
$\mathcal{F}_H \subset \mathcal{F}_{H,n^{it}} \subset \mathcal{M}_H$,
which will also be discussed in Section \ref{s:examples}.

\begin{definition} \label{def:F_H(x)}
Let $\mathcal{F}_{H,n^{it}} \subset \mathcal{M}_H$ denote the subset of 
functions $f$ with the following property. 
For every constant $C>0$ and every sufficiently large $x > 1$, there exists 
$t_x \in \RR$ with $|t_x| \leq 2 \log x$ such that the function 
$f_x: n \mapsto f(n)n^{-it_x}$ satisfies \eqref{eq:d-F} for all
$x' \in (x(\log x)^{-C},x)$, all $1\leq Q<(\log x)^C$, $W(x)|Q$, and all 
reduced residues $A \Mod{Q}$.
Observe that $\mathcal{F}_H \subset \mathcal{F}_{H,n^{it}}$ 
since we may take $t_x=0$ for all $x$.
\end{definition} 

Twists of the form $f(n)n^{-it}$ play an important role in the study of 
multiplicative functions as their behaviour is closely linked to that of the 
mean value of $f$ through Hal\'asz's theorem \cite{Halasz}, see also \cite[\S 
III.4.3]{Tenen}.
While Hal\'asz's theorem concerns bounded functions that are closely related to 
the constant function $\1$, an analogon to this result, applicable to our basic 
class $\mathcal{M}_H$, has recently been proved independently by Elliott
\cite[Theorems 2 and 4]{elliott} and Tenenbaum 
\cite[Th\'eor\`eme 1.2]{tenen}.
The following lemma, which we chiefly include for comparison of the error terms 
in \eqref{eq:d-F} and in later results,
is a straightforward consequence of their result.
The first part is due to Elliott and Kish \cite[Lemma 21]{elliott-kish}.
\begin{lemma}[Elliott--Kish, Elliott, Tenenbaum] \label{l:elliott}
Suppose $f \in \mathcal{M}_H$ and that 
$$\sum_{p \leq H} \sum_{k \geq 2} |f(p^k)|p^{-k} < \infty.$$
Then
$$
S_{|f|}(x) 
\gg \frac{1}{\log x} \exp\bigg(\sum_{p \leq x} \frac{|f(p)|}{p}\bigg).
$$
Furthermore, we have $|S_{f}(x)| =o(S_{|f|}(x))$ unless there exists $t \in \RR$ such that 
\begin{equation*}
\sum_{p \text{ prime}} \frac{|f(p)| - \Re (f(p)p^{it})}{p}< \infty, 
\end{equation*}
in which case $|S_{f}(x)| \asymp S_{|f|}(x)$.
\end{lemma}

Returning to the basic class $\mathcal{M}_H$, let us record the lemma that
shows that every element of $\mathcal{M}_H$ does indeed admit a Dirichlet 
decomposition with the properties described at the beginning of this 
introduction.
To be precise, the lemma below corresponds to 
$\theta_f = \frac{1}{2}$ in \eqref{eq:new-moment}.
We will prove this lemma in Section \ref{s:decomposition}.
In accordance with the earlier discussion, this lemma will only be needed in the 
case where $f$ is unbounded, i.e. when $H>1$.

\begin{lemma}(Dirichlet decomposition)\label{l:dirichlet}
Let $f\in \mathcal{M}_H$ and let $h$ be the multiplicative function defined as
\begin{equation}\label{eq:h(p)/H}
h(p^k) 
=
\begin{cases}
f(p)/H & \mbox{if $k=1$}\\
0      & \mbox{if $k>1$}
\end{cases}. 
\end{equation}
Let $h^{*H}$ denote the $H$-fold convolution of $h$ with itself. 
Then $$f=h^{*H}*h',$$ where $h'$ is a multiplicative function that satisfies 
$h'(p)=0$ at primes and $|h'(p^k)|\leq (2H)^k$ at prime powers.

Let $f=f_1 * \dots *f_H$ with $f_i=h$ for all but one of the factors and 
$f_i=h*h'$ for the remaining one.
If $x>1$ and if $Q\leq x^{1/2}$ is an integer multiple of $W(x)$, then 
the following bound holds for all $A \in (\ZZ/Q\ZZ)^*$:
\begin{align}\label{eq:expectation-condition*}
 \nonumber
&\sum_{\substack{D \leq x^{1-1/H}\\\gcd(D,Q)=1}}
\sum_{d_1 \dots d_{H-1} = D}
\frac{|f_1(d_1) \dots f_{H-1}(d_{H-1})|}{D}
\sqrt{\frac{D Q}{x}\sum_{\substack{n \leq x/D \\ n D \equiv A 
\Mod{Q}}} |f_H(n)|^2}\\
&\ll (\log x)^{1/2} 
\frac{Q}{\phi(Q)} \frac{1}{\log x} 
\prod_{\substack{p\leq x \\ p \nmid Q}}
\left(1 + \frac{|f(p)|}{p} \right).
\end{align}
\end{lemma}

\addtocontents{toc}{\SkipTocEntry}
\subsection*{Aim and motivation}
As mentioned before, the purpose of this paper is to study correlations of 
multiplicative functions, more specifically of functions from 
$\mathcal{M}_H$, with polynomial nilsequences.
In general, such correlations can only shown to be small if either the nilsequence 
is highly equidistributed or else if the multiplicative function is 
equidistributed in progressions with short common difference.
We will consider both cases, the former in Proposition \ref{p:equid-non-corr} and 
the latter in Theorem \ref{t:non-corr}.
In accordance with this restriction, the latter result only 
applies to the subsets $\mathcal{F}_H$ and $\mathcal{F}_{H, n^{it}}$ whose 
elements admit a $W$-trick as we will establish in Section \ref{s:W}.
Restricting attention to the class $\mathcal{F}_H$ for now, then `$W$-trick'
roughly means the following here. 
For every $f \in \mathcal{F}_H$ there is a product 
$\widetilde W = \widetilde W(x)$ of small prime powers such 
that $f$ has a constant average value in all suitable subprogressions of 
$\{n \equiv A \Mod{\widetilde W}\}$ for every fixed residue 
$A \in (\ZZ/\widetilde W \ZZ)^*$.
Instead of bounding Fourier coefficients of $f$ as in \eqref{eq:fourier-bound}, we 
aim to show that every $f \in \mathcal{F}_H$ satisfies\footnote{This statement 
needs to be slightly adapted if $f \in \mathcal{F}_{H, n^{it}}$.}
\begin{align}\label{eq:general-fourier}
\frac{\widetilde{W}}{x} 
&\sum_{n \leq x/\widetilde{W}} 
\Big(f(\widetilde{W}n+A) - S_f(x;\widetilde{W},A)\Big) 
F(g(n)\Gamma)\\ 
&= 
\nonumber
o_{G/\Gamma}\bigg(
\frac{1}{\log x} \frac{\widetilde{W}}{\phi(\widetilde{W})} 
\prod_{\substack{p \leq x,~ p \nmid \widetilde W}}
\left(1+ \frac{|f(p)|}{p}\right)
\bigg)
\end{align}
for all $1$-bounded polynomial nilsequences $F(g(n)\Gamma)$ of bounded degree 
and bounded Lipschitz constant that are defined with respect to a 
nilmanifold $G/\Gamma$ of bounded step and bounded dimension.
The precise statement will be given in Section \ref{s:non-corr}.
This result can be viewed as a generalisation of work of 
Green and Tao \cite{GT-nilmobius} who were the first to study correlations of 
the form \eqref{eq:general-fourier} and who prove \eqref{eq:general-fourier} for 
the M\"obius function.
In fact, we borrow the approach from \cite{GT-nilmobius} to reduce Theorem 
\ref{t:non-corr} to Proposition \ref{p:equid-non-corr} in Sections 
\ref{s:non-corr} and we work with techniques from \cite{GT-nilmobius} in Sections 
\ref{s:linear-subsecs} and \ref{s:products}.

Note carefully, that the bound proposed in \eqref{eq:general-fourier} is non-trivial even
in the case where the function $f$ satisfies $S_f(x)=o(1)$, i.e.\ even for a function like
$f(n) = \delta^{\omega(n)}$ with $\delta \in (0,1)$, 
which satisfies 
$$
S_f(x) \sim (\log x)^{\delta - 1} \asymp 
\frac{1}{\log x} \prod_{p \leq x} \left(1+ \frac{|f(p)|}{p}\right) = o(1).
$$
To see this, we observe that Lemma \ref{l:elliott} and Shiu's Lemma \cite[Theorem 1]{shiu}
imply that the error term in \eqref{eq:general-fourier} is, at least for a positive proportion of the 
reduced residues $A \Mod{\widetilde{W}}$, of the form 
$o(\text{`the trivial upper bound'})$, which is the bound obtained by inserting absolute values everywhere.

The interest in estimates of the form \eqref{eq:general-fourier} lies in 
the fact that the Green--Tao--Ziegler inverse theorem \cite{GTZ} allows 
one to deduce that $f(\widetilde Wn+A) - S_f(x;\widetilde W,A)$ has small 
$U^k$-norms of all orders, where `small' may depend on $k$.
Employing the nilpotent Hardy--Littlewood method of Green and Tao 
\cite{GT-linearprimes}, this in turn allows one to deduce asymptotic 
formulae for expressions of the form
\begin{align} \label{eq:star}
\sum_{\x \in K \cap \ZZ^s} f(\vphi_1(\x)+a_1) \dots f(\vphi_r(\x)+a_r),
\end{align}
where $a_1, \dots, a_r \in \ZZ$, where 
$\vphi_1, \dots, \vphi_r : \ZZ^s \to \ZZ$ are pairwise non-proportional 
linear forms, and where $K \subset \RR^s$ is convex, 
\emph{provided} that $f$ has a sufficiently pseudo-random majorant function.
We construct such pseudo-random majorants in the companion paper \cite{lmm2}, 
which also addresses the question of evaluating \eqref{eq:star} for
functions $f \in \mathcal{F}_{H, n^{it}}$ with the property that 
$|f(n)| \ll_{\eps} n^{\eps}$ for all $\eps > 0$.

\addtocontents{toc}{\SkipTocEntry}
\subsection*{Strategy and related work}
Our overall strategy is to decompose the given multiplicative function via 
Dirichlet decomposition in such a way that we can employ the Montgomery--Vaughan
approach to the individual factors.
This approach reduces matters to bounding correlations of sequences defined in 
terms of primes.
One type of correlation that appears will be handled with the help of Green and 
Tao's bound \cite[Prop.\ 10.2]{GT-linearprimes} on the correlation of the 
`$W$-tricked von Mangoldt function' with nilsequences.
Carrying out the Montgomery--Vaughan approach in the nilsequences setting makes 
it necessary to understand the equidistribution properties of certain families of 
product nilsequences which result from an application of the Cauchy--Schwarz 
inequality.
These product sequences are studied in Section \ref{s:products} refining 
techniques introduced in \cite{GT-nilmobius}.
More precisely, we show that most of these products are equidistributed 
provided the original sequence that these products are derived from was 
equidistributed.
The latter can be achieved by the Green--Tao factorisation theorem 
for nilsequences from \cite{GT-polyorbits}. 

The question studied in this paper is in spirit related to that of
Bourgain--Sarnak--Ziegler~\cite{BSZ}, who use an orthogonality criterion that can 
be proved employing ideas that go back to Daboussi--Delange \cite{dab} 
(cf.\ Harper \cite{Harper} and Tao \cite{Tao}).
Invoking the orthogonality criterion in the form it is presented in K\'atai 
\cite{katai}, recent and very substantial work of Frantzikinakis and Host 
\cite{FH} shows that 
every bounded multiplicative function can be decomposed into the sum of a 
Gowers-uniform function, a structured part and an error term.
This error term is small in the sense that the integral of the error 
term over the space of all $1$-bounded multiplicative functions is small.
While their result provides no information on the quality of the error term of 
individual functions, it allows one to study simultaneously all bounded 
multiplicative functions.

The point of view taken in the present work is a different one: we have 
applications to explicit multiplicative functions in mind. 
For many multiplicative functions $f$ that appear naturally in number theoretic 
contexts, the mean value $\frac{1}{x}\sum_{n \leq x} f(x)$ is described by a 
reasonably nice function in $x$, and one can hope to be able to verify the 
conditions from Definitions \ref{def:M} and \ref{d:F} (or \ref{def:F_H(x)}) for 
such functions.
In order to deduce asymptotic formulae for expressions as in \eqref{eq:star}, it 
is important that the bound on the correlation \eqref{eq:general-fourier} 
improves at least on the trivial bound given by the average value of $|f|$.
Thus, we need to be able to understand these bounds for individual functions $f$.
We establish a non-correlation result (Theorem \ref{t:non-corr}) with an
explicit bound that preserves information on $|f|$ just as in  
\eqref{eq:general-fourier}.
An important feature of this work is that it applies to a large class of 
unbounded functions.

\subsection*{Notation}
\addtocontents{toc}{\SkipTocEntry}
The following, perhaps unusual, piece of notation will be used throughout the
paper: Suppose $\delta \in (0,1)$, then we write $x=\delta^{-O(1)}$ instead of
$x=(1/\delta)^{O(1)}$ to indicate that there is a constant $0 \leq C \ll 1$
such that $x = (1/\delta)^{C}$. 

\subsection*{Convention}
\addtocontents{toc}{\SkipTocEntry}
If the statement of a result contains Vinogradov or $O$-notation in the 
assumptions, then the implied constants in the conclusion may depend on all 
implied constants from the assumptions.

\begin{ack}
 I would like to thank Hedi Daboussi, R\'egis de la Bret\`eche, Nikos 
Frantzikinakis, Andrew Granville, Ben Green, Adam Harper, 
Dimitris Koukoulopoulos and Terence Tao for very helpful discussions and 
Nikos Frantzikinakis for valuable comments 
on a previous version of this paper.
I am very grateful to the anonymous referee for extremely helpful and 
detailed comments, which led in particular to the development of Section 
\ref{s:examples}.
\end{ack}

\section{Brief outline of some ideas}\label{s:outline}
In this section we give a very rough outline of the ideas behind the application 
of the Montgomery--Vaughan approach in the nilsequences setting, making a number 
of simplifications for the benefit of the exposition.  
The main idea of Montgomery--Vaughan \cite{mv77} is to introduce a $\log$ factor 
into the Fourier coefficient that we wish to analyse. 
Let $f:\NN \to \RR$ be a multiplicative function that satisfies 
$|f(p)| \leq H$ for some constant $H \geq 1$ and all primes $p$ and suppose
\eqref{eq:moment-2} holds.
Then we have
\begin{align*}
\sum_{n \leq N} f(n) e(n \alpha) \log(N/n)
\leq
\Big( \sum_{n\leq N} (\log (N/n))^2
\Big)^{1/2}
\Big( \sum_{n\leq N} |f(n)|^2 \Big)^{1/2}
\ll N^{1/2} \Big( \sum_{n\leq N} |f(n)|^2 \Big)^{1/2},
\end{align*}
and thus
\begin{align*}
\log N \Big(\frac{1}{N}&\sum_{n \leq N} f(n) e(n \alpha) \Big)  
\ll \Big(\frac{1}{N} \sum_{n\leq N} |f(n)|^2 \Big)^{1/2} 
 + \Big|\frac{1}{N} \sum_{n \leq N} f(n) e(n \alpha) \log n  \Big|.
\end{align*}
The first term in the bound is handled by the assumptions on $f$, that is, by 
assuming that \eqref{eq:moment-2} holds.
To bound the second term, one invokes the identity 
$\log n = \sum_{d|n} \Lambda(d)$, which reduces the task to bounding the 
expression
$$
\sum_{nm \leq N} f(nm) \Lambda(m) e(nm \alpha).
$$
This in turn may be reduced to the task of bounding
$$
\sum_{np \leq N} f(n)f(p) \Lambda(p) e(pn \alpha), 
$$
where $p$ runs over primes.
Applying the Cauchy-Schwarz inequality and smoothing, it furthermore suffices to 
estimate expressions of the form
$$
\sum_{p,p'} f(p)f(p') \log(p) \log(p') \sum_{n} w(n) e((p-p')n \alpha), 
$$
where $p$ and $p'$ run over primes and where $w$ is a smooth weight function.
One employs a standard sieve estimate to bound $\#\{(p,p'):p-p'=h\}$ 
for fixed $h$. Standard exponential sum estimates and a delicate decomposition of 
the summation ranges for $n, p, p'$ yield an explicit bound on 
$\frac{1}{N}\sum_{n \leq N} f(n) e(n \alpha)$. 

We seek to employ the above approach to correlations of the 
form
$$\frac{1}{N} 
\sum_{n \leq N}
\Big(f(n) - \frac{1}{N}\sum_{m\leq N}f(m)\Big) 
F(g(n)\Gamma)$$
for multiplicative $f$.
One problem we face is that the above approach makes substantial use of the strong
equidistribution properties of the exponential functions $e((p-p') n \alpha)$ 
for distinct primes $p,p'$.
A general polynomial sequence $(g(n)\Gamma)_{n \leq N}$ on a nilmanifold 
$G/\Gamma$ may, on the other hand, not even be equidistributed.
This problem is resolved by an application of the factorisation theorem for 
polynomial sequences from Green--Tao \cite{GT-polyorbits}, which allows us 
to assume that $(g(n)\Gamma)_{n \leq N}$ is equidistributed in $G/\Gamma$ if $f$ 
is equidistributed in progressions to small moduli.
The latter will be arranged for by employing a $W$-trick.
As above, we then consider the following expression which we split into the case 
of large, resp.\ small primes with respect to a suitable cut-off parameter $X$: 
\begin{align*}
\frac{1}{N} \sum_{mp \leq N} f(m) f(p) \Lambda(p) F(g(mp) \Gamma) 
& = \frac{1}{N} \sum_{m \leq X} \sum_{p \leq N/m} 
f(m) f(p) \Lambda(p) F(g(mp)\Gamma) \\
&\quad + \frac{1}{N} \sum_{m > X} \sum_{p \leq N/m} 
f(m) f(p) \Lambda(p) F(g(mp)\Gamma).
\end{align*}
Applying Cauchy-Schwarz to both terms shows that it suffices to 
understand correlations of the form
\begin{align*}
\sum_{m,m'} f(m) f(m') 
\sum_{p} \Lambda(p) F(g(mp) \Gamma) \overline{F(g(m'p) \Gamma)}
\end{align*}
and
\begin{align*}
\sum_{p,p'} f(p) f(p') \Lambda(p) \Lambda(p')
\sum_{m} F(g(pm) \Gamma) \overline{F(g(p'm) \Gamma)}.
\end{align*}
Choosing $X$ suitably, only the first of these correlations matters.
We shall bound this correlation by employing Green and Tao's result that the 
$W$-tricked von Mangoldt function is orthogonal to nilsequences. 
The necessary equidistribution properties of the sequences
$n \mapsto F(g(mn) \Gamma) \overline{F(g(m'n) \Gamma)}$ will be established in 
Sections \ref{s:linear-subsecs} and \ref{s:products}.
The problem of extending the above method to functions from $\mathcal{M}_H$ will 
be addressed at the beginning of Section~\ref{s:proof-of-prop}. 
For this purpose the moment condition \eqref{eq:moment-2} will be replaced by 
Lemma \ref{l:dirichlet}.

\section{A suitable Dirichlet decomposition for $f \in \mathcal{M}_h$}
\label{s:decomposition}
In this section we prove Lemma \ref{l:dirichlet}, which shows that 
every function $f\in \mathcal{M}_H$ has a decomposition $f=f_1* \dots * f_H$ 
into multiplicative functions $f_i$ such that the $L^2$-norms of 
the $f_i$ are controlled on average by the mean value of $f$.
This lemma will replace the much more restrictive condition \eqref{eq:moment-2} 
in our application of the Montgomery--Vaughan approach outlined in the previous 
section.
Before we prove Lemma \ref{l:dirichlet}, let us record a straightforward 
consequence of Shiu \cite[Theorem~1]{shiu} that will be used.
\begin{lemma}[Shiu] \label{l:shiu}
 Let $H$ be a positive integer and suppose 
$f: \NN \to \RR$ is a non-negative multiplicative function 
satisfying $f(p^k) \leq H^k$ at all prime powers $p^k$.
Let $W=W(x)$ be as before, let $q>0$ be an integer and let 
$A' \in (\ZZ/Wq \ZZ)^*$.
Then,
\begin{align}\label{eq:shiu}
\sum_{\substack{x-y < n \leq x \\ n \equiv A' \Mod{Wq}}} f(n)
\ll \frac{y}{\phi(Wq)} \frac{1}{\log x}
\exp\bigg(\sum_{\substack{w(x) < p \leq x \\ p \nmid q}} \frac{f(p)}{p}\bigg)~,
\end{align}
uniformly in $A'$, $q$ and $y$, provided that $q\leq y^{1/2}$ 
and $x^{1/2} \leq y \leq x$.
\end{lemma}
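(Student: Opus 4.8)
The plan is to obtain Lemma~\ref{l:shiu} as a specialisation of Shiu's Brun--Titchmarsh theorem for multiplicative functions \cite[Theorem~1]{shiu}. That result states: if $g\colon\NN\to\RR_{\geq 0}$ is multiplicative with $g(p^\ell)\leq A_1^\ell$ at all prime powers and $g(n)\ll_\eps n^\eps$ for every $\eps>0$, and if $0<\alpha,\beta<1/2$ are fixed, then
\[
\sum_{\substack{x-z<n\leq x\\ n\equiv a\Mod{k}}} g(n)
\;\ll\; \frac{z}{\phi(k)}\,\frac{1}{\log x}\,
\exp\Bigl(\,\sum_{\substack{p\leq x\\ p\nmid k}}\frac{g(p)}{p}\Bigr)
\]
uniformly for $(a,k)=1$, $k\leq x^{1-\alpha}$ and $x^\beta\leq z\leq x$, as $x\to\infty$. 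The hypothesis $f(p^k)\leq H^k$ gives the first growth condition with $A_1=H$, but it does \emph{not} by itself yield a pointwise bound $f(n)\ll_\eps n^\eps$ with small $\eps$ (already $f(n)=H^{\Omega(n)}$ on powers of $2$). Repairing this mismatch is the one step needing care; everything else is bookkeeping.

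To deal with it I would use the coprimality forced by the progression. Since $A'\in(\ZZ/Wq\ZZ)^*$ and $W=W(x)$ divides $Wq$, every $n$ on the left of \eqref{eq:shiu} satisfies $(n,W)=1$. Let $g$ be the multiplicative function with $g(n)=f(n)$ when $(n,W)=1$ and $g(n)=0$ otherwise; then $g\geq 0$, $g(p^\ell)=f(p^\ell)\leq H^\ell$ for $p>w(x)$, $g(p^\ell)=0$ for $p\leq w(x)$, and replacing $f$ by $g$ does not change the sum in \eqref{eq:shiu}. If $(n,W)=1$ then every prime factor of $n$ exceeds $w(x)$, so $\Omega(n)\leq\log n/\log w(x)$ and hence $g(n)\leq H^{\Omega(n)}\leq n^{\log H/\log w(x)}$; as $w(x)\to\infty$, this is $\leq n^\eps$ for each fixed $\eps>0$ once $x\geq x_0(H,\eps)$, while $g(n)=0$ when $(n,W)>1$. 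Thus $g$ meets both hypotheses of Shiu's theorem for all large $x$.

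It then remains to apply the theorem with suitable parameters and to identify the main term. I would take the full window $z=x$ (so $x^\beta\leq z\leq x$ holds for any fixed $\beta<1/2$), residue $a=A'$, and modulus $k=Wq$. The constraint $k\leq x^{1-\alpha}$ holds with $\alpha=2/5$: Chebyshev's estimate gives $\log W(x)=\sum_{p\leq w(x)}\log p\ll w(x)\leq\log\log x$, so $W(x)\leq(\log x)^{O(1)}$, and together with $q\leq x^{1/2}$ this forces $Wq\leq x^{1/2}(\log x)^{O(1)}\leq x^{3/5}$ for $x$ large. Shiu's theorem then gives
\[
\sum_{\substack{0<n\leq x\\ n\equiv A'\Mod{Wq}}} g(n)
\;\ll\; \frac{x}{\phi(Wq)}\,\frac{1}{\log x}\,
\exp\Bigl(\,\sum_{\substack{p\leq x\\ p\nmid Wq}}\frac{g(p)}{p}\Bigr).
\]
Finally, $p\nmid Wq$ is equivalent to $p\nmid W$ (that is, $p>w(x)$) together with $p\nmid q$, and on this set $g(p)=f(p)$; hence the exponent equals $\sum_{w(x)<p\leq x,\, p\nmid q}f(p)/p$ and the left-hand side equals $\sum_{0<n\leq x,\, n\equiv A'\Mod{Wq}}f(n)$, which is exactly \eqref{eq:shiu}. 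Uniformity in $A'$ and $q$ is inherited from that in Shiu's theorem, and the finitely many small $x$ are absorbed into the implied constant since the right-hand side is $\geq x/(\phi(Wq)\log x)>0$. The sole genuine obstacle is the growth-condition repair of the second paragraph; the range check on $Wq$ and the exponent rewriting are routine.
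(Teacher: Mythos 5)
Your route is the same as the paper's: deduce the lemma from Shiu's Theorem 1, using that $n\equiv A'\Mod{Wq}$ with $A'\in(\ZZ/Wq\ZZ)^*$ forces $(n,W)=1$, hence $\Omega(n)\leq \log n/\log w(x)$ and $f(n)\leq n^{\log H/\log w(x)}$, together with the range check $Wq\leq x^{1/2}(\log x)^{O(1)}$ and the rewriting of the exponent. The gap is in the sentence ``Thus $g$ meets both hypotheses of Shiu's theorem for all large $x$.'' The hypothesis you quoted is ``$g(n)\ll_\eps n^\eps$ for \emph{every} $\eps>0$,'' and your auxiliary function $g=g_x$ varies with $x$. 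For a fixed (large) $x$ this hypothesis genuinely fails: take $f(n)=H^{\Omega(n)}$ and $n=p_0^m$ with $p_0$ the least prime exceeding $w(x)$; then $g_x(n)=n^{\log H/\log p_0}$, which is not $\ll_\eps n^\eps$ for any $\eps<\log H/\log p_0$. What your computation shows is only that for each fixed $\eps$ there is $x_0(H,\eps)$ with $g_x(n)\leq n^\eps$ once $x\geq x_0(H,\eps)$; the quantifiers are in the wrong order to verify the hypothesis for any single function $g_x$, and since you are applying the theorem to a family of functions depending on $x$, you would in any case need the implied constant to be uniform over that family, which the statement as you quoted it does not provide (the constant is allowed to depend on all the constants $A_2(\eps)$).

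The repair is precisely the point the paper makes: as observed in \cite{nair-tenenbaum}, the proof of \cite[Theorem~1]{shiu} uses the bound $f(n)\ll_\eps n^\eps$ only for \emph{one} fixed value of $\eps$ once the window exponent $\beta$ is fixed; here the window is the full interval ($y=x$), so $\beta$ may be fixed once and for all, and a single $\eps=\eps(\beta)$ suffices. With that refinement, your bound $g_x(n)\leq n^{\eps}$ (with constant $1$) for all $x\geq x_0(H,\eps)$ yields an implied constant depending only on $H$, $\eps$, $\alpha$, $\beta$, uniformly in the family $g_x$, and the remainder of your argument goes through and coincides with the paper's proof.
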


\begin{proof}
 This lemma differs from \cite[Theorem~1]{shiu} in that it does not 
concern short intervals but at the same time it does not require $f$ to 
satisfy $f(n) \ll_{\eps} n^{\eps}$.
Shiu's result works with a summation range of the form
$x-y < n \leq x$, where $x^{\beta} < y \leq x$, $\beta \in (0,\frac{1}{2})$.
Thus, in our case the parameter $\beta$ can be regarded as fixed.
As observed in \cite{nair-tenenbaum}, the proof of \cite[Theorem~1]{shiu}
only requires the condition $f(n) \ll_{\eps} n^{\eps}$ to hold for one fixed 
value of $\eps$ once $\beta$ is fixed.

Note that any integer $n \equiv A' \Mod{Wq}$ is free from prime divisors 
$p < w(x)$. 
Thus, $f(n) \leq H^{\Omega(n)} \leq n^{\log H / \log w(x)}$.
Given any $\eps > 0$, we deduce that $n \equiv A' \Mod{Wq}$ implies
$f(n) \leq n^{\eps}$ provided $x$ is sufficiently large.
\end{proof}

\begin{proof}[Proof of Lemma \ref{l:dirichlet}]
Let $h$ and $h'$ be as in the statement of the lemma. 
We begin by showing that 
\begin{equation} \label{eq:h'-bound}
 |h'(p^k)| \leq (2H)^k,
\end{equation}
using induction.
Since $h'(p)=0$, the inequality holds for $k=1$.  
To analyse the general case, note that, since $h(p^k)=0$ whenever 
$k\geq 2$, we have
$$h^{*H}(p^k) = \binom{H}{k} h^k(p) = \binom{H}{k} \frac{f^k(p)}{H^k} $$
for $1\leq k \leq H$, and $h^{*H}(p^k) = 0$ if $k> H$.
Thus, $f=h'*h^{*H}$ implies that
\begin{align*}
h'(p^k) &= f(p^k) - \sum_{j=1}^{\min(k,H)} h'(p^{k-j}) h^{*H}(p^j).
\end{align*}
Suppose now that $k \geq 2$ and that the inequality holds for all $j < k$.
Then, invoking also (1) of Definition \ref{def:M}, we have
\begin{align*}
|h'(p^k)| < H^k + \sum_{j=1}^{\min(k,H)} (2H)^{k-j} \binom{H}{j} 
<(2H)^k \bigg(2^{-k} + \sum_{j=1}^{\min(k,H)} \frac{1}{ j! 2^j} \bigg)
<(2H)^k,
\end{align*}
as claimed.

To prove \eqref{eq:expectation-condition*}, suppose that $f_H = h$ or $h*h'$. 
By Shiu's bound, we have
\begin{align*}
\frac{DQ}{x}
\sum_{\substack{n \leq x/D \\ n \equiv A \Mod{Q}}} 
 f_H^2(n)
 \ll \frac{1}{\log (x/D)}\frac{Q}{\phi(Q)}
 \prod_{\substack{p\leq x\\ p \nmid Q}}
 \left(1 + \frac{|f(p)|}{Hp}\right),
\end{align*}
where we used the trivial inequality $f_H(p)^2 \leq |f_H(p)|=|f(p)|/H$ and 
extended the product over primes up to $x$.
Multiplying the right hand side with
$$
\frac{Q}{\phi(Q)}
 \prod_{\substack{p\leq x\\ p \nmid Q}}
 \left(1 + \frac{|f(p)|}{Hp}\right) \gg 1,
$$
and observing that $\log(x/D) \asymp_H \log x$, we obtain
\begin{align*}
\sqrt{\frac{DQ}{x}
\sum_{\substack{n \leq x/D \\ n \equiv A \Mod{Q}}} 
 f_i^2(n)}
 \ll_H
\frac{1}{(\log x)^{1/2}}\frac{Q}{\phi(Q)}
 \prod_{\substack{p\leq x\\ p \nmid Q}}
 \left(1 + \frac{|f(p)|}{Hp}\right).
\end{align*}
Thus, the left hand side of \eqref{eq:expectation-condition*} is 
bounded by
\begin{align*}
&\ll_H \frac{1}{(\log x)^{1/2}} \frac{Q}{\phi(Q)}
 \prod_{\substack{p\leq x\\ p \nmid Q}}
 \left(1 + \frac{|f(p)|}{Hp}\right)
\sum_{\substack{D \leq x^{1-1/H}\\\gcd(D,Q)=1}}
\sum_{d_1 \dots d_{H-1} = D}
\frac{|f_1(d_1) \dots f_{H-1}(d_{H-1})|}{D}
\\
&\ll_H \frac{1}{(\log x)^{1/2}} \frac{Q}{\phi(Q)}
 \prod_{\substack{p\leq x\\ p \nmid Q}}
 \left(1 + \frac{|f(p)|}{Hp}\right)
 \left(1 + \frac{(H-1)|f(p)|}{Hp}\right) 
 \left(1 + \sum_{k\geq 2} \frac{|f_1 * \dots *f_{H-1}(p^k)|}{p^k}\right)
\\
&\ll_H \frac{1}{(\log x)^{1/2}} \frac{Q}{\phi(Q)}
 \prod_{\substack{p\leq x\\ p \nmid Q}}
 \left(1 + \frac{|f(p)|}{p}\right)
 \left(1 + \frac{(H-1)}{p^2}\right) 
 \left(1 + \sum_{k\geq 2} \frac{|f_1 * \dots *f_{H-1}(p^k)|}{p^k}\right).
\end{align*}
The above is now seen to have the claimed bound
$$
\ll_H \frac{1}{(\log x)^{1/2}} \frac{Q}{\phi(Q)}
 \prod_{\substack{p\leq x\\ p \nmid Q}}
 \left(1 + \frac{|f(p)|}{p}\right)
$$
for all sufficiently large $x$, providing
$$
\sum_{w(x)<p\leq x} \sum_{k\geq 2} \frac{|f_1 * \dots *f_{H-1}(p^k)|}{p^k}
\ll_H 1.
$$
To show the latter, note that $f_1 * \dots *f_{H-1}$ equals either $h^{*(H-1)}$ 
or $h^{*(H-1)}*h'$.
Similarly as in the first part of this proof, we have
$$|h^{*(H-1)}(p^k)| \leq \binom{H-1}{k} \leq H^k/k!$$ for $k<H$ and 
$h^{*(H-1)}(p^k) = 0$ for $k\geq H$, and, consequently,
\begin{align*}
|(h^{*(H-1)}*h')(p^k)| 
&= \sum_{j=0}^{\min(k,H-1)} |h'(p^{k-j}) h^{*(H-1)}(p^j)|
\leq \sum_{j=0}^{\min(k,H-1)} (2H)^{k-j} H^{j}
\leq 2(2H)^k
\end{align*}
Thus, if $x$ is sufficiently large so that $w(x)>4H$, then
\begin{align*}
 \sum_{w(x)<p\leq x} \sum_{k\geq 2} \frac{|f_1 * \dots *f_{H-1}(p^k)|}{p^k}
 &\leq 2 \sum_{w(x)<p\leq x} \sum_{k\geq 2} \Big(\frac{2H}{p}\Big)^k 
 \leq 8H^2 \sum_{w(x)<p\leq x} \frac{1}{p^2} \Big(1-\frac{2H}{p}\Big)^{-1} \\
 & \leq 16H^2 \sum_{w(x)<p\leq x} \frac{1}{p^2} \ll_H 1,
\end{align*}
which completes the proof.
\end{proof}

\section{Multiplicative functions in progressions: The class of functions 
$\mathcal{F}_H$}
\label{s:examples}
Both of the conditions that define $\mathcal{M}_H$ are natural and simple
conditions on the behaviour of $|f|$ at prime powers.
Our aim in this section is to discuss the more complicated stability condition 
\eqref{eq:d-F} on mean values in progressions, that defines the class 
$\mathcal{F}_H$.
We will prove two sufficient conditions, recorded in Propositions 
\ref{p:sufficient-F} and 
\ref{p:sufficient-2}, for the bound \eqref{eq:d-F} to hold and apply these 
to provide several examples of natural functions that 
belong to $\mathcal{F}_H$.
In particular, we deduce a simple criterion, see Lemma \ref{l:appl-2}, 
for a non-negative function to belong to $\mathcal{F}_H$.

\subsection{A sufficient condition for $f \in \mathcal{F}_H$}
The main tool in our analysis of \eqref{eq:d-F} will be the following 
consequence of the `pretentious large sieve', which allows one to bound 
the tail of the character sum expansion of $S_h(x;q,a)$ for any bounded 
multiplicative function $h$ and thereby simplifies the task of analysing the 
expression $S_h(x;q,a)$.
\begin{proposition}[Granville and Soundararajan \cite{GS-book}; cf.\ 
Lemma 3.1 in \cite{BGS}, Theorem 2 in \cite{G:PLS} and Theorem 1.8 in \cite{GHS}] 
\label{p:GS}
Let $C>0$ be fixed and let $h$ be a bounded multiplicative function.
For any given $x$, consider the set of primitive characters of conductor at 
most 
$(\log x)^C$ and enumerate them as $\chi_1, \chi_2, \dots$ in such a way that 
$|S_{h \bar\chi_1}(x)| \geq |S_{h \bar\chi_2}(x)| \geq \dots$.
If $x$ is sufficiently large, then the following holds for all
$x^{1/2} \leq X \leq x$ and $q \leq (\log x)^C$.
Let $\mathcal{C}$ be any set of characters modulo $q$, $q \leq (\log x)^C$, 
which 
does not contain characters induced by $\chi_1, \dots , \chi_k$, where $k 
\geq 2$. 
Then
\begin{align*}
&\left|
\frac{1}{\phi(q)} 
\sum_{\chi \in \mathcal{C}} \chi(a) 
\sum_{n \leq X} h(n) \bar\chi(n)
\right|\\
&\ll_C \frac{e^{O_C(\sqrt{k})}X}{q} 
\left(\frac{\log \log x}{\log x}\right)^{1-\frac{1}{\sqrt{k}}}
\log \left(\frac{\log x}{\log \log x}\right)
\prod_{p\leq q, p\nmid q}
\left(1+\frac{|h(p)|-1}{p}\right).
\end{align*}
\end{proposition}

In order to deduce a sufficient condition for \eqref{eq:d-F} we begin by 
extending the above result to all unbounded elements of $\mathcal{M}_H$.

\begin{corollary} \label{c:GS}
 Let $f \in \mathcal{M}_H$ and set $h=f$ if $H=1$. If $H>1$, let $h$ be the 
multiplicative function defined in \eqref{eq:h(p)/H} so that 
$f= h^{*H}*h'$ for a multiplicative function $h'$ with support in the 
square-full numbers.
Let $C>0$ be a constant, let $\eps = \frac{1}{2}\min(1,\alpha_f/H)$, and set 
$k=\lceil \eps^{-2}\rceil \geq 2$ and $k' = \lceil \log_2(4H) \rceil$.
For each $j\in \{0, \dots, k'\}$, let 
$\mathcal{E}_j = \{\chi_1^{(j)}, \dots, \chi_k^{(j)} \}$ denote the set 
consisting of the first $k$ primitive characters of conductor at most 
$(\log x^{1/2^j})^C$ that are defined by Proposition \ref{p:GS} when applied to 
$h$ and with $x$ replaced by $x^{1/2^j}$.

If $x$ is sufficiently large, then the following holds for all
$x^{1/2} \leq y \leq x$ and all integer multiplies 
$0 < Q \leq (\log x^{1/(8H)})^C$ of $W(x)$.
Let $\mathcal{C}$ be any set of characters modulo $Q$
which does not contain characters induced by any 
$\chi \in \mathcal{E}:= \mathcal{E}_0 \cup \dots \cup \mathcal{E}_{k'}$, then
\begin{align*}
&\bigg|S_f(y;Q,a) - 
\frac{Q}{y}
\frac{1}{\phi(Q)} 
\sum_{\substack{\chi \Mod{Q} \\ \chi \not\in \mathcal{C}}} \chi(a) 
\sum_{n \leq y} f(n) \bar\chi(n)
\bigg| = \\
&\frac{Q}{\phi(Q)} \left| 
\sum_{\chi \in \mathcal{C}} \chi(a) 
\frac{1}{y}
\sum_{n \leq y} f(n) \bar\chi(n)
\right| 
\ll_{C,H,\alpha_f} 
\frac{1}{(\log x)^{1+\alpha_f/(3H)}}
\frac{Q}{\phi(Q)}
\exp \bigg( \sum_{p\leq x, \,p\nmid Q} \frac{|f(p)|}{p}\bigg).
\end{align*}
\end{corollary}

The proof of this corollary makes use of the following 
lemma about the contribution of the sparse function $h'$.

\begin{lemma} \label{l:h'-contribution}
 Let $H>1$, $f\in \mathcal{M}_H$ and let $f=h^{*H}*h'$ be the decomposition 
from \eqref{eq:h(p)/H}. Let $g:\NN \to \CC$ be a bounded completely 
multiplicative function that vanishes at all primes 
$p \leq w$ for a fixed $w \geq (2H)^{16}$, 
and let $\delta \in (0,1)$.
Then, if $x^{1/2} \leq y \leq x$, we have
\begin{align*}
\left|\frac{1}{y} 
\sum_{n\leq y} f(n) b(n) \right|
&\leq
\sum_{n_1 \leq y^{\delta}} \frac{|h'(n_1) b(n_1)|}{n_1} 
\bigg| \frac{n_1}{y} 
\sum_{n_2 \leq y/n_1} h^{*H}(n_2) b(n_2) \bigg|
+ O(x^{-\delta/8}(\log y)^{O(H)}).
\end{align*} 
\end{lemma}

\begin{proof}
Recall that $h'$ is supported on square-full numbers only and that
$|h'(p^k)| \leq (2H)^k$ by \eqref{eq:h'-bound}.
Since $b$ is completely multiplicative, we have
\begin{align*}
&\sum_{n\leq y} f(n) b(n) \\
&=
\sum_{n_1 n_2 \leq y} h'(n_1)h^{*H}(n_2) b(n_1)b(n_2)\\
&\leq
\sum_{n_1 \leq y^{\delta}} |h'(n_1) b(n_1)| \bigg| \sum_{n_2 \leq y/n_1} 
h^{*H}(n_2) b(n_2) \bigg|
+y\sum_{n_1 > y^{\delta}} \frac{|h'(n_1) b(n_1)|}{n_1}
 \sum_{n_2 \leq y/n_1} \frac{|h^{*H}(n_2)|}{n_2}\\
&\leq
\sum_{n_1 \leq y^{\delta}} |h'(n_1) b(n_1)| \bigg| \sum_{n_2 \leq y/n_1} 
h^{*H}(n_2) b(n_2) \bigg|
+ y(\log y)^{O(H)}
\sum_{\substack{n_1 \geq y^{\delta} \\ p|n_1 \Rightarrow p > w}} 
\frac{|h'(n_1)|}{n_1}.
\end{align*}
By decomposing every square-full number $n_1$ as $m^2d$ with $d|m$, 
we obtain the following bound for the sum in the final term:
\begin{align} \label{eq:d_0-sum}
 \sum_{\substack{n_1 \geq y^{\delta} \\ p|n_1 \Rightarrow p > w}} 
 \frac{|h'(n_1)|}{n_1} 
 &\leq \sum_{\substack{m \geq y^{\delta/3} \\ p|m \Rightarrow p > w}} 
 \frac{(2H)^{\Omega(m^2)}}{m^2} \sum_{d|m} 
 \frac{(2H)^{\Omega(d)}}{d} \\
 \nonumber
 &\leq \sum_{\substack{m \geq y^{\delta/3} \\ p|m \Rightarrow p > w}} 
 \frac{(2H)^{\frac{2\log m}{\log w}}}{m^2} 
 d(m)\\
 \nonumber
 &\ll 
 \sum_{\substack{m \geq y^{\delta/3} \\ p|m \Rightarrow p > w}} 
 m^{-2+ \frac{2\log (2H)}{\log w}+ \frac{1}{8}}
 \ll \sum_{\substack{m \leq y^{\delta/3} \\ p|m \Rightarrow p > w}} 
 m^{-2+  \frac{1}{4}}
 \ll y^{-\delta/4},
\end{align}
where we used the bound $d(n) \ll n^{1/8}$.
Combining the above two bounds completes the proof.
\end{proof}

\begin{proof}[Proof of Corollary \ref{c:GS}]
To start with, we consider the bounded multiplicative function $h$.
Note that Proposition \ref{p:GS} applies to values of $X$ with 
$x^{1/2} \leq X \leq x$.
Our application, will, however, require a range of the form 
$x^{1/(4H)} \leq X \leq x$.
For this reason, we will apply Proposition \ref{p:GS} once with $x$ replaced by 
$x^{1/2^j}$ for each $j \in \{ 0, \dots, \lceil \log_2(4H) \rceil\}$.
If $\mathcal{C}$ is as in the statement of the corollary, then Proposition 
\ref{p:GS} shows that for all $Q \leq (\log x^{1/(8H)})^{C}$ and for all 
$x^{1/(4H)} < X \leq x$, we have
\begin{align*}
\frac{1}{X}
\frac{Q}{\phi(Q)}
\bigg|
\sum_{\chi \in \mathcal{C}} 
\chi(A)
\sum_{n\leq X} h(n) \bar\chi(n) \bigg|
&\ll_{C,H,\alpha_f} 
\left(\frac{\log \log x}{\log x}\right)^{1-\frac{1}{\sqrt{k}}} 
\log\left(\frac{\log x}{\log \log x}\right)\\
&\ll_{C,H,\alpha_f}
(\log x)^{-1+\alpha_f/(2H)}
(\log \log x)^2,
\end{align*}
since
$1/\sqrt{k} = 1/\sqrt{[\eps^{-2}]} \leq \eps = \frac{1}{2}\min(1,\alpha_f/H) 
\leq \alpha_f/(2H)$.

By property (2) of Definition \ref{def:M}, we have
$$
\frac{Q}{\phi(Q)}
\exp \bigg( \sum_{\substack{p\leq x\\p\nmid Q}} \frac{|h(p)|}{p}\bigg)
\geq \exp \bigg( \sum_{\substack{p\leq x\\p\nmid Q}} \frac{|f(p)|}{Hp}\bigg)
\geq \bigg(\frac{\log x}{C\log \log x}\bigg)^{\alpha_f/H},
$$
and, thus,
\begin{align}\label{eq:preliminary}
 \nonumber
 \frac{1}{X}
\frac{Q}{\phi(Q)}
\bigg|
\sum_{\chi \in \mathcal{C}} 
\chi(A)
\sum_{n\leq X} h(n) \bar\chi(n) \bigg|
&\ll_{C,H,\alpha_f} 
 \frac{(\log \log x)^{2+\alpha_f/H}}{ (\log x)^{1+\alpha_f/(2H)}}
 \frac{Q}{\phi(Q)}
\exp \bigg( \sum_{\substack{p\leq x\\p\nmid Q}} \frac{|h(p)|}{p}\bigg)\\
&\ll_{C,H,\alpha_f} 
 \frac{1}{ (\log x)^{1+\alpha_f/(3H)}} \frac{Q}{\phi(Q)}
\exp \bigg( \sum_{\substack{p\leq x\\p\nmid Q}} \frac{|h(p)|}{p}\bigg).
\end{align}

To handle the case where $H>1$, consider the decomposition $f=h^{*H} * h'$ with 
$h$ as in \eqref{eq:h(p)/H}.
If $x^{1/2} \leq y \leq x$, then Lemma \ref{l:h'-contribution} implies that 
for any $\delta \in (0,1)$,
\begin{align} \label{eq:remove-h'}
&\frac{1}{y} 
\sum_{\chi \in \mathcal{C}}
\chi(A) \sum_{n\leq y} f(n) \bar\chi(n) \\
\nonumber
&\leq
\frac{1}{y}
\sum_{\chi \in \mathcal{C}}
\sum_{d_0 \leq y^{\delta}} |h'(d_0) \bar\chi(d_0)| \bigg| \sum_{d \leq y/d_0} 
h^{*H}(d) \bar\chi(d) \bigg|
+ O(x^{-\delta/8}(\log x)^{O(H)}).
\end{align}
The error term in this bound is acceptable. 
A generalisation of the hyperbola method applied to the sum over $d$ (see 
Section \ref{ss:hyperbola} for a deduction) shows that the main term 
satisfies:
\begin{align}\label{eq:less-preliminary}
&\frac{1}{y}
\sum_{\chi \in \mathcal{C}}
\sum_{d_0 \leq y^{\delta}} |h'(d_0) \bar\chi(d_0)| \bigg| \sum_{d \leq y/d_0} 
h^{*H}(d) \bar\chi(d) \bigg|\\
\nonumber
&\leq \sum_{\substack{d_0 \leq y^{\delta}\\ p|d_0 \Rightarrow p>w(x) }}  
\frac{|h'(d_0)|}{d_0}
\sum_{D \leq (y/d_0)^{1-1/H}}
\sum_{d_1 \dots d_{H-1} = D}
\frac{|h(d_1) \dots h(d_{H-1})||\bar\chi(D)|}{D} \times
\\
\nonumber
&\qquad \qquad \qquad \sum_{i=1}^H
\frac{Dd_0}{y}
\Bigg|~
\sum_{\chi \in \mathcal{C}}
\chi(A)
\sum_{\substack{n:\\ (y/d_0)^{1-1/H}
\max(d_1, \dots, d_{i-1})\\ \leq Dn \leq y/d_0}} 
h(n) \bar\chi(n) \Bigg|.
\end{align}
Observe that the upper bound on $n$ in the inner sum satisfies 
$y/(Dd_0) \in [y^{1/H-\delta},x]$. 
By choosing $\delta = 1/(4H)$, this interval is contained in 
$[x^{1/(2H)-\delta/2},x] = [x^{3/(8H)},x]$.
An application of the triangle inequality shows that the inner sum is bounded 
by 
$$
\frac{Dd_0}{y}
\Bigg|~
\sum_{\chi \in \mathcal{C}}
\chi(A)
\sum_{ n \leq y/(d_0D)} h(n) \bar\chi(n) \Bigg|
+ \frac{Dd_0}{y}
\Bigg|~
\sum_{\chi \in \mathcal{C}}
\chi(A)
\sum_{n \leq y' } 
h(n) \bar\chi(n) \Bigg|
$$
where $y' = \min(y/(d_0D), (y/d_0)^{1-1/H} D^{-1} \max(d_1, \dots, d_{i-1}))$. 
We are now in the position to apply \eqref{eq:preliminary} to bound the first 
of these terms by
$$
\ll_{C,H,\alpha_f} 
 \frac{1}{ (\log x)^{1+\alpha_f/(3H)}}
\exp \bigg( \sum_{\substack{p\leq x, \, p\nmid Q}} \frac{|h(p)|}{p}\bigg).
$$
If $y' > x^{1/(4H)}$, then the same bound applies to the second term.
If, on the other hand, $y' \leq x^{1/(4H)} \leq y^{1/(2H)}$, then the second 
term may trivially 
be bounded by 
$$\phi(Q) y^{1/(4H)-1} d_0D 
\leq \phi(Q) y^{1/(4H)-1 + 1/(4H) + 1 - 1/H}
\leq (\log x)^{C} x^{-1/(4H)}.$$
Inserting these bounds into \eqref{eq:less-preliminary} and completing the 
outer sums, we deduce that
\eqref{eq:less-preliminary} is bounded by
\begin{align*}
\ll_{C,H,\alpha_f} 
 \frac{1}{ (\log x)^{1+\alpha_f/(3H)}}
\exp \bigg( \sum_{\substack{p\leq x, \, p\nmid Q}} \frac{|h(p)|}{p}\bigg)
\sum_{\substack{d_0:  p|d_0 \Rightarrow p>w(x) }}  
\frac{|h'(d_0)|}{d_0}
\bigg(\sum_{d \leq x} \frac{|h(d)\chi(d)|}{d} \bigg)^{H-1}.
\end{align*}
The sum over $d$ in this bound satisfies
$$\bigg(\sum_{d \leq x} \frac{|h(d)\chi(d)|}{d} \bigg)^{H-1}
\leq \prod_{p \leq x} \left(1 + \frac{|h(p)\chi(p)|}{p}\right)^{H-1}
\leq \exp \bigg( (H-1) \sum_{p\leq x, p \nmid Q} \frac{|h(p)|}{p} 
\bigg),$$
and the sum over $d_0$ converges by \eqref{eq:d_0-sum}, applied with $y=1$, 
provided $x$ is sufficiently large for
$w(x) \geq (2H)^{16}$ to hold.
Collecting all information together, it follows from \eqref{eq:remove-h'} 
that
\begin{align*}
\frac{1}{x}
\frac{Q}{\phi(Q)} 
\sum_{\chi \in \mathcal{C}} 
\chi(A) \sum_{n\leq x} f(n) \bar\chi(n) 
&\ll_{C,H,\alpha_f} 
\frac{1}{(\log x)^{1+\alpha_f/(3H)}}
\frac{Q}{\phi(Q)}
\exp \bigg( \sum_{\substack{p\leq x, \,p\nmid Q}} \frac{|f(p)|}{p}\bigg),
\end{align*}
which competes the proof.
\end{proof}

With Corollary \ref{c:GS} in place, we obtain the following sufficient 
condition for $f \in \mathcal{M}_H$ to belong to $\mathcal{F}_H$:
\begin{proposition}[Sufficient condition] \label{p:sufficient-F}
Suppose that $f \in \mathcal{M}_H$. Then $f \in \mathcal{F}_H$ if the 
following holds.
For every $C>0$, there exists a function $\psi_C: \RR_{\geq 0} \to \RR_{\geq 0}$, 
with the property that $\psi_C(x) \to 0$ as $x \to \infty$, such that
\begin{equation} \label{eq:p-sufficient-F}
  S_{f\chi}(x') 
  = S_{f \chi}(x) 
  + O\Bigg( \frac{\psi_C(x)}{\log x}
  \exp\bigg(\sum_{\substack{p \leq x, \,p \nmid Q}} 
            \frac{|f(p)|}{p}\bigg) \Bigg), 
\qquad ( x \geq 2),
\end{equation}
uniformly for all $x' \in (x(\log x)^{-C},x]$ and all
characters $\chi \Mod{Q}$ with $1 < Q \leq (\log x)^C$ and $W(x)|Q$.
\end{proposition}

\begin{proof}
Recall from Definition \ref{d:F} that we have to show that there exists $\varphi_C = o(1)$
such that
\begin{equation} \label{eq:recall-1.5}
|S_{f}(x';Q,A) 
- S_{f}(x;Q,A)|
= O\Bigg( \frac{\varphi_C (x)}{\log x}
 \frac{Q}{\phi(Q)} 
 \prod_{\substack{p \leq x, \,p\nmid Q}} 
 \left(1 + \frac{|f(p)|}{p}\right)
 \Bigg)
\end{equation}
uniformly for all $x' \in (x(\log x)^{-C},x]$, all $1 \leq Q \leq (\log x)^C$ with $W(x)|Q$ and 
all reduced $A \Mod{Q}$.
This will be a straightforward consequence of the fact that by Corollary~\ref{c:GS} 
there are only finitely many characters in the character sum expansions of 
$S_f(x';Q,A)$ and $S_f(x;Q,A)$ that matter.
Using the notation from the corollary, let $\mathcal{E}(Q)$ denote the set of 
characters modulo $Q$ that are induced by the elements of $\mathcal{E}_0 
\cup \dots \cup \mathcal{E}_{k'}$. Then
\begin{align*}
S_f(x';Q,A) &= 
\frac{Q}{\phi(Q)} 
\sum_{\substack{\chi \Mod{Q} \\ \chi \in \mathcal{E}(Q)}} 
\chi(A) 
S_{f \bar \chi} (x')
+ O\Bigg( \frac{\psi(x)}{\log x}
\frac{Q}{\phi(Q)}
\exp \bigg( \sum_{\substack{p\leq x, \, p\nmid Q}} \frac{|f(p)|}{p}\bigg) 
\Bigg),
\end{align*}
where $\psi(x) = O_{C,H,\alpha_f}((\log x)^{-\alpha_f/(3H)})$,
uniformly in $x'$, $Q$ and $A$ as above. 
Thus, \eqref{eq:recall-1.5} follows from our assumptions with
$\psi_C(x) = \psi(x) + \# \mathcal{E} \cdot \varphi_C(x)$.
\end{proof}

\begin{example}[Applications using Selberg--Delange type arguments]
The conditions required by Proposition \ref{p:sufficient-F} 
are of a type that can usually be checked by means of 
the Selberg--Delange method (see e.g.\ Tenenbaum \cite[Section II.5]{Tenen})
provided the function $f$ is closely related to a $\zeta$- or $L$- function. 
The range of the modulus $Q$ of the characters $\chi$ that appear is small 
enough to ensure that exceptional characters can be handled.
Examples of functions suitable for this approach include:
\begin{enumerate}
 \item[(i)] the function 
$$\frac{r(n)}{4} = \frac{1}{4}\#\{(x,y) \in \ZZ^2: x^2+y^2 =n\},$$
 \item[(ii)] the indicator function of the set of sums of two squares,
 \item[(iii)] the characteristic function of set of numbers composed of primes 
that split completely in a given Galois extension $K/\QQ$ of finite degree.
\end{enumerate}
\end{example}

In the following subsection, we will further analyse the Lipschitz condition \eqref{eq:p-sufficient-F} and 
prove another sufficient condition, in this case for an element $f \in \mathcal{M}_H$ to 
belong to $\mathcal{F}_{H,n^{it}}$.

\subsection{Lipschitz estimates for elements of $\mathcal{M}_H$ and another sufficient condition}

For applications of Proposition \ref{p:sufficient-F} or Corollary \ref{c:GS}, the following four
lemmas, which we all prove in Section \ref{ss:proofs-of-lemmas}, are very useful.
The first lemma is a slight generalisation of the Lipschitz estimate for 
bounded multiplicative functions and a related decay estimate that Granville and Soundararajan 
established in \cite[Theorems 3 and 4]{GS-decay}.

\begin{lemma}[Lipschitz estimates]\label{l:lipschitz}
Let $f_0 \in \mathcal{M}_1$ and let $x \geq 3$.
Suppose that $f : \NN \to \CC$ is multiplicative, bounded in absolute value by $1$ and satisfies
$|f(p^k)| = |f_0(p^k)|$ for all primes $p \geq \exp((\log \log x)^2)$ and $k \geq 1$. 
 Define
 $$
 F(s) = 
 \prod_{p \leq x} 
 \left( 1 + \frac{f(p)}{p^s} + \frac{f(p^2)}{p^{2s}} + \dots \right).
 $$  
 If the maximum of 
 $$
 \max_{|y| \leq 2 \log x} |F(1+iy)|
 $$
 is attained at $y=t_{x,f}$, then,
  uniformly in $x$ and $f$ as above, 
  we have
\begin{equation} \label{eq:lipschitz}
 \Big| \frac{1}{x} \sum_{n \leq x} f(n)n^{-it_{x,f}} 
  - \frac{1}{x'} \sum_{n \leq x'} f(n)n^{-it_{x,f}} \Big| 
 \ll_{f_0} \frac{1}{(\log x)^{1+C_0}} \exp \Bigg(\sum_{p \leq x} \frac{|f(p)|}{p}\Bigg)
\end{equation}
for all $x' \in [x \exp(-(\log \log x)^{-4}),x]$, 
where $C_0\in (0,\alpha_{f_0}/2)$ is a positive constant that only depends on $\alpha_{f_0}$.
Furthermore, we have, for any $t_{x,f}$ as above,
\begin{equation} \label{eq:GS-Thm3}
 \Big| \frac{1}{x} \sum_{n \leq x} f(n) \Big|
 \ll_{f_0} \frac{1}{|t_{x,f}| + 1} + \frac{\log \log x}{\log x} +
 \frac{1}{(\log x)^{1+C_0}} \exp \Bigg(\sum_{p \leq x} \frac{|f(p)|}{p}\Bigg).
\end{equation}
\end{lemma}

The conditions on $f$ above will allow us to apply the lemma to twists 
$h\chi$ where $h \in \mathcal{M}_1$ and $\chi$ is a character modulo $Q$ 
with $Q \leq (\log x)^{C}$ for any given constant $C>0$.
In order to extend this to twists $f \chi$ for $f \in \mathcal{M}_H$ and
$H>1$, we note that the function $h$ associated to $f$ via  
\eqref{eq:h(p)/H} belongs to $\mathcal{M}_1$. 
The following lemma will enable us to employ Lemma \ref{l:lipschitz} in general.

\begin{lemma} \label{l:lipschitz-h-to-f}
Let $f \in \mathcal{M}_H$ and let $h$ be as in \eqref{eq:h(p)/H}.
Let $C>0$ be a fixed constant and let $\psi:\RR_{\geq 0} \to \RR_{\geq 0}$ be a function that satisfies 
$\psi(x) \to 0$ as $x \to \infty$.
Let $x > 1$ and suppose that $\chi \Mod{Q}$, with 
$Q \leq (\log x)^C$ and $W(x)|Q$, is a character such that
\begin{equation*}
  | S_{h\chi}(y) 
  - S_{h \chi}(y') |
  \leq \frac{\psi(x)}{\log y}
            \exp\bigg(\sum_{\substack{p \leq y,\, p \nmid Q}} 
            \frac{|h(p)|}{p}\bigg) 
\end{equation*}
for all $y \in (x^{1/(2H)},x]$ and $y' \in (y(\log x)^{-C},y]$.
Then, for all $x' \in (x(\log x)^{-C},x]$, we have
\begin{equation*}
  | S_{f\chi}(x) -  S_{f \chi}(x') |
  \leq \frac{\psi'(x)}{\log x}
            \exp\bigg(\sum_{\substack{p \leq x,\, p \nmid Q}} 
            \frac{|f(p)|}{p}\bigg),
\end{equation*}
where $\psi'$ is independent of $\chi$ and $Q$ and satisfies $\psi'(x) \to 0$ as $x \to \infty$;
more precisely,
$$\psi'(x)=O_{H,C}\Big(\psi(x) + (\log x)^{- \min(1, \alpha_f/(2H))} + x^{-1/8} (\log x)^{O(H)}\Big).$$
\end{lemma}

The next lemma shows that if $\chi$ is a character that is negligible in the 
application of Proposition \ref{p:sufficient-F} or Corollary \ref{c:GS}
to the function $h$, then $\chi$ is also negligible in an application of the proposition or corollary to $f$.

\begin{lemma} \label{l:small-chi-f-to-h}
Let $f \in \mathcal{M}_H$, let $h$ be as in \eqref{eq:h(p)/H} and let $C>0$.
Let $\psi:\RR_{\geq 0} \to \RR_{\geq 0}$ be a function that satisfies 
$\psi(x) \to 0$ as $x \to \infty$.
Let $x>1$ and suppose that $\chi \Mod{Q}$, with $Q \leq (\log x)^C$ and $W(x)|Q$, is 
any character such that
$$
|S_{h\chi}(x')| 
\leq \frac{\psi(x)}{\log y}
  \exp\bigg(\sum_{\substack{p \leq y, \,p \nmid Q}} 
            \frac{|h(p)|}{p}\bigg) 
$$
for all $x' \in [x^{1/(4H)}, x]$. Then
$$
|S_{f\chi}(y)| 
\leq \frac{\psi'(x)}{\log x}
  \exp\bigg(\sum_{\substack{p \leq x, \,p \nmid Q}} 
            \frac{|f(p)|}{p}\bigg), \qquad(y \in [x^{1/2},x]),
$$
where $\psi'$ is independent of $\chi$ and $Q$ and satisfies
$\psi'(x) = O(\psi(x) + x^{-1/(32 H)}(\log x)^{O(H)})$.
\end{lemma}

Finally, we observe that \eqref{eq:lipschitz} holds \emph{uniformly} 
in $f$ for some $t = t_x$ that only depends on $x$ and $f_0$.
This proves particularly valuable when dealing with families of induced characters.

\begin{lemma} \label{l:lipschitz-t-uniform}
 Let $x$, $f_0$ and $f$ be as in Lemma \ref{l:lipschitz}, and let $x' \in [x\exp(-(\log \log x)^4/2),x]$. 
 Then there exists $|t_x|\leq 2 \log x$, only dependent on $x$ and $f_0$, but not on $f$ or $x'$, 
 such that, uniformly in $x$, $x'$ and $f$ as before,
\begin{equation} \label{eq:lipschitz'}
 \Big| \frac{1}{x} \sum_{n \leq x} f(n)n^{-it_x} 
  - \frac{1}{x'} \sum_{n \leq x'} f(n)n^{-it_x} \Big| 
 \ll \frac{1}{(\log x)^{1+C_0}} \exp \Bigg(\sum_{p \leq x} \frac{|f(p)|}{p}\Bigg),
\end{equation}
where $C_0>0$ is a positive constant that only depends on $\alpha_{f_0}$.
\end{lemma}

As a consequence of Lemmas \ref{l:lipschitz}--\ref{l:lipschitz-t-uniform} and 
Corollary \ref{c:GS}, we obtain the following sufficient condition for testing whether a function 
belongs to $\mathcal{F}_{H,n^{it}}$.

\begin{proposition}[Another sufficient condition] \label{p:sufficient-2}
Let $f \in \mathcal{M}_H$ and $h$ as in \eqref{eq:h(p)/H}.
For every $C>0$,  let $\psi_C:\RR_{\geq 0} \to \RR_{\geq 0}$ be a function that satisfies
$\psi_C(x) \to 0$ as $x \to \infty$.
Suppose that for every sufficiently large $x$ there exists $\tau_x \in \RR$ with 
$|\tau_x| \leq 2 \log x$ such that the following holds:
If $1 \leq Q \leq (\log x)^C$ with $W(x)|Q$ and if $\chi \Mod{Q}$ is a character then either the bound
\begin{align} \label{eq:p-sufficient-2}
|S_{g_x\chi}(x')| \leq \frac{\psi_C(x')}{\log x'}
  \exp\bigg(\sum_{\substack{p \leq x', p \nmid Q}} 
            \frac{|g(p)|}{p}\bigg),
  \qquad(x^{1/(8H)} \leq x' \leq x),
\end{align}
holds for either $g=h$ or $g=f$ and for $g_x:n\mapsto g(n)n^{-i\tau_x}$,
or else we have $t_{x,f}=\tau_x$ or $t_x=\tau_x$ in the statement of 
Lemma \ref{l:lipschitz} or Lemma \ref{l:lipschitz-t-uniform}
when applied with $f_0=h$ and with $f$ replaced by $h \chi$.

Then the function $n \mapsto f(n)n^{-i\tau_x}$ satisfies \eqref{eq:d-F} and 
$f \in \mathcal{F}_{H,n^{it}}$.
\end{proposition}

\begin{example} \label{ex:prop-suff-2}
The above proposition applies to:

(i) the M\"obius function $f = \mu$. 
In this case we may take $\tau_x=0$ for all $x$
since $S_{\mu \chi}(x) \ll_B q^{1/2} (\log x)^{-B}$ for all $B>0$ and all 
$\chi \Mod{q}$.\footnote{This is the same information about $\mu$ as was used in 
Green and Tao's work \cite{GT-nilmobius} (see \cite[Proposition A.1]{GT-nilmobius}) to handle the `major arcs'.}
Indeed, if $\chi$ is a trivial character this estimate follows from Prime-Number-Theorem-type bounds on $S_{\mu}(x)$;
see Example \ref{ex:mobius-4.4}(i) for details.
If $\chi \Mod{q}$ is non-trivial, then its conductor, $q'$ say, is at least $2$ and one may deduce the estimate from
\cite[Corollary 5.29]{IK}, which proves the claimed bound for non-trivial primitive characters.
In fact, if $q \leq x$, then it follows from \cite[(5.79)]{IK} that
$$\sum_{p \leq x} \chi(p) \ll_B {q'}^{1/2} x (\log x)^{-B} + \omega(q) \ll_B q^{1/2} x (\log x)^{-B},$$
since $\omega(q) \ll \log x$.
If $q>x$, then $\sum_{p \leq x} \chi(p) \ll_B {q}^{1/2} x (\log x)^{-B}$ holds trivially.
Thus, \cite[(5.79)]{IK} generalises to all non-trivial $\chi$ and, by following the original proof from \cite{IK}, 
so does \cite[(5.80)]{IK}.

(ii) every multiplicative function $f$ that takes values on the unit circle, i.e.\ for which $|f(n)|=1$ for all $n$. 
This, in turn, follows from \cite[Theorem 2]{BGS}, which provides the bound
\begin{equation} \label{eq:unit-circle-decay}
 |S_{f \chi}(x)| \ll \Big((\log \log x)^2/\log x\Big)^{1/20},
\end{equation} 
valid for all characters $\chi$ of conductor $Q \leq \exp((\log \log x)^2)$, 
except perhaps for those induced by one exceptional character, $\xi$ say.
By Lemma~\ref{l:lipschitz-t-uniform}, there exists $|t_x| \leq 2 \log x$ such that \eqref{eq:lipschitz'} holds for all
$\chi \Mod{Q}$ induced by $\xi$.
Suppose now that $|t_x| > (\log x)^{1/100}$. 
Then Lemma~\ref{l:lipschitz-t-uniform}, combined with the bound \eqref{eq:GS-Thm3}, implies that the above bound on 
$|S_{f \chi}(x)|$ also holds for characters induced by $\xi$.
In this case, we may take $\tau_x = 0$.
If, however, $|t_x| \leq (\log x)^{1/100}$, then we may use partial summation to deduce from \eqref{eq:unit-circle-decay}
that $$|S_{f_x \chi}(x)| \ll \Big((\log \log x)^2/\log x\Big)^{3/100}$$ for all $\chi$ not induced by $\xi$.
In this case, we may set $\tau_x=t_x$.
\end{example}

\begin{proof}[Proof of Proposition \ref{p:sufficient-2}]
 This result follows from Corollary \ref{c:GS} in a similar way as Proposition \ref{p:sufficient-F} does.
 To show that $n \mapsto f_x(n) := f(n)n^{-i\tau_x}$ satisfies \eqref{eq:d-F}, let $1 \leq Q \leq (\log x)^C$ be such that $W(x)|Q$
 and let $\mathcal{E}(Q)$ denote the set of characters modulo $Q$ that are induced by the elements of
 $\mathcal{E}_0 \cup \dots \cup \mathcal{E}_{k'}$ from Corollary \ref{c:GS}, when applied to the function 
 $f_x$.
 Then
\begin{align} \label{eq:prop4.10-proof}
S_{f_x}(y;Q,A) - S_{f_x}(x;Q,A) &= 
\frac{Q}{\phi(Q)} 
\sum_{\substack{\chi \Mod{Q} \\ \chi \in \mathcal{E}(Q)}} 
\chi(A) 
\left( S_{f_x \bar \chi} (y) - S_{f_x \bar \chi} (x) \right) \\
\nonumber
& \quad+ O_{C,H,\alpha_f}\Bigg( (\log x)^{-\alpha_f/(3H)}
\frac{Q}{\phi(Q)}
\frac{1}{\log x}
\exp \bigg( \sum_{\substack{p\leq x, \, p\nmid Q}} \frac{|f(p)|}{p}\bigg) 
\Bigg),
\end{align}
whenever $x^{1/2} \leq y \leq x$.

We begin with the contribution from those characters $\bar \chi$ to which the first alternative from the statement applies.
Observe that \eqref{eq:p-sufficient-2} implies that
$$
|S_{g_x\chi}(x')| \leq \frac{\psi'_C(x)}{\log x}
  \exp\bigg(\sum_{\substack{p \leq x, p \nmid Q}} 
            \frac{|g(p)|}{p}\bigg),
  \qquad(x^{1/(8H)} \leq x' \leq x),
$$
where $\psi'_C(x)= O_H(1) \max_{x^{1/(8H)} \leq x' \leq x} \psi_C(x')$.
To see this, note that for all $x'$ as above,
\begin{align*}
 \nonumber
 &\exp\bigg(\sum_{p \leq x, \,p \nmid Q} \frac{|g(p)|}{p}\bigg)
  \exp\bigg(-\sum_{p \leq x', \,p \nmid Q} \frac{|g(p)|}{p}\bigg)
 \leq \exp\bigg(\sum_{x^{1/(8H)} < p \leq x} \frac{H}{p}\bigg)\\
 &\leq \exp\bigg(H (\log \log x + \log (8H) - \log \log x + o(1))\bigg)
  \leq (8H)^{H(1 + o(1))} \ll_{H} 1.
\end{align*}
Thus, by Lemma \ref{l:small-chi-f-to-h} it follows that all characters $\bar \chi$ with 
$\chi \in \mathcal{E}(Q)$ to which the first alternative from the statement applies satisfy
 \begin{align}\label{eq:prop4.10-proof-2}
 |S_{f_x \bar \chi} (y)| \leq 
 \frac{\psi_C''(x)}{\log x}
 \exp \bigg( \sum_{\substack{p\leq x, \, p\nmid Q}} \frac{|f(p)|}{p}\bigg), \qquad(x^{1/2} \leq y \leq x), 
 \end{align}
 for a suitable function $\psi_C''=o(1)$.
 
For all remaining $\chi \in \mathcal{E}(Q)$, Lemma \ref{l:lipschitz} or Lemma \ref{l:lipschitz-t-uniform} 
provides the Lipschitz estimate 
$$
S_{f_x \bar \chi}(y) = S_{f_x \bar \chi}(x) + O\Bigg( \frac{\psi'''(x)}{\log x}
 \exp \bigg( \sum_{\substack{p\leq x, \, p\nmid Q}} \frac{|f(p)|}{p}\bigg)  \Bigg),
 \qquad (y \in [x\exp(-(\log \log x)^4/2),x])
$$
with $\psi'''(x) = (\log x)^{-C_0}$.
Thus, the result follows from \eqref{eq:prop4.10-proof}.
\end{proof}

\subsection{Proofs of Lemmas \ref{l:lipschitz}--\ref{l:lipschitz-t-uniform}} \label{ss:proofs-of-lemmas}
We prove Lemmas \ref{l:lipschitz}, 
\ref{l:lipschitz-t-uniform},
\ref{l:lipschitz-h-to-f} and \ref{l:small-chi-f-to-h}, 
in this order.

\begin{proof}[Proof of Lemma \ref{l:lipschitz}]
The proof of this lemma is almost identical to the proofs of the original results,
\cite[Theorem 3 and 4]{GS-decay}, except for one ingredient, namely 
\cite[Lemma 2.3]{GS-decay}, which needs to be replaced by Lemma \ref{l:GS-2.3} 
below.
The estimate \eqref{eq:GS-Thm3} follows immediately from \cite[\S5]{GS-decay} and Lemma \ref{l:GS-2.3}.
Concerning the Lipschitz estimate \eqref{eq:lipschitz}, we replace the application of 
\cite[Theorem 3]{GS-decay} at the beginning of \cite[\S6]{GS-decay} by the estimate \eqref{eq:GS-Thm3}.
The bound in \cite[eq.\ (6.2)]{GS-decay} continues to apply.
The first term in this bound is acceptable since in our case 
$w \leq \exp((\log \log x)^4)$, and since $C_0<\alpha_{f_0}$.
To bound the integrand in the second term, we use the bound 
\cite[eq.\ (6.5)]{GS-decay}
if $\alpha$ is large, which in our situation means  
$\alpha > 
\exp( \sum_{p\leq x} \frac{|f(p)|}{p} )^{-1} (\log x)^{C_0}$ 
with $C_0=C_0(\alpha_{f_0},1)$ as in the lemma below.
If 
$\alpha \leq \exp(\sum_{p\leq x} \frac{|f(p)|}{p} )^{-1} (\log x)^{C_0}$,
we proceed as in the small-$\alpha$-case from the original proof
but, again, apply our Lemma~\ref{l:GS-2.3} instead of \cite[Lemma 2.3]{GS-decay}.
\end{proof}

\begin{lemma}[`new Lemma 2.3'] \label{l:GS-2.3}
Let $x\geq 3$, $f_0 \in \mathcal{M}_H$ and let $f : \NN \to \CC$ be a multiplicative function 
such that $|f(p^k)| \leq |f_0(p^k)|$ at all prime powers $p^k$, and such that
$|f(p^k)| = |f_0(p^k)|$ whenever $p \geq \exp((\log \log x)^2)$ and $k \in \NN$. 
 Let 
$$
 F(s)
  = \prod_{p\leq x} 
    \left( 1 + \frac{f(p)}{p^s} + \frac{f(p^2)}{p^{2s}} + \dots \right).
$$
Then there exists a positive constant $C_0 = C_0(\alpha_{f_0},H) \in (0, \alpha_f/2)$ such that 
for all real numbers $y$ and $1/\log x \leq |\beta| \leq \log x$,
we have
$$
|F(1 + iy) F(1 + i(y+\beta))| \ll
\exp \Big( 2 \sum_{p\leq x} \frac{|f(p)|}{p} \Big) 
(\log x)^{-2C_0}.
$$
\end{lemma}

\begin{rem*}
 Observe that we actually only use this lemma in the case where $H=1$.
\end{rem*}

\begin{proof}
Suppose $|f(p)| = g(p) +h(p)$ for two non-negative functions $g$ and $h$.
Then
\begin{align}\label{eq:GS-lemma-0}
|F(1 + iy) F(1 + i(y+\beta))| 
&\ll \exp \Big(\Re \sum_{p\leq x} \frac{f(p)p^{-iy} + f(p)p^{-i(y+\beta)}}{p} 
\Big) \\
\nonumber
&\ll \exp \Big( \sum_{p\leq x} \frac{(g(p) + h(p)) |1 + p^{-i\beta}|}{p} \Big) 
\\
\nonumber
&\ll \exp \Big( 2 \sum_{p\leq x} 
          \frac{g(p) |\cos(\frac{|\beta|}{2} \log p)|}{p} \Big) 
     \exp \Big( 2 \sum_{p\leq x} \frac{h(p)}{p} \Big).
\end{align}
The aim is to exploit the fact that $|\cos|$ is not the constant function $1$ 
in order to bound this expression.
We begin by decomposing the set of primes less than $x$ into subsets on which 
$|\cos(\frac{|\beta|}{2} \log p)|$ is almost constant.
For this purpose, let $\delta = 1/(\log x)^3$ and consider the decomposition of 
$[0, 2 \pi)$ into intervals of the form 
$((n-1)\log(1+\delta) |\beta|/2,n\log(1+\delta) |\beta|/2]$.
Thus, in order to cover the interval $[0, 2 \pi)$, the parameter 
$n$ runs over the range $1 \leq n \leq N$, for some $N \asymp (\delta 
|\beta|)^{-1}$, and, in particular, we have $(\log x)^{2} \ll N \ll (\log x)^4$.
By changing $\delta$ slightly, we can insure that 
$N\log(1+\delta) |\beta|/2 = 2 \pi$
so that the decomposition of $[0, 2 \pi)$ has exactly $N$ full intervals and 
no smaller or larger ones.
Next, we set $Y = \exp((\log \log x)^2)$ and decompose the set of primes in the 
interval $[Y,x]$ into $N$ sets of the form
$$
P_n(x) = 
 \bigcup_{m \equiv n \Mod{N}} 
 \{p\in (Y(1+\delta)^{m-1}, Y(1+\delta)^{m}] \cap (Y,x] \}, 
 \quad (1 \leq n \leq N).
$$
If
$M = \log (x/Y)/\log(1+\delta)$, the Brun-Titchmarsh inequality implies that 
for each $n \leq N$:
\begin{align}\label{eq:P_n-bound}
 \nonumber
 \sum_{p\in P_n(x)} \frac{1}{p}
 &\leq \sum_{\substack{0 \leq m \leq M \\ m \equiv n \Mod{N}}}
 \frac{ \pi\left(Y(1+\delta)^{m}\right) - \pi\left(Y(1+\delta)^{m-1}\right)}
 {Y(1+\delta)^{m-1}} \\ \nonumber
 &\ll \sum_{m \equiv n \Mod{N}} 
  \frac{ \delta }{\log(Y \delta (1+\delta)^{m-1})}\\ \nonumber
 &\ll \delta \sum_{0\leq k \leq M/N} \frac{1}{\log(x(1+\delta)^{-kN})}\\ 
 \nonumber
 &\ll \delta \sum_{0\leq k \leq M/N} \frac{1}{\log x - k N \log(1+\delta)}\\
 \nonumber
 &\ll \frac{\delta}{N\log(1+\delta)} 
 \log \Big(\frac{\log x}{N \log(1+\delta)}\Big)\\ 
 &\ll \frac{1}{N} \log \log x.
\end{align}
Suppose now that $g$ satisfies
\begin{equation} \label{eq:g-property}
\sum_{p \leq x} \frac{g(p)}{p} \sim \alpha \log \log x
\end{equation}
for some $\alpha > 0$ and
let $S \subset \{1,\dots,N\}$ denote the set of all indices $n$ such that
\begin{equation} \label{eq:def-S}
 \sum_{p \in P_n(x)} \frac{g(p)}{p} 
\geq \frac{\alpha}{2} \sum_{p \in P_n(x)}
\frac{1}{p}.
\end{equation}
Then, taking our choice of $Y$ into account and since $g(p) \leq |f(p)| \leq 
H$, we have 
\begin{equation}\label{eq:GS-lemma-b1}
\sum_{n\in S}\sum_{p \in P_n(x)} \frac{1}{p} 
\geq \frac{1}{H} \sum_{n\in S}\sum_{p \in P_n(x)} \frac{g(p)}{p} 
\geq \frac{1}{H} \bigg(\sum_{Y \leq p \leq x} \frac{g(p)}{p}
- \frac{\alpha}{2} \sum_{p \leq x} \frac{1}{p} \bigg)
\sim \frac{\alpha}{2H} \log \log x. 
\end{equation}
Comparing this bound with \eqref{eq:P_n-bound} shows that $S$ contains a 
positive proportion of the integers up to $N$.
Our next aim is to find a subset $T \subset S$ that satisfies
\begin{equation}\label{eq:GS-lemma-b2}
\sum_{n\in T}\sum_{p \in P_n(x)} \frac{1}{p} \geq
\frac{1}{2} \sum_{n\in S}\sum_{p \in P_n(x)} \frac{1}{p},
\end{equation}
and for which $|\cos(\frac{|\beta|}{2} \log p)|$ is bounded away from $1$ as
$p$ ranges over $\bigcup_{n\in T} P_n$.

By \eqref{eq:P_n-bound} and \eqref{eq:GS-lemma-b1}, we can 
choose a positive proportion of all $n \leq N$ not to belong to $T$. 
In particular, we can exclude all $n$ from $T$ for which
$((n-1)\log(1+\delta) |\beta|/2,n\log(1+\delta) |\beta|/2]$ intersects
$[0, c) \cup (\pi-c,\pi+c) \cup (2\pi - c, 2\pi)$ for some small constant $c>0$ 
that only depends on $\alpha$, $H$ and on the implied constant in 
\eqref{eq:P_n-bound}.
By doing so, we ensure that
$$\left| \cos\Big(\frac{|\beta|}{2} \log p \Big) \right| < \cos c < 1$$
for all $p \in P_n(x)$ with $n\in T$.
Writing $c':=\cos c$ and considering the cosine sum in the final 
expression of \eqref{eq:GS-lemma-0}, the above yields
\begin{align*}
 \sum_{p \in P_n(x)} \frac{g(p) |\cos(\frac{|\beta|}{2} \log p)|}{p}
\leq c' \sum_{p \in P_n(x)} \frac{g(p)}{p}
\leq \sum_{p \in P_n(x)} \frac{g(p)}{p} - (1 - c')
\sum_{p \in P_n(x)} \frac{g(p)}{p}
\end{align*}
for $n\in T$.
If $n\not\in T$, we have the trivial bound
\begin{align*}
 \sum_{p \in P_n(x)} \frac{g(p) |\cos(\frac{|\beta|}{2} \log p)|}{p}
&\leq \sum_{p \in P_n(x)} \frac{g(p)}{p}.
\end{align*}
By combining these two bounds with \eqref{eq:def-S}, 
\eqref{eq:GS-lemma-b1} and \eqref{eq:GS-lemma-b2}, it follows that
that
\begin{align*}
\sum_{p \leq x} \frac{g(p) |\cos(\frac{|\beta|}{2} \log p)|}{p} 
&\leq  \sum_{p\leq x} \frac{g(p)}{p} - 
(1 - c') \sum_{n\in T}\sum_{p \in P_n(x)} \frac{g(p)}{p}\\
&\leq  \sum_{p\leq x} \frac{g(p)}{p} - 
\frac{\alpha (1 - c')}{2} \sum_{n\in T}\sum_{p \in P_n(x)} 
\frac{1}{p}\\
&\leq \sum_{p\leq x} \frac{g(p)}{p} - (C_0 + o(1)) \log \log x
\end{align*}
for some constant $C_0 > 0$ that only depends on $\alpha$ and $H$.
By \eqref{eq:GS-lemma-0}, we thus deduce that
\begin{align*}
|F(1 + iy) F(1 + i(y+\beta))| \ll 
\exp \Big( 2 \sum_{p\leq x} \frac{|f(p)|}{p} \Big) 
(\log x)^{-2C_0}.
\end{align*}

It remains to show that there exists a decomposition of $|f(p)|$ into  
non-negative functions $g$ and $h$ such that \eqref{eq:g-property} holds.
This will follow from Elliott \cite[Lemma 5]{elliott}.
To apply this result, we observe that the two conditions from 
Definition \ref{def:M} and partial summation show that every 
$f_0 \in \mathcal{M}_H$ has the property that 
\begin{equation}\label{eq:elliott-liminf}
\liminf_{x \to \infty} \frac{1}{\eps \log x}
\sum_{x^{1-\eps } < p \leq x} \frac{|f_0(p)| \log p}{p} \geq \alpha_{f_0} .
\end{equation}
for every  $\eps \in (0,1)$.
Thus, the assumptions of \cite[Lemma 5]{elliott} are met and the lemma implies 
that there exists a non-negative completely multiplicative function 
$g_0 \leq |f_0|$, which satisfies
$$
\lim_{x \to \infty} (\log x)^{-1} \sum_{p \leq x} \frac{g_0(p) \log p}{p}
= \frac{\alpha_{f_0}}{2}.
$$
The function $g_0$ arises from $|f_0|$ as the result of a simple greedy-type argument 
that decides one by one for each prime $p$ if $g_0(p)=0$ or $g_0(p)= |f_0(p)|$.
Partial summation yields
$$\sum_{p\leq z} \frac{g_0(p)}{p} \sim \frac{\alpha_{f_0}}{2} \log \log z.$$
If we let $g(p) = g_0(p)$ for all $p \geq Y$ and $g(p)=0$ otherwise, then
$$\sum_{p\leq x} \frac{g(p)}{p} = 
\sum_{p\leq x} \frac{g_0(p)}{p} 
+ O(H \log \log Y)
\sim \frac{\alpha_{f_0}}{2} \log \log x + O(H \log \log \log x),$$
as required. 
Thus, we may set 
$\alpha = \alpha_{f_0}/2$ in the first part of the proof and, hence,
$C_0$ only depends on $\alpha_{f_0}$ and $H$.
\end{proof}

\begin{proof}[Proof of Lemma \ref{l:lipschitz-t-uniform}]
 Let $f^*$ denote the multiplicative function that satisfies
 $f^*(p^k)=0$ whenever $k\geq 1$ and $p \leq \exp((\log \log x)^2)$, and
 $f^*(p^k)=f_0(p^k)$ whenever $k\geq 1$ and $p > \exp((\log \log x)^2)$.
 Then, by applying Lemma \ref{l:lipschitz} twice, we have
 $$
 \Big| \frac{1}{y} \sum_{n \leq y} f^*(n)n^{-it} 
  - \frac{1}{y'} \sum_{n \leq y'} f^*(n)n^{-it} \Big| 
 \ll \frac{1}{(\log x)^{1+C_0}} \exp \Bigg(\sum_{p \leq x} \frac{|f(p)|}{p}\Bigg),
 $$
 for all $y,y' \in [x \exp(-(\log \log x)^{-4}),x]$ and some $t=t_{x,f^*}$ with $|t|\leq 2\log x$.
 
 Observe that any $f$ may be decomposed as $f = f' * f^*$, where $f'(p^k)=0$ for all $p > \exp((\log \log x)^2)$.
 Thus, if $w:=\exp((\log \log x)^{-4}/2)$ and
 $x' \in [x /w ,x]$, then
 \begin{align*}
 &\Big| \frac{1}{x} \sum_{n \leq x} f(n)n^{-it} 
  - \frac{1}{x'} \sum_{n \leq x'} f(n)n^{-it} \Big| \\
 &\ll \sum_{d \leq w} \frac{|f'(d)| }{d}
 \Big| \frac{d}{x} \sum_{n \leq x/d} f^*(n)n^{-it} 
  - \frac{d}{x'} \sum_{n \leq x'/d} f^*(n)n^{-it} \Big| \\
 &\qquad+  \frac{1}{x}\sum_{d > w} \sum_{m<x/d} |f'(d)f^*(m)|
  +  \frac{1}{x'}\sum_{d > w} \sum_{m<x'/d} |f'(d)f^*(m)|\\
 &\ll \sum_{d \leq w} \frac{|f'(d)| }{d}
  \frac{1}{(\log x)^{1+C_0}} \exp \Bigg(\sum_{p \leq x} \frac{|f^*(p)|}{p}\Bigg)\\
 &\qquad+  \sum_{m \leq x} \frac{1}{m} \frac{m}{x}\sum_{w <d \leq x/m} |f'(d)|
  +  \sum_{m \leq x'} \frac{1}{m} \frac{m}{x'}\sum_{m \leq x'}\sum_{w <d \leq x'/m} |f'(d)|\\
\end{align*} 
To bound the last two terms, recall that $|f'(n)|\leq 1$ and that (cf.\ \cite[Theorem III.5.1]{Tenen}) 
 \begin{equation} \label{eq:smooth-nmbrs-bd}
  \Psi(z,y) := \#\{n\leq z: p|n \Rightarrow p \leq y\} \ll ze^{-u/2}\qquad (z \geq y \geq 2),
 \end{equation}
 where $u= \log z / \log y$. In particular
 $$\Psi(z,\exp((\log \log x)^2)) \ll ze^{-(\log \log x)^2/4} = z(\log x)^{-(\log \log x)/4}$$
 whenever $z \geq \exp((\log \log x)^4/2)$. 
 Thus, the above is bounded by
 \begin{align*}
 &\ll 
  \frac{1}{(\log x)^{1+C_0}} 
  \exp \Bigg(\sum_{p \leq x} \frac{|f(p)|}{p} + \sum_{p \leq d} \frac{1}{p^2(1-p^{-1})}\Bigg)
  +  \sum_{m \leq x} \frac{1}{m} (\log x)^{-(\log \log x)/2} \\ 
  &\ll 
  \frac{1}{(\log x)^{1+C_0}} \exp \Bigg(\sum_{p \leq x} \frac{|f(p)|}{p}\Bigg),
\end{align*}
which completes the proof.
\end{proof}

\begin{proof}[Proof of Lemma \ref{l:small-chi-f-to-h}]
By Lemma \ref{l:h'-contribution} and \eqref{eq:less-preliminary} it follows that
 \begin{align*}
|S_{f\chi}(y)| 
&\leq  x^{-\delta/8} (\log x)^{O(H)} 
\\ &\quad 
+ 
\sum_{d_0 \leq y^{\delta}} 
\sum_{D \leq (y/d_0)^{1-1/H}}
\sum_{d_1 \dots d_{H-1} = D}
\frac{|h'(d_0)h(d_1) \dots h(d_{H-1})||\chi(d_0D)|}{d_0D} 
\\
\nonumber
&\qquad  \times \sum_{i=1}^H
\frac{Dd_0}{y}
\Bigg(~
\Bigg|
\sum_{\substack{n \leq y/(Dd_0)}} 
h(n) \chi(n) \Bigg|
+
\Bigg|~
\sum_{\substack{n < (y/d_0)^{1-1/H}
\max(d_1, \dots, d_{i-1})/D}} 
h(n) \chi(n) \Bigg| \Bigg).
\end{align*}
As in the proof of Corollary \ref{c:GS}, we set $\delta = 1/(4H)$.
Then the inner sums may be bounded using either the assumption or, if 
$(y/d_0)^{1-1/H} \max(d_1, \dots, d_{i-1})/D<x^{1/(4H)},$ 
by the trivial estimate 
$$
\frac{Dd_0}{y} x^{1/(4H)} \leq y^{1 - 1/H + 1/(4H) - 1 } x^{1/(4H)} = y^{-3/(4H)} x^{1/(4H)} \leq x^{-1/(8H)}.
$$
Bounding the sums over $D$ and $d_0$ as in the proof of Corollary \ref{c:GS},
the lemma follows.
\end{proof}

\begin{proof}[Proof of Lemma \ref{l:lipschitz-h-to-f}]
We first use Lemma \ref{l:h'-contribution} to remove the contribution 
of the function $h'$ defined in Lemma \ref{l:dirichlet}.
Given $\delta \in (0,1)$, let $\delta'$ be such that $x^{\delta} = 
{x'}^{\delta'}$.
Then
\begin{align} \label{eq:h-to-f-1}
& |S_{f\chi}(x) - S_{f\chi}(x')| \\
\nonumber
& \leq x^{-\delta/4} (\log x)^{O(H)}  + \sum_{d_0 \leq x^{\delta}} 
\frac{|h'(d_0) \chi(d_0)|}{d_0}
 \bigg|
\frac{d_0}{x} \sum_{n\leq x/d_0} h^{*H}(n)\chi(n)
- \frac{d_0}{x'} \sum_{n\leq x'/d_0} h^{*H}(n)\chi(n)
 \bigg|
\end{align}
To analyse the difference above, we seek to decompose $h^{*H}$ using $H-1$ 
applications of the hyperbola trick\footnote{This proof requires a different 
decomposition from the one used in \eqref{eq:less-preliminary} and 
\S\ref{ss:decomposition} in order to be able to collect together terms in 
\eqref{eq:hyperbola2} below.},
\begin{align*}
 \sum_{nm \leq Y} 
 = \sum_{n \leq X} 
   \sum_{m \leq Y/n}
  +\sum_{m \leq Y/X} 
   \sum_{X \leq n \leq Y/m}. 
\end{align*}
Fix $d_0$ and let $X = (x'/d_0)^{1/H}$.
If $y \in \{x'/d_0,x/d_0\}$, then applying the hyperbola trick with 
the chosen cut-off $X$ and with $Y=y$, $n=d_1$ and $m=d_2 \dots d_H$, we obtain
\begin{align*}
 \sum_{d_1 \dots d_H \leq y} 
 = \sum_{d_1 \leq X} 
   \sum_{d_2 \dots d_H \leq y/d_1}
  +\sum_{d_2 \dots d_H \leq y/X} 
   \sum_{X \leq d_1 \leq y/(d_2 \dots d_H)}. 
\end{align*}
We keep the second term and decompose the first term again, using the same 
cut-off $X$, and $Y=y/d_1$, $n=d_2$ and $m= d_3 \dots d_H$.
This leads to:
\begin{align*}
 \sum_{d_1 \dots d_H \leq y} 
 &= \sum_{d_1 \leq X}
   \sum_{d_2 \leq X} 
   \sum_{d_3 \dots d_H \leq y/(d_1 d_2)}
  +\sum_{d_1 \leq X}
   \sum_{d_3 \dots d_H \leq y/(d_1X)} 
   \sum_{X \leq d_2 \leq y/(d_1 d_3 \dots d_H)}\\
 &+\sum_{d_2 \dots d_H \leq y/X} 
   \sum_{X \leq d_1 \leq y/(d_2 \dots d_H)} .
\end{align*}
Continuing in this manner, i.e., keeping every time the second new term and 
further decomposing the first, we arrive at
\begin{align} \label{eq:h-to-f-decomposition}
 \sum_{d_1 \dots d_H \leq y} 
 &= \sum_{d_1, d_2 \dots, d_{H-1} \leq X}
   \sum_{d_H \leq y/(d_1 d_2 \dots d_{H-1})} \\
\nonumber   
 &\quad + \sum_{i=1}^{H-1} \sum_{d_1, \dots, d_{i-1} \leq X}
   \sum_{\substack{d_{i+1}, \dots, d_H: \\ d_1 \dots \hat{d_i} \dots d_{i-1}\leq y/X}} 
   \sum_{X \leq d_i \leq y/(d_1 \dots \hat d_i \dots d_H)}.
\end{align}
In order to apply this decomposition to \eqref{eq:h-to-f-1}, let us consider 
the difference of the normalised sums \eqref{eq:h-to-f-decomposition} for 
$y=x/d_0$ and $y=x'/d_0$. 
Recall that that $x \geq x'$.
By splitting the third sum of the second term of the decomposition into two 
sums when $y=x/d_0$, we obtain the following:
\begin{align}\label{eq:hyperbola2}
&\frac{d_0}{x}\sum_{d_1 \dots d_H \leq x/d_0} -  
 \frac{d_0}{x'}\sum_{d_1 \dots d_H \leq x'/d_0} \\
\nonumber 
&= \sum_{d_1, d_2 \dots, d_{H-1} \leq X}
   \left( \frac{d_0}{x} \sum_{d_H \leq x/(d_0 d_1 \dots d_{H-1})} - 
          \frac{d_0}{x'} \sum_{d_H \leq x'/(d_0 d_1 \dots d_H-1)}
   \right)\\
\nonumber
&\quad  + \sum_{i=1}^{H-1} \sum_{d_1, \dots, d_{i-1} \leq X}
    \sum_{\substack{d_{i+1}, \dots, d_H: 
    \\ d_0 \dots \hat{d_i} \dots d_{i-1}\leq x'/X}} \\
\nonumber    
&\qquad   \left( 
    \frac{d_0}{x} \sum_{d_i \leq x/(d_1 \dots \hat d_i \dots d_H)}
    - \frac{d_0}{x'}\sum_{d_i \leq x'/(d_1 \dots \hat d_i \dots d_H)}
    + \frac{d_0}{x'} \sum_{d_i \leq X}
    - \frac{d_0}{x} \sum_{d_i \leq X}
    \right)\\
\nonumber
&\quad  + \sum_{i=1}^{H-1} \sum_{d_1, \dots, d_{i-1} \leq X}
    \sum_{\substack{d_{i+1}, \dots, d_H: \\
    x'/X \leq d_0 \dots \hat d_i \dots d_H \leq x/X}}
    \frac{d_0}{x}
    \sum_{X \leq d_i \leq x/(d_0 \dots \hat d_i \dots d_H)}
\end{align}
When we apply this decomposition with the summation argument
$g(d_1) \dots g(d_H)$, where $g(n)=h(n)\chi(n)$, then the first and the second 
term above contain expressions of the form $S_g(z)-S_g(z')$ for suitable $z$ 
and $z'$. 
These will be estimates using the assumptions of the lemma.
Before turning towards these, let us consider the remaining terms.

The second term contains two short sums up to $X$ that will be 
estimated using Shiu's bound \eqref{eq:shiu} in the following form.
For every fixed $j\in \NN$, every $q \in \NN$ for which the interval
$((W(x) q)^2,x]$ is non-empty and every $y \in ((W(x) q)^2,x]$, we have
\begin{align} \label{eq:shiu-character}
\sum_{n \leq y} |g^{*j}(n)|
= \sum_{A \in (\ZZ/qW(x)\ZZ)^*}
\sum_{\substack{n \leq y \\ n \equiv A \Mod{qW(x)}}} |h^{*j}(n)| 
\ll \frac{x}{\log x} 
 \exp\Big(j \sum_{\substack{p\leq y \\ p\nmid qW(x)}} \frac{|h(p)|}{p} \Big), 
\end{align}
since $W(x)q \leq y^{1/2}$, and thus $W(x)q = W(y)q'$ for some $q' \leq y^{1/2}$.

By applying this bound twice with $W(x)q=Q$, we obtain:
\begin{align*}
& \Big|
 \frac{d_0}{x'} 
 \sum_{d_1, \dots, d_{i-1} \leq X} g(d_1) \dots g(d_{i-1})
 \sum_{\substack{d_{i+1} \dots d_H \leq \\ x'/(d_0 \dots d_{i-1}X)}} 
 g(d_{i+1}) \dots g(d_H)
 \sum_{d_i \leq X} g(d_i) \Big| \\
&\leq \frac{d_0 X}{x'} \frac{1}{\log X} 
 \exp\Big( \sum_{\substack{p\leq X \\ p\nmid Q}} \frac{|h(p)|}{p} \Big)
 \sum_{d_1, \dots, d_{i-1} \leq X} |g(d_1) \dots g(d_{i-1})|
 \sum_{\substack{d \leq x'/(d_0 \dots d_{i-1}X)}} 
 | g^{*(H-i)}(d)| \\
 &\leq \frac{1}{(\log X)^2} 
 \exp\Big( (H-i+1) \sum_{\substack{p\leq x' \\ p\nmid Q}} 
 \frac{|h(p)|}{p} \Big)
 \sum_{d_1, \dots, d_{i-1} \leq X}
 \frac{|g(d_1) \dots g(d_{i-1})|}{d_1 \dots d_{i-1}} \\
 &\ll_{H,C} \frac{1}{(\log x)^2} 
 \exp\Big( \sum_{\substack{p\leq x' \\ p\nmid Q}} 
 \frac{|f(p)|}{p} \Big),
\end{align*}
which saves $(\log x)^{-1}$.

In the final term of \eqref{eq:hyperbola2}, we will take advantage of the fact 
that the third sum is short.
Starting off with another application of \eqref{eq:shiu-character}, we get
\begin{align*}
&\sum_{d_1, \dots, d_{i-1} \leq X} g(d_1) \dots g(d_{i-1})
 \sum_{\substack{d_{i+1}, \dots, d_H: \\
    x'/X \leq d_0 \dots \hat d_i \dots d_H \leq x/X}}
    g(d_{i+1}) \dots g(d_H)
 \frac{d_0}{x}
    \sum_{X \leq d_i \leq x/(d_0 \dots \hat d_i \dots d_H)} g(d_i) \\
& \leq 
 \frac{1}{\log x} 
 \exp\Big( \sum_{\substack{p\leq x' \\ p\nmid Q}} \frac{|h(p)|}{p} \Big)
 \sum_{d_1, \dots, d_{i-1} \leq X} 
  \frac{|g(d_1) \dots g(d_{i-1})|}{d_1 \dots d_{i-1}}
 \sum_{\substack{d_{i+1}, \dots, d_H: \\
    x'/X \leq d_0 \dots \hat d_i \dots d_H \leq x/X}}
  \frac{|g(d_{i+1}) \dots g(d_H)|}{d_{i+1} \dots d_H} \\
& \leq 
 \frac{1}{\log x} 
 \exp\Big( \sum_{\substack{p\leq x' \\ p\nmid Q}} \frac{|h(p)|}{p} \Big)
 \sum_{\substack{d_1, \dots, d_{i-1} \leq X \\ d_{i+1}, \dots d_{H-1} \leq x}}
 \frac{|g(d_1) \dots \widehat{g(d_i)} \dots g(d_{H-1})|}
      {d_1 \dots \hat d_i \dots d_{H-1}}
 \sum_{\substack{d_H: \\
    x'/X \leq d_0 \dots \hat d_i \dots d_H \leq x/X}}
    \frac{1}{d_H} \\
& \leq 
 \frac{1}{\log x} 
 \exp\Big( \sum_{\substack{p\leq x' \\ p\nmid Q}} \frac{|h(p)|}{p} \Big)
 \sum_{\substack{d_1, \dots, d_{i-1} \leq X \\ d_{i+1}, \dots d_{H-1} \leq x}}
    \frac{|g(d_1) \dots \widehat{g(d_i)} \dots g(d_{H-1})|}
      {d_1 \dots \hat d_i \dots d_{H-1}}
    \Big(\log (x/x') + O(1) \Big) \\
& \leq 
 \frac{\log \log X + \log C + O(1)}{\log x} 
 \exp\Big((H-1)\sum_{\substack{p\leq x'\\p\nmid Q}} \frac{|h(p)|}{p}\Big),
\end{align*}
which saves a factor $(\log x)^{- \alpha_f/H + \eps}$.

To summarise our progress so far, note that the decomposition \eqref{eq:hyperbola2} 
and the previous two bounds yield
\begin{align*}
&
 \bigg|
\frac{d_0}{x} \sum_{n\leq x/d_0} h^{*H}(n)\chi(n)
- \frac{d_0}{x'} \sum_{n\leq x'/d_0} h^{*H}(n)\chi(n)
 \bigg| \\
&=
\sum_{\substack{d_1, \dots, d_{H-1}  \leq \\ (x'/d_0)^{1/H}}}
\frac{|g(d_1) \dots g(d_{H-1})|}{d_1 \dots d_{H-1}}
~\bigg| 
S_{g} \Big(\frac{x}{d_0 \dots d_{H-1}}\Big) 
- S_{g}\Big(\frac{x'}{d_0 \dots d_{H-1}}\Big) 
\bigg|\\
&\quad+ \sum_{i=1}^{H-1}
\sum_{\substack{d_1, \dots, d_{i-1}  \leq \\ (x'/d_0)^{1/H}}}
\frac{|g(d_1) \dots g(d_{i-1})|}{d_1 \dots d_{i-1}}
\sum_{\substack{ d_{i+1}, \dots, d_H: \\
 d_1 \dots \hat d_i \dots d_H \leq \\(x'/d_0)^{1-1/H}}} 
\frac{|g(d_{i+1}) \dots g(d_H) |}{d_{i+1} \dots d_H} \times
  \\
&\qquad \qquad \qquad \times
\bigg| 
S_g \Big(\frac{x}{d_0 \dots \hat d_i \dots d_{H}}\Big) 
- S_g\Big(\frac{x'}{d_0 \dots \hat d_i \dots d_{H}}\Big) 
\bigg|\\
&\quad+ O_{H,C} \Bigg(
 (\log x)^{- \min(1,\alpha_f/(2H))}
 \frac{1}{\log x} 
 \exp\Big(\sum_{\substack{p\leq x'\\p\nmid Q}} \frac{|f(p)|}{p}\Big)
\Bigg).
\end{align*}
Choosing $\delta = 1/2$ to ensure that $d_0 \leq x^{1/2}$, it follows that
the terms $x/(d_0 \dots d_{H-1})$ and $x/(d_0 \dots \hat d_i \dots d_{H})$ 
in the above expression are at least as large as $x^{1/(2H)}$.
Since $g=h\chi$, we may thus apply the assumptions of the lemma to deduce 
that the above is bounded by:
\begin{align*}
&\ll \frac{\psi(x)}{\log ((x'/d_0)^{1/H})} 
 \exp\Big(\sum_{\substack{p\leq x'\\p\nmid Q}} \frac{|h(p)|}{p}\Big)
 \Bigg(\prod_{d \leq x} \frac{g(d)}{d}\Bigg)^{H-1} \\
&\qquad +
 (\log x)^{- \min(1,\alpha_f/(2H))}
 \frac{1}{\log x} 
 \exp\Big(\sum_{\substack{p\leq x'\\p\nmid Q}} \frac{|f(p)|}{p}\Big)\\
&\ll \Big( \psi(x) + (\log x)^{- \min(1,\alpha_f/(2H))} \Big) 
 \frac{1}{\log x} 
 \exp\Big(\sum_{\substack{p\leq x'\\p\nmid Q}} \frac{|f(p)|}{p}\Big)
\end{align*}
The lemma then follows from \eqref{eq:h-to-f-1} since by \eqref{eq:d_0-sum}, 
applied with $y=1$ and $w=w(x)$, the completed outer sum over $d_0$ converges, 
i.e. $ \sum_{d_0 =1}^{\infty} |h'(d_0) \chi(d_0)|/d_0 < \infty$, provided $x$ 
is sufficiently large for $w(x)\geq(2H)^{16}$ to hold.
\end{proof}

\subsection{Applications to functions bounded away from zero at primes}
\label{ss:non-negative}
In this subsection, we will discuss a concrete example of an element of $\mathcal{F}_H$ and 
prove a criterion for real-valued $f$ to belong to $\mathcal{F}_H$ that is just based on the values of $f$ at primes.
Let us begin by stating a special case of Proposition \ref{p:sufficient-2} for non-negative $f \in \mathcal{M}_H$.
\begin{lemma}[Sufficient condition for non-negative functions]
\label{l:lipschitz-non-negative}
Let $f \in \mathcal{M}_H$ be a non-negative function.
Then there exists a constant $c>0$, only depending on $f$, such that the following holds:
If $x>3$, if $1 < Q \leq \exp((\log \log x)^2)$ is a multiple of $W(x)$, and if 
$\chi_0 \Mod{Q}$ denotes the trivial character, then 
 $$S_{f\chi_0}(x) = S_{f\chi_0}(x')
 + O\Bigg( (\log x)^{-c}
\frac{1}{\log x}
 \prod_{\substack{p \leq x,\, p\nmid Q}} 
 \left(1 + \frac{|f(p)|}{p}\right)
\Bigg)
$$
uniformly for all $x>3$, $x' \in [x \exp( - (\log \log x)^{2},x]$ and all $Q$ as above.
If, furthermore, for either $g=h$ or $g=f$ and for any $C>0$, we have a uniform bound
of the form
\begin{align}\label{eq:non-neg-condition}
S_{g \chi}(x) = 
O\Bigg(
\frac{\psi_C(x)}{\log x}
 \prod_{\substack{p \leq x, \, p\nmid Q}} 
 \left(1 + \frac{|g(p)|}{p}\right)
\Bigg),
\end{align}
valid for all $x>3$, all non-trivial $\chi \Mod{Q}$ and all $1 \leq Q \leq (\log x)^C$ with $W(x)|Q$, 
and where $\psi_C=o(1)$ may depend on $C$ but is otherwise independent of $\chi$ and $Q$,
then $f \in \mathcal{F}_H$.
\end{lemma}

\begin{rem} \label{r:lipschitz-nn}
Note that in the context of this corollary, the main term in the character 
sum expansion of $S_f(x;Q,A)$ always comes from the trivial character.
\end{rem}

\begin{proof}
The first part follows from Lemmas \ref{l:lipschitz} and 
\ref{l:lipschitz-h-to-f} 
provided we can show that for all sufficiently large $x$ we have $t_{x,h\chi_0}=0$ in the statement of 
Lemma \ref{l:lipschitz} when applied with $f$ replaced by $h\chi_0$.
This, however, is immediate since $h$ is non-negative.
The second part is a consequence of Proposition \ref{p:sufficient-2}.
\end{proof}

The following three lemmas all arise as (non-trivial) applications of Lemma~\ref{l:lipschitz-non-negative}.

\begin{lemma}[Coefficients of cusp forms] \label{l:appl-1}
 Let $f$ be a primitive holomorphic cusp form\footnote{See \cite[\S14.1 and \S14.7]{IK} for definitions.} 
 of weight $k\in 2\NN$ and level $N \in \NN$ and let
$$
f(z)=
\sum_{n=1}^{\infty}
\lambda_f(n) n^{(k-1)/2}e(nz), 
$$
be its Fourier expansion, where the $\lambda_f(n)$ are the 
normalised Fourier coefficients.
Then the function $n \mapsto |\lambda_f(n)|$ belongs to $\mathcal{F}_2$.
\end{lemma}

\begin{lemma}[Non-negative $f$] \label{l:appl-2}
For every $H \geq 1$ and $\alpha > 0$, there exists $c = c(H,\alpha) > 0$ such 
that the following holds.
If $f \in \mathcal{M}_H$ is non-negative with $\alpha_f \geq \alpha$, and if 
there exists $\delta > 0$ such that
$$
\#\Big\{ p \leq x: f(p) > \delta \Big\} 
\geq \frac{ (1-c) x}{\log x}
$$
for all sufficiently large $x$, then $f \in \mathcal{F}_H$.
\end{lemma}

\begin{rem*}
As a special case, Lemma \ref{l:appl-2} yields the following simple criterion, 
which also proves one part of Proposition \ref{p:intro}: 

A non-negative function $f \in \mathcal{M}_H$ belongs to $\mathcal{F}_H$ if it 
is bounded away from zero on the primes, i.e.\ if there exists $\delta >0$ such 
that $f(p)>\delta$ for all $p$.
The same holds true if the latter condition is replaced by
$\#\{ p \leq x: f(p) > \delta \} \geq (1 + o(1)) x/\log x$ as $x\to \infty$.
\end{rem*}

The following variant of Lemma \ref{l:appl-2} will follow with minor changes in the proof.

\begin{lemma}[Real-valued $f$] \label{l:appl-3}
 For every $H \geq 1$ and $\alpha > 0$, there exists $c = c(H,\alpha) > 0$ such 
that the following holds.
If $f \in \mathcal{M}_H$ is a real-valued function with $\alpha_f \geq \alpha$, and if there exists $\delta > 0$ 
and a sign $\epsilon \in \{+,-\}$ such that
$$
\#\Big\{ p \leq x: \epsilon f(p) > \delta \Big\} 
\geq \frac{ (1-c) x}{\log x}
$$
for all sufficiently large $x$, then $f \in \mathcal{F}_{H,n^{it}}$.
If, furthermore, for every $C>0$ there exists a function $\psi_C =o(1)$ such that
$$S_{f \chi_0}(x) = 
O\Bigg( 
\frac{\psi_C(x)}{\log x}
 \prod_{p \leq x, p \nmid Q} 
 \left(1 + \frac{|f(p)|}{p}\right)
\Bigg),
$$
whenever $\chi_0$ is the trivial character modulo $Q$ 
for any $Q \in (1, (\log x)^C)$ with $W(x)|Q$, 
then $f \in \mathcal{F}_{H}$.
\end{lemma}

\begin{rem*}
As a particular consequence, we deduce that $f \in \mathcal{F}_{H,n^{it}}$ for any function 
$f \in \mathcal{M}_H$ for which there exists $\delta >0$ such that  
$f(p)< - \delta < 0$ at all primes $p$.
\end{rem*}

\begin{example}\label{ex:mobius-4.4}
Examples of functions the above results apply to include:
\begin{enumerate}
 \item[(i)] the M\"obius function $f(n) = \mu(n)$. Here, the full statement of 
 Lemma \ref{l:appl-3} applies. We may deduce this from the estimate 
 $S_{\mu}(x) \ll_B (\log x)^{-B}$ for $B>0$ and $x \geq 2$. 
 In fact, writing $d|Q^{\infty}$ to indicate that $p|d$ implies $p|Q$, it follows via repeated M\"obius inversion that
 $$  \sum_{\substack{n \leq x\\ (n,Q)=1}} \mu(n) 
  = \sum_{d| Q^{\infty} }\sum_{n \leq x/d} \mu(n)$$
 Recalling \eqref{eq:smooth-nmbrs-bd}, the above is seen to be bounded by
  \begin{align*}
  &\ll \sum_{\substack{d| Q^{\infty}\\ d \leq x^{1/2} }}\sum_{n \leq x/d} \mu(n)
  + \sum_{x^{1/2} < 2^k \leq x}  \Psi(2^k, (\log x)^C) x 2^{-k}\\
  & \ll_B \sum_{\substack{d| Q^{\infty}\\ d \leq x^{1/2} }} \frac{x}{d} (\log x)^{-B} 
  + \sum_{ x^{1/2} < 2^k \leq x} x  \exp \Big(- \frac{\log x }{4 C \log \log x}\Big)  \\
  & \ll_B x (\log x)^{-B} \prod_{p|Q} (1 - p^{-1})^{-1} 
  \ll_B x (\log x)^{-B + 1},
 \end{align*}
 which yields the required decay estimate.
 \item[(ii)] the function $f(n)= \delta^{\omega(n)}$ for any non-zero real number $\delta$ and  
where $\omega(n)$ counts the number of distinct prime factors of $n$.
If $\delta >0$, then Lemma \ref{l:appl-2} applies.
For $\delta <0$ we will now show that the full statement of Lemma \ref{l:appl-3} applies.
Since $W(x)|Q$, we may simplify our task by removing finitely many primes from consideration to start with:
let $A\geq|\delta|$ be a constant to be chosen later, let $Q_0 = \prod_{p>A} p^{v_p(Q)}$ and let
$h(n) = \delta^{\omega(n)}\1_{\gcd(n,\prod_{p\leq A}p)=1}$ denote the restriciton of 
$f$ to integers free from primes factors $p\leq A$. 
For this function, the Selberg-Delange method in the form \cite[Theorem 7.18]{MV-book} implies
$S_{h}(x) \ll (\log x)^{\delta - 1}$ and 
$S_{|h|}(x) \asymp (\log x)^{|\delta| - 1}$ for all $x \geq 2$, while
Lemma \ref{l:elliott} and Shiu's lemma in its original form \cite[Theorem 1]{shiu} yield
$S_{|h|}(x)\asymp \frac{1}{\log x} \prod_{p \leq x} (1 + \frac{|\delta|}{p})$.
Proceeding in a similar way as in (i), repeated M\"obius inversion shows that
\begin{align} \label{eq:delta-bound-1}
\nonumber
  \sum_{\substack{n \leq x\\ (n,Q)=1}} \delta^{\omega(n)}
  &= \sum_{k\geq 1} \sum_{\substack{ d_1| Q_0^{\infty} \\ d_1 \geq 1 }} \sum_{\substack{d_2| d_1^{\infty} \\ d_2 >1}} 
     \dots \sum_{\substack{d_k| d_{k-1}^{\infty} \\ d_k>1 }}
  |\delta|^{\omega(d_1) + \dots + \omega(d_k)} \sum_{\substack{n \leq x/d \\ p|n \Rightarrow p>A}} \delta^{\omega(n)} \\
  &\leq  \sum_{d| Q_0^{\infty} } |\delta|^{\varpi(d)} \wp_m(d) \Big| \sum_{n \leq x/d} h(n) \Big|,
\end{align}  
where $\varpi(d) = \omega(d)$ if $|\delta|<1$ and $\varpi(d) = \Omega(d)$ if $|\delta|\geq1$, and where
$\wp_m$ counts factorisations of the following form:
$$\wp_m(d) = \# \left\{(d_1 \dots, d_k) \in \NN_{>1}^k, k\geq 1 : 
\begin{array}{c}
d=d_1 \dots d_k, \cr
p | d_j \text{ for some } 1 \leq j \leq k \cr \Rightarrow p| d_i \text{ for all } i < j 
\end{array}
\right\}.
$$
If $\wp(n)$ denotes the partition number of $n$ as defined in \cite{HardyRamanujan}, then 
$$\wp_m(d) \leq \prod_{p|d} \wp(\nu_p(d)).$$
To bound \eqref{eq:delta-bound-1}, we will use the fact that there exists a constant $B \geq 1$ such that 
$\wp(n) \leq B^{\sqrt{n}}$, as proved in \cite[\S2]{HardyRamanujan}. 
Further, we require a bound corresponding to the one recalled in
\eqref{eq:smooth-nmbrs-bd} but for sums over $|\delta|^{\varpi(n)}\wp_m(n)$ restricted to smooth numbers
that are coprime to all $p\leq A$.
For such sums we have
\begin{align*}
\Psi^*(x,y) &:= \sum_{\substack{n\leq x\\ p|n \Rightarrow  p \in (A,y]}} |\delta|^{\varpi(n)} \wp_m(n) \\
 &\leq C_{\eps} x^{1/2 + \eps} 
 + \sum_{\substack{n\leq x \\  p|n \Rightarrow  p \in (A,y] }} (n x^{-1/2})^{\alpha} 
   \prod_{p|n} |\delta|^{\varpi(p^{v_p(n)})} B^{\sqrt{v_p(n)}} 
\end{align*}
for any $\alpha >0$.
Let $\alpha = (\log y)^{-1}$ and suppose that $y \geq 2$.
By \cite[Cor.\ III.3.5.1]{Tenen}, the final sum in the expression above is bounded by
\begin{align*}
 & \ll x^{1-\alpha/2} \prod_{A<p \leq y} (1-p^{-1}) \sum_{k\geq 0} \frac{p^{\alpha k} |\delta|^{\varpi(p^k)} B^{k}}{p^k} \\
 & \ll x^{1-\alpha/2} \prod_{A<p \leq y} (1-p^{-1})  (1-e|\delta|Bp^{-1})^{-1}
 \ll x^{1-\alpha/2} (\log y)^{O(1)},
\end{align*}
provided $A \geq e B \max(1, |\delta|) $.
Thus, in total, we obtain:
\begin{align*}
 \Psi^*(x,y) &\ll x^{1-\alpha/2} (\log y)^{O(1)} 
 = x \exp\Big(-\frac{\log x}{2 \log y} + O(1) \log \log y \Big)\\
 &\ll x \exp\Big(-\frac{\log x}{4 \log y} \Big), \qquad (2 \leq y \leq x, \quad A \geq e B \max(1, |\delta|) ).
\end{align*}

Returning to \eqref{eq:delta-bound-1}, we choose $A= e B \max(1, |\delta|) $ in the definition of
$h$ and $Q_0$, and recall that $Q_0 \leq Q \leq (\log x)^C$.
With the above bound on $\Psi^*(x,y)$ in place, the expression \eqref{eq:delta-bound-1} can now be bounded by:
\begin{align*}  
  & \ll \sum_{\substack{d| Q_0^{\infty}\\ d \leq x^{1/2} }} |\delta|^{\varpi(d)} \wp_m(d) 
    \sum_{n \leq x/d} \delta^{\omega(n)}
  + \sum_{x^{1/2} < 2^k \leq x}  \Psi^*(2^k, (\log x)^C) x 2^{-k} (\log (x 2^{-k}))^{|\delta| -1} \\
  & \ll \sum_{\substack{d| Q_0^{\infty}\\ d \leq x^{1/2} }} x\frac{|\delta|^{\varpi(d)} \wp_m(d) }{d} (\log x)^{\delta - 1} 
  + \sum_{ x^{1/2} < 2^k \leq x} x  \exp \Big(- \frac{\log x }{4 C \log \log x}\Big) (\log x)^{|\delta| } \\
  & \ll x (\log x)^{\delta-1} 
    \prod_{p|Q_0} \sum_{k \geq 0} 
    \frac{|\delta|^{\varpi(p^k)} B^{\sqrt{k}}}{p^k}
    +O_A(x(\log x)^{-A}),
\end{align*}
which is further bounded by
\begin{align*}
  & \ll x (\log x)^{\delta -1}
    \prod_{\substack{p|Q \\ p> \max(|\delta|,B)~ }} \sum_{k \geq 0 } 
    \frac{ |\delta|^{\varpi(p^k)} B^{k}}{p^k}
    +O_A(x(\log x)^{-A})  \\
  & \ll x (\log x)^{\delta -1} (\log Q)^{B|\delta|} 
   \ll \frac{(C \log \log x)^{O(|\delta|)}}{(\log x)^{2|\delta|}} 
    \frac{x}{\log x} \prod_{\substack{p \leq x,\, p \nmid Q}} \Big( 1 + \frac{|\delta|}{p} \Big).
 \end{align*}
 Thus, the required decay estimate holds.
 \item[(iii)] the general divisor functions 
 $d_k(n) = \mathbf{1}^{(*k)}(n)$ for $k \geq 2$, i.e.\ the $k$-fold convolution of $\1$ with 
 itself. In this case Lemma \ref{l:appl-2} applies.
\end{enumerate}
\end{example}

The remainder of this subsection contains the proofs of Lemmas \ref{l:appl-1}--\ref{l:appl-3}.
We begin with the proof of Lemma \ref{l:appl-1}, which is the least technical case.
Lemma \ref{l:appl-2} and \ref{l:appl-3} will follow with small modifications from 
the same proof.

\begin{proof}[Proof of Lemma \ref{l:appl-1}]
 The function $\lambda_f$ that describes the normalised Fourier coefficients 
of $f$ is a multiplicative function and satisfies Deligne's bound
$$
|\lambda_f(n)| \leq d(n),
$$
where $d$ is the divisor function.
This shows that part (1) of Definition \ref{def:M} holds with 
$H=2$.
Condition (2) of the definition follows from Rankin \cite[Theorem 2]{rankin}, 
since
$$
\sum_{p \leq x}|\lambda_f(p)| \log p \geq 
\frac{1}{2}
\sum_{p \leq x}\lambda_f(p)^2 \log p
\sim \frac{x}{2},
$$
which allows us to take $\alpha_{\lambda_f} = 1/2 - \eps$ for any $\eps > 0$. 
Hence, $g=|\lambda_f|$ belongs to $\mathcal{M}_2$.

To show that $g \in \mathcal{F}_2$, let $h$ be the bounded multiplicative 
functions defined, as in Lemma \ref{l:dirichlet}, by
\begin{equation*}
h(p^k) 
=
\begin{cases}
|\lambda_f(p)|/2 & \mbox{if $k=1$}\\
0      & \mbox{if $k>1$},
\end{cases} 
\end{equation*}
and note that, by Lemma \ref{l:lipschitz-non-negative}, it suffices to show that
\begin{equation} \label{eq:lambda-task}
|S_{h \chi}(x)| = 
o\Bigg( \frac{1}{\log x}
 \prod_{\substack{p \leq x \\ p\nmid qW(x)}} 
 \left(1 + \frac{|h(p)|}{p}\right) \Bigg) 
\end{equation}
for all non-trivial $\chi \Mod{Q}$ with $Q \leq (\log x)^C$ and $W(x)|Q$.
We begin this task by invoking Hal\'asz's theorem.
Since $g$ is bounded, the Hal\'asz--Granville--Soundararajan bound 
\cite[Corollary 1]{GS-decay} implies that
\begin{align} \label{eq:Halasz-G-S}
|S_{h \chi}(x)|
= \frac{1}{x}\left|\sum_{n\leq x} \chi(n) h(n)\right|
 \ll (M + 1) e^{- M} + \frac{1}{Y} + \frac{\log \log x}{\log x},
\end{align}
where 
$$
M = M(x,Y) 
= \min_{|y| \leq 2Y} \sum_{p \leq x} \frac{1 - \Re( h(p)\chi(p) p^{iy})}{p}.
$$
Note that
\begin{align}\label{eq:splitting-the-distance}
\nonumber
M(x,Y) 
&= \min_{|y| \leq 2Y} \sum_{p \leq x} \frac{1 - h(p) + h(p) - \Re( h(p)\chi(p) 
p^{iy})}{p} \\
&= \sum_{p \leq x} \frac{1 - h(p)}{p}
+ \min_{|y| \leq 2Y} \sum_{p \leq x} \frac{h(p)(1 - \Re(\chi(p) p^{iy}))}{p}.
\end{align}
and let 
$$M_{h\chi}(x,Y) 
= \min_{|y| \leq 2Y} \sum_{p \leq x} \frac{h(p)(1 - \Re(\chi(p) p^{iy}))}{p}$$ 
denote the second term from this expression.

Observe that the product in the bound \eqref{eq:lambda-task} satisfies
\begin{align} \label{eq:lambda-triv}
\prod_{\substack{p\leq x\\ p \nmid qW(x)}} 
\bigg(1+\frac{|h(p)|}{p}\bigg)
&\gg \exp \bigg( \sum_{\substack{(\log x)^{C+2} <p\leq x}} 
\frac{|h(p)|}{p} \bigg) \\
\nonumber
&\gg_{\eps} (\log x)^{-\eps} \exp\left( \sum_{p\leq x} \frac{|h(p)|}{p} \right)
\gg_{\eps} (\log x)^{ \alpha_h - \eps}
\end{align} 
with $\alpha_h=\alpha_g/H = \alpha_g/2$.
Thus, if we let $Y = (\log x)^{1-\alpha_h/2}$, 
then the last two terms in \eqref{eq:Halasz-G-S} are negligible compared with 
the bound \eqref{eq:lambda-task}. Combining \eqref{eq:Halasz-G-S}, 
\eqref{eq:splitting-the-distance} and \eqref{eq:lambda-triv} it 
follows that 
\begin{align} \label{eq:lambda-first-comp}
\nonumber
 |S_{h\chi}(x)| 
 &\ll (1+M) e^{-M_{h\chi}(x,Y)} 
 \exp \left( \sum_{p\leq x} \frac{|h(p)|-1}{p} \right)
 + (\log x)^{-1+\alpha_h/2}\\
\nonumber
 & \ll \frac{\log \log x }{\log x}
  e^{-M_{h\chi}(x,Y)} 
  \exp \left( \sum_{p\leq x} \frac{|h(p)|}{p} \right)
  + (\log x)^{-1+\alpha_h/2} \\
 &\ll_{\eps} 
  \frac{ (\log x)^{\eps} e^{-M_{h\chi}(x,Y)}}{\log x}
 \prod_{\substack{p \leq x \\ p\nmid qW(x)}} 
 \left(1 + \frac{|h(p)|}{p}\right)
 + (\log x)^{-1+\alpha_h/2}.
\end{align}
This reduces our task to that of finding a sufficiently good lower bound 
on $M_{h\chi}(x,Y)$.
To achieve this, we aim to show that there are positive constants 
$\delta_0, \delta_1, \delta_2 > 0$ such that for all non-trivial 
$\chi \Mod{Q}$ with $Q \leq (\log x)^C$ and $W(x)|Q$, for all 
$0\leq t \leq 2 Y$ and for all
$y \in (\exp((\log x)^{1-\alpha_h/4}), x]$, the set
\begin{align}\label{eq:dense-primes-subset}
\mathcal{P}_{\delta_1,\delta_2}(y) =
 \Big\{ p \leq y: h(p) > \delta_1 \Big\} 
 \cap \Big\{ p \leq y: 1 - \Re (\chi(p) p^{it}) > \delta_2 \Big\}
\end{align}
has positive relative density at least $\delta_0$ in the set of primes up to 
$y$, i.e.\
\begin{align}\label{eq:dense-primes-subset-bound}
\#\mathcal{P}_{\delta_1,\delta_2}(y) \geq  \frac{\delta_0 y}{\log y}.
\end{align}
The restriction to non-negative $t$ is justified here since we consider 
together with every non-trivial $\chi \Mod{qW(x)}$ also its conjugate character 
$\bar \chi$.

Assuming \eqref{eq:dense-primes-subset-bound} for the moment, we then have
\begin{align*}
 \sum_{p \in \mathcal{P}_{\delta_1,\delta_2}(x)} \frac{1}{p}
 &\geq \frac{\#\mathcal{P}_{\delta_1,\delta_2}(x)}{x} 
 + \int_2^x \frac{\#\mathcal{P}_{\delta_1,\delta_2}(t)}{t^2} dt \\
 &\geq \frac{\delta_0}{\log x} 
 + \delta_0 
   \int_{\exp((\log x)^{1-\alpha_h/4})}^x 
   \frac{dt}{t \log t} dt 
  \geq \frac{\delta_0 \alpha_h}{4} \log \log x,
\end{align*}
and, hence,
\begin{align*}
 e^{M_{h\chi}(x,Y)} 
 \gg \exp  \Bigg(\delta_1\delta_2 
    \sum_{p \in \mathcal{P}_{\delta_1,\delta_2}(x)} \frac{1}{p} \Bigg) 
 \gg (\log x)^{\delta_0 \delta_1 \delta_2 \alpha_h/4}.
\end{align*}
Combined with \eqref{eq:lambda-first-comp}, this shows, in particular, that
$$
 |S_{g\chi}(x)| 
 \ll_{\eps} (\log x)^{-\delta_0 \delta_1 \delta_2 \alpha_h/4 + \eps}
 \frac{1}{\log x} 
 \prod_{\substack{p \leq x \\ p\nmid Q}} 
 \left(1 + \frac{|g(p)|}{p}\right) + (\log x)^{-1+\alpha_h/2},
$$
and, hence, that \eqref{eq:lambda-task} holds.
Thus, it remains to establish \eqref{eq:dense-primes-subset-bound}.

The set of primes $\mathcal{P}_{\delta_1,\delta_2}(y)$ is determined by two 
conditions involving the behaviour of $h$, $\chi$ and $n^{it}$ at these primes.
To find a lower bound on the cardinality of 
$\mathcal{P}_{\delta_1,\delta_2}(y)$, our first step is to remove the condition 
that $h(p)>\delta_1$ from consideration.
To do so, recall that the Sato--Tate law \cite{BGHT} implies that 
$$
\#\{p \leq y: 0 \leq |\lambda_p| \leq \alpha \} 
\sim \frac{\mu(\alpha) y}{\log y}
$$
for every $\alpha \in [0,2]$, where
\begin{align*}
\mu(\alpha) = 
\Big( 2
\arcsin(\alpha/2) +
\sin(2\arcsin(\alpha/2)) \Big)/\pi.
\end{align*}
This shows, in particular, that for every $c_1 \in (0,1)$ there exists  a 
$\delta(c_1)>0$ such that 
\begin{equation} \label{eq:primes-g-lower-bd}
\#\Big\{ p \leq y: g(p) > \delta(c_1) \Big\} \geq  \frac{c_1y}{\log y} 
\end{equation}
for all sufficiently large $y$.
Thus, to prove \eqref{eq:dense-primes-subset-bound} for $\delta_2=1/12$, say, 
it suffices to show that for every $0 \leq t \leq 2Y$ and 
every $y \in (\exp((\log x)^{1-\alpha_h/4}), x]$, the set
\begin{align}\label{eq:prime-subset-pretentious}
\mathcal{P}_{\chi,t}(y):= \Big\{ p \leq y: \Re (\chi(p) p^{it}) < 11/12  \Big\}
\end{align}
has positive relative density in the set of primes up to $y$. 
Indeed, if 
\begin{align}\label{eq:prime-subset-pretentious-bound}
\# \mathcal{P}_{\chi,t}(y) \geq \frac{c_2 y}{\log y}
\end{align}
for some $c_2>0$, then, setting $c_1= 1 - c_2/2$ in 
\eqref{eq:primes-g-lower-bd} and letting $\delta_1 = \delta(c_1)$, we find that 
$\#\mathcal{P}_{\delta_1,\delta_2}(y) \geq c_2 y/(2 \log y)$, i.e. 
that \eqref{eq:dense-primes-subset-bound} holds with $\delta_0= c_2/2 > 0$, as 
required\footnote{
In view of the reduction to \eqref{eq:prime-subset-pretentious-bound}, 
it becomes clear that we will only require \eqref{eq:primes-g-lower-bd} to hold for one specific value of $c_1$ in the end.
This will later allow us to deduce Lemma \ref{l:appl-2} from this proof and, with some further modifications, also 
Lemma \ref{l:appl-3}. 
For this reason we will track the information gathered on $c_2$ throughout the rest of the proof.}.

Having simplified our problem to that of establishing \eqref{eq:prime-subset-pretentious-bound}
for a set of primes only defined 
by the behaviour of $\chi(p)$ and $p^{it}$, our next step is to also 
remove $\chi$ from consideration and to essentially turn the problem into a 
question about the distribution of $(\frac{t}{2\pi} \log p)_{p \leq y}$ 
modulo one.
Let us begin by decomposing the set of primes into 
classes on which $\chi(p)$ is constant and consider the primes in each 
progression $A \Mod{Q}$ for $\gcd(A,Q)=1$ separately.
Let $\{z\} = z - \lfloor z \rfloor$ denote the fractional part 
of a real number $z$, let $T=t/(2\pi)$ and consider for each $A$ as above 
the set 
\begin{align}\label{eq:N_A}
\mathcal{N}_A(y) 
&= \Big\{ p < y: \{ T \log p \} \in I_{T\log y}
\text{ and } p \equiv A \Mod{Q} \Big\}
\end{align}
where $I_{T \log y} = [T\log y-1/9,T\log y] \Mod{1}$ is an interval of fixed 
length $1/9$, the position of which only depends on the parameters $y$ and $t$, 
but not on the residue class $A$.
Our aim is to show that there exists a constant $c_3>0$ such that for every
reduced residue class $A \Mod{Q}$ and every $y \in (\exp((\log x)^{1-\alpha_h/4}), x]$, we have
\begin{equation} \label{eq:N_A-claim}
 \#\mathcal{N}_A(y) \geq \frac{c_3y}{\phi(Q)\log y}.
\end{equation}
Since this bound clearly holds for all invertible residue classes if $t=T=0$ 
and if $c_3=1-\eps$, $\eps > 0$, we may restrict attention to the case 
$t \in (0,2Y]$ below.

Assuming \eqref{eq:N_A-claim} for the moment, let us first show how to deduce the claimed 
bound \eqref{eq:prime-subset-pretentious-bound}.
In view of \eqref{eq:N_A-claim}, it suffices to show that for
a positive proportion of the reduced residues $A \Mod{Q}$ we have
$\mathcal{N}_A(y) \subset \mathcal{P}_{\chi,t}(y)$ for all
$y \in (\exp((\log x)^{1-\alpha_h/4}), x]$.

If $\chi$ is a non-trivial real character, then each of the pre-images 
$\chi^{-1}(1)$ and $\chi^{-1}(-1)$ contains $\phi(Q)/2$ residue classes 
$A \Mod{Q}$.
If the distance of $T \log y$ to the closest integer satisfies 
$\|T \log y\| > \frac{1}{6}$, then we have
$\Re e(z) < \cos(2\pi/6) + 1/9 = 1/2 + 1/9 < 3/4$ for every $z \in I_{T\log y}$,
and, hence,
$$
\Re(\chi(p) p^{it})
=\Re(e^{it\log p}) 
< 3/4 < 11/12
$$
for all $p \in \mathcal{N}_A(y)$ and all $\phi(Q)/2$ classes $A \in \chi^{-1}(1)$.
If, on the other hand, $\|T \log y\| \leq 1/6$, then we have
$\Re e(z) \geq \cos(2\pi/6) - 1/9 = 1/2 - 1/9 > 0$ for every 
$z \in I_{T\log y}$,
and, thus,
$$
\Re(\chi(p) p^{it})
= - \Re(e^{it\log p}) < 0 < 11/12 
$$
for all $p \in \mathcal{N}_A(y)$ and all $\phi(Q)/2$ classes $A \in \chi^{-1}(-1)$.
Thus, \eqref{eq:prime-subset-pretentious-bound} holds with
$c_2 = c_3/2$ if $\chi$ is non-trivial and real.

Turning toward the case where $\chi$ is not real, recall that the non-zero 
values of any Dirichlet character $\chi \Mod{Q}$ are the $k$-th roots of unity 
if $\chi$ has order $k$ in the group of characters modulo $Q$ and recall also 
that $\chi(A)$ assumes each $k$-th root of unity equally often as $A$ runs over 
the reduced residue classes modulo $Q$.
Thus, if $\chi \Mod{Q}$ is not a real character, then each of 
the four sets
\begin{align*}
\mathcal{R}_{>} = \{A \Mod{Q}: \Re (\chi(A)) > 0 \}, \qquad 
\mathcal{R}_{<} = \{A \Mod{Q}: \Re (\chi(A)) < 0 \},\\
\mathcal{I}_{>} = \{A \Mod{Q}: \Im (\chi(A)) > 0 \}, \qquad
\mathcal{I}_{<} = \{A \Mod{Q}: \Im (\chi(A)) < 0 \}~
\end{align*}
is non-empty and contains a positive proportion of the reduced residues $A 
\Mod{Q}$. 
To see this, note that $k \geq 3$, since $\chi$ is not real.
By the symmetry of the set of $k$-th roots of unity, we have
$\# \mathcal{I}_{<} = \# \mathcal{I}_{>}$ and, if $i$ is a $k$-th root of 
unity, then $\# \mathcal{R}_{<} = \# \mathcal{R}_{>}$ as well.
If $i$ is not a $k$-th root of unity, then
$|\# \mathcal{R}_{<} - \# \mathcal{R}_{>}| \leq \phi(Q)/k$. 
Since $\mathcal{I}_{<} \cup \mathcal{I}_{>}$ and 
$\mathcal{R}_{<} \cup \mathcal{R}_{>}$ both excludes at most two of the $k$ 
$k$-th roots and since the latter set excludes none if $i$ is not a 
$k$-th root, we have 
$$\# \mathcal{S} \geq \frac{k-2}{2} 
\frac{\phi(Q)}{k} 
= \Big(\frac{1}{2} - \frac{1}{k} \Big) \phi(Q) \geq \frac{\phi(Q)}{6}$$
for each set 
$\mathcal{S}\in \{ \mathcal{R}_>, \mathcal{R}_<,\mathcal{I}_>, \mathcal{I}_<\}$.
This proves the claim.

For each of the above sets $\mathcal{S}$, the product set
$$
\{ \chi(A) e^{2 \pi i \tau}: 
   A \in \mathcal{S}, 
   \tau \in [T\log y-\frac{1}{9},T\log y] \}
$$
is contained in an arc of length $2 \pi (1/2+1/9)$ on the unit circle 
and a rotation by $\pi/2$ maps each of these four arcs onto another one of them.
This configuration has the property that no arc of length at most $\pi/4$ meets 
more than three of the product sets.
Thus, for each choice of the endpoint $T \log y$ there is one set $\mathcal{S}$ 
for which the above product set avoids $\{e^{iz}: \| z \| < \pi/8\}$, and
for that particular set $\mathcal{S}$, we then have
$$
  \Re(\chi(p) p^{it})
= \Re(\chi(A)e^{it\log p})  
< \cos(\pi/8)
= \frac{1}{2} \sqrt{2+\sqrt{2}}
< \frac{11}{12}
$$
for all $A \in \mathcal{S}$ and all $p \in \mathcal{N}_A(y)$. 
Thus, \eqref{eq:prime-subset-pretentious-bound} holds with $c_2=c_3/6$.
This completes the proof of the claim that 
\eqref{eq:N_A-claim} implies \eqref{eq:prime-subset-pretentious-bound}.

It finally remains to analyse the set $\mathcal{N}_A(y)$ that was defined in
\eqref{eq:N_A} and we will do this by borrowing an approach from Wintner's 
work \cite{wintner} on the distribution of $(\log p_n)_{n \leq x}$ modulo one.

Let us fix a reduced residue class $A \Mod{Q}$ and let 
$(p_n^{(A)})_{n \in \NN}$ denote the sequence of primes congruent to $A 
\Mod{Q}$, ordered in increasing order.
Adapting the notation from \cite{wintner} to our setting, let $N_A(\tau)$ 
denote the largest index $m$ for which $\log p_m^{(A)} < \tau$, if such an $m$ 
exists, and let $N_A(\tau) =0$ otherwise.
By the prime number theorem in arithmetic progressions, we then have
\begin{equation}\label{eq:PNTAP}
N_A(\tau)=\frac{e^\tau}{\phi(Q)\tau}\left(1 + O(\tau^{-1}) \right),
\quad (\tau > 0).
\end{equation}
Observe that $N_A(\tau/T)$ counts the number of $m>0$ such that 
$T \log p_m \leq \tau$.
Thus, if we set $\xi := \{T\log y\}$, so that $T\log y = [T\log y] + \xi$, 
then, in analogy to \cite[eq.\ (3)]{wintner}, we may express 
the quantity $\# \mathcal{N}_A(y)$ as
\begin{align} \label{eq:N_A(y)}
 \nonumber
 \# \mathcal{N}_A(y)
 &= \sum_{n=1}^{[T\log y]} 
 \left( N_A\left(\frac{n+\xi}{T}\right) 
 -  N_A\left(\frac{n+\xi - 1/9}{T}\right) 
 \right)\\
 &= \sum_{n=T}^{[T\log y]} N_A\left(\frac{n+\xi}{T}\right) 
   - \sum_{n=T}^{[T\log y]} N_A\left(\frac{n+\xi - 1/9}{T}\right),  
\end{align}
If $T \in (0,C']$ for any fixed constant $C'\geq 1$, then
\begin{align} \label{eq:N_A-C'}
\nonumber
 \# \mathcal{N}_A(y)
&> N_A\left(\frac{[T\log y]+\xi}{T}\right) 
 - N_A\left(\frac{[T\log y]+\xi - 1/9}{T}\right) \\
\nonumber 
&= N_A\left(\log y\right) 
 - N_A\left(\log y - \frac{1}{9T}\right) \\
\nonumber 
&=\pi(y;Q,A) - \pi(y e^{-1/(9T)};Q,A) \\
\nonumber
&>\pi(y;Q,A) - \pi(y e^{-1/(9C')};Q,A)\\
&\gg_{C'} \pi(y;Q,A),
\end{align}
and $c_3 \gg_{C'} 1$ in \eqref{eq:N_A-claim}.
This leaves us to establish \eqref{eq:N_A-claim} for 
$T \in (C', \frac{1}{\pi}(\log x)^{1- \alpha_h/2}]$.

To bound \eqref{eq:N_A(y)} below, note that the prime number theorem  
\eqref{eq:PNTAP} implies that
\begin{align} \label{eq:N_A-sum}
 \sum_{n=T}^{[\tau]} N_A\left(\frac{n+\xi}{T}\right)
 = \frac{T}{\phi(Q)} \sum_{n=T}^{[\tau]} \frac{e^{\frac{n+\xi}{T}}}{n+\xi}
   + O\Bigg(\frac{T^2}{\phi(Q)} \sum_{n=T}^{[\tau]} 
\frac{e^{\frac{n+\xi}{T}}}{(n+\xi)^2}\Bigg).
\end{align}
A corresponding expansion for the second sum in \eqref{eq:N_A(y)} is 
obtained on replacing $\xi$ by $\xi - 1/9$.
The sum in the main term above may be asymptotically evaluated, using induction:
\begin{equation}\label{eq:wintner-sum}
(e^{\frac{1}{T}}-1)\sum_{n=T}^{N-1} \frac{e^{\frac{n}{T}}}{n+\xi}
= \frac{e^{\frac{N}{T}}}{N}
   + O\Bigg(\frac{1}{T} + \sum_{n=T}^{N} 
            \frac{e^{\frac{n}{T}}}{(n+1)^2}\Bigg), \quad (N \geq T+1). 
\end{equation}
Indeed, if $N=T+1$, then
$$
\left| \frac{e}{T+\xi} - \frac{e^{1+\frac{1}{T}}}{T+1}\right|
= \left|e^{1+\frac{1}{T}} \frac{1-\xi}{(T+1)(T+\xi)} - \frac{e}{T+\xi}\right|
= O\left(\frac{e^{1+\frac{1}{T}} }{(T+1)^2} + \frac{1}{T} \right),
$$
and if we assume that \eqref{eq:wintner-sum} holds for $N=M$, then it 
follows for $N=M+1$, since
\begin{align*}
\frac{e^{\frac{M}{T}}}{M} + \frac{(e^{\frac{1}{T}}-1) e^{\frac{M}{T}}}{M+\xi}
&=  \frac{e^{\frac{M}{T}}}{M} + \frac{(e^{\frac{1}{T}}-1) e^{\frac{M}{T}}}{M} 
+ \frac{\xi e^{\frac{M}{T}}(e^{\frac{1}{T}}-1)}{M(M+\xi)}\\
&= \frac{e^{\frac{M+1}{T}}}{M} + O\left(\frac{e^{\frac{M+1}{T}}}{M^2} \right)
= \frac{e^{\frac{M+1}{T}}}{M+1} + O\left(\frac{e^{\frac{M+1}{T}}}{M^2} \right).
\end{align*}

Thus, evaluating main term in \eqref{eq:N_A-sum} by means of 
\eqref{eq:wintner-sum}, we obtain
\begin{align} \label{eq:N_A-sum'}
 \sum_{n=T}^{[\tau]} N_A\left(\frac{n+\xi}{T}\right)
 = \frac{T}{\phi(Q)} 
\frac{e^{\frac{\xi}{T}} e^{\frac{[\tau]+1}{T}}}{(e^{\frac{1}{T}}-1)([\tau]+1)}
      + O\Bigg(\frac{T^2}{\phi(Q)} 
                \sum_{n=T}^{[\tau]} \frac{e^{\frac{n+1}{T}}}{(n+1)^2} 
                + \frac{1}{\phi(Q)}\Bigg).
\end{align}
Since $\int \frac{\d x}{(\log x)^2} = \li(x) - \frac{x}{\log x} \ll 
\frac{x}{(\log x)^2}$, the sum in the error term satisfies
\begin{align*}
 \sum_{n=T}^{[\tau]} \frac{e^{\frac{n+1}{T}}}{(n+1)^2} 
 \leq \int_T^{\tau+1} \frac{e^{t/T}}{t^2} \d t
 = \frac{1}{T} \int_e^{e^{(\tau+1)/T}} \frac{\d u}{(\log u)^2}
 = O\bigg( \frac{Te^{\frac{\tau+1}{T}}}{(\tau+1)^2} + 1 \bigg).
\end{align*}
Inserting this information, \eqref{eq:N_A-sum'} and the analogues expression 
with $\xi$ replaced by $\xi - 1/9$ into \eqref{eq:N_A(y)}, 
we obtain
\begin{align*}
\# \mathcal{N}_A(y)
&= \frac{T}{\phi(Q)} 
   \frac{e^{\frac{[T \log y]+1}{T}}e^{\frac{\xi}{T}} }{[T \log y]+1}
\frac{1 - e^{-\frac{1}{9T}}}{e^{\frac{1}{T}}-1}
+O\bigg( \frac{T}{\phi(Q)} \frac{e^{\frac{T \log y+1}{T}}}{(T \log y+1)^2/T^2} 
+ \frac{T^2}{\phi(Q)} \bigg).
\end{align*}
Recalling that $1 < C' < T \leq (\log x)^{1 - \alpha_h/2}/\pi $
and that $(1- \alpha_h/4) \log x < \log y \leq \log x$, this yields
\begin{align} \label{eq:N_A-sum-3}
\nonumber
\# \mathcal{N}_A(y)
&= \frac{T}{\phi(Q)} 
   \frac{e y }{[T \log y]+1}
\frac{1 - e^{-\frac{1}{9T}}}{e^{\frac{1}{T}}-1}
+O\bigg( \frac{T}{\phi(Q)} \frac{y}{(\log y)^2} \bigg) \\
\nonumber
&= \frac{T}{\phi(Q)} 
   \frac{e y }{[T \log y]+1}
\frac{1 - e^{-\frac{1}{9T}}}{e^{\frac{1}{T}}-1}
+O_{\alpha_h}\bigg( y(\log y)^{-1 - \alpha_h/2}/\phi(Q) \bigg) \\
&\gg_{\alpha_h}  \frac{1 - e^{-\frac{1}{9T}}}{e^{\frac{1}{T}}-1}
\frac{1}{\phi(Q)} \frac{y}{\log y}.
\end{align}
Thus, it remains to bound below the leading fraction in this bound.
To this end, note that
$$
e^{-\tau} 
= 1 - \frac{\tau}{2} - \frac{\tau - \tau^2}{2} 
 - \sum_{k=1}^{\infty} \frac{\tau^{2k+1}}{(2k+1)!}(1-\frac{\tau}{2k+2})
\leq 1 - \frac{\tau}{2}
$$ 
for every $\tau \in [0,1]$, and that
$$
e^\tau \leq 1 + \tau + \frac{\tau^2}{2} \sum_{k=0}^{\infty} 2^{-k} 
= 1 + \tau + \tau^2 \leq 1 + 2\tau
$$
for all $\tau \in [0,\frac{1}{2}]$.
Thus, if $T \geq 2$, then the leading factor in the lower bound 
\eqref{eq:N_A-sum-3} satisfies
$$
\frac{1 - e^{-\frac{1}{9T}}}{e^{\frac{1}{T}}-1}
>\frac{1  - 1  + \frac{1}{18T}}{1 +\frac{2}{T}-1}
=\frac{1}{36},
$$
and it follows that $c_3 \gg_{\alpha_h} 1$ in this case.
Choosing $C'=2$ in \eqref{eq:N_A-C'}, this completes the proof of 
\eqref{eq:N_A-claim} and of the lemma.
\end{proof}

\begin{proof}[Proof of Lemma \ref{l:appl-2}]
 This lemma follows from the proof above, observing that the information 
\eqref{eq:primes-g-lower-bd} gained from the Sato-Tate law is 
now included as an assumption in the statement of the lemma.
More precisely, \eqref{eq:primes-g-lower-bd} is only required for
$c_1 = 1 - c_2/2$, with $c_2 = c_3/6 \gg_{\alpha_h} 1$.
Thus, $c_1 = 1 - c$ for some $c>0$ only depending on 
$\alpha_h= \alpha/H$.
\end{proof}

\begin{proof}[Proof of Lemma \ref{l:appl-3}]
To deduce this lemma, we need to apply Proposition \ref{p:sufficient-2}
instead of the special case recorded in Lemma \ref{l:lipschitz-non-negative}.
We restrict attention to the first part of this lemma, the second being a simplification. 
Let $h$ be the function associated to $f$ via \eqref{eq:h(p)/H}. 
Let $x>1$ and let $t_x$ be a real number as in Lemma \ref{l:lipschitz-t-uniform}, 
applied with $f_0=h$.
By arguing as in the proof of Lemma \ref{l:lipschitz-t-uniform}, it follows from 
\eqref{eq:GS-Thm3} that
$$
|S_{h\chi_0}(x)| \ll \frac{1}{|t_x| + 1} + \frac{\log \log x}{\log x} + 
\frac{1}{(\log x)^{1+C_0}} \exp\Big(\sum_{p\leq x} \frac{|h(p)\chi_0(p)|}{p}\Big)
$$
whenever $\chi_0 \Mod{Q}$, $Q \leq \exp((\log\log x)^2)$, is a trivial character.
If $|t_x| > (\log x)^{1-\alpha_h/2}$, then 
$|S_{h\chi_0}(x)|$ is small, and we set $\tau_x = 0$.
If $|t_x| \leq (\log x)^{1-\alpha_h/2}$, we instead set $\tau_x = t_x$.

The rest of the proof proceeds almost exactly as that of Lemma \ref{l:appl-1}, 
but with the following changes.
Instead $|S_{h\chi}(x)|$, we now seek to bound 
$|S_{h(n)\chi(n)n^{-it_x}}(x)|$, or even 
\begin{equation}\label{eq:l4.17-proof}
 \max_{|t| \leq (\log x)^{1-\alpha_h/2}} |S_{h(n)\chi(n)n^{-it}}(x)|.
\end{equation}
Since the parameter $Y$ is chosen as $Y = (\log x)^{1-\alpha_h/2}$ in the proof of Lemma \ref{l:appl-1},
we may readily turn the bound \eqref{eq:Halasz-G-S} into one on \eqref{eq:l4.17-proof}
by redefining $M$ as $M=M(x,Y')$ with $Y'=2Y$, a change which does not affect the rest of the argument.
Continuing from here, we replace the decomposition \eqref{eq:splitting-the-distance} by
\begin{align*}
\nonumber
M(x,Y') 
&= \min_{|y| \leq 2Y'} \sum_{p \leq x} \frac{1 - |h(p)| + |h(p)| - \Re( h(p)\chi(p) 
p^{iy})}{p} \\
&= \sum_{p \leq x} \frac{1 - |h(p)|}{p}
+ \min_{|y| \leq 2Y'} \sum_{p \leq x} \frac{|h(p)|(1 - \sgn(h(p)) \Re(\chi(p) p^{iy}))}{p},
\end{align*}
and let
$$M_{h\chi}(x,Y') 
= \min_{|y| \leq 2Y'} \sum_{p \leq x} \frac{|h(p)|(1 - \sgn(h(p))\Re(\chi(p) p^{iy}))}{p}$$ 
denote the second term from this new expression.
As in the proof of Lemma \ref{l:appl-2}, we need to replace \eqref{eq:primes-g-lower-bd} by our new 
assumptions, which will also allow us to fix $\sgn(h(p)) = \epsilon$. 
The set of primes in \eqref{eq:prime-subset-pretentious} now takes the form
$$\mathcal{P}_{\chi,t}(y) = \{ p \leq y: \epsilon \Re(\chi(p) p^{iy})) < 11/12\}.$$
The deduction of \eqref{eq:prime-subset-pretentious-bound} from \eqref{eq:N_A-claim} remains, 
apart from obvious changes taking into account the additional sign $\epsilon$, unchanged.
\end{proof}

\section{$W$-trick} \label{s:W}
In the same way as the bounds mentioned in the introduction only apply to Fourier 
coefficients $\frac{1}{x}\sum_{n\leq x} f(n) e(\alpha n)$
at an irrational phase $\alpha$, it is the
case that an arbitrary multiplicative function $f$ may
\emph{correlate} with a given nilsequence, unless this sequence itself is 
sufficiently equidistributed. 
Thus, statements of the form 
$$\frac{1}{N}\sum_{n\leq N} h(n)F(g(n)\Gamma)=o_{G/\Gamma}(1)$$ 
with $h=f$ or $h = f - S_f(N;1,1)$
cannot be expected to hold in general.
On the other hand, it turns out to be sufficient to ensure that $h$ is
equidistributed in progressions to small moduli in order to resolve this
problem.
For arithmetic applications such as establishing a result of the form 
\eqref{eq:star}, this can be achieved with the help of the $W$-trick from 
\cite{GT-longprimeaps}.
The basic idea is to decompose $f$ into a sum of functions that are
equidistributed in progressions to small moduli.
This is achieved by decomposing the range $\{1,\dots, N\}$ into subprogressions
modulo a product $W(N)$ of small primes, which has the effect of fixing or
eliminating the contribution from small primes on each of the subprogressions.

For multiplicative functions some minor modifications are necessary.
Our aim is to decompose the interval $\{1, \dots, N\}$ into subprogressions 
$r \Mod{q}$ in such a way that
\begin{equation}\label{eq:constant-av}
 S_f(N;q,r)=(1+o(1))S_f(N;qq',r + qr')
\end{equation}
for small $q'$ and $0 \leq r' < q$.
Thus, $f$ should essentially have a constant average value when decomposing one of
the given subprogressions into further subprogressions of small moduli $q'$.
The example of the characteristic function of sums of two squares shows that we
cannot in general choose $q$ to be a product of small primes (consider the case
where $r \equiv 1 \Mod{2}$, $q'=2$ and $r+qr' \equiv 3 \Mod{4}$), but rather need
to allow $q$ to be a product of small prime powers.
Note further, that if $f$ is a function for which Shiu's bound on $S_f(N;q,r)$ 
is correct in the sense that
$$
S_f(N;q,r) \sim 
\frac{q}{\phi(q)}
\frac{1}{\log N} 
\prod_{\substack{p\leq N \\ p \nmid q}}
\left(1 + \frac{f(p)}{p}\right),
$$
then, in order for \eqref{eq:constant-av} to hold, we must have $p|q$ whenever 
$p|q'$ and $p$ is small. 

Our aim in this section is to show that for every $f \in \mathcal{F}_H$ 
we may, instead of $q = W(N)$ as in \cite{GT-longprimeaps}, take 
$q=\widetilde{W}(N):=q^*(N)W(N)$ for some integer-valued function 
$q^*: \NN \to \NN$ that satisfies the bound $q^*(x) \leq (\log x)^{O(1)}$.
For comparison, recall that $W(x) = \prod_{p\leq w(x)} p$, with $w$ as in 
Definition \ref{d:w}.
Thus, 
$$\log W(x) = \sum_{p \leq w(x)} \log p \sim w(x) 
\quad \text{ and } \quad
W(x) \leq (\log x)^{1+o(1)}.
$$

For such a function $\widetilde{W}$, we may decompose the range $[1,N]$ into 
subprogressions of the form
\begin{align*}
\Big\{ 1 \leq m \leq N : m \equiv w_1A \Mod{w_1\widetilde{W}(N)} \Big\}, 
\end{align*}
where $A \in (\ZZ/\widetilde{W}(N)\ZZ)^*$ and where $w_1 \geq 1$ is composed 
entirely of primes dividing $\widetilde{W}(N)$.
Abbreviating $\widetilde{W}=\widetilde{W}(N)$, we have
$\gcd(w_1, \widetilde{W}n+A)= 1$ and hence 
$f(w_1(\widetilde{W}n+A)) = f(w_1) f(\widetilde{W}n+A)$.
Thus, it suffices to study the family of functions 
$$
\left\{n \mapsto f(\widetilde{W}n+A) :
\begin{array}{l}
0<A<\widetilde{W}(N) \\
\gcd(A, \widetilde W)=1
\end{array}
\right\}.$$ 
Our first concern is to discard the set of large values of $w_1$ from 
consideration, as by doing so we can insure that the range on which each function 
$n \mapsto f(\widetilde{W}n+A)$ 
needs to be considered is always large.
Since large values of $w_1$ form a sparse set, their contribution in any 
arithmetic application can usually be bounded by just using the Cauchy--Schwarz 
inequality and a bound on the second moment of $f$ as in \cite[Lemma 7.9]{bm}.
More precisely, one can show that if, for $C_1>1$,
$$
\mathcal{S}_{C_1}(N) = 
\left\{w_1  \in \NN : 
\begin{array}{l}
w_1 > (\log N)^{C_1} \\
p|w_1 \Rightarrow p|\widetilde W(N) 
\end{array}
\right\},
$$
then
$$
\frac{1}{N}\sum_{n \leq N} 
\sum_{w_1 \in \mathcal{S}_{C_1}(N)}  
\1_{w_1|n} |f(n)|
\ll (\log N)^{-C_1/3}
$$
provided $q^*(N) < (\log N)^{C_1/3}$ and $C_1$ is sufficiently large with 
respect to $H$; see \cite[\S2]{lmm2} for details.
By choosing $C_1 > 3 \alpha_f$, we can for instance ensure that this bound is 
$o(\frac{1}{N}\sum_{n \leq N} |f(n)|)$.
As shown in \cite[\S2]{lmm2}, the contribution of $\mathcal{S}_{C_1}(N)$ 
to correlations of the form \eqref{eq:star} is negligible.

Thus, for the purpose of arithmetic applications, it suffices to consider 
$n \mapsto f(\widetilde{W}n+A)$ for $n \in \{1, \dots, T\}$ with
$$T = \frac{N-A w_1}{w_1\widetilde{W}(N)} 
\gg  \frac{N}{(\log N)^{C_1} \widetilde{W}(N)}.$$

The next proposition shows that every function $f \in \mathcal{F}_{H}$ admits a 
$W$-trick.
More precisely, any finite collection $f_1, \dots, f_r$ of elements from
$\mathcal{F}_{H}$ simultaneously admits a $W$-trick and we moreover have 
control over the size of $\widetilde W(=q)$ and over the level of $q'$ up to 
which (a weakened form of) the relation \eqref{eq:constant-av} holds.
Below, $\widetilde W$ plays the role of $q$ and $q$ plays the role of $q'$.

\begin{proposition}[The elements of $\mathcal{F}_{H,n^{it}}$ admit a 
$W$-trick]\label{p:major-arc}
Let $E, H \geq 1$ be constants and let $f_1, \dots, f_r \in \mathcal{F}_{H, n^{it}}$.
Then there exists a constant $\kappa$, depending on $E$, $H$, $r$ and 
$\alpha = \min_{1 \leq j \leq r} \alpha_{f_j}$, 
and functions $\varphi':\NN \to \RR$ and $q^*:\NN \to \NN$ such that the 
following holds:
\begin{enumerate}
 \item $\varphi'(x) \to 0$ as $x\to\infty$,
 \item $q^*(x) \leq (\log x)^{\kappa}$ for all sufficiently large $x \in \NN$,
 \item if $x\in \NN$ is sufficiently large, if we set 
 $\widetilde{W}(x):=q^*(x)W(x)$, and if we define
 $$f_x:n \mapsto f(n)n^{-it_x} \quad \text{ for any } f \in \{f_1, \dots, f_r\}$$ 
and with $t_x$ as in Definition \ref{def:F_H(x)} with $C=2E + \kappa + 4$,
 then the estimate 
\begin{align}\label{eq:major-arc}
\nonumber
\frac{q \widetilde{W}(x)}{|I|}
& \sum_{\substack{  m \in I \\ m \equiv A ~(q \widetilde{W}(x))}} f_x(m)
- S_{f_x}(x;\widetilde{W}(x),A)\\
&= O_{E,H,\kappa}\bigg(\varphi'(x) \frac{1}{\log x} 
     \frac{\widetilde W(x)}{\phi(\widetilde W(x))}
   \prod_{\substack{p<x\\p\nmid \widetilde W(x)}} \left(1 + \frac{|f(p)|}{p} 
\right) 
\bigg)
\end{align}
holds uniformly for all intervals $I \subseteq \{1,\dots, x\}$ with 
$|I|> x(\log x)^{-E}$, for all integers 
$0<q \leq (\log x)^{E}$ 
and for all $A \in (\ZZ/q \widetilde{W}(x)\ZZ)^*$.
\end{enumerate}
\end{proposition}
\begin{rems} \label{rem:phi'}
(1) If $f \in \mathcal{F}_H$, then $f_x=f$. \indent
(2) We will show that \eqref{eq:major-arc} holds with
$\varphi'(x)= \varphi_{C}(x) + (\log w(x))^{-1} + (\log x)^{-\alpha_f/(3H)} + 
(\log x)^{-E}$, where
$\varphi_C$ is as in Definition \ref{d:F} with $C=2E + \kappa + 4$.
\end{rems}

The rest of this section is devoted to a proof of Proposition 
\ref{p:major-arc}. 
Our strategy is to first relate the left hand side of \eqref{eq:major-arc} to a 
restricted character sum, which we will then attempt to bound by means of 
the `pretentious large sieve'-consequence recorded in Corollary \ref{c:GS}.

We begin with a technical lemma that will at various points in the argument 
allow us to control the contribution of the prime divisors $p|\widetilde W(N)$ 
that are larger than $w(N)$.
\begin{lemma} \label{l:a'}
Let $1 \leq a \leq(\log N)^{E}$ be an integer that is free from prime 
factors $p<w(N)$ and suppose that $0 \leq g(p) \leq H$ for all $p$.
Then 
$$
\prod_{p|a} \left(1+\frac{g(p)}{p}\right)
= 1 + O_{E,H}\Big(\frac{1}{\log w(N)}\Big).
$$
\end{lemma}
\begin{proof}
The assumptions on $a$ imply the bound 
$\Omega(a) \leq \frac{E\log\log N}{\log w(N)}$ on the total number of prime 
factors of $a$.
Let
$m = [w(N)/\log w(N) + \Omega(a')]$ and recall that the $n$-th prime 
$p_n$ satisfies $p_n \sim n \log n$.
Then,
$$
p_m \sim m \log m \leq \frac{w(N) + E \log \log N}{\log w(N)} \log m
\sim w(N) + E \log \log N.
$$
Using the bounds on $w(N)$ from Definition \ref{d:w} and Mertens' estimate, we 
obtain
\begin{align*}
 \prod_{p|a} \left(1+\frac{g(p)}{p}\right)
 &\leq \prod_{p|a} \left(1+ p^{-1}\right)^H
  \leq \prod_{w(N) < p < p_m}(1 + p^{-1})^H
\\
 &\leq \left(
 \frac{\log (w(N)+ E \log \log N) + O(1)}{\log w(N) + O(1)}
 \right)^H \\
 &= \left( 1 + O_E\left(\frac{1}{\log w(N)}\right) \right)^H
 =  1 + O_{E,H}\Big(\frac{1}{\log w(N)}\Big),
\end{align*}
as claimed.
\end{proof}

\begin{corollary} \label{c:a'}
If $x$ and $q$ are as in Proposition \ref{p:major-arc}, then  
$$
\frac{q\widetilde W(x)}{\phi(q\widetilde W(x))}
\leq \left( 1 + O_{E,H}\left(\frac{1}{\log w(N)}\right) \right)
  \frac{\widetilde W(x)}{\phi(\widetilde W(x))}.
$$
\end{corollary}
\begin{proof} Let $a = \prod_{p|q, p \nmid \widetilde W(x)}p$. Then
\begin{align*}
\prod_{p|a} (1 - p^{-1})^{-1} &=
\prod_{p|a} (1 + p^{-1})(1 - p^{-2})^{-1} \\
&\leq \exp\bigg( \sum_{p|a} \frac{2}{p^2}\bigg) \prod_{p|a} (1 + p^{-1}) 
=\Big(1+ O\Big(\frac{1}{w(x)}\Big)\Big) \prod_{p|a} (1 + p^{-1}).
\end{align*}

\end{proof}

The next lemma replaces the general interval $I$ from \eqref{eq:major-arc} by 
one of the form $\{1, \dots, y\}$.

\begin{lemma}\label{l:major-arc}
If $E,H,x,f$ and $f_x$ are as in Proposition \ref{p:major-arc},
if $\kappa \geq 0$ is a given constant and if
$\widetilde W(x) \leq (\log x)^{\kappa + 2}$ is a multiple of $W(x)$,
then \eqref{eq:major-arc} follows if there exists a function $\varphi''=o(1)$ 
such that
\begin{align}\label{eq:major-arc'}
S_{f_x}(x;q\widetilde{W}(x),A)
= S_{f_x}(x;\widetilde{W}(x),A)
+ O_{E,H,\kappa}\Bigg(
 \frac{\varphi''(x)}{\log x}
 \frac{\widetilde Wq}{\phi(\widetilde Wq)}
   \prod_{\substack{p<x\\p\nmid \widetilde Wq}}
\left(1 + \frac{|f(p)|}{p} \right) \Bigg)
\end{align}
for all $q \in (0, (\log x)^{-E}]$ and 
$A \in (\ZZ/{q \widetilde{W}(x)}\ZZ)^*$.
More precisely, we may take 
$\varphi'(x)=\varphi_C(x)+\varphi''(x) + (\log w(x))^{-1}$ 
in \eqref{eq:major-arc}, where $\varphi_C$ is as in Definition \ref{d:F}
with $C=2E + \kappa + 4$.
\end{lemma}
\begin{proof}
In view of \eqref{eq:major-arc'}, it suffices to relate the first term in \eqref{eq:major-arc}
to $S_{f_x}(x;q\widetilde{W}(x),A)$.
Let $y_1, y_2 \in \ZZ_{\geq 0}$ and suppose that $I=(y_1,y_1+y_2]\subset [1,x]$ 
with $y_2 \geq x(\log x)^{-E}$.
Writing $\widetilde W = \widetilde W(x)$,
an application of \eqref{eq:d-F} with $C:=2E + \kappa + 4>E$ 
shows that the first term in \eqref{eq:major-arc} satisfies
\begin{align} \label{eq:major-arc_I}
\frac{q \widetilde{W}}{|I|}
\sum_{\substack{  m \in I \\ m \equiv A ~(q \widetilde{W})}} f_x(m) 
&= \frac{q \widetilde{W}}{y_2}
\sum_{\substack{ y_1 < m \leq y_1 +y_2 \\ m \equiv A ~(q \widetilde{W})}} f_x(m)
\\
\nonumber 
&=\frac{y_1+y_2}{y_2} 
S_{f_x}\Big(y_1+y_2;q \widetilde{W},A\Big)
-\frac{y_1}{y_2} S_{f_x}\Big(y_1;q  \widetilde{W},A\Big) \\
\nonumber
&=\frac{y_1+y_2}{y_2} 
S_{f_x}\Big(x;q \widetilde{W},A\Big)
-\frac{y_1}{y_2} S_{f_x}\Big(y_1;q  \widetilde{W},A\Big) \\
\nonumber
&\qquad \qquad \qquad \qquad+ 
O\bigg(\frac{\varphi_C(x)}{\log x} \frac{\widetilde W q}{\phi(\widetilde W q)}
   \prod_{\substack{p<x\\p\nmid \widetilde W q}}
   \left(1 + \frac{|f(p)|}{p} \right) \bigg).
\end{align}
We now split into two cases.
If, on the one hand, 
$x>y_1 > y_2(\log x)^{-E - \kappa -4} > x(\log x)^{-2E - \kappa-4}$, then 
\eqref{eq:d-F} shows that 
$S_{f_x}(y_1;q  \widetilde{W},A)$ can be replaced by 
$S_{f_x}(x;q  \widetilde{W},A)$ in the final expression in 
\eqref{eq:major-arc_I} so that \eqref{eq:major-arc_I} is seen to equal
$$
S_{f_x}\Big(x;q \widetilde{W},A\Big) 
+ O\Bigg(
 \frac{\varphi_C(x)}{\log x}
 \frac{\widetilde W q}{\phi(\widetilde Wq)}
 \prod_{\substack{p<x\\p\nmid \widetilde Wq}}
 \left(1 + \frac{|f(p)|}{p} \Bigg)
\right).
$$
In this case, \eqref{eq:major-arc} follows with 
$\varphi'=\varphi_C + \varphi'' + (\log w(x))^{-1}$
from \eqref{eq:major-arc'} and Corollary \ref{c:a'}.

If, on the other hand, $y_1 \leq y_2(\log x)^{-E-\kappa-4}$, then 
$\frac{y_1+y_2}{y_2} = (1 + O\left((\log x)^{-E-\kappa-4}\right))$.
Since $\phi(q\widetilde{W}) \leq q\widetilde{W} \leq (\log x)^{E + \kappa + 2}$, we further have
\begin{align*}
S_{f_x}\Big(y_1;q  \widetilde{W},A\Big)
&= \frac{q  \widetilde{W}}{y_1}
  \sum_{\substack{n \leq y_1 \\ n \equiv A \Mod{q  \widetilde{W}}}} f(n)
\leq q \widetilde{W} \sum_{\substack{n \leq y_1 \\ n \equiv A \Mod{q\widetilde{W}}}} \frac{f(n)}{n} \\
&\leq q \widetilde{W}
     \prod_{\substack{p \leq y\\ p \nmid q \widetilde W}}
     \left(1 + \frac{|f(p)|}{p} + \frac{H^2}{p^2 (1-H/p)} \right)\\
&\ll q \widetilde{W}
     \prod_{\substack{p \leq y\\ p \nmid q \widetilde W}}
     \left(1 + \frac{|f(p)|}{p} \right)
\ll (\log x)^{E+\kappa +2} \frac{q \widetilde{W}}{\phi(q \widetilde{W})} 
     \prod_{\substack{p \leq y\\ p \nmid q \widetilde W}}
     \left(1 + \frac{|f(p)|}{p} \right),
\end{align*}
which implies that
\begin{align*}
\frac{y_1}{y_2} S_{f_x}\Big(y_1;q  \widetilde{W},A\Big)
\leq (\log x)^{-E-\kappa-4} S_{f_x}\Big(y_1;q  \widetilde{W},A\Big)
&\ll (\log x)^{-2} 
\frac{\widetilde Wq}{\phi(\widetilde Wq)}
\prod_{\substack{p<x\\p\nmid \widetilde Wq}}
\left(1 + \frac{|f(p)|}{p} \right).
\end{align*}
Thus, in this case, \eqref{eq:major-arc_I} equals
$$
S_{f_x}(x;q\widetilde{W},A)
+ O\Bigg(\frac{\varphi_C(x)+(\log x)^{-1}}{\log x}
\frac{\widetilde Wq}{\phi(\widetilde Wq)}
\prod_{\substack{p<x\\p\nmid \widetilde Wq}}
\left(1 + \frac{|f(p)|}{p} \right)
\Bigg),
$$
and an application of \eqref{eq:major-arc'} yields \eqref{eq:major-arc} 
with $\varphi'(x) = \varphi_C(x)+ \varphi''(x) +(\log w(x))^{-1}$, when taking 
into account Corollary \ref{c:a'}.
\end{proof}

Following the above reduction, we now proceed to analyse the difference of the 
two mean values that appear in \eqref{eq:major-arc'}. 
\begin{lemma}[Restricted character sum] \label{l:restricted-sum}
Let $g : \NN \to \CC$ be an arithmetic function, not necessarily 
multiplicative, let $\widetilde{W}, q, A \geq 1$ be integers
and suppose that $\gcd(A,q\widetilde{W})=1$. 
If $y \geq 1$, then 
\begin{align} \label{eq:reduced-chi-sum}
S_{g}(y;\widetilde W, A) - S_{g}(y;q \widetilde W, A)
= \frac{q \widetilde{W}}{y}
\frac{1}{\phi(q \widetilde{W})}
\starsum_{\chi \Mod{q \widetilde{W}}} 
\chi(A)
\sum_{n\leq y} g(n) \bar\chi(n), 
\end{align}
where $\starsum$ indicates the restriction of the sum to characters that are 
not 
induced from characters $\Mod{\widetilde{W}}$.
\end{lemma}
\begin{proof}
We have
\begin{align} \label{eq:starcharsum}
\nonumber 
&S_g(y;\widetilde W, A) - S_g(y;q \widetilde W, A)\\
\nonumber
&=\frac{\widetilde W}{y} \Bigg(
\sum_{\substack{n\leq y \\ n \equiv A \Mod{\widetilde{W}}}} g(n)
- q
\sum_{\substack{n\leq y \\ n \equiv A \Mod{q \widetilde{W}}}} g(n)
\Bigg)
\\
\nonumber
&= \frac{1}{y}
\frac{\widetilde W}{\phi(q \widetilde{W})}
\sum_{\chi \Mod{q \widetilde{W}}} 
\Bigg( 
\sum_{\substack{A' \Mod{q \widetilde{W}} \\ A \equiv A' \Mod{\widetilde{W}}}}
\chi(A')
- q\chi(A)
\Bigg)
\sum_{n\leq y} g(n) \bar\chi(n) \\
&= \frac{1}{y}\frac{\widetilde{W}}{\phi(q \widetilde{W})}
\starsum_{\chi \Mod{q \widetilde{W}}} 
\Bigg( 
\sum_{\substack{A' \Mod{q \widetilde{W}} \\ A \equiv A' \Mod{\widetilde{W}}}}
\chi(A')
- q\chi(A)
\Bigg)
\sum_{n\leq y} g(n) \bar\chi(n),
\end{align}
where $\starsum$ indicates the restriction of the sum to characters that are 
not 
induced from characters $\Mod{\widetilde{W}}$; for all other characters we have 
$\chi(A') = \chi(A)$ and the difference in the brackets above is zero.
It remains to show that the sum over $A'$ in \eqref{eq:starcharsum} vanishes.
However,
\begin{align*}
 \sum_{\substack{A' \Mod{q \widetilde{W}}\\ A \equiv A' \Mod{\widetilde{W}}}}
\chi(A')
 =\frac{1}{\phi(\widetilde{W})}
 \sum_{\chi' \Mod{\widetilde{W}}}
 \bar{\chi'}(A) 
 \sum_{A' \Mod{q \widetilde{W}}} \chi(A') \chi'(A')=0,
\end{align*}
since $\chi \chi'$ is a non-trivial character modulo $q \widetilde{W}$.
Thus the lemma follows.
\end{proof}

Finally, we aim to exploit the fact that the character sum on the right hand 
side of \eqref{eq:reduced-chi-sum} is restricted by invoking Corollary 
\ref{c:GS}.

\begin{proof}[Proof of Proposition \ref{p:major-arc}]
Let $\eps := \frac{1}{2}\min(1,\alpha/(2H))$, 
$k:=\lceil \eps^{-2}\rceil$ and 
$k' = k \lceil \log_2 (4H) \rceil$, as in the statement of 
Corollary \ref{c:GS}.
Setting $C' = (E+1)3^{rk'+1}$, we let $\mathcal{E}$ denote the union of
the sets of characters defined by Corollary \ref{c:GS} when applied with
$C=C'$ to each of the $r$ functions $f_x \in \mathcal{M}_H$ for $f \in \{f_1, \dots, f_r\}$.

Our aim is to find a suitable integer $\widetilde W(x)$ so that, if 
$\widetilde W = \widetilde W(x)$ and $q \leq (\log x)^E$, then none of 
the characters that appear in the restricted character sum 
\eqref{eq:reduced-chi-sum} is induced by a character from the set 
$\mathcal{E}$.
To do so, we construct a finite sequence of integers 
$W_0(x), W_1(x), \dots$ with the property 
$$W_i(x) \leq W(x)^{2^i}(\log x)^{3^iE}$$ as follows.
Let $W_0(x) = W(x)$ and suppose we have already defined 
$W_i(x)$ for all $0 \leq i \leq j$.
Consider the set of integers in the interval 
$I_{j} = [W_{j}(x), W_{j}(x)(\log x)^E]$.
If there exists a character $\chi \in \mathcal{E}$ whose conductor $c_{\chi}$ satisfies 
$c_{\chi} \nmid W_{j}(x)$ but $c_{\chi} < W_{j}(x)(\log x)^E$, then we choose one such 
character $\chi$ and define $W_{j+1}(x) := c_{\chi} W_{j}(x)$. 
Note that
$$
W_{j+1}(x) 
< W_{j}(x)^2(\log x)^E 
< W(x)^{2\cdot2^j}(\log x)^{(2\cdot3^j+1)E} 
< W(x)^{2^{j+1}}(\log x)^{3^{j+1}E}.$$
If there is no such $\chi \in \mathcal{E}$, then we stop and set
$\widetilde W(x) = W_{j}(x)$.
Since $\#\mathcal{E} \leq r k'$, this process stops after at most $r k'$ steps 
and, thus, 
$\widetilde W(x) \leq W(x)^{2^{rk'}}(\log x)^{3^{rk'}E} \leq 
(\log x)^{2^{rk'+ 1} + 3^{rk'}E}$
and
$$
\widetilde W(x) q < (\log x^{1/(8H)})^C
$$
for all $q \leq (\log x)^E$ and sufficiently large $x$.

Our construction ensures that there exists no character 
$\chi \Mod{q \widetilde W(x)}$ with $q \leq (\log x)^E$ that is induced by an 
element from $\mathcal{E}$ but \emph{not} induced from a character 
$\Mod{\widetilde W(x)}$.
Since the sum \eqref{eq:reduced-chi-sum} is restricted to those characters 
modulo $q \widetilde W(x)$ that are not induced from characters modulo
$\widetilde W(x)$, we may apply Corollary \ref{c:GS} with $\mathcal{C}$ given 
by this restricted set of characters and with $Q=q\widetilde W(x)$.
This application shows that whenever $1 \leq q \leq (\log x)^E$ and 
$x^{1/2} < y \leq x$, then
\begin{align*}
&\frac{1}{y}
\starsum_{\chi \Mod{q \widetilde{W}}} 
\chi(A) \sum_{n\leq y} f_x(n) \bar\chi(n) 
\ll_{C,H,\alpha} 
\frac{1}{(\log x)^{1+\alpha/(3H)}}
\exp \bigg( \sum_{\substack{p\leq x\\p\nmid q \widetilde W}} 
\frac{|f(p)|}{p}\bigg).
\end{align*}
In combination with Lemma \ref{l:restricted-sum} for $g=f_x$, 
this yields \eqref{eq:major-arc'} for $\kappa = C'-2$ and with 
$\varphi''(x)=(\log x)^{-\alpha/(3H)}$.
Hence, Lemma \ref{l:major-arc} implies the result with
$\kappa = C'-2 \ll_{E,H,r,\alpha} 1$.
\end{proof}

The estimate \eqref{eq:major-arc} will be referred to as `the major arc 
estimate'.
We will show in Section \ref{ss:reduction} that despite the restriction to 
invertible residues $A \in (\ZZ/q \widetilde{W}\ZZ)^*$, the estimate 
\eqref{eq:major-arc} 
implies that $f(\widetilde{W}n+A) - S_f(x;\widetilde{W},A)$ is
orthogonal to periodic sequences of period at most $(\log x)^E$, 
for every $A \in (\ZZ/\widetilde{W}\ZZ)^*$.
This information will be used in combination with a factorisation theorem to 
reduce the task of proving non-correlation for  
$(f(\widetilde{W}n+A) - S_f(x;\widetilde{W},A))$ with general nilsequences to the
case where the nilsequence enjoys certain equidistribution properties and the
Lipschitz function satisfies, in particular, $\int_{G/\Gamma} F = 0$.

\section{The non-correlation result} \label{s:non-corr}
This section contains a precise statement of the main result, 
which, informally speaking, shows the following.
Given $E \geq 1$ and a multiplicative function $f \in \mathcal{F}_{H}$, 
let 
$\widetilde W(x)$ be the function from Proposition \ref{p:major-arc}. 
Then for every  residue $A \in (\ZZ/\widetilde{W}(N)\ZZ)^*$ 
and for parameters $N$ and $T$ such that $N^{1-o(1)} \ll T \ll N$,
the sequence $(f(\widetilde{W}n+A) - S_f(N;\widetilde{W},A))_{n \leq T}$ is 
orthogonal to any given polynomial nilsequence, provided $E$ is sufficiently 
large with respect to $H$, $\alpha_f$ and data related to the nilsequence.
In Section \ref{ss:reduction} we carry out a standard reduction of the main 
result to an equidistributed version, modelled on \cite[\S2]{GT-nilmobius}.

\addtocontents{toc}{\SkipTocEntry}
\subsection{Statement of the main result}
\label{ss:statement}
We begin by recalling the definition of a polynomial nilsequence and related 
notions from \cite{GT-polyorbits}.
For this, let $G$ be a connected, simply connected, nilpotent Lie group.
In accordance with \cite{GT-polyorbits}, we define a filtration $G_{\bullet}$ 
on $G$ to be a finite sequence of closed connected subgroups
$$G = G_0 = G_1 \geq G_2 \geq \dots \geq G_d \geq G_{d+1} = \{\id_G\}$$
with the property that for all pairs $(i,j)$ with $0\leq i,j \leq d$, 
the commutator group $[G_i,G_j]$ is a subgroup of $G_{i+j}$, where we set 
$G_{i+j} =  \{\id_G\}$ if $i+j > d+1$.
The degree of $G_{\bullet}$ is defined to be the largest index $j$ for which 
$G_j$ is non-trivial.
Since $G$ is nilpotent, the lower central series, defined by $G_1=G$ and 
$G_{i+1} = [G,G_i]$ for $i \geq 1$, terminates after finitely many steps.
Setting $G_0=G$, this series defines a filtration.
If $s$ denotes the degree of this filtration, then the Lie group $G$ is called 
$s$-step nilpotent. 
One can show that $s$ is the smallest possible degree that a filtration of $G$ 
can have.

Let $g:\ZZ \to G$ be a sequence with values in $G$ and define for every 
$h \in \ZZ$, the discrete derivative
$\partial_h g(n) = g(n+h) g(n)^{-1}$.
Then, following \cite[Definition 1.8]{GT-polyorbits}, the set 
$\mathrm{poly}(\ZZ,G_{\bullet})$ of polynomial sequences with coefficients in 
$G_{\bullet}$ is defined to be the set of all sequences $g:\ZZ \to G$ for which
every $i$-th derivative takes values in $G_i$, i.e. for which 
$\partial_{h_i} \dots \partial_{h_1} g(n) \in G_i$ for all 
$i \in \{0, \dots, d+1\}$ and for all $n, h_1, \dots, h_i \in \ZZ$.

To define polynomial \emph{nil}sequences, let $\Gamma < G$ be a discrete 
co-compact subgroup. 
Then the compact quotient $G/\Gamma$ is called a nilmanifold.
Any Mal'cev basis $\mathcal{X}$ (see \cite[\S2]{GT-polyorbits} for a 
definition) for $G/\Gamma$ gives rise to a metric $d_{\mathcal{X}}$ on 
$G/\Gamma$ as described in \cite[Definition 2.2]{GT-polyorbits}.
This metric allows us to define Lipschitz functions on $G/\Gamma$ as the 
set of functions $F:G/\Gamma \to \CC$ for which the Lipschitz 
norm (cf. \cite[Definition 1.2]{GT-polyorbits})
$$
\|F\|_{\mathrm{Lip}} =
\|F\|_{\infty}
+ \sup_{x,y \in G/\Gamma} \frac{|F(x)-F(y)|}{d_{\mathcal{X}}(x,y)}
$$
is finite.
If $F$ is a $1$-bounded Lipschitz function, then 
$(F(g(n)\Gamma))_{n \in \ZZ}$ is called a (polynomial) nilsequence.

We are now ready to state the main result:
\begin{theorem}\label{t:non-corr}
Let $E,H,d,m_G \geq 1$ be integers and let $f \in \mathcal{F}_{H,n^{it}}$.
Let $N$ be a positive integer parameter and let 
$\widetilde W = \widetilde W (N)$ be the 
integer produced by Proposition \ref{p:major-arc} for the function $f$ when 
applied with the given values of $E,H$ and with $x=N$.
Let $A \in \NN$ be such that $0 < A < \widetilde{W}$ and 
$\gcd(\widetilde W,A)=1$. 
Suppose further that $T$ satisfies $N/(\log N)^{E/2} \ll T \ll N$ and that 
$T,N > e^e$.
Let $G/\Gamma$ be a nilmanifold of dimension $m_G$ together 
with a filtration $G_{\bullet}$ of $G$ of degree $d$ and let
$g \in \mathrm{poly}(\ZZ,G_{\bullet})$ a polynomial sequence.
Suppose that $G/\Gamma$ has a $M_0$-rational Mal'cev basis adapted to 
$G_{\bullet}$ for some $M_0 \geq 2$ and let $G/\Gamma$ be equipped with the 
metric defined by this basis.
Let $F:G/\Gamma \to \CC$ be a $1$-bounded Lipschitz function.
Then, provided $E \geq 1$ is sufficiently large with respect to 
$d$, $m_G$, $\alpha_f$ and $H$, we have
\begin{align}\label{eq:main}
&\left|
\frac{\widetilde{W}}{T}
\sum_{n\leq T/\widetilde{W}}
\bigg(f(\widetilde{W}n+A)-
(\widetilde{W}n+A)^{it_N} S_{f(n)n^{-it_N}}(N;\widetilde{W},A)\bigg)
F(g(n)\Gamma)
\right| \ll_{d,m_G,\alpha_f,H} \\
\nonumber
&\quad
\Bigg\{ \varphi'(N) + \frac{1}{\log w(N)} 
+\frac{M_0^{O_{d,m_G}(1)} }{(\log \log T)^{1/(4^{d+1} \dim G)}}
\Bigg\}
\frac{1+ \|F\|_{\mathrm{Lip}}}{\log T}
\frac{\widetilde W}{\phi(\widetilde W)}
\prod_{\substack{p\leq N\\ p\nmid \widetilde 
W(N)}}\left(1+\frac{|f(p)|}{p}\right),
\end{align}
where $t_N \in [-2 \log N,2 \log N]$ is, as in Proposition \ref{p:major-arc}, 
given by Definition \ref{def:F_H(x)} with $C= 2E + \kappa + 4$
(in particular, $t_N=0$ if $f \in \mathcal{F}_H$),
and where $\varphi'$ is given by \eqref{eq:major-arc}.
\end{theorem}

\begin{rem*}
 Partial summation, when combined with the estimate \eqref{eq:d-F}, which 
holds with the same value of $C$ as above for the function $n \mapsto f(n)n^{-it_N}$, shows that
\begin{align*}
S_{f(n)n^{-it_N}}&(N;\widetilde{W},A) = (1+it_N) N^{-it_N} S_{f}(N;\widetilde{W},A)\\
&+O\bigg( |t_N|(1+|t_N|)\bigg((\log N)^{-E + O(H)} 
+\frac{\vphi_C(N)}{\log N} \frac{\widetilde{W}}{\phi(\widetilde{W})} 
\prod_{\substack{p<N , p \nmid \widetilde{W}}} \left(1 + \frac{|f(p)|}{p} \right)\bigg)\bigg),
\end{align*}
where $\vphi_C$ is as in \eqref{eq:d-F}.
Thus, if $E \gg_{H, \alpha_f} 1$ is sufficiently large and $|t_N|^2 = o(\vphi_C(N))$, then we may replace
the term $(\widetilde{W}n+A)^{it_N} S_{f(n)n^{-it_N}}(N;\widetilde{W},A)$ in the statement above, by 
$(1+it_N)\Big( \frac{\widetilde{W}n+A}{N}\Big)^{it_N} S_{f}(N;\widetilde{W},A)$.
\end{rem*}

\addtocontents{toc}{\SkipTocEntry}
\subsection{Reduction of Theorem \ref{t:non-corr} to the equidistributed case}
\label{ss:reduction}

Proceeding similarly as in \S2 of Green and Tao \cite{GT-nilmobius}, we will 
reduce Theorem \ref{t:non-corr} to a special case that involves only 
equidistributed polynomial sequences.
Let us begin by recalling the quantitative notion of equidistribution and total 
equidistribution for polynomial sequences that was introduced in 
\cite[Definition 1.2]{GT-polyorbits}.

\begin{definition}
 Let $G/\Gamma$ be a nilmanifold equipped with Haar measure, 
 let $\delta>0$ and let $N \in \NN$.
 A finite sequence $g:\{1, \dots, N\} \to G$ is called $\delta$-equidistributed 
in $G/\Gamma$ if 
$$
\Big|\frac{1}{N} \sum_{n \leq N} F(g(n)\Gamma) - \int_{G/\Gamma} F \Big| 
\leq \delta \|F\|_{\mathrm{Lip}}
$$
for all Lipschitz functions $F:G/\Gamma \to \CC$.
It is called totally $\delta$-equidistributed if, moreover, 
$$
\Big|\frac{1}{\# P} \sum_{n\in P} F(g(n)\Gamma) - \int_{G/\Gamma} F \Big| 
\leq \delta \|F\|_{\mathrm{Lip}}
$$
for all Lipschitz functions $F:G/\Gamma \to \CC$ and progressions 
$P \subset \{1, \dots, N\}$ of length $\# P\geq \delta N$.
\end{definition}

The tool that makes a reduction to equidistributed polynomial sequences work 
is the following factorisation 
theorem \cite[Theorem 1.19]{GT-polyorbits} due to Green and Tao:

\begin{lemma}[Factorisation lemma, Green--Tao \cite{GT-polyorbits}] 
\label{l:factorisation}
Let $m$ and $d$ be positive integers, and let $M_0, N, B >1$ be real numbers. 
Let $G/\Gamma$ be an $m$-dimensional nilmanifold together with a
filtration $G_{\bullet}$ of degree $d$. 
Suppose that $\mathcal{X}$ is an $M_0$-rational Mal'cev basis adapted to 
$G_{\bullet}$ and let $g \in \mathrm{poly}(\ZZ, G_{\bullet})$ be a polynomial 
sequence.
Then there is an integer $M$ with $M_0 \ll M \ll M_0^{O_{B,m,d}(1)}$,
a rational subgroup $G' \subseteq G$, a Mal'cev basis $\mathcal{X'}$ for 
$G'/\Gamma'$ in which each element is an $M$-rational combination of the elements 
of $\mathcal{X}$, and a decomposition $g = \eps g' \gamma$ into
polynomial sequences $\eps, g', \gamma \in \mathrm{poly}(\ZZ, G_{\bullet})$ with 
the following properties:
\begin{enumerate}
 \item $\eps: \ZZ \to G$ is $(M,N)$-smooth\footnote{The notion of 
 smoothness was defined in \cite[Def.\ 1.18]{GT-polyorbits}. 
 A sequence $(\eps (n))_{n\in \ZZ}$ is said to be \emph{$(M,N)$-smooth} if 
 both $d_{\mathcal{X}}(\eps(n),\id_G) \leq N$ and 
 $d_{\mathcal{X}}(\eps(n),\eps(n-1)) \leq M/N$ hold 
 for all $1 \leq n \leq N$.};
 \item
 $g': \ZZ \to G'$ takes values in $G'$ and the finite sequence 
 $(g'(n)\Gamma')_{n \leq T}$ is totally $M^{-B}$-equidistributed in 
 $G'\Gamma/\Gamma$ using the metric $d_{\mathcal{X'}}$ on $G'\Gamma/\Gamma$;
 \item $\gamma: \ZZ \to G$ is an $M$-rational\footnote{A sequence 
 $\gamma: \ZZ \to G$ is said to be 
 \emph{$M$-rational} if for each $n$ there is $0<r_n\leq M$ such that 
 $(\gamma(n))^{r_n} \in \Gamma$; see \cite[Def.\ 1.17]{GT-polyorbits}.} 
 sequence and the sequence $(\gamma(n)\Gamma)_{n \in \ZZ}$ is periodic with 
 period at most $M$.
\end{enumerate}
\end{lemma}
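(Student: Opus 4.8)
The plan is to follow the argument of Green and Tao \cite[Theorem 1.19]{GT-polyorbits}; since the statement is quoted essentially verbatim from there, I only sketch its structure. The proof goes by induction on the dimension $m = \dim G$, with the degree $d$ of the filtration $G_{\bullet}$ held fixed. The base case $m = 0$ is trivial: $G/\Gamma$ is then a point and one takes $G' = G$, $\mathcal{X}' = \mathcal{X}$, $M = M_0$, $\eps = g' = \gamma = \id_G$. In the inductive step one runs a dichotomy. If $(g(n)\Gamma)_{n \leq N}$ is already totally $M_0^{-B}$-equidistributed, one is done exactly as in the base case. Otherwise one must produce a genuine factorisation, which is where the work lies.

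Assume non-equidistribution. I would first invoke the quantitative equidistribution criterion --- the contrapositive of Leibman's theorem in the quantitative form established in \cite{GT-polyorbits} --- to obtain a nontrivial horizontal character $\eta \colon G \to \RR/\ZZ$ (a continuous homomorphism vanishing on $\Gamma$ and on $[G,G]$) with modulus $|\eta| \ll M_0^{O_{B,m,d}(1)}$ such that $\eta \circ g$ is smooth: its $C^{\infty}[N]$-norm is $\ll M_0^{O_{B,m,d}(1)}$. Writing $\eta \circ g(n) \equiv \sum_{j} \beta_j \binom{n}{j} \pmod 1$, a Dirichlet-type pigeonhole argument applied to the non-constant coefficients (exploiting the $C^{\infty}$-smallness) yields an integer $q \ll M_0^{O_{B,m,d}(1)}$ with each $q\beta_j$ within $O(M_0^{O(1)} N^{-j})$ of an integer. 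I would then split off a smooth factor and a rational factor in the $\eta$-direction: write $g = \eps g_1$ with $\eps$ an $(M,N)$-smooth polynomial sequence absorbing the slowly varying part of the $\eta$-coordinate, so that $\eta \circ g_1$ has coefficients of denominator dividing $q$, and then $g_1 = g_2 \gamma$ with $\gamma$ an $M$-rational, $M$-periodic polynomial sequence carrying the rational rotation and $g_2$ valued in a rational subgroup $G' \subsetneq G$ that contains $[G,G]$ and has $\dim G' = m-1$ --- morally $G' = \ker\eta$, corrected so as to admit a Mal'cev basis $\mathcal{X}'$ each element of which is an $M$-rational combination of $\mathcal{X}$. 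All of this uses repeatedly that products and conjugates of sequences in $\mathrm{poly}(\ZZ,G_{\bullet})$ remain in $\mathrm{poly}(\ZZ,G_{\bullet})$ (Lazard--Leibman), so that $\eps,\gamma,g_2$ are polynomial sequences.

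One now applies the inductive hypothesis to $g_2 \in \mathrm{poly}(\ZZ,G'_{\bullet})$ on the lower-dimensional nilmanifold $G'/\Gamma'$ (with $\Gamma' = \Gamma \cap G'$ and $M$ in the role of $M_0$), obtaining $g_2 = \eps' g' \gamma'$ with $g'$ totally equidistributed in the relevant subnilmanifold, and then reassembles $g$ into the shape $\eps'' \cdot g' \cdot \gamma''$ by multiplying and conjugating the pieces, verifying that the combined smooth factor remains $(M',N)$-smooth and the combined rational factor remains $M'$-rational and $M'$-periodic. Since the recursion has depth at most $m$ and each layer inflates all parameters only polynomially, the final bound $M_0 \ll M' \ll M_0^{O_{B,m,d}(1)}$ follows.

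I expect the main obstacle to be the bookkeeping in this last reassembly step: keeping the three components in the prescribed smooth / totally-equidistributed / rational-periodic forms while composing across recursion levels, and checking that each of these properties degrades only polynomially under the multiplications and conjugations needed to restore the canonical shape --- in particular under conjugating a rational sequence by a smooth one, and multiplying two smooth sequences. A secondary technical point is the descent from $G$ to the rational subgroup $G'$: one must exhibit an explicit $M$-rational Mal'cev basis for $G'/\Gamma'$ and bound the induced metric $d_{\mathcal{X}'}$ in terms of $d_{\mathcal{X}}$, which is exactly where the $M_0$-rationality of $\mathcal{X}$ is used.
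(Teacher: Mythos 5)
This lemma is not proved in the paper at all: it is quoted verbatim from Green--Tao \cite[Theorem 1.19]{GT-polyorbits} and used as a black box, so there is no internal proof to compare against. Your sketch correctly reproduces the structure of the original Green--Tao argument (induction on $\dim G$ via the equidistribution dichotomy, extraction of a horizontal character from the quantitative Leibman theorem, splitting off smooth and rational factors in the $\eta$-direction, descent to the codimension-one rational subgroup $\ker\eta$, and polynomial bookkeeping across at most $m$ recursion levels), and the two technical obstacles you flag -- reassembly of the three factors under conjugation and multiplication, and the construction of an $M$-rational Mal'cev basis for $G'/\Gamma'$ -- are indeed where the bulk of the work in \cite{GT-polyorbits} lies.
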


The following proposition handles the special case of Theorem \ref{t:non-corr} 
where the polynomial sequence is equidistributed.

\begin{proposition}[Non-correlation, equidistributed case]
\label{p:equid-non-corr}
Let $E, H, m_G, d \geq 1$ be integers and suppose that $f \in \mathcal{M}_H$. 
Let $N$ and $T$ be integer parameters satisfying $N^{1-o(1)} \ll T \ll N$ and
let $\delta= \delta(N) \in (0,1/2)$ depend on $N$ in such a way 
that 
$$\log N \leq \delta(N)^{-1} \leq (\log N)^{E}.$$
Let $G/\Gamma$ be a nilmanifold of dimension $m_G$ together with a 
filtration $G_{\bullet}$ of degree $d$, and suppose that $\mathcal X$ is a 
$\frac{1}{\delta(N)}$-rational Mal'cev basis adapted to $G_{\bullet}$.
This basis gives rise to the metric $d_{\mathcal X}$.
Let $Q=Q(N) \leq (\log N)^{E}$ be an integer that is divisible by $W(N)$ and 
let $0 \leq A < Q$ be an integer such that
$A \in (\ZZ/Q\ZZ)^*$.

Then there is $E_0 \geq 1$, depending on $d$, $m_G$ and $H$, such that the 
following holds provided $E$ is sufficiently large with respect to $d$, $m_G$ and 
$H$: 

Let $g \in \mathrm{poly}(\ZZ,G_{\bullet})$ be any polynomial sequence 
such that the finite sequence 
$$(g(n)\Gamma)_{n\leq T/Q}$$ is 
totally $\delta(N)^{E_0}$-equidistributed.
Let $F:G/\Gamma \to \CC$ be any $1$-bounded Lipschitz function such that
$\int_{G/\Gamma}F=0$, and let $I \subset \{1, \dots, T/Q\}$ 
be any discrete interval of length at least $T/(Q (\log N)^E)$.
Then:
\begin{align} \label{eq:prop-bound}
&\bigg|
\frac{Q}{T}
\sum_{n \in I} 
f\left(Qn+A\right)
F(g(n)\Gamma)
\bigg| \ll_{d,m_G, \alpha_f, H,E} \\
\nonumber
&
\left\{(\log \log T)^{-1/(2^{2d+3} \dim G)} 
+ \frac{\delta(N)^{-10^d \dim G}}{(\log \log T)^{1/2^{d+2}}}\right\}
\frac{1 +\|F\|_{\mathrm{Lip}}}{\log N}
\frac{Q}{\phi(Q)}
\prod_{\substack{p\leq N\\p\nmid Q}}
\left(1 + \frac{|f(p)|}{p}\right).
\end{align}
\end{proposition}

\begin{proof}[Proof of Theorem \ref{t:non-corr} assuming Proposition 
\ref{p:equid-non-corr}]
We loosely follow the strategy of \cite[\S 2]{GT-nilmobius}.
In view of the final error term in \eqref{eq:main}, we may assume 
that $M_0 \leq \log N$, as the theorem holds trivially otherwise.
This implies that $\mathcal{X}$ is a $(\log N)$-rational Mal'cev basis. 
Applying the factorisation lemma from above with $T$ replaced by 
$T/\widetilde{W}$, with $M_0=\log N$, and with a parameter $B>1$ that will be 
determined in course of the proof (as parameter $E_0$ in an application of
Proposition \ref{p:equid-non-corr}), we obtain a factorisation of $g$ as $\eps g' 
\gamma$ with properties (1)--(3) from Lemma \ref{l:factorisation}.
In particular, there is $M$ such that 
$\log N \leq M \leq (\log N)^{O_{B,m_G,d}(1)}$ and such that $g'$ takes 
values in 
a $M$-rational subgroup $G'$ of $G$ and is $M^{-B}$-equidistributed in 
$G'\Gamma/\Gamma$.
Our first aim is to decompose the summation range of $n$ in \eqref{eq:main} 
into subprogressions on which the three functions $\gamma$, $\eps$ and 
$(\widetilde W n + A)^{it_N}$ are all almost constant.

Since $\gamma$ is periodic with some period $a \leq M$, the function
$n \mapsto \gamma(an + b)$ is constant for every $b$, that is, 
$\gamma$ is constant on every progression 
$$P_{a,b}:=\{  n \in [1, T/\widetilde{W}] : n \equiv b \Mod{a}\},$$
where $0\leq b < a$.
Let $\gamma_b$ denote the value $\gamma$ takes on $P_{a,b}$ and
note that $$|P_{a,b}| \geq T/(2a\widetilde{W}) \geq T/(2M\widetilde{W}).$$
Let $g'_{a,b} : \ZZ \to G'$ be defined via
$$g'_{a,b}(n) = g'(an + b).$$
Since $(g'(n)\Gamma)_{n \leq T/\widetilde{W}}$ is totally 
$M^{-B}$-equidistributed in $G'\Gamma/\Gamma$, it is clear that every 
finite subsequence $(g'_{a,b}(n)\Gamma)_{n \leq T/(Ca\widetilde{W})}$
is $M^{-B/2}$-equidistributed if $a$, $b$ and $C$ are such that
both $0 \leq b < a \leq M$ and $C>0$ and, furthermore, $M^{B/2}>Ca$ 
hold.

Let $R \geq 1$ be an integer that will be chosen later depending on $d$ 
and $\dim G$.
By splitting each progression $P_{a,b}$ into $\ll M (\log \log N)^{1/R}$ 
pieces $P_{a,b}^{(j)}$ of diameter bounded by 
$\ll T/(M\widetilde{W} (\log \log N)^{1/R}),$ we may also arrange for 
$\eps$ and, simultaneously, for $(\widetilde W n + A)^{it_N}$ to be almost 
constant.
More precisely, the fact that $\eps$ is $(M,T/\widetilde{W})$-smooth implies 
that
$$
d_{\mathcal{X}}(\eps(n),\eps(n')) 
\leq |n-n'| M \widetilde{W}  T^{-1}
\ll (\log \log N)^{-1/R}
$$
for all $n,n' \leq T/\widetilde{W}$ with 
$|n-n'| \ll T/(M \widetilde{W} (\log \log N)^{1/R})$.
By choosing $B$ sufficiently large, 
we may ensure that $M^{B/2} \geq M \log \log N$ and, hence, that the 
equidistribution properties of $g'_{a,b}$ are preserved on the new bounded 
diameter pieces of $P_{a,b}$.
Let $\mathcal{P}$ denote the collection of all progressions $P_{a,b}^{(j)}$ in 
our decomposition.

Since $F$ is a Lipschitz function and since $d_{\mathcal{X}}$ is right-invariant 
(cf.\ \cite[Appendix A]{GT-polyorbits}), we deduce that
\begin{align}\label{eq:Lipschitz-application}
\nonumber
|F(\eps(n) g'(n) \gamma(n))
  -F(\eps(n') g'(n) \gamma(n))|
&\leq 
(1 + \|F\|)
d(\eps(n),\eps(n')) \\
&\ll 
(1 + \|F\|) (\log \log N)^{-1/R}
\end{align}
for all $n,n' \in P_{a,b}$ with 
$|n-n'| \ll T/(M\widetilde{W} (\log \log N)^{1/R} )$.
Thus, this bound holds in particular for any $n,n' \in P_{a,b}^{(j)}$.

To ensure that $(\widetilde W n + A)^{it_N}$ is almost constant on the bounded 
parameter progressions $P_{a,b}^{(j)}$ that we consider, let 
$\mathcal{P}' \subset \mathcal{P}$ denote the subset of progressions  
$P_{a,b}^{(j)}$ that are completely contained in the interval
$[T/(\widetilde{W} (\log \log N)^{1/(2R)}),T/\widetilde{W}]$.
Observe that the contribution of all other progressions 
$P_{a,b}^{(j)} \in \mathcal{P} \setminus \mathcal{P}'$ to 
\eqref{eq:main} may be bounded by
\begin{align*}
(\log \log N)^{-1/(2R)}
\frac{\widetilde W(N)}{\phi(\widetilde W(N))}
\frac{2}{\log T} 
\prod_{\substack{p\leq N\\ p\nmid 
\widetilde W(N)}}\left(1+\frac{|f(p)|}{p}\right),
\end{align*}
where we used Shiu's bound \eqref{eq:shiu} together with fact that 
we are only summing over $n \leq T/(\widetilde{W}(\log \log N)^{1/(2R)})$.
Since $\frac{\log \log N}{\log \log \log N} < w(N) \leq \log \log N$ and since
$1 \leq R \ll_{d, \dim G} 1$, we have
\begin{equation}\label{eq:loglogN^-1/R}
(\log \log N)^{-1/(2R)} \ll_{d,\dim G} (\log w(N))^{-1}, 
\end{equation}
which implies that the above contribution is negligible when compared to the 
bound in \eqref{eq:main}.
For every remaining progression $P_{a,b}^{(j)} \in \mathcal{P}'$, the diameter 
is now short compared to the size of the endpoints and we have
\begin{align*}
\log (\widetilde W n+ A) 
&= \log (\widetilde W n'+ A)
  + \log \frac{\widetilde W (n' + n - n') + A}{\widetilde W n'+ A} \\
&= \log (\widetilde W n'+ A) 
  + \log \bigg(1 + 
  O\bigg( \frac{1}{M(\log \log N)^{1/(2R)}} \bigg) \bigg) \\
&= \log (\widetilde W n'+ A) 
  +  
  O\bigg( \frac{1}{M(\log \log N)^{1/(2R)}} \bigg)\\
\end{align*}
for all $n,n' \in P_{a,b}^{(j)}$.
Since $|t_N| \leq 2 \log N$ and $M \geq \log N$, we deduce that 
\begin{align}\label{eq:n^it-factor}
\nonumber
 (\widetilde W n+ A )^{it_N} 
 &= (\widetilde W n'+ A )^{it_N} 
    \exp\bigg( O\Big( \frac{\log N}{M(\log \log N)^{1/(2R)}} \Big)\bigg) \\
\nonumber    
 &= (\widetilde W n'+ A )^{it_N} (1 + O((\log \log N)^{-1/(2R)})) \\
 &= (\widetilde W n'+ A )^{it_N} + O((\log\log N)^{-1/(2R)})
\end{align}
for all $n,n' \in P_{a,b}^{(j)}$.

Let us fix one element $n_{b,j}$ for each progression 
$P_{a,b}^{(j)} \in \mathcal{P}'$.
As we will show next, it will be sufficient to bound the correlation
\begin{align}\label{eq:P-abj-corr}
&\bigg|
\sum_{n \in P_{a,b}^{(j)}} 
\bigg(
f(\widetilde{W}n +A) 
- (\widetilde{W}n_{b,j}+A)^{it_N}
S_{f(n)n^{-it_N}}(N;\widetilde{W},A)
\bigg)
F(\eps(n_{b,j}) g'(n) \gamma_{b})\Gamma)
\bigg| = \\ 
\nonumber
&\bigg| 
\sum_{\substack{n: \\ an+b \in P_{a,b}^{(j)}}} 
\bigg(
f(\widetilde{W}(an+b)+A) 
- 
(\widetilde{W}n_{b,j}+A)^{it_N}
S_{f(n)n^{-it_N}}(N;\widetilde{W},A)
\bigg)
F(\eps(n_{b,j}) g'_{a,b}(n) \gamma_{b})\Gamma)
\bigg| 
\end{align}
for each bounded diameter piece $P_{a,b}^{(j)} \in \mathcal{P}'$.
Indeed, the estimates \eqref{eq:Lipschitz-application} and 
\eqref{eq:n^it-factor} applied with $n'= n_{b,j}$ 
to each such progression, show that the error term incurred from this 
reduction satisfies
\begin{align*}
\nonumber
&\bigg|
\sum_{P_{a,b}^{(j)} \in \mathcal{P}'}
\sum_{n \in P_{a,b}^{(j)}}
\bigg\{ \bigg(f(\widetilde{W}n +A) - 
(\widetilde{W}n+A)^{it_N}
S_{f(n)n^{-it_N}}(N;\widetilde{W},A) \bigg)
F(\eps(n) g'(n) \gamma(n)) \\
& \hspace*{2.6cm} -
\bigg(f(\widetilde{W}n +A) - 
(\widetilde{W}n_{b,j}+A)^{it_N}
S_{f(n)n^{-it_N}}(N;\widetilde{W},A) \bigg)
F(\eps(n_{b,j}) g'(n) \gamma_b)\bigg\}\bigg|\\
&\leq \sum_{P_{a,b}^{(j)} \in \mathcal{P}'}
\sum_{n \in P_{a,b}^{(j)}}
|f(\widetilde{W}n +A)| 
\Big|F(\eps(n) g'(n) \gamma(n))
  -F(\eps(n_{b,j}) g'(n) \gamma_b)\Big|\\
&\quad
+\sum_{P_{a,b}^{(j)} \in \mathcal{P}'}
\sum_{n \in P_{a,b}^{(j)}}
\Big|(\widetilde{W}n+A)^{it_N} \Big| 
\Big|F(\eps(n) g'(n) \gamma(n))
  -F(\eps(n_{b,j}) g'(n) \gamma_b)\Big|
 S_{|f|}(N;\widetilde{W},A) 
  \\  
&\quad 
+\sum_{P_{a,b}^{(j)} \in \mathcal{P}'}
\sum_{n \in P_{a,b}^{(j)}}
\Big| 
( \widetilde{W}n+A)^{it_N}
-( \widetilde{W}n_{b,j}+A)^{it_N}
\Big|
\Big|F(\eps(n_{b,j}) g'(n) \gamma_b)\Big|
S_{|f|}(N;\widetilde{W},A)
\\
&\ll 
\frac{T}{\widetilde W(N)}
\frac{(1 + \|F\|)S_{|f|}(N;\widetilde{W},A)}{(\log \log N)^{1/R}}.
\end{align*}
By Shiu's bound \eqref{eq:shiu}, this in turn is bounded above by
\begin{align}\label{eq:eps-error}
 \ll 
 \frac{T}{\widetilde W(N)}
 \frac{(1 + \|F\|)}{(\log \log N)^{1/R}}
 \frac{\widetilde W(N)}{\phi(\widetilde W(N))} \frac{1}{\log N}
\exp\bigg(\sum_{w(N) < p \leq N } 
\frac{|f(p)|}{p}\bigg)~.
\end{align}
Taking into account \eqref{eq:loglogN^-1/R}, the error term 
\eqref{eq:eps-error} is acceptable in view of the bound in 
\eqref{eq:main}.

We aim to estimate the correlation \eqref{eq:P-abj-corr} with the help of 
Proposition \ref{p:equid-non-corr}.
This task will be carried out in four steps, the first of which will be to 
bound the contribution from non-invertible residues 
$\widetilde{W}b + A \Mod{\widetilde{W}a}$ to which Proposition 
\ref{p:equid-non-corr} does not apply.
The two subsequent steps consist of checking the various assumptions of 
Proposition \ref{p:equid-non-corr}, while the fourth step contains the actual 
application of the proposition.

Before we start, we record a final estimate that will be used throughout 
the rest of the proof.
Note that the common difference of $P_{a,b}^{(j)}$ satisfies 
$a \leq M \ll (\log N)^{O_{d,m_G,B}(1)}$, which is bounded above by 
$(\log N)^E$, provided $E$ is sufficiently large in terms of $d$, $m_G$ and 
$B$. 

\paragraph*{\em Step 1: Non-invertible residues}
We seek to bound the contribution to \eqref{eq:main} of all progressions 
$P_{a,b}^{(j)} \in \mathcal{P}'$ with $\gcd(\widetilde{W}b + 
A,\widetilde{W}a)>1$.
Let $a' = \prod_{p \nmid \widetilde W(N)} p^{v_p(a)}$, so that
$\widetilde W(N)$ is invertible modulo $a'$.
Since $\gcd(A,\widetilde{W})=1$, it suffices to check whether $b$ satisfies
$\gcd(\widetilde{W}b + A,a')>1$.
Thus, the contribution we seek to bound takes the form
\begin{align*}
  &\frac{\widetilde{W}}{T}
 \sum_{d|a',\,d>1} 
 \sum_{\substack{b < a:\\ \gcd(\widetilde{W}b + A,a')=d~}}
 \sum_{\substack{n < T/\widetilde{W}  \\ n\equiv b \Mod{a}}} 
 \Big\{|f(\widetilde{W}n+A)| + S_{|f|}(N;\widetilde W,A) \Big\}.
\end{align*}
The contribution from the terms involving $S_{|f|}(N;\widetilde W,A)$ is
bounded by
\begin{align} \label{eq:non-triv-residues-1}
 &\ll S_{|f|}(N;\widetilde W,A)
 \sum_{d|a',\,d>1} 
 \sum_{\substack{b < a:\\ \gcd(\widetilde{W}b + A,a')=d~}} \frac{1}{a} \\
 \nonumber
 &\ll S_{|f|}(N;\widetilde W,A)
 \sum_{d|a',\,d>1} \frac{1}{a} \frac{a}{a'}
 \phi\Big(\frac{a'}{d}\Big) \\
 \nonumber
 &\ll S_{|f|}(N;\widetilde W,A)
 \sum_{d|a',\,d>1} \frac{1}{d},
\end{align}
where we used the fact that $\widetilde W(N)$ is invertible modulo $a'$.
In a similar fashion, we may bound the contribution from those terms involving
$|f(\widetilde{W}n+A)|$ as follows:
\begin{align*}
 &\frac{\widetilde{W}}{T}
 \sum_{d|a',\,d>1} 
 \sum_{\substack{b < a:\\ \gcd(\widetilde{W}b + A,a')=d~}}
 \sum_{\substack{n < T/\widetilde{W}  \\ n\equiv b \Mod{a}}} 
 |f(\widetilde{W}n+A)| \\
 & \leq \sum_{d|a',\,d>1} 
 \sum_{\substack{b < a:\\ \gcd(\widetilde{W}b + A,a')=d~}}
 \frac{|f(d)|}{a}
 ~S_{|f|} 
  \left(\frac{T}{d};\frac{\widetilde{W}a}{d},\frac{\widetilde{W}b+A}{d}\right)
 \\
 & \leq \sum_{d|a',\,d>1}
 \frac{|f(d)|}{a}
 \frac{a}{a'}
 \phi\Big(\frac{a'}{d}\Big)
 ~S_{|f|}
  \left(\frac{T}{d};\frac{\widetilde{W}a}{d},\frac{\widetilde{W}b+A}{d}\right)
 \\
  & \ll \sum_{d|a',\,d>1} 
 \frac{|f(d)|}{a'}
 \phi\Big(\frac{a'}{d}\Big)
 \frac{\widetilde{W}a/d}{\phi(\widetilde{W}a/d)}
 \frac{1}{\log (T/d)}
 \exp \bigg( \sum_{ \substack{ p < T/d \\ p \nmid \widetilde W a'/d}} 
 \frac{|f(p)|}{p} \bigg) \\
 & \leq \sum_{d|a',\,d>1} 
 \frac{|f(d)|}{a'}
 \phi\Big(\frac{a'}{d}\Big)
 \frac{a'/d}{\phi(a'/d)}
 \frac{\widetilde W}{\phi(\widetilde W)}
 \frac{1}{\log (T/d)}
 \exp \bigg( \sum_{ \substack{ p < T \\ p \nmid \widetilde W }} 
 \frac{|f(p)|}{p} \bigg) \\
& \ll 
 \frac{\widetilde W}{\phi(\widetilde W)}
 \frac{1}{\log T}
 \exp \bigg( \sum_{ \substack{ p < T \\ p \nmid \widetilde W}} 
 \frac{|f(p)|}{p} \bigg)
 \sum_{d|a',\,d>1} 
 \frac{|f(d)|}{d}, 
\end{align*}
where we made use of \eqref{eq:shiu} and of the fact that
$d\leq a \leq (\log N)^{E}$ so that
$\log (T/d) \geq (\log T)/2$ once $N$ and, hence, $T$ are sufficiently large.
Observe that the final sums in each of the two bounds above are similar.
We restrict attention to bounding the latter of them.
Assuming that the lower bound $w(N)$ on prime divisors $p|a'$ is sufficiently 
large with respect to $H$, we have
\begin{align*}
 \sum_{d|a',\,d>1} \frac{|f(d)|}{d} 
&\leq \prod_{p|a'}\left(1+\frac{H}{p} + \frac{H^2}{p^2} +  \dots \right)-1 
 \leq \prod_{p|a'}\left(1 + \frac{H}{p}\right)
                   \left(1 + \frac{H^2}{p^2(1 - \frac{H}{p})}\right)-1\\
&\leq \exp \bigg( \sum_{p|a'} \frac{2 H^2}{p^2} \bigg) 
      \prod_{p|a'}\left(1 + \frac{H}{p}\right) -1                
\leq \left(1+\frac{4H^2}{w(N)}\right)
\prod_{p|a'}\left(1 + \frac{H}{p}\right) - 1 \\
& \ll_{E,H} \frac{4H^2}{w(N)}+\frac{1}{\log w(N)}
 \ll_{E,H} \frac{1}{\log w(N)},
\end{align*}
where we applied Lemma \ref{l:a'} with $a$ replaced by $a'$ to estimate the 
product over $p|a'$.

Bounding the inner sum in \eqref{eq:non-triv-residues-1} in a similar 
fashion and applying \eqref{eq:shiu} to estimate $S_{|f|}(N; \widetilde W,A)$, 
we deduce that the total contribution of non-invertible residues 
$\widetilde{W}b + A \Mod{\widetilde{W}a}$ to \eqref{eq:main} is at most
$$
O_{d,m_G,B,H}\left(\frac{1}{\log w(N)}
 \frac{\widetilde W}{\phi(\widetilde W)}
 \frac{1}{\log T}
 \exp \left( \sum_{w(N)< p < T } 
 \frac{|f(p)|}{p} \right)\right),
$$
which has been taken care of in \eqref{eq:main}. 
This leaves us to considering the case where the value of $b$ does not impose an 
obstruction to applying Proposition \ref{p:equid-non-corr}.

\paragraph*{\em Step 2: Checking the initial conditions of Proposition 
\ref{p:equid-non-corr}}
The central assumption of Proposition \ref{p:equid-non-corr} concerns the 
equidistribution of the polynomial sequence it is applied to.
To verify this assumption for the sequence that appears in
\eqref{eq:P-abj-corr}, it is necessary 
to show that the conjugated sequence 
$h^*: n \mapsto \gamma_{b}^{-1} g'_{a,b}(n) \gamma_{b}$ 
is, in fact, a polynomial sequence and that it inherits the equidistribution 
properties of $g'_{a,b}(n)$.
Both these questions have been addressed in \cite[\S2]{GT-nilmobius} in a way we 
can directly build on:
Let $H=\gamma_{b}^{-1} G' \gamma_{b}$ and define
$H_{\bullet} = \gamma_{b}^{-1} (G')_{\bullet} \gamma_{b}$.
Let $\Lambda = \Gamma \cap H$ and define
$F_{b,j}: H/\Lambda \to \RR$ via
$$
F_{b,j}(x \Lambda) 
= F(\eps(n_{b,j}) \gamma_{b} x \Gamma).
$$
Then $h^* \in \mathrm{poly}(\ZZ,H_{\bullet})$ and the correlation 
\eqref{eq:P-abj-corr} that we seek to bound takes the form
\begin{align} \label{eq:F-b-j}
\bigg|
\sum_{\substack{n: (an+b) \in  P_{a,b}^{(j)}}} 
\bigg(
f(\widetilde{W}(an+b)+A) - 
(\widetilde{W}n_{b,j}+A)^{it_N}
S_{f(n)n^{-it_N}}(N;\widetilde{W},A)
\bigg)
F_{b,j}(h^*(n)\Lambda) \bigg|.
\end{align}
The `Claim' from the end of \cite[\S2]{GT-nilmobius} guarantees the existence of  
a Mal'cev basis $\mathcal{Y}$ for $H/\Lambda$ adapted to $H_{\bullet}$ 
such that each basis element $Y_i$ is a $M^{O(1)}$-rational combination of basis
elements $X_i$. 
Thus, there is $C'=O(1)$ such that $\mathcal{Y}$ is $M^{C'}$-rational.
Furthermore, it implies that there is $c'>0$, depending only on the dimension of 
$G$ and the degree of $G_{\bullet}$, such that whenever $B$ is sufficiently large 
the sequence 
\begin{equation}\label{eq:h-seq}
 (h^*(n) \Lambda)_{n \leq T/(a\widetilde{W})}
\end{equation}
is totally $M^{-c'B/2 + O(1)}$-equidistributed in $H/\Lambda$, equipped with 
the metric $d_{\mathcal{Y}}$ induced by $\mathcal{Y}$.
Taking $B$ sufficiently large, we may assume that the sequence \eqref{eq:h-seq} is
totally $M^{-c'B/4}$-equidistributed.
Finally, the `Claim' also provides the bound 
$\|F_{b,j}\|_{\mathrm{Lip}} \leq M^{C''} \|F\|_{\mathrm{Lip}}$ for some 
$C''=O(1)$.
This shows that all conditions of Proportion \ref{p:equid-non-corr} are satisfied 
except for $\int_{H/\Lambda}F_{b,j}=0$.

\paragraph*{\em Step 3: The final condition}
The final condition that needs to be arranged for before we can apply 
Proposition \ref{p:equid-non-corr} to \eqref{eq:F-b-j} is that  
$\int_{H/\Lambda}F_{b,j}=0$.
This is where the major arc condition \eqref{eq:major-arc} is needed, which in 
turn requires that $\gcd(\widetilde{W}b+A,\widetilde{W}a)=1$.
To ensure that the integral over the test function is zero, we decompose 
$F_{b,j}(x\Lambda)$ as 
$$(F_{b,j}(x\Lambda) - \mu_{b,j}) + \mu_{b,j},$$
where $\mu_{b,j}:=\int_{H/\Lambda}F_{b,j}$. 
The expression in brackets represents a new test function that
we can apply the proposition with, and we will show next that the contribution 
from the final term above is small provided 
$f(\widetilde{W}n+A)- 
(\widetilde{W}n_{b,j}+A)^{it_N}
S_{f(n)n^{-it_N}}(N;\widetilde{W},A)$ does 
not correlate with the characteristic function $\1_{P_{a,b}^{(j)}}$ of the 
corresponding progression $P_{a,b}^{(j)}$.

To start with, recall that $T \geq N/(\log N)^{E/2}$, that the common difference of 
$P_{a,b}^{(j)}$ satisfies $a \leq (\log N)^{E}$ and that the length of $P_{a,b}^{(j)}$ is 
bounded below by
$$|P_{a,b}^{(j)}|
\geq T/(2aM\widetilde{W} (\log \log N)^{1/R}) 
\gg T/(a\widetilde{W} (\log N)^{E/2}) \gg N/(a\widetilde{W} (\log N)^{E}),$$
provided $E$ is sufficiently large in terms of $d$, $m_G$ and $B$.
Observe that condition \eqref{eq:major-arc} applies to the function
$n \mapsto f(n)n^{-it_N}$ and to all discrete intervals 
$I \subset \{1, \dots,T/\widetilde{W}\}$ of length $|I|\gg T/(\log T)^{E}$.
In particular, we may choose $q=a$ and $r=b$ and let 
$I$ be a discrete interval of length $a \widetilde W(N) |P_{a,b}^{(j)}|$ that 
contains the set $\{ \widetilde W(N) m + A: m \in P_{a,b}^{(j)}\}$.
To relate $f(n)$ to $f(n)n^{-it_N}$, we observe that the estimates 
\eqref{eq:n^it-factor} and \eqref{eq:loglogN^-1/R} imply that
$$
f(\widetilde W n + A) 
= (\widetilde W n_{b,j} + A)^{it_N} 
  f(\widetilde W n + A)
  (\widetilde W n + A)^{-it_N} 
+ O\bigg(\frac{|f(\widetilde W n + A)|}{\log w(N)}\bigg)$$ 
for all $n, n_{b,j} \in P_{a,b}^{(j)} \in \mathcal{P}'$.

By applying condition \eqref{eq:major-arc} to the main below and Shiu's 
bound \eqref{eq:shiu} in combination with Corollary \ref{c:a'}
to the error term, we obtain the uniform estimate
\begin{align*}
&\frac{1}{|P_{a,b}^{(j)}|}
 \sum_{  m \in P_{a,b}^{(j)} } f(\widetilde{W} m + A)\\
&= (\widetilde{W} m_{b,j} + A)^{it_N}~
\frac{a \widetilde{W}}{|I|}
 \sum_{\substack{  m \in I 
 \\ m \equiv \widetilde{W}b + A ~(a\widetilde{W})}} 
 f(m) m^{-it_N}+ O\bigg( \frac{1}{\log w(N)} 
 \frac{a \widetilde{W}}{|I|}
 \sum_{\substack{  m \in I 
  \\ m \equiv \widetilde{W}b + A ~(a\widetilde{W})}} 
 |f(m)|\bigg)
\\
&= 
(\widetilde{W} m_{b,j} + A)^{it_N} S_{f(n)n^{-it_N}}(N;\widetilde{W},A) \\
&\qquad + 
 O\bigg( \bigg(\varphi'(N) + \frac{1}{\log w(N)} \bigg) 
 \frac{1}{\log N} \frac{\widetilde{W}}{\phi(\widetilde{W})} 
 \prod_{\substack{p<N \\ p \nmid \widetilde{W}}} \left(1 + \frac{|f(p)|}{p} \right)
 \bigg), 
\end{align*}
valid for all $P_{a,b}^{(j)} \in \mathcal{P}'$.

Let, as above, $\mu_{b,j}=\int_{H/\Lambda}F_{b,j}$, and note that 
$\mu_{b,j} \ll 1$.
Thus, the error term incurred by replacing for each $P_{a,b}^{(j)}$ 
with $\gcd(\widetilde{W}b+A,\widetilde{W}a)=1$ the factor $F_{b,j}(h(n)\Lambda)$ 
in \eqref{eq:F-b-j} by
$(F_{b,j}(h(n)\Lambda) - \mu_{b,j})$ is bounded as 
follows:
\begin{align*}
&\bigg|\frac{\widetilde{W}}{T}
\sum_{\substack{{P}_{a,b}^{(j)} \in \mathcal{P}' 
\\ \gcd(\widetilde{W} b + A,a)=1 }}
\mu_{b,j}
\sum_{n\in {P}_{a,b}^{(j)}}
\Big(f(\widetilde{W}n+A)-
(\widetilde{W}m_{b,j}+A)^{it_N}
S_{f(n)n^{-it_N}}(N;\widetilde{W},A)\Big) 
\bigg|\\
&\ll
\frac{\widetilde{W}}{T} 
\sum_{{P}_{a,b}^{(j)} \in \mathcal{P}'} |P_{a,b}^{(j)}|
\bigg(\varphi'(N) + \frac{1}{\log w(N)} \bigg) 
\frac{1}{\log N} \frac{\widetilde{W}}{\phi(\widetilde{W})} 
\prod_{\substack{p<N \\ p \nmid \widetilde{W}}} \left(1 + \frac{|f(p)|}{p} \right)\\
&\ll \bigg(\varphi'(N) + \frac{1}{\log w(N)} \bigg) 
\frac{1}{\log N} \frac{\widetilde{W}}{\phi(\widetilde{W})} 
\prod_{\substack{p<N \\ p \nmid \widetilde{W}}} \left(1 + \frac{|f(p)|}{p} \right),
\end{align*}
where $\varphi'$ is the function defined in Remark \ref{rem:phi'}.
This error term has been taken care of in the bound \eqref{eq:main}.

\paragraph*{\em Step 4: Application of Proposition \ref{p:equid-non-corr}}
The application of Proposition \ref{p:equid-non-corr} to \eqref{eq:F-b-j}
will give rise to the third error term in \eqref{eq:main}.
In view of the work carried out in Steps 1--3, we may now 
assume that $\gcd(\widetilde{W}b+A,\widetilde{W}a)=1$ 
and that $\int_{H/\Lambda}F_{b,j} = 0$ holds, and
apply Proposition \ref{p:equid-non-corr}
with:
\begin{itemize}
 \item $g=h$, $Q=\widetilde W a$, $I=\{n: an+b \in P_{a,b}^{(j)}\}$,
 \item  with a function $\delta:\NN \to \RR$ such that 
$\delta(N)=M^{-C'} (= M_0^{O_{d,m_G,B}(1)})$, which 
ensures that $\mathcal{Y}$ is $\frac{1}{\delta(N)}$-rational, 
 \item with $E$ sufficiently large to ensure that 
$M^{C'} < (\log N)^E$, which in particular means that $E$ depends on $B$,
and
 \item $E_0=c'B/4=O_{d, m_G}(B)$ for some value of $B$ that is sufficiently 
large to ensure that \eqref{eq:h-seq} is totally $M^{-c'B/4}$-equidistributed 
in 
$H/\Lambda$ (cf.\ Step 2) and that is also sufficiently large for Proposition 
\ref{p:equid-non-corr} to apply with the above choice of $E_0$.
\end{itemize}
Since there are $\ll aM(\log \log N)^{1/R}$ intervals $P_{a,b}^{(j)}$ in the 
decomposition $\mathcal{P}' \subset \mathcal{P}$, this yields the bound
\begin{align} \label{eq:deduction-final-bd}
\nonumber
\sum_{P_{a,b}^{(j)} \in \mathcal{P}'}
&\bigg|
\sum_{\substack{n:\, (an+b) \\ \in P_{a,b}^{(j)}}} 
\bigg(
f(\widetilde{W}(an+b)+A) - 
(\widetilde{W}n_{b,j}+A)^{it_N}
S_{f(n)n^{-it_N}}(N;\widetilde{W},A)
\bigg)
F_{b,j}(h^*(n)\Lambda) \bigg|\\
\nonumber
&\ll aM(\log \log N)^{1/R}
     \frac{1 + M^{O(1)} \|F\|}{\log T}
     \frac{T}{\widetilde{W}a}
     \frac{\widetilde W a}{\phi(\widetilde W a)}
     \prod_{\substack{p\leq N\\p\nmid \widetilde Wa}}
     \left(1 + \frac{|f(p)|}{p}\right)
     \mathcal{N} 
\\
&\ll 
     M^{O(1)}
     (\log \log N)^{1/R} 
     \frac{1 + \|F\|}{\log T}
     \frac{T}{\widetilde{W}}
     \frac{\widetilde W a}{\phi(\widetilde W a)}
     \prod_{\substack{p\leq N\\p\nmid \widetilde W}}
     \left(1 + \frac{|f(p)|}{p}\right)
     \mathcal{N},
\end{align}
where the implied constant depends on $d,m_G,\alpha_f,H$ and $B$, and where
\begin{align*}
\mathcal{N}=
(\log \log T)^{-1/(2^{2d+3} \dim G)} 
+ \frac{M^{10^d \dim G}}{(\log \log T)^{1/2^{d+2}}}
\ll \frac{M^{10^d \dim G}}{(\log \log T)^{1/(2^{2d+3} \dim G)}}.
\end{align*}
Finally, we invoke Corollary \ref{c:a'} to remove the dependence on $a$ from 
\eqref{eq:deduction-final-bd}.
We complete the deduction of Theorem \ref{t:non-corr} by setting 
$R = 2^{2d+3} \dim G$ and comparing the bound arising from 
\eqref{eq:deduction-final-bd} with the third term in \eqref{eq:main}.
\end{proof}
It remains to establish Proposition \ref{p:equid-non-corr}.

\section{Linear subsequences of equidistributed nilsequences}
\label{s:linear-subsecs}
Our aim in this section is to study the equidistribution properties of 
families 
$$
\Big\{ (g(Dn+D')\Gamma)_{n\leq T/D} : D \in [K,2K) \Big\}
$$
of linear subsequences of an equidistributed sequence $(g(n)\Gamma)_{n \leq T}$,
where $D$ runs through dyadic intervals $[K,2K)$ for $K \leq T^{1-1/H}$. 
This result will only be needed in the case of unbounded multiplicative 
functions, which allows us to assume that $H>1$ in this section.

We begin by recalling some essential definitions and notation. 
Let $P: \ZZ \to \RR/\ZZ$ be a polynomial of degree at most $d$ and let 
$\alpha_0, \dots, \alpha_d \in \RR/\ZZ$ be defined via
$$
P(n) 
= \alpha_0 + \alpha_1 \binom{n}{1} + \dots + \alpha_d \binom{n}{d}.$$
Then the \emph{smoothness norm} of $g$ with respect to $T$ is defined 
(c.f.\ Green--Tao \cite[Def.\ 2.7]{GT-polyorbits}) as
$$
\|P\|_{C^{\infty}[T]}
= \sup_{1 \leq j \leq d} T^j \|\alpha_j\|_{\RR/\ZZ}.
$$
If $\beta_0, \dots, \beta_d \in \RR/\ZZ$ are defined via
$$
P(n) = \beta_dn^d + \dots + \beta_1 n + \beta_0,
$$
then (cf.\ \cite[equation (14.3)]{lmr}) the smoothness norm is bounded 
above by a similar expression in terms of the $\beta_i$, namely
\begin{equation} \label{eq:alpha-beta}
\|P\|_{C^{\infty}[T]} 
\ll_d
\sup_{1 \leq j \leq d} T^j \|j!\beta_j\|_{\RR/\ZZ}
\ll_d \sup_{1 \leq j \leq d} T^j \|\beta_j\|_{\RR/\ZZ}.
\end{equation}
On the other hand, \cite[Lemma 3.2]{GT-polyorbits} shows that there is 
a positive integer $q \ll_d 1$ such that
$$
\|q \beta_j\|_{\RR/\ZZ} \ll T^{-j} \|P\|_{C^{\infty}[T]}.
$$
Apart from smoothness norms, we also require the notion of a 
horizontal character as defined in 
\cite[Definition 1.5]{GT-polyorbits}.
A continuous additive homomorphism $\eta: G \to \RR/\ZZ$ is called a
\emph{horizontal character} if it annihilates $\Gamma$.
In order to formulate quantitative results, one defines a height function $|\eta|$
for these characters.
A definition of this height, called the \emph{modulus} of $\eta$, may be 
found in \cite[Definition 2.6]{GT-polyorbits}.
All that we require to know about these heights is that there are at most 
$M^{O(1)}$ horizontal characters $\eta:G\to \RR/\ZZ$ of modulus 
$|\eta| \leq M$.

The interest in smoothness norms and horizontal characters lies in Green and
Tao's `quantitative Leibman Theorem':
\begin{proposition}[Green--Tao, Theorem 2.9 of \cite{GT-polyorbits}]
\label{p:leibman}
Let $m_G$ and $d$ be non-negative integers, let $0 < \delta < 1/2$ and let
$N \geq 1$.
Suppose that $G/\Gamma$ is an $m_G$-dimensional nilmanifold together with a 
filtration $G_{\bullet}$ of degree $d$ and that $\mathcal{X}$ is a 
$\frac{1}{\delta}$-rational Mal'cev basis adapted to $G_{\bullet}$. 
Suppose that $g \in \mathrm{poly}(\ZZ, G_{\bullet})$.
If $(g(n)\Gamma)_{n \leq N}$ is not $\delta$-equidistributed, then there 
is a non-trivial horizontal character $\eta$ with 
$0 < |\eta| \ll \delta^{-O_{d,m_G}(1)}$ such that
$$\|\eta \circ g \|_{C^{\infty}[N]} \ll \delta^{-O_{d,m_G}(1)} .$$
\end{proposition}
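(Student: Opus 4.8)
This is \cite[Theorem~2.9]{GT-polyorbits}; we outline the argument. Unravelling the hypothesis: if $(g(n)\Gamma)_{n\le N}$ is not $\delta$-equidistributed, then there is a Lipschitz function $F\colon G/\Gamma\to\CC$ with $\|F\|_{\mathrm{Lip}}\le 1$ and $\int_{G/\Gamma}F=0$ such that $\big|\frac1N\sum_{n\le N}F(g(n)\Gamma)\big|>\delta$. The plan is to induct on the degree $d$ of the filtration $G_{\bullet}$, assuming (after shortening $G_{\bullet}$ if necessary) that the last term $G_d$ is nontrivial; being central, $G_d/(\Gamma\cap G_d)$ is a torus, the \emph{vertical torus}.

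For $d=1$ the nilmanifold is a torus $\RR^{m_G}/\ZZ^{m_G}$ and $g$ is an ordinary vector polynomial. Expanding $F$ into Fourier modes and truncating via the Lipschitz bound --- only $\delta^{-O_{m_G}(1)}$ frequencies of size $\ll\delta^{-O(1)}$ can contribute more than $\delta/2$ --- one finds a nonzero $k\in\ZZ^{m_G}$ with $|k|\ll\delta^{-O(1)}$ and $\big|\frac1N\sum_{n\le N}e(k\cdot g(n))\big|\gg\delta^{O(1)}$. The scalar polynomial $k\cdot g$ then has a large Weyl sum, so the quantitative form of Weyl's inequality (cf.\ \cite[Lemma~3.2]{GT-polyorbits}) produces $q\ll\delta^{-O(1)}$ with $\|q(k\cdot g)\|_{C^{\infty}[N]}\ll\delta^{-O(1)}$; the horizontal character $\eta\colon x\mapsto q\,k\cdot x$ is as required.

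For the inductive step, decompose $F=\sum_{\xi}F_{\xi}$ into vertical Fourier components ($\xi$ ranging over characters of the vertical torus), truncate, and pigeonhole to obtain a single $\xi$ with $|\xi|\ll\delta^{-O(1)}$, $\|F_{\xi}\|_{\mathrm{Lip}}\ll\delta^{-O(1)}$ and $\big|\frac1N\sum_{n\le N}F_{\xi}(g(n)\Gamma)\big|\gg\delta^{O(1)}$. If $\xi$ is trivial, $F_{\xi}$ factors through the quotient nilmanifold $G/(G_d\Gamma)$, which carries a filtration of degree $d-1$; the inductive hypothesis yields a horizontal character there, and the rationality of the Mal'cev basis lets us pull it back to an $\eta$ on $G/\Gamma$ with comparable modulus and smoothness norm. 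The substantial case is $\xi$ nontrivial: apply van der Corput to $\frac1N\sum_{n}F_{\xi}(g(n)\Gamma)$, so that for $\gg\delta^{O(1)}N$ shifts $h$ one has $\big|\frac1N\sum_n F_{\xi}(g(n+h)\Gamma)\overline{F_{\xi}(g(n)\Gamma)}\big|\gg\delta^{O(1)}$. The key point is that, for each fixed $h$, the pair $(g(n+h),g(n))$ traces out a polynomial sequence on a nilmanifold manufactured from $G\times G$ on which the relevant function $F_{\xi}\otimes\overline{F_{\xi}}$ has \emph{trivial} vertical frequency --- the two copies of $\xi$ cancel along the diagonal of the central torus --- so that this nilmanifold admits a filtration of degree only $d-1$. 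After subtracting mean values and absorbing the $h$-averaged contribution, the inductive hypothesis applies and furnishes, for many $h$, a horizontal character $\eta_h$ of modulus $\ll\delta^{-O(1)}$ whose composition with the difference sequence $n\mapsto g(n+h)g(n)^{-1}$ has small smoothness norm. The final step is to \emph{patch} these into one character of $G$: pigeonhole so that $\eta_h$ equals a fixed $\eta_0$ for $\gg\delta^{O(1)}N$ values of $h$, translate the resulting bounds into smallness of the (polynomial-in-$h$) Taylor coefficients of $\eta_0\circ g$, and apply a quantitative recurrence lemma --- if $\|\beta h\|_{\RR/\ZZ}$ is small for a positive proportion of $h$ in an interval then $\beta$ lies near a rational with small denominator, iterated through the polynomial dependence --- to extract a single $\eta$ with $0<|\eta|\ll\delta^{-O(1)}$ and $\|\eta\circ g\|_{C^{\infty}[N]}\ll\delta^{-O(1)}$.

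I expect the nontrivial-vertical-frequency case to be the main obstacle: one must set up the van der Corput step so that the vertical frequency genuinely cancels and the filtration degree drops, handle the mean-value terms produced by averaging in $h$, and carry out the patching, all while keeping every implied exponent polynomial in $1/\delta$ and every modulus polynomially bounded throughout the induction. The base case and the trivial-frequency case are routine by comparison.
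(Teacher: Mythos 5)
The paper does not prove this proposition at all: it is imported verbatim as Theorem 2.9 of Green--Tao \cite{GT-polyorbits}, so there is no internal argument to compare yours against. Your sketch is a faithful outline of the proof given in that reference (Weyl-sum/Fourier-truncation base case, vertical character decomposition, van der Corput step on the manufactured nilmanifold where the vertical frequency cancels and the filtration degree drops, then pigeonholing and the recurrence lemma to patch the characters $\eta_h$ into a single $\eta$), and it correctly identifies the nontrivial-frequency case as the crux.
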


The following lemma shows that for polynomial sequences the notions 
of equidistribution and total equidistribution are equivalent with a polynomial 
dependence in the equidistribution parameter.
\begin{lemma} \label{l:equi/totally-equi}
 Let $N$ and $A$ be positive integers and let $\delta: \NN \to [0,1]$ be a 
 function that satisfies $\delta(x)^{-t} \ll_t x$ for all $t>0$.
 Suppose that $G$ has a $\frac{1}{\delta(N)}$-rational Mal'cev basis adapted to 
 the filtration $G_{\bullet}$. 
 Suppose that $g \in \mathrm{poly}(\ZZ,G_{\bullet})$ is a polynomial sequence 
 such that $(g(n)\Gamma)_{n\leq N}$ is $\delta(N)^A$-equidistributed.
 Then there is  $1\leq B \ll_{d,m_G} 1$
 such that $(g(n)\Gamma)_{n\leq N}$ is totally 
 $\delta(N)^{A/B}$-equidistributed, provided $A/B>1$ and provided $N$ is 
 sufficiently large.
\end{lemma}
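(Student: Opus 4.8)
The plan is to argue by contraposition. Write $\delta=\delta(N)$ and fix a constant $B=O_{d,m_G}(1)$ to be pinned down at the end; assume $A/B>1$ and set $\delta'=\delta^{A/B}$, so that $\delta'\le\delta<1/2$ and hence the given $\tfrac1\delta$-rational Mal'cev basis is in particular $\tfrac1{\delta'}$-rational. Suppose $(g(n)\Gamma)_{n\le N}$ is \emph{not} totally $\delta'$-equidistributed. Then there are an arithmetic progression $P=\{n\in[N]:n\equiv r_0\Mod{q}\}$ with $N':=|P|\ge\delta'N$ (so that $q\le 1/\delta'$) and a Lipschitz function $F$ with $\int_{G/\Gamma}F=0$, normalised so that $\|F\|_{\mathrm{Lip}}=1$, for which $|\EE_{n\in P}F(g(n)\Gamma)|>\delta'$. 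Reparametrising $P$ as $\{0,1,\dots,N'-1\}$ and writing $\tilde g(m):=g(qm+r_0)$ --- which again lies in $\mathrm{poly}(\ZZ,G_\bullet)$, since polynomial sequences are closed under affine substitution of the variable --- this says exactly that the finite sequence $(\tilde g(m)\Gamma)_{m<N'}$ fails to be $\delta'$-equidistributed.

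Now apply the quantitative Leibman theorem (Proposition \ref{p:leibman}) to $\tilde g$ on $[N']$ with parameter $\delta'$: this produces a nontrivial horizontal character $\eta$ with $0<|\eta|\ll\delta'^{-O_{d,m_G}(1)}$ and $\|\eta\circ\tilde g\|_{C^{\infty}[N']}\ll\delta'^{-O_{d,m_G}(1)}$. The technical core of the proof is to upgrade this into a bound on the smoothness norm of a horizontal character composed with $g$ on the \emph{full} interval $[N]$. Expanding $(\eta\circ g)(n)=\sum_{0\le j\le d}\beta_j n^j$ with $\beta_j\in\RR/\ZZ$, the substitution $n=qm+r_0$ gives $(\eta\circ\tilde g)(m)=\sum_k\tilde\beta_k m^k$ with $\tilde\beta_k=q^k\sum_{i\ge k}\binom ik r_0^{\,i-k}\beta_i$; in particular $\tilde\beta_d=q^d\beta_d$. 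Feeding $\|\eta\circ\tilde g\|_{C^{\infty}[N']}\ll\delta'^{-O(1)}$ into the coefficient estimate recorded just after \eqref{eq:alpha-beta} yields an integer $q_0\ll_d 1$ with $\|q_0\tilde\beta_k\|_{\RR/\ZZ}\ll_d(N')^{-k}\delta'^{-O(1)}$ for $1\le k\le d$. One then runs a downward induction on $k=d,d-1,\dots,1$, solving $q^k\beta_k=\tilde\beta_k-q^k\sum_{i>k}\binom ik r_0^{\,i-k}\beta_i$ for $\beta_k$: using $r_0\le N$, the inductively obtained control on the $\beta_i$ with $i>k$, and the relations $N'\ge\delta'N$ and $q\le1/\delta'$ to pass between the two length scales, one produces an integer $Q^*\ll_{d,m_G}\delta'^{-O_{d,m_G}(1)}$ for which $\|Q^*\beta_j\|_{\RR/\ZZ}\ll_{d,m_G}N^{-j}\delta'^{-O_{d,m_G}(1)}$ for all $1\le j\le d$. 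Since the horizontal characters form a torsion-free group, $\eta^*:=Q^*\eta$ is again a nontrivial horizontal character, and by \eqref{eq:alpha-beta} it satisfies $|\eta^*|\ll_{d,m_G}\delta'^{-O_{d,m_G}(1)}$ and $\|\eta^*\circ g\|_{C^{\infty}[N]}\ll_{d,m_G}\delta'^{-O_{d,m_G}(1)}$.

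To conclude, I would use the converse direction underlying Proposition \ref{p:leibman}: the existence of a nontrivial horizontal character $\eta^*$ with $|\eta^*|$ and $\|\eta^*\circ g\|_{C^{\infty}[N]}$ both of size $\ll\delta'^{-O_{d,m_G}(1)}=\delta^{-O_{d,m_G}(1)\cdot A/B}$ contradicts $(g(n)\Gamma)_{n\le N}$ being $\delta^A$-equidistributed, as soon as the exponent $O_{d,m_G}(1)\cdot A/B$ is at most $A$, i.e. as soon as $B$ exceeds a constant depending only on $d$ and $m_G$; choosing $B$ to be that constant --- which is consistent with the standing hypothesis $A/B>1$ --- gives the contradiction and hence the lemma. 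I expect the main obstacle to be the smoothness-norm transfer of the second paragraph: the denominators $Q^*$ and the accumulated powers of $\delta'$ must be tracked carefully enough that the final exponent attached to $\delta'$ depends only on $d$ and $m_G$, and not on $A$, so that one fixed bounded $B$ works uniformly in $A$; one must also be careful that the bound $\|\eta^*\circ g\|_{C^{\infty}[N]}\ll\delta'^{-O(1)}$ is turned into genuine non-equidistribution of $(g(n)\Gamma)_{n\le N}$ itself, rather than merely non-equidistribution on some shorter subinterval of $[N]$. The hypothesis $\delta(x)^{-t}\ll_t x$ enters to guarantee that $N'\ge\delta'N$ is large, that all intermediate quantities $\delta'^{-O(1)}$ stay of size $N^{o(1)}$, and hence that every application of Proposition \ref{p:leibman} is legitimate for $N$ sufficiently large.
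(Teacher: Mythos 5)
Your proposal is correct and follows essentially the same route as the paper's proof: contradiction via a bad subprogression, the quantitative Leibman theorem applied to the reparametrised sequence $\tilde g$, a binomial-coefficient comparison between the coefficients of $\eta\circ\tilde g$ and $\eta\circ g$ with a downward induction, clearing denominators by passing to an integer multiple of $\eta$ of size $\delta'^{-O_{d,m_G}(1)}$ (the paper uses $\ell^{d}\eta$), and finally the converse of the Leibman theorem (the paper cites \cite[Proposition 14.2(b)]{lmr}) to contradict $\delta(N)^A$-equidistribution once $B$ is chosen large in terms of $d$ and $m_G$. The technical concerns you flag at the end are exactly the points the paper's argument addresses, and in the same way.
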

\begin{rem}
 The Green--Tao factorisation theorem (cf.\ property (3) of Lemma 
 \ref{l:factorisation}) usually allows one to arrange for $A > B$ to hold.
\end{rem}

\begin{proof}
 We allow all implied constants to depend on $d$ and $m_G$.
 Let $B\geq1$ and suppose that $(g(n)\Gamma)_{n\leq N}$ fails to be totally 
 $\delta(N)^{A/B}$-equidistributed.
 Then there is a subprogression $P=\{ \ell n + b : 0 \leq n \leq m-1\}$ of
 $\{1, \dots, N\}$ of length $m > \delta(N)^{A/B}N$ such
 that the sequence $(\tilde g(n))_{0 \leq n < m}$, where
 $\tilde g(n)=g(\ell n + b)$, fails to be 
 $\delta(N)^{A/B}$-equidistributed.
 Provided $A>B$, Proposition \ref{p:leibman} implies that there is a non-trivial 
 horizontal character $\eta: G \to \RR/\ZZ$ of modulus
 $|\eta| < \delta(N)^{-O({A/B})}$ such that 
 $$
 \|\eta \circ \tilde g\|_{C^{\infty}[m]} \ll \delta(N)^{-O({A/B})}.
 $$ 
 The lower bound on $m$ implies that this is equivalent to the assertion 
 $$
 \|\eta \circ \tilde g\|_{C^{\infty}[N]} \ll \delta(N)^{-O({A/B})},
 $$ 
 where we recall that the implied constant may depend on $d$.
 
Observing that $\eta \circ g$ is a polynomial of degree at most $d$, let
$\eta \circ g(n)= \beta_d n^d + \dots + \beta_0$.
Then
 $$
 \eta \circ \tilde g(n) 
 = \sum_{i=0}^d n^i 
   \sum_{j=i}^d \beta_j \binom{j}{i}\ell^i b^{j-i},
 $$
and, hence, 
 $$
 \sup_{1 \leq i \leq d} N^i 
 \left \|\sum_{j=i}^d \beta_j \binom{j}{i}\ell^i b^{j-i} \right\|
 \ll \delta(N)^{-O({A/B})}. 
 $$
This yields the bound
 \begin{equation} \label{eq:beta-downwards}
 \left \|\sum_{j=i}^d \beta_j \binom{j}{i}\ell^i b^{j-i} \right\|
 \ll N^{-i} \delta(N)^{-O({A/B})} 
 \end{equation}
 for $1 \leq i \leq d$.
 Note that the lower bound on $m$ implies that $\ell < \delta(N)^{-A/B}$.
 Using a downwards induction argument, we aim to show that
 \begin{equation}\label{eq:beta-induction}
 \|\ell^d \beta_j \| \ll N^{-j} \delta(N)^{-O(A/B)}  
 \end{equation}
 for all $1\leq j \leq d$.
 For $j=d$, this is clear from the above.
 Suppose \eqref{eq:beta-induction} holds for all $j>i$.
 For each $i<j$ we then, in particular, have that 
 $$
 \left\|\ell^{d}  \beta_j \binom{j}{i} b^{j-i}\right\|
 \ll_d  \|\ell^{d} \beta_j \| b^{j-i}
 \ll_d  N^{-j} \delta(N)^{-O(A/B)} b^{j-i}
 \ll_d  N^{-i} \delta(N)^{-O(A/B)}.
 $$
 Using the fact that $\delta(N)^{-t} \ll_t N$ for all $t>0$, we deduce 
 that \eqref{eq:beta-induction} holds for $j=i$ from the above bounds and 
 from \eqref{eq:beta-downwards}. 
 This shows that there is a non-trivial horizontal character, namely
 $\ell^d \eta$, of modulus at most $\delta(N)^{-O(A/B)}$, such that 
 $$
 \|\ell^d \eta \circ g \|_{C^{\infty}[N]}
 \ll \sup_{1 \leq i \leq d} N^{i}  \|\ell^d \beta_i \|_{\RR/\ZZ}
 \ll \delta(N)^{-O({A/B})},
 $$
 where we made use of \eqref{eq:alpha-beta}.
 Choosing $B$ sufficiently large in terms of $m$ and $d$, 
 \cite[Proposition 14.2(b)]{lmr} implies that $g$ is not 
 $\delta(N)^A$-equidistributed, which is a contradiction.
\end{proof}

We are now ready to address the equidistribution properties of linear 
subsequences.

\begin{proposition} \label{p:linear-subsecs}
Let $H>1$, let $N$ and $T$ be as before and let $E_1 \geq 1$.
Let $(A_D)_{D \in \NN}$ be a sequence of integers such that 
$|A_D| \leq D$ for every $D \in \NN$.
Further, let $\delta: \NN \to (0,1)$ be a function that satisfies 
$\delta(x)^{-t} \ll_t x$ for all $t>0$.
Suppose $G/\Gamma$ has a $\frac{1}{\delta(N)}$-rational Mal'cev basis adapted to 
a filtration $G_{\bullet}$ of degree $d$.
Let $g \in \mathrm{poly}(G_{\bullet},\ZZ)$ be a polynomial sequence and suppose  
that the finite sequence $(g(n)\Gamma)_{n\leq T}$ is totally 
$\delta(T)^{E_1}$-equidistributed in $G/\Gamma$.
Then there is a constant $c_1 \in (0,1)$, depending only on $d$ and 
$m_G := \dim G$, 
such that the following assertion holds for all integers 
$$K \in [(\log T)^{\log \log T}, T^{1-1/H}],$$
provided $c_1E_1 \geq 1$.

Write $g_D(n) = g(Dn + A_D)$ and let $\mathcal{B}_{K}$ denote the set of integers 
$D \in [K,2K)$ for which 
$$(g_D(n)\Gamma)_{n \leq T/D}$$
fails to be totally $\delta(T)^{c_1 E_1}$-equidistributed. 
Then
$$
\# \mathcal{B}_{K}
\ll K \delta(T)^{c_1 E_1}.
$$
\end{proposition}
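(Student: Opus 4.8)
The plan is to combine the quantitative Leibman theorem (Proposition~\ref{p:leibman}) with a decomposition over horizontal characters and a Weyl-type count of the ``bad'' values of $D$. First I would fix $D\in\mathcal{B}_K$. Since $K\le T^{1-1/t}$, the length $T/D\ge\tfrac12 T^{1/t}$ is a positive power of $T$, so Proposition~\ref{p:leibman} applies to the non-equidistributed sequence $(g_D(n)\Gamma)_{n\le T/D}$ with parameter $\delta(T)^{c_1E_1}$: there is a non-trivial horizontal character $\eta_D$ on $G$ with $0<|\eta_D|\ll\delta(T)^{-O_{d,m_G}(c_1E_1)}$ and $\|\eta_D\circ g_D\|_{C^{\infty}[T/D]}\ll\delta(T)^{-O_{d,m_G}(c_1E_1)}$. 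As there are only $\ll\delta(T)^{-O_{d,m_G}(c_1E_1)}$ horizontal characters of such modulus, it suffices to bound, for each non-trivial character $\eta$ of this size, the quantity $\#\{D\in[K,2K): \|\eta\circ g_D\|_{C^{\infty}[T/D]}\ll\delta(T)^{-O_{d,m_G}(c_1E_1)}\}$ (which contains $\{D\in\mathcal{B}_K:\eta_D=\eta\}$), and then to sum over $\eta$.

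Next I would translate the smoothness bound into Diophantine information about $\eta$ itself. Writing $\eta\circ g(n)=\beta_dn^d+\dots+\beta_1n+\beta_0$ and expanding $\eta\circ g_D(n)=\eta\circ g(Dn+A_D)$ as a polynomial in $n$, then invoking \eqref{eq:alpha-beta} together with \cite[Lemma~3.2]{GT-polyorbits} exactly as in the proof of Lemma~\ref{l:equi/totally-equi} --- using $|A_D|\le D$ to absorb each factor $A_D^{k-j}$ into a power of $D$ --- one obtains an integer $q\ll_d 1$ such that, for all $1\le j\le d$ and all admissible $D$,
$$
\Big\|\,q\,D^{j}\sum_{k\ge j}\binom{k}{j}\beta_k\,A_D^{k-j}\Big\|_{\RR/\ZZ}\ \ll\ \Big(\frac{D}{T}\Big)^{j}\delta(T)^{-O_{d,m_G}(c_1E_1)}.
$$
The case $j=d$ does not involve $A_D$ and reads $\|q\beta_d D^d\|_{\RR/\ZZ}\le\varepsilon$, where $\varepsilon:=(2K/T)^{d}\delta(T)^{-O_{d,m_G}(c_1E_1)}\le T^{-d/t+o(1)}$ is a genuine negative power of $T$, hence smaller than any fixed positive power of $\delta(T)$ (since $\delta(T)\ge T^{-o(1)}$ by hypothesis).

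It therefore remains to bound $\#\{D\in[K,2K):\|q\beta_dD^d\|_{\RR/\ZZ}\le\varepsilon\}$ for each of the $\ll\delta(T)^{-O(c_1E_1)}$ characters, and to sum. Here I would regard $D\mapsto q\beta_dD^d$ as a polynomial sequence on the circle $\RR/\ZZ$ and apply Proposition~\ref{p:leibman} (with $m_G=1$, degree $d$) at scale $\kappa:=\delta(T)^{O(c_1E_1)}$. If $(q\beta_dD^d)_{D\in[K,2K)}$ is $\kappa$-equidistributed, the set in question has size $\ll(\varepsilon+\kappa)K$, and after multiplying by the number of characters this is $\ll\delta(T)^{-O(c_1E_1)}(\varepsilon+\kappa)K\ll\delta(T)^{c_1E_1}K$ for a suitable choice of the exponent hidden in $\kappa$ (the negative power of $T$ in $\varepsilon$ leaving ample room). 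If instead it is not $\kappa$-equidistributed, Proposition~\ref{p:leibman} yields an integer $q_1\ll\delta(T)^{-O(c_1E_1)}$ with $\|q_1q\beta_d\|_{\RR/\ZZ}\ll\delta(T)^{-O(c_1E_1)}K^{-d}$; since $K\ge(\log T)^{\log\log T}$, the right-hand side is smaller than every fixed negative power of $\log T$, so $q_1q\eta$ is a non-trivial horizontal character of modulus $\ll\delta(T)^{-O(c_1E_1)}$ whose leading coefficient is abnormally close to a rational with bounded denominator. I would then rule this out by induction on the step of $G$: passing to the quotient of $G/\Gamma$ by the last term of $G_{\bullet}$ lowers the degree, total equidistribution is inherited by the quotient, and the lower-degree constraints above (which do involve $A_D$) feed into the quotient in the same way; the base of the induction is the assertion that total $\delta(T)^{E_1}$-equidistribution of $(g(n)\Gamma)_{n\le T}$, via Proposition~\ref{p:leibman} applied to the full interval and its long subprogressions, precludes such a near-rational leading coefficient for any non-trivial horizontal character of modulus $\ll\delta(T)^{-O(E_1)}$.

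The step I expect to be the real obstacle is this last dichotomy, together with the bookkeeping of constants it forces. One must chase the implied exponents carefully so that (i) the power-of-$T$ savings coming from $D\le T^{1-1/t}$ dominate all the $\delta(T)^{-O(\cdot)}$ losses incurred by Leibman's theorem and the decomposition over characters, and (ii) the near-rational alternative genuinely contradicts the total equidistribution hypothesis --- this is exactly what pins down $c_1$, small in terms of $d$ and $m_G$, since the irrationality available from total $\delta(T)^{E_1}$-equidistribution lives at scale $\delta(T)^{-O(E_1)}$ while the near-rationality produced above is at scale $\delta(T)^{-O(c_1E_1)}$. Both hypotheses on $K$ are used essentially: $K\ge(\log T)^{\log\log T}$ makes the conclusion $\|q_1q\beta_d\|\ll\delta(T)^{-O(c_1E_1)}K^{-d}$ strong enough to exclude, while $K\le T^{1-1/t}$ ensures simultaneously that $T/D$ is a large power of $T$ (so Proposition~\ref{p:leibman} applies with room) and that $\varepsilon$ is a negative power of $T$.
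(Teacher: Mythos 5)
Your opening moves (applying Proposition~\ref{p:leibman} to each bad $D$, pigeonholing onto a single horizontal character $\eta$, and extracting the constraints $\|D^j\sum_{k\ge j}\binom{k}{j}\beta_kA_D^{k-j}\|\ll (D/T)^j\delta(T)^{-O(c_1E_1)}$) coincide with the paper's proof. The gap lies in how you exploit these constraints. Your dichotomy treats only the leading coefficient $\beta_d$, and in the non-equidistributed branch it delivers only $\|q_1q\beta_d\|\ll\delta(T)^{-O(c_1E_1)}K^{-d}$. This does not contradict the total equidistribution of $(g(n)\Gamma)_{n\le T}$: a single coefficient of $\eta\circ g$ being rational with bounded denominator (even exactly zero) is perfectly compatible with equidistribution, which can be carried entirely by the other coefficients. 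So the ``base of the induction'' you assert is false, and the proposed induction on the step of $G$ does not address the right object --- $\eta$ already factors through the horizontal torus, and what must be controlled is the full tuple $(\beta_1,\dots,\beta_d)$ of coefficients of the real polynomial $\eta\circ g$, not the nilpotency structure. Moreover, when $\beta_d$ is exactly rational the $j=d$ constraint is vacuous and your count gives nothing, so the argument must in any case descend to the lower coefficients, where the $A_D$-mixing you defer is the real difficulty.

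What is missing is the mechanism the paper uses: a downward induction on $j$ producing, for every $1\le j\le d$, an integer $\kappa_j\ll\delta(T)^{-O(c_1E_1)}$ with $\|\kappa_j\beta_j\|\ll\delta(T)^{-O(c_1E_1)}T^{-j}$ --- note the scale $T^{-j}$, not $K^{-j}$. The device is the Waring trick (Lemmas~\ref{l:GT-waring} and~\ref{l:GT-rec}): a proportion $\gg\delta(T)^{O(c_1E_1)}$ of $D\in[K,2K)$ have $\|\beta_jD^j\|$ confined to an interval of length $\ll\delta(T)^{-O(c_1E_1)}(K/T)^j$, so forming sums $D_1^j+\dots+D_s^j$ with $s\ge 2^j+1$ yields a positive-density subset of $[1,s(2K)^j]$ of integers $m$ with $\|\beta_jm\|$ in a short interval, and the recurrence lemma then gives the $T^{-j}$-quality rational approximation to $\beta_j$ with bounded denominator. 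The coupling of $\beta_j$ to the higher coefficients through $A_D$ is handled by restricting $D$ to a fixed residue class modulo $\lcm(\kappa_{j+1},\dots,\kappa_d)$; this only works because the higher approximations are accurate to $T^{-i}$, so the errors $\tilde\beta_iD^{i}$ are negligible --- with only $K^{-d}$ accuracy, as in your branch, the error $\tilde\beta_dD^{d}$ is of size $\delta(T)^{-O(1)}$ and swamps the lower-order constraints. Finally, the contradiction with total equidistribution is not that some coefficient is near-rational, but that $\kappa\eta\circ g$ (with $\kappa=\lcm(\kappa_1,\dots,\kappa_d)$) has small $C^{\infty}[T]$-norm and is therefore nearly constant on an initial segment of length $\delta(T)^{O(c_1E_1)}T$, against which the mean-zero Lipschitz function $\chi\circ\kappa\eta$ witnesses the failure of $\delta(T)^{E_1}$-equidistribution.
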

\begin{proof}
Let $K\in [(\log T)^{\log \log T}, T^{1-1/H}]$ be a fixed integer and let 
$c_1 > 0$ to be determined in the course of the proof.
Suppose that $E_1>1/c_1$.
Lemma \ref{l:equi/totally-equi} implies that for every 
$D \in \mathcal{B}_{K}$, the sequence 
$(g_D(n)\Gamma)_{n\leq T/ D}$
fails to be $\delta(T)^{c_1E_1B}$-equidistributed on $G/\Gamma$ for some $B>0$ 
only depending on $d$ and $m_G$.
We continue to allow implied constants to depend on $d$ and $m_G$.
By Proposition \ref{p:leibman}, there is a non-trivial horizontal character
$\eta_D: G \to \RR/\ZZ$ of magnitude $|\eta_D| \ll \delta(T)^{-O(c_1E_1)}$ such 
that
\begin{align} \label{eq:smoothness-D}
\| \eta_D \circ g_D\|_{C^{\infty}[T/D]} \ll 
\delta(T)^{-O(c_1E_1)}.
\end{align}
For each non-trivial horizontal character $\eta: G \to \RR/\ZZ$ we define the 
set
$$
\mathcal{D}_{\eta} = 
\left\{ D \in \mathcal{B}_K : \eta_D = \eta \right\}.
$$
Note that this set is empty unless $|\eta| \ll \delta(T)^{-O(c_1E_1)}$.
Suppose that
$$
\# \mathcal{B}_K \geq K \delta(T)^{c_1E_1}.
$$
By the pigeon hole principle, there is some $\eta$ of modulus
$|\eta| \ll \delta(T)^{-O(c_1E_1)}$ such that
$$
\# \mathcal{D}_{\eta} \geq K \delta(T)^{O(c_1E_1)}.
$$
Suppose
$$
\eta \circ g(n) 
= \beta_d n^d + \dots \beta_1 n + \beta_0 
$$
and let
$$
\eta \circ g_D(n) 
= \alpha_d^{(D)} n^d + \dots + \alpha_1^{(D)} n + \alpha_0^{(D)}
$$
for any $D \in \mathcal{B}_K$.
The quantities $\alpha_j^{(D)}$ and $\beta_j$ are linked through the relation
\begin{align} \label{eq:rec}
 \alpha_j^{(D)} = 
 D^{j} \sum_{i=j}^d \binom{i}{j} A_D^{i-j} \beta_i
\end{align}
for each $1 \leq j \leq d$.
Thus, the bound \eqref{eq:smoothness-D} on the smoothness norm 
asserts that
\begin{align} \label{eq:smoothness-D-2}
\sup_{1 \leq j \leq d} 
\frac{T^j}{K^j} 
\| \alpha_j^{(D)}  \|
\ll \delta(T)^{-O(c_1E_1)}. 
\end{align}
With a downwards induction we deduce from \eqref{eq:smoothness-D-2} and 
\eqref{eq:rec} that
\begin{align}\label{eq:pre-waring-D}
\sup_{1 \leq j \leq d} 
\frac{T^j}{K^j} 
\left\| D^j \beta_j \right\|
\ll \delta(T)^{-O(c_1E_1)}. 
\end{align}
The bound \eqref{eq:pre-waring-D} provides information on rational approximations 
of $D^j \beta_j$ for many values of $D$. 
Our next aim is to use this information in order to deduce information on rational 
approximations of the $\beta_j$ themselves.
To achieve this, we employ the Waring trick that appeared in the \emph{Type I} 
sums analysis in \cite[\S3]{GT-nilmobius}, and begin by recalling the two 
lemmas that this trick rests upon.
The first one is a recurrence result, \cite[Lemma 3.2]{GT-polyorbits}.
\begin{lemma}[Green--Tao \cite{GT-polyorbits}]\label{l:GT-rec}
Let $\alpha \in \RR$, $0< \delta <1/2$ and $0 < \sigma < \delta/2$, 
and let $I \subseteq \RR/\ZZ$ be an interval of length $\sigma$ such that 
$\alpha n \in I$ for at least $\delta N$ values of $n$, $1 \leq n \leq N$. 
Then there is some $k \in \ZZ$ with 
$0 < |k| \ll \delta^{-O(1)}$ such that 
$\| k\alpha \| \ll \sigma \delta^{-O(1)}/N$.
\end{lemma}
The second, \cite[Lemma 3.3]{GT-nilmobius}, is a consequence of the asymptotic 
formula in Waring's problem.
\begin{lemma}[Green--Tao \cite{GT-nilmobius}] \label{l:GT-waring}
Let $K \geq 1$ be an integer, and suppose that 
$S \subseteq \{1, \dots, K\}$ is a set of size $\alpha K$.
Suppose that $t\geq 2^j + 1$. Then $ \gg_{j,t} \alpha^{2t}K^j$ integers in 
the interval $[1,tK^j]$ can be written in the form 
$k_1^j + \dots + k_t^j$, $k_1, \dots, k_t \in S$.
\end{lemma}
Returning to the proof of Proposition \ref{p:linear-subsecs}, let us consider the
set
\begin{align*}
\overline{\mathcal{D}}_j
= \left\{ m \leq s(2K)^j:
\begin{array}{l}
m =  D_1^j + \dots + D_s^j \\
D_1, \dots, D_s \in \mathcal{D}_{\eta}
\end{array}
\right\} 
\end{align*}
for some $s \geq 2^j+1$.
Each element $m$ of this set satisfies
\begin{align}\label{eq:bd-waring-D}
\|\beta_j m \|
\ll \delta(T)^{-O(c_1)} (K/T)^j, \quad 1\leq j \leq d,  
\end{align}
in view of \eqref{eq:pre-waring-D}.
Thus, Lemma \ref{l:GT-waring} implies that there are
$$
\# \overline{\mathcal{D}}_j
\gg \delta(T)^{O(c_1E_1)} K^j 
$$ 
elements in this set.
In view of the restrictions on $K$ and the assumptions on the function 
$\delta(x)$, the conditions of Lemma \ref{l:GT-rec} (on $\sigma$ and $\delta$) 
are satisfied provided $T$ is sufficiently large.
We conclude that there is an integer $k_j$ such that
$$1 \leq k_j 
\ll \delta(T)^{-O(c_1E_1)}
$$ 
and such that 
\begin{align*} 
\| k_j \beta_j \|
\ll 
 \delta(T)^{-O(c_1E_1)} 
 T^{-j}.
\end{align*}
Thus
\begin{align}\label{eq:alpha_j-kappa_j-D}
\beta_j = \frac{a_j}{\kappa_j} + \tilde\beta_j,
\end{align}
where $\kappa_j| k_j $, $\gcd(a_j,\kappa_j)=1$
and 
$$0 \leq \tilde\beta_j \ll 
 \delta(T)^{-O(c_1E_1)}  
 T^{-j}.$$
Hence,
\begin{align}\label{eq:kappa_j-D}
\| \kappa_j \beta_j \|
\ll \delta(T)^{-O(c_1E_1)} 
 T^{-j}.
\end{align}
Let $\kappa=\lcm(\kappa_1, \dots, \kappa_d)$ and set 
$\tilde \eta = \kappa \eta$.
We proceed as in \cite[\S3]{GT-nilmobius}: 
The above implies that
$$
\|\tilde \eta \circ g (n)\|_{\RR/\ZZ} \ll
\delta(T)^{-O(c_1E_1)} n/T,
$$
which is small provided $n$ is not too large.
Indeed, if $T'= \delta(T)^{c_1E_1C}T$ for some sufficiently large constant 
$C \geq 1$, only depending on $d$ and $m_G$, and if $n \in \{1, \dots , T'\}$, 
then
$$
\|\tilde \eta \circ g (n)\|_{\RR/\ZZ} \leq 1/10.
$$
Let $\chi: \RR/\ZZ \to [-1,1]$ be a function of bounded Lipschitz norm that 
equals $1$ on $[-\frac{1}{10},\frac{1}{10}]$ and satisfies $\int_{\RR/\ZZ} 
\chi(t)\d t = 0$.
Then, by setting $F := \chi \circ \tilde \eta$, we obtain a Lipschitz function 
$F:G/\Gamma \to [-1,1]$ that satisfies $\int_{G/\Gamma} F = 0$ and 
$\|F\|_{\mathrm{Lip}} \ll \delta(T)^{-O(c_1E_1)}$.
Choosing, finally, $c_1$ sufficiently small, only depending on $d$ and $m_G$, we 
may ensure that
$$\|F\|_{\mathrm{Lip}} < \delta(T)^{-E_1}$$
and, moreover, that
$$
T'> \delta(T)^{E_1}T.
$$
This choice of $T'$, $F$ and $c_1$ implies that
$$
\Big| \frac{1}{T'} \sum_{1 \leq n \leq T'} F(g(n)\Gamma) \Big|
= 1 > \delta(T)^{E_1}  \|F\|_{\mathrm{Lip}},
$$
which contradicts the fact that
$(g(n)\Gamma)_{n\leq T}$ is totally $\delta(T)^{E_1}$-equidistributed.
This completes the proof of the proposition.
\end{proof}

\section{Equidistribution of product nilsequences} \label{s:products}
In this section we prove, building on material and techniques from 
\cite[\S 3]{GT-nilmobius}, a result on the equidistribution of 
products of nilsequences which will allow us to perform 
applications of the Cauchy--Schwarz inequality in 
Section \ref{s:proof-of-prop}.
The specific form of the result is adjusted to the requirements of 
Section \ref{s:proof-of-prop}.

We begin by introducing the product sequences we shall be interested in.
Suppose $g \in \mathrm{poly}(G_{\bullet},\ZZ)$ is a polynomial 
sequence.
This is equivalent to the assertion that there exists an integer $k$, 
elements $a_1,\dots, a_k$ of $G$, and integral polynomials 
$P_1 ,\dots, P_k \in \ZZ[X]$ such that
$$g(n)=a_1^{P_1(n)}a_2^{P_2(n)} \dots a_k^{P_k(n)}.$$
Then, for any pair of integers $(m, m')$, the sequence
$n \mapsto (g(mn),g(m'n)^{-1})$ is a polynomial sequence on $G\times G$ 
that may be represented by 
$$(g(mn),g(m'n)^{-1})
= \left(\prod_{i=1}^k (a_i,1)^{P_i(mn)}\right)
\left(\prod_{i=1}^k(1,a_i)^{P_i(m'n)}\right)^{-1}.$$
The horizontal torus of $G \times G$ arises as the direct product
$G/\Gamma[G,G]\times G/\Gamma[G,G]$ of horizontal tori for $G$.
Let $\pi:G \to G/\Gamma[G,G]$ be the natural projection map. 
Any horizontal character on $G \times G$ restricts to a horizontal
character on each of its factors. 
Thus, it takes the form 
$\eta \oplus \eta'(g_1,g_2):= \eta(g_1) + \eta'(g_2)$ for horizontal 
characters $\eta,\eta'$ of $G$.
The following proposition will be applied in the proof of Proposition 
\ref{p:equid-non-corr} to sequences $g=g_D$ for unexceptional $D$ in the sense of 
Proposition \eqref{p:linear-subsecs}.
\begin{proposition}\label{p:equid}
Let $N$ and $T$ be as before and let $E_2 \geq 1$.
Let $(\tilde D_m)_{m \in \NN}$ be a sequence of integers satisfying
$|\tilde D_m| < m$ for every $m \in \NN$.
Further, let $\delta: \NN \to (0,1)$ be a function that satisfies 
$\delta(x)^{-t} \ll_t x$ for all $t>0$.
Suppose $G/\Gamma$ has a $\frac{1}{\delta(T)}$-rational Mal'cev basis adapted 
to a filtration $G_{\bullet}$ of degree $d$. 
Let $P \subset \{1,\dots, T \}$ be a discrete interval.
Suppose $F: G/\Gamma \to \CC$ is a $1$-bounded function of bounded Lipschitz 
norm $\|F\|_{\mathrm{Lip}}$ and suppose that $\int_{G/\Gamma} F = 0$.
Let $g \in \mathrm{poly}(G_{\bullet},\ZZ)$ and suppose that
the finite sequence $(g(n)\Gamma)_{n\leq T}$ is 
totally $\delta(T)^{E_2}$-equidistributed in $G/\Gamma$.
Then there is a constant $c_2 \in (0,1)$, only depending on $d$ and 
$m_G:= \dim G$, such that the following assertion holds for all integers 
$$K \in 
\left[\exp \left( (\log \log T)^2 \right), 
\exp \left(\frac{1}{H}\left(\log T -(\log T)^{1/U}\right)\right)\right],$$
where $1 < U \ll 1$, provided $c_2E_2\geq1$.

Let $\mathcal{E}_{K}$ denote the set of integer pairs $(m,m') \in (K,2K]^2$
such that the discrete interval
$$
I_{m,m'}
=\left\{n \in \NN : 
\begin{array}{c}
nm + \tilde D_m \in P, \\
nm' + \tilde D_{m'} \in P
\end{array}
\right\}
$$
has length at least $$\# I_{m,m'}> \delta(N)^{c_2E_2} T/K,$$
and such that
\begin{align*}
\bigg|
\sum_{\substack{
n \in I_{m,m'}}}
F\Big(g\Big(mn + \tilde D_m \Big)\Gamma\Big)
\overline{F\Big(g\Big(m'n+\tilde D_{m'}\Big)\Gamma\Big)} 
\bigg|
> (1 + \|F\|_{\mathrm{Lip}}) \delta(T)^{c_2E_2}~\#I_{m,m'}
\end{align*}
holds.
Then, 
$$
\# \mathcal{E}_{K} 
< K^2 \delta(T)^{O(c_2E_2)}, 
$$
uniformly for all $K$ as above.
\end{proposition}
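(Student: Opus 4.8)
The plan is to argue by contradiction, adapting the \emph{Type II} analysis of \cite[\S3]{GT-nilmobius} to the product nilmanifold $(G\times G)/(\Gamma\times\Gamma)$. Assume $\#\mathcal{E}_K\geq K^2\delta(T)^{C'c_2E_2}$ for the (bounded) constant $C'$ implicit in the conclusion; I will produce a nontrivial horizontal character on $G$ whose existence is incompatible with the total $\delta(T)^{E_2}$-equidistribution of $(g(n)\Gamma)_{n\leq T}$. Throughout, all implied constants and all exponents written ``$O(c_2E_2)$'' may depend on $d$, $m_G$ and $U$.

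First I would linearise the obstruction. For $(m,m')\in\mathcal{E}_K$, consider the polynomial sequence $G_{m,m'}(n)=\big(g(mn+\tilde D_m),\,g(m'n+\tilde D_{m'})\big)$ on $G\times G$, adapted to the degree-$d$ product filtration $G_\bullet\times G_\bullet$ and to the product Mal'cev basis, which is $O(1/\delta(T))$-rational. Put $\Psi=F\otimes\overline{F}$; then $\Psi$ is $1$-bounded with $\|\Psi\|_{\mathrm{Lip}}\ll 1+\|F\|_{\mathrm{Lip}}$, one has $\int_{(G\times G)/(\Gamma\times\Gamma)}\Psi=\big|\int_{G/\Gamma}F\big|^2=0$, and $\Psi(G_{m,m'}(n))=F(g(mn+\tilde D_m)\Gamma)\,\overline{F(g(m'n+\tilde D_{m'})\Gamma)}$. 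The set $I_{m,m'}$ is an intersection of two intervals, hence an interval, of length $L>\delta(T)^{c_2E_2}T/K$, which is at least $\delta(T)^{c_2E_2}$ times the full range $\{n\leq T/\max(m,m')\}$; by the defining inequality of $\mathcal{E}_K$ the average of $\Psi\circ G_{m,m'}$ over $I_{m,m'}$ exceeds a fixed multiple of $\delta(T)^{c_2E_2}\|\Psi\|_{\mathrm{Lip}}$, the factor $1+\|F\|_{\mathrm{Lip}}$ in the threshold cancelling $\|\Psi\|_{\mathrm{Lip}}$. Reparametrising $I_{m,m'}$ as $\{0,\dots,L-1\}$ and applying Proposition \ref{p:leibman} on $G\times G$ yields a nontrivial horizontal character $\eta\oplus\eta'$ ($\eta,\eta'$ horizontal characters of $G$, not both trivial) of modulus $\ll\delta(T)^{-O(c_2E_2)}$ with $\big\|n\mapsto\eta\circ g(mn+c_m)+\eta'\circ g(m'n+c_{m'})\big\|_{C^\infty[L]}\ll\delta(T)^{-O(c_2E_2)}$, where $c_m,c_{m'}$ are the resulting shifts (of size $\ll T$). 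Writing $\eta\circ g(x)=\sum_i\beta_i x^i$, $\eta'\circ g(x)=\sum_i\beta_i' x^i$ and rescaling from $[L]$ to the $(K/T)$-scale, this gives $\|\tilde\alpha_j\|\ll\delta(T)^{-O(c_2E_2)}(K/T)^j$ for $1\leq j\leq d$, where $\tilde\alpha_j$ is the $n^j$-coefficient; the leading one is $\tilde\alpha_d=m^d\beta_d+m'^d\beta_d'$, free of the shifts.

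Next I would pigeonhole and run the Waring trick. Since there are only $\ll\delta(T)^{-O(c_2E_2)}$ pairs of horizontal characters of the permitted modulus, a single pair $(\eta,\eta')$ — not both trivial, say $\eta\neq 0$ — serves for $\gg K^2\delta(T)^{O(c_2E_2)}$ pairs $(m,m')$. A downwards induction on $j$ (as in the \emph{Type I} analysis of \cite[\S3]{GT-nilmobius} and in the proof of Proposition \ref{p:linear-subsecs}) reduces matters at each level to a relation $\|\beta_j^*m^j+\gamma_j^*\|\ll\delta(T)^{-O(c_2E_2)}(K/T)^j$ holding on a set $S\subseteq(K,2K]$ of density $\gg\delta(T)^{O(c_2E_2)}$ — obtained by fixing one coordinate to a pigeonholed value and absorbing the shift-dependent higher-order terms using the inductive bounds $\|k\beta_i\|,\|k\beta_i'\|\ll\delta(T)^{-O(c_2E_2)}T^{-i}$ for $i>j$ (legitimate since $m^jc_m^{i-j}\|k\beta_i\|\ll\delta(T)^{-O(c_2E_2)}(K/T)^j$ for $m\leq 2K$, $c_m\ll T$) — with $\beta_j^*$ an integer multiple of $\beta_j$ of bounded size and $\gamma_j^*$ fixed. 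Lemma \ref{l:GT-waring} (with $s=2^j+1$) then yields $\gg\delta(T)^{O(c_2E_2)}K^j$ integers in $[1,O(K^j)]$ that $\beta_j^*$ maps into a fixed interval of length $\ll\delta(T)^{-O(c_2E_2)}(K/T)^j$, and Lemma \ref{l:GT-rec} — whose hypotheses hold for $T$ large given the constraints on $K$ and the assumption $\delta(x)^{-t}\ll_t x$ — produces $1\leq k_j\ll\delta(T)^{-O(c_2E_2)}$ with $\|k_j\beta_j\|\ll\delta(T)^{-O(c_2E_2)}T^{-j}$ (and, when $\eta'\neq 0$, likewise $\|k_j\beta_j'\|$ small, by also fixing the other coordinate). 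With $k=\lcm(k_1,\dots,k_d)\ll\delta(T)^{-O(c_2E_2)}$, the character $k\eta$ is nontrivial (horizontal characters of $G$ form a free abelian group, $k\neq 0$) and obeys $\|k\eta\circ g\|_{C^\infty[T]}\ll\delta(T)^{-O(c_2E_2)}$ by \eqref{eq:alpha-beta}.

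Finally, \cite[Proposition 14.2(b)]{lmr} — the converse to Proposition \ref{p:leibman} used already in the proof of Lemma \ref{l:equi/totally-equi} — shows $(g(n)\Gamma)_{n\leq T}$ is not $\delta(T)^{O(c_2E_2)}$-equidistributed. Choosing $c_2>0$ small enough (depending on $d$ and $m_G$, and so that this $O(c_2E_2)<E_2$ while $c_2E_2\geq 1$ persists) and $C'$ large enough that the two pigeonhole steps are non-vacuous for $K$ in the stated range, this contradicts the total $\delta(T)^{E_2}$-equidistribution of $(g(n)\Gamma)_{n\leq T}$; hence $\#\mathcal{E}_K<K^2\delta(T)^{C'c_2E_2}$. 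The case $\eta=0$, $\eta'\neq 0$ is identical after swapping the two coordinates, the definition of $\mathcal{E}_K$ being symmetric. The main obstacle is the bookkeeping of the equidistribution parameter: reparametrisation and rescaling, Proposition \ref{p:leibman} on $G\times G$, the two pigeonholes and the Waring trick each cost a power of $\delta(T)$, and one must ensure the total loss stays a bounded multiple of $c_2E_2$ — small enough to be beaten by $E_2$ once $c_2$ is fixed, yet not so large that the pigeonholes go vacuous in the range $K\in[\exp((\log\log T)^2),\exp((\log T-(\log T)^{1/U})/t)]$. Apart from this, one must also check that the shifted product sequence on $G\times G$ has the claimed polynomial/filtration/rationality structure and that smoothness norms transfer correctly between the $[L]$-, $(K/T)$- and $[T]$-scales; all of this parallels \cite[\S3]{GT-nilmobius}.
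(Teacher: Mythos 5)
Your proposal is correct and follows essentially the same route as the paper: form the tensor-square nilsequence on $G\times G$ with $F\otimes\overline F$ (mean zero), argue by contradiction via the quantitative Leibman theorem, pigeonhole to a single horizontal character $\eta\oplus\eta'$, run the downward induction combined with the Waring trick (Lemma \ref{l:GT-waring}) and the recurrence lemma (Lemma \ref{l:GT-rec}) to extract a bounded multiple of $\eta$ with small smoothness norm for $g$ itself, and contradict the total equidistribution of $(g(n)\Gamma)_{n\leq T}$. The only cosmetic difference is that you invoke the converse equidistribution criterion from \cite[Proposition 14.2(b)]{lmr} at the end, where the paper constructs the explicit test function $\chi\circ\tilde\eta$ on a shortened range; these are the same content.
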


\begin{rem}
Using a trivial bound when $\# I_{m,m'} \leq \delta(N)^{c_2E_2} T/K$, we
deduce that
\begin{align*}
\bigg|
\sum_{n \in I_{m,m'}}
F\Big(g\Big(mn + \tilde D_m \Big)\Gamma\Big)
\overline{F\Big(g\Big(m'n+\tilde D_{m'}\Big)\Gamma\Big)} 
\bigg|
<
\frac{(1 + \|F\|_{\mathrm{Lip}}) \delta(T)^{c_2E_2} T}{K}
\end{align*}
for all $(m,m')\in (K,2K]^2\setminus  \mathcal{E}_{K}$.
\end{rem}

\begin{rem}\label{r:primes}
 The above proposition essentially continues to hold when the variables $(m,m')$ 
are restricted to pairs of primes.
Due to a suitable choice of a cut-off parameter, $X$, that appears in Section 
\ref{ss:MV}, we will not need this variant of the proposition (cf. Section 
\ref{ss:small-primes}) and only provide a very brief account of it at the very end 
of this section.
\end{rem}

\begin{proof}
To begin with, we endow $G/\Gamma \times G/\Gamma$ with a 
metric by setting 
$$d((x,y),(x',y'))=d_{G/\Gamma}(x,x')+d_{G/\Gamma}(y,y').$$
Let $\tilde F: G/\Gamma \times G/\Gamma \to \CC$ be defined via
$\tilde F (\gamma,\gamma') = F(\gamma) \overline{F(\gamma')}$.
This is a Lipschitz function.
Indeed, the fact that $F$ and $\overline{F}$ are $1$-bounded Lipschitz 
functions allows us to deduce that 
$\|\tilde F\|_{\mathrm{Lip}} \leq \|F\|_{\mathrm{Lip}}$.
Let $g_{m,m'}: \NN \to G \times G$ be the polynomial sequence
defined by 
$$g_{m,m'}(n)= (g(nm+\tilde D_m),g(nm'+\tilde D_{m'})).$$
Furthermore, we write $\Gamma'=\Gamma \times \Gamma$.
Then $\tilde F$ satisfies
$$
\int_{G/\Gamma \times G/\Gamma}
 \tilde F(\gamma,\gamma')
 \d(\gamma,\gamma')
=
 \int_{G/\Gamma}
 F(\gamma)
 \int_{G/\Gamma}
 \overline{F(\gamma')}
 \d\gamma'
 \d\gamma
= 0.
$$
Now, suppose that 
$$K \in 
\left[\exp \left( (\log \log T)^2 \right), 
\exp \left(H^{-1}(\log T -(\log T)^{1/U})\right)\right]
$$
and that 
$$
\mathcal{E}_{K} \geq  K^2 \delta(T)^{c_2E_2}. 
$$
For each pair $(m,m') \in \mathcal{E}_{K}$, we have
\begin{align} \label{eq:tech-lowerbound}
 \Bigg|\sum_{ n \in I_{m,m'}} 
  \tilde{F} (g_{m,m'}(n) \Gamma) \Bigg|
>  (1 + \|F\|_{\mathrm{Lip}})\delta(T)^{c_2E_2} \# I_{m,m'}.
\end{align}
Thus, for every pair $(m,m') \in \mathcal{E}_K$, the corresponding sequence
$$(\tilde{F} (g_{m,m'}(n) \Gamma))_{n \leq T/ \max(m,m')}$$ 
fails to be totally $\delta(T)^{c_2E_2}$-equidistributed.
Lemma \ref{l:equi/totally-equi} implies that this finite sequence also fails to 
be $\delta(T)^{c_2E_2B}$-equidistributed for some $B\geq1$ that only depends on 
$d$ 
and $m_G$.
All implied constants in the sequel will be allowed to depend on $d$ and $m_G$, 
without explicit mentioning.
By \cite[Theorem 2.9]{GT-polyorbits}\footnote{The 
$\frac{1}{\delta(T)}$-rational 
Mal'cev basis for $G/\Gamma$ induces one for $G/\Gamma \times G/\Gamma$. 
Thus \cite[Theorem 2.9]{GT-polyorbits} is applicable.}, 
there is for each pair $(m,m') \in \mathcal{E}_K$ a non-trivial horizontal 
character 
$$\tilde \eta_{m,m'}
 = \eta_{m,m'} \oplus \eta'_{m,m'}: G\times G \to \RR/\ZZ$$ 
of magnitude $\ll \delta(T)^{-O(c_2E_2)}$ such that
\begin{equation}\label{eq:smoothness-bound}
 \| \tilde \eta_{m,m'} \circ \tilde g_{m,m'}\|_{
 C^{\infty}[T/\max(m,m')]} 
\ll \delta(T)^{-O(c_2E_2)}.
\end{equation}
Given any non-trivial horizontal character 
$\tilde \eta : G \times G \to \RR/\ZZ$, we define the set 
$$
\mathcal{M}_{\eta} 
= \left\{ (m,m') \in \mathcal{E}_{K} \mid 
\tilde \eta_{m,m'} = \tilde \eta
\right\}.
$$
This set is empty unless $|\tilde \eta| \ll \delta(T)^{-O(c_2E_2)}$.
Pigeonholing over all non-trivial $\tilde \eta$ of modulus bounded by 
$\delta(T)^{-O(c_2E_2)}$, 
we find that there is some $\tilde \eta$ amongst them for which 
$$
\# \mathcal{M}_{\tilde \eta} > K^2 \delta(T)^{O(c_2E_2)}. 
$$
Let us fix such a character $\tilde \eta = \eta \oplus \eta'$ and suppose 
without loss of generality that the component $\eta$ is non-trivial.
Suppose
$$
\tilde \eta \circ (g(n),g(n'))
= (\alpha_d n^d + \alpha'_d n'^d) + \dots 
+ (\alpha_1 n   + \alpha'_1 n')
+ (\alpha_0     + \alpha'_0 )$$
and define for $(m,m') \in \mathcal{E}_K$ the coefficients $\alpha_j(m,m')$, 
$1 \leq j \leq d$, via
$$
\tilde \eta \circ g_{m,m'}(n)
= \alpha_d(m,m') n^d + \dots 
+ \alpha_1(m,m') n
+ \alpha_0(m,m').
$$
Then the bound \eqref{eq:smoothness-bound} on the smoothness norm 
asserts that
\begin{align} \label{eq:smoothness-2}
\sup_{1 \leq j \leq d} 
\frac{T^j}{K^j} 
\left\| \alpha_j (m,m') \right\|
\ll \delta(T)^{-O(c_2E_2)}. 
\end{align}
Observe that each $\alpha_j(m,m')$, $1 \leq j \leq d$, satisfies
\begin{align} \label{eq:rec-1}
 \alpha_j(m,m') = 
 \sum_{i=j}^d 
 \binom{i}{j} 
 \Big(\tilde D_m^{i-j} \alpha_i m^{j} 
 + \tilde D_{m'}^{i-j} \alpha'_i m'^j \Big).
\end{align}
We now aim to show with a downwards induction starting from $j=d$ that
\begin{align}\label{eq:alpha_j-kappa_j}
\alpha_j = \frac{a_j}{\kappa_j} + \tilde\alpha_j,
\end{align}
where 
$1 \leq \kappa_j \ll \delta(T)^{-O(c_2E_2)}$, 
$\gcd(a_j,\kappa_j)=1$, 
and 
\begin{align}\label{eq:tilde-alpha-j}
 \tilde \alpha_j \ll \delta(T)^{-O(c_2E_2)} T^{-j}.
\end{align}
Suppose $j_0 \leq d$ and that the above holds for all $j>j_0$.
Set $k_{j_0}=\lcm(\kappa_{j_0+1}, \dots, \kappa_d)$ if $j_0<d$, and $k_{j_0}=1$ 
when $j_0=d$.
Note that $k_{j_0} \ll \delta(T)^{-O(c_2E_2)}$.

Pigeonholing, we find that there is $\tilde m'$ such that 
$m'=\tilde m'$ for $\gg K\delta(T)^{O(c_2E_2)}$ pairs 
$(m,m') \in \mathcal{M}_{\tilde \eta}$.
Amongst these there are furthermore 
$\gg K\delta(T)^{O(c_2E_2)}$ values of $m$ that belong to the 
same fixed residue class modulo $k_{j_0}$. 
Denote this set of integers $m$ by $\mathcal{M}'$.
Suppose $m=k_{j_0}m_1 + m_0 \in \mathcal{M}'$.
Letting $\{x\}$ denote the fractional part of $x \in \RR$, we then have
\begin{align*}
\{\tilde D_m^{i-j_0} \alpha_i m^{j_0}\}
= 
\Big\{
 \tilde D_m^{i-j_0} \tilde \alpha_i m^{j_0} 
 + \frac{a_i m_0^{j_0}}{\kappa_i} 
\Big\}, \qquad (i \geq j_0),
\end{align*}
where, in view of \eqref{eq:tilde-alpha-j},
$$
\tilde D_m^{i-j_0} \tilde \alpha_i m^{j_0} 
\ll \delta(T)^{-O(c_2E_2)}  K^{i} T^{-i}.
$$
Since $m_0$ is fixed, it thus follows from \eqref{eq:smoothness-2}, 
\eqref{eq:rec-1}, \eqref{eq:alpha_j-kappa_j} and the above bound that as 
$m$ varies over $\mathcal{M}'$, the value of
$$
\|\alpha_{j_0} m^{j_0}\|
$$
lies in a fixed interval of length 
$\ll \delta(T)^{-O(c_2E_2)} K^{j_0} T^{-j_0}$.

We aim to make use of this information in combination with the Waring trick
from \cite[\S 3]{GT-nilmobius} that was already employed in Section 
\ref{s:linear-subsecs}.
For this purpose, we consider the set of integers 
\begin{align*}
\mathcal{M}^*=\left\{ m \leq s(2K)^{j_0}:
\begin{array}{l}
m = m_1^{j_0} + \dots + m_s^{j_0} \\
m_1, \dots, m_s \in \mathcal{M}'
\end{array}
\right\}
\end{align*}
with $s \geq 2^{j_0}+1$.
For each element $m \in \mathcal{M}^*$ of this set,
$\|\alpha_{j_0} m\|$ lies in an interval of length
$\ll_s \delta(T)^{-O(c_2E_2)} K^{j_0} T^{-j_0}$.
Furthermore, Lemma \ref{l:GT-waring} implies that 
$\# \mathcal{M}^* \gg \delta(T)^{O(c_2E_2)} K^{j_0}$.
The restrictions on the size of $K$ and the assumptions on the 
function $\delta$ imply that the conditions of Lemma \ref{l:GT-rec} are 
satisfied once $T$ is sufficiently large. 
Thus, assuming $T$ is sufficiently large, there is an integer
$1 \leq \kappa'_{j_0} \ll \delta(T)^{-O(c_2E_2)}$ such that 
\begin{align*} 
\| \kappa'_{j_0} \alpha_{j_0} \|
\ll \delta(T)^{-O(c_2E_2)} T^{-j_0},
\end{align*}
i.e.
\begin{align*}
\alpha_{j_0} = \frac{a_{j_0}}{\kappa_{j_0}} + \tilde\alpha_{j_0},
\end{align*}
where $\kappa_{j_0}| \kappa'_{j_0} $, $\gcd(a_{j_0},\kappa_{j_0})=1$
and $\tilde \alpha_{j_0} \ll \delta(T)^{-O(c_2E_2)} T^{-j_0}$,
as claimed.

In particular, we have
\begin{align}\label{eq:kappa_j}
\| \kappa_{j} \alpha_{j} \|
\ll \delta(T)^{-O(c_2E_2)} T^{-j}
\end{align}
for $1\leq j \leq d$.
Proceeding as in \cite[\S3]{GT-nilmobius}, 
let $\kappa=\lcm(\kappa_1, \dots, \kappa_d)$ and set 
$\tilde \eta = \kappa \eta$.
Then \eqref{eq:kappa_j} implies that
$$
\|\tilde \eta \circ g\|_{C^{\infty}[T]} =
\sup_{1\leq j \leq d} T^j \| \kappa \alpha_{j} \|
\ll \delta(T)^{-O(c_2E_2)},
$$
which in turn shows that
$$
\|\tilde \eta \circ g (n)\|_{\RR/\ZZ} \ll
\delta(T)^{-O(c_2E_2)}n/T
$$
for every $n \in \{1, \dots, T\}$.
Note that the latter bound can be controlled by restricting $n$ to a smaller 
range.
For this, set $T'= \delta(T)^{c_2E_2C}T$ for some constant $C \geq 1$ depending 
only on $d$ and $m_G$, chosen sufficiently large to guarantee that
$$
\|\tilde \eta \circ g (n)\|_{\RR/\ZZ} \leq 1/10,
$$
whenever $n \in \{1, \dots , T'\}$.
Let $\chi: \RR/\ZZ \to [-1,1]$ be a function of bounded Lipschitz norm that 
equals $1$ on $[-\frac{1}{10},\frac{1}{10}]$ and satisfies $\int_{\RR/\ZZ} 
\chi(t)\d t = 0$.
Then, by setting $F := \chi \circ \tilde \eta$, we obtain a function 
$F:G/\Gamma \to [-1,1]$ such that $\int_{G/\Gamma} F = 0$ and 
$\|F\|_{\mathrm{Lip}} \ll \delta(T)^{-O(c_2E_2)}$.
Choosing $c_2$ sufficiently small, we may ensure that
$$\|F\|_{\mathrm{Lip}} < \delta(T)^{-E_2}$$
and, moreover, that
$$
T'> \delta(T)^{E_2}T.
$$
The quantities $T'$, $F$ and $c_2$ are chosen in such a way that
$$
\Big| \frac{1}{T'} \sum_{1 \leq n \leq T'} F(g(n)\Gamma) \Big|
= 1 > \delta(T)^{E_2}  \|F\|_{\mathrm{Lip}},
$$
This contradicts the fact that
$(g(n)\Gamma)_{n\leq T}$ is totally $\delta(T)^{E_2}$-equidistributed and 
completes the proof of the proposition.
\end{proof}

\subsection{Restriction of Proposition \ref{p:equid} to pairs of primes}
We end this section by making the contents of Remark \ref{r:primes} more precise.
The variables $(m,m')$ in Proposition \ref{p:equid} can without much additional 
effort be restricted to range over pairs of primes.
It is clear that in the above proof all applications of the pigeonhole principle 
that involve the parameters $m$ and $m'$ have to be restricted to the set of 
primes.
The only true difference lies in the application of Waring's result:
Lemma \ref{l:GT-waring} needs to be replaced by the following one.
\begin{lemma}
Let $K \geq 1$ be an integer and let 
$S \subset \{1, \dots, K\} \cap \mathcal{P}$ be a subset of the primes 
less than $K$.
Suppose $\#S= \alpha \frac{K}{\log K}$.
Let $s \geq 2^k + 1$.
Let $X \subset \{1, \dots, sK^k\}$ denote the set of integers that are 
representable as $p_1^k + \dots + p_s^k$ with $p_1, \dots, p_s \in S$.
Then 
$$|X|\gg_{k,s} \alpha^{2s} K^k,$$
as $K \to \infty$.
\end{lemma}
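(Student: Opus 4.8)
The plan is to run, on the representation function, the same Cauchy--Schwarz argument that underlies the proof of Lemma \ref{l:GT-waring} in \cite{GT-nilmobius}, with the input ``asymptotic formula in Waring's problem'' replaced by the corresponding upper bound for the Waring--Goldbach problem. Introduce
$$
r(n) = \#\{(p_1,\dots,p_s) \in S^s : p_1^k + \dots + p_s^k = n\},
$$
so that $X$ is exactly the support of $r$ and $r$ vanishes outside $[s2^k, sK^k]$. The first moment is immediate: $\sum_n r(n) = (\#S)^s = (\alpha K/\log K)^s$.

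The second moment is where the work lies. Writing $R(n)$ for the analogous count in which $S$ is replaced by the set $\mathcal{P}\cap[1,K]$ of all primes up to $K$, we have $r(n)\leq R(n)$ pointwise, and hence $\sum_n r(n)^2 \leq (\max_n R(n))\sum_n R(n)$. Here $\sum_n R(n) = \pi(K)^s \ll_s (K/\log K)^s$ by Chebyshev's bound, so it remains to estimate $\max_n R(n)$ over the range $s2^k \leq n \leq sK^k$ on which $r$ is supported. For $n \geq K^{k/2}$ I would use the classical upper bound in the Waring--Goldbach problem (Hua): for $s \geq 2^k+1$,
$$
\#\{(p_1,\dots,p_s)\in\mathcal{P}^s : p_1^k+\dots+p_s^k = n\} \ll_{k,s} \frac{n^{s/k-1}}{(\log n)^s}
$$
uniformly in $n$, the singular series being $O_{k,s}(1)$; for $K^{k/2}\leq n\leq sK^k$ this gives $R(n) \ll_{k,s} K^{s-k}(\log K)^{-s}$. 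For the complementary small range $n < K^{k/2}$ every $p_i$ is at most $K^{1/2}$, so trivially $R(n) \leq \pi(K^{1/2})^s \ll (K^{1/2}/\log K)^s$, which is again $\ll K^{s-k}(\log K)^{-s}$ since $s \geq 2k$ (a consequence of $s \geq 2^k+1$). Altogether $\max_n R(n) \ll_{k,s} K^{s-k}(\log K)^{-s}$, hence $\sum_n r(n)^2 \ll_{k,s} K^{2s-k}(\log K)^{-2s}$.

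The conclusion then follows from Cauchy--Schwarz: since $(\sum_{n\in X} r(n))^2 \leq |X|\sum_n r(n)^2$,
$$
|X| \geq \frac{\big(\sum_n r(n)\big)^2}{\sum_n r(n)^2} \gg_{k,s} \frac{\alpha^{2s}K^{2s}(\log K)^{-2s}}{K^{2s-k}(\log K)^{-2s}} = \alpha^{2s}K^k.
$$

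The Cauchy--Schwarz bookkeeping is routine and identical in structure to \cite[\S3]{GT-nilmobius}. The single point requiring genuine care --- and the only place where the hypothesis $s \geq 2^k+1$ enters --- is the uniform upper bound for the Waring--Goldbach representation function: $s \geq 2^k+1$ is precisely the Hua threshold at which the circle method produces the expected size $n^{s/k-1}(\log n)^{-s}$, and this estimate plays exactly the role that the asymptotic formula in Waring's problem plays in the proof of Lemma \ref{l:GT-waring}. I expect this to be the only real obstacle; the rest is elementary.
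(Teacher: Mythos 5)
Your proof is correct and follows essentially the same route as the paper's: compute the first moment $\sum_n r(n)=(\#S)^s$, bound the second moment via Hua's upper bound $\ll_{k,s} n^{s/k-1}(\log n)^{-s}$ for the Waring--Goldbach representation function, and conclude by Cauchy--Schwarz. Your extra step of splitting off $n<K^{k/2}$ to make the bound $\max_n R(n)\ll_{k,s}K^{s-k}(\log K)^{-s}$ uniform is a minor refinement of the same argument, which the paper passes over more quickly.
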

\begin{proof}
Let $I_s(N)$ denote the number of solutions to the equation
$$p_1^k + \dots + p_s^k = N$$
in positive prime numbers $p_1, \dots, p_s$.
Hua's asymptotic formula \cite[Theorem 11]{hua} for the 
Waring--Goldbach problem implies that
$$I_s(N) \ll_{k,s} \frac{N^{s/k-1}}{(\log N)^{s}}.$$
Thus, for $1 \leq n \leq sK^k$, we have
$$
I_s(n) \ll_{k,s} \frac{K^{s-k}}{(\log K)^{s}}.
$$
Hence,
$$
\alpha^{2s} \frac{K^{2s}}{(\log K)^{2s}}
=\bigg( \sum_{n=1}^{sK^k} I_s(n)\bigg)^2
\leq |X| \sum_{n} I_s^2(n)
\ll_{k,s} |X| K^k\frac{K^{2s-2k}}{(\log K)^{2s}} 
\ll_{k,s} |X| \frac{K^{2s-k}}{(\log K)^{2s}}. 
$$
Rearranging completes the proof of the lemma.
\end{proof}

\section{Proof of Proposition \ref{p:equid-non-corr}}
\label{s:proof-of-prop}
In this section we prove Proposition \ref{p:equid-non-corr} by invoking the 
possibly trivial Dirichlet decomposition from Lemma \ref{l:dirichlet}.
Let $f \in \mathcal{M}_H$, let $h,h'$ be as in Lemma \ref{l:dirichlet} and let  
$f=f_1 * \dots * f_H$ with $f_i=h$ for $i<H$ and $f_H=h*h'$.
We are given integers $Q$ and $A$ such that $0 \leq A < Q \leq (\log N)^E$ and 
such that $A \in (\ZZ/Q\ZZ)^*$. 
Recall that $g \in \mathrm{poly}(\ZZ,G_{\bullet})$ is a polynomial sequence 
with the property that
$(g(n)\Gamma)_{n \leq T/Q}$ is totally 
$\delta(N)^{E_0}$-equidistributed in $G/\Gamma$.
Let $I \subset \{1,\dots,T/Q\}$ be a discrete interval of length at 
least $T/(Q (\log N)^E)$.
Our aim is to bound above the expression
\begin{equation} \label{eq:proof-general-aim}
\bigg| 
\frac{Q}{T}\sum_{n \in I} 
f(Qn+A)F(g(n)\Gamma)
\bigg|.
\end{equation}
If $H=1$, then we may write this expression as
\begin{equation} \label{eq:H=1-case}
\bigg|
\frac{Q}{T}\sum_{n \in I} 
f(Qn+ D')F(g(Dn+D'')\Gamma)
\bigg|,
\end{equation}
where $D=1$, $D'= A$ and $D'' = 0$.
The aim of the next two sections is to show that in the case where $H>1$ 
and the Dirichlet decomposition is non-trivial, the task of bounding 
\eqref{eq:proof-general-aim} can be reduced to that of bounding an expression 
similar to \eqref{eq:H=1-case}, but with $f$ replaced by one of the $f_i$.

\subsection{Reduction by hyperbola method}
\label{ss:hyperbola}
Taking into account that $f=f_1 * \dots * f_H$, the correlation from 
Proposition \ref{p:equid-non-corr} may be written as
\begin{align}\label{eq:full-sum}
& \frac{Q}{T}
\sum_{n \leq T/Q} \1_I(n)f(Qn+A)F(g(n)\Gamma) 
=
\\
\nonumber
& \frac{Q}{T}
 \sum_{\substack{d_1 \dots d_H \leq T \\ 
                 d_1 \dots d_H \equiv  A \\ \Mod{Q} 
 }}
  f_1(d_1) f_2(d_2) \dots f_H(d_H)
  F\left(g\left(\frac{d_1 \dots d_H -A}{Q} 
   \right)\Gamma \right) 
  \1_{P}\left(d_1 \dots d_H\right),
\end{align}
where $P$ is the finite progression defined via $P=QI+A$.
Our first step is to split this summation via inclusion-exclusion into 
a finite sum of weighted correlations of individual factors $f_i$ with a 
nilsequence.
To describe these weighted correlations,  
let $i \in \{1, \dots, H\}$. 
For every $j \not=i$, let $d_j$ be a fixed positive integer and 
write $D_i:=\prod_{j \not= i} d_j$. 
Let $a_i \in [0, T/D_i)$ be an integer.
Weighted correlations involving $f_i$ will then take the form:
\begin{align}\label{eq:fi-corr}
&  \frac{Q}{T}  
  \bigg(\prod_{j \not= i} f_j(d_j)\bigg)
  \sum_{\substack{a_i < d_i \leq T/D_i 
        \\ d_iD_i \equiv A \Mod{Q}}} 
   \1_{P}(d_iD_i)
   f_i(d_i) 
   F\left(g\left(
   \frac{d_i D_i -A}{Q} \right)\Gamma 
   \right) \\
\nonumber
&= \frac{Q}{T}
  \bigg(\prod_{j \not= i} f_j(d_j)\bigg)
  \sum_{\frac{a_i - D'_i}{Q} < n \leq 
        \frac{T-D'_i}{D_iQ}}
   f_i(Q n + D'_i) 
   F\Big(g\left(D_i n + D''_i \right)\Gamma \Big)
   \1_I(D_i n + D''_i ),
\end{align}
for suitable integers $D'_i,D''_i$, determined by the values of $D_i \Mod{Q}$ 
and $A$.
In order to bound correlations of the form \eqref{eq:fi-corr}, we need to ensure 
that $d_i$ runs over a sufficiently long range, which will be achieved by 
arranging for $D_i \leq T^{1-1/H}$ to hold.

Let $\tau=T^{1-1/H}$ and note that $D_i = D_j \frac{d_j}{d_i}$. 
Hence,
$$
D_i > \tau \iff d_j > \frac{\tau d_i}{D_j}.
$$
With the help of this equivalence, the function
$\1: \ZZ^H \to 1$ can be decomposed as follows.
Suppose $d_1 \dots d_H \leq T$.
Then
\begin{align*}
\1(d_1,&\dots, d_H) \\
&= \1_{D_1 \leq \tau} + \1_{D_1 > \tau}
 \Big(\1_{D_2 \leq \tau} 
 +\1_{D_2 > \tau}
 \Big(\1_{D_3 \leq \tau}
  + \dots 
 \Big(\1_{D_H \leq \tau} + \1_{D_H > \tau}\Big) \dots \Big) \Big) \\
&=\1_{D_1 \leq \tau} + \1_{D_1 > \tau}
 \Big(\1_{D_2 \leq \tau} \1_{d_2 > \frac{\tau d_1}{D_2}}
 +\1_{D_2 > \tau}
 \Big(\1_{D_3 \leq \tau}
  \1_{d_3 > \frac{\tau \max(d_1,d_2)}{D_3}}
  + \dots\\
&\qquad \qquad \qquad \qquad \qquad \qquad \qquad \qquad \dots
  + \1_{D_{H-1} > \tau}
 \Big(\1_{D_H \leq \tau} + \1_{D_H > \tau}\Big) \dots \Big) \Big)\\
&= \1_{D_1 \leq \tau} 
  +\1_{D_2 \leq \tau} \1_{d_2 > \frac{\tau d_1}{D_2}}
  +\1_{D_3 \leq \tau} \1_{d_3 > \frac{\tau \max(d_1,d_2)}{D_3}}
  + \dots 
  +\1_{D_H \leq \tau} 
   \1_{d_H > \frac{\tau \max(d_1, \dots, d_{H-1})}{D_H}}.
\end{align*}
Thus,
$$
 \sum_{d_1 \dots d_H < T}
=\sum_{i=1}^H
 \sum_{D \leq T^{1-1/H}}
 \sum_{\substack{d_1, \dots \widehat{d_{i}} \dots, d_H \\ D_i=D}}
 \sum_{\substack{d_i \leq T/D_i \\
       d_i > \tau \max(d_1, \dots, d_{i-1})/D_i}}.
$$
This shows that the original summation \eqref{eq:full-sum} may be 
decomposed as a sum of summations of the shape \eqref{eq:fi-corr} while only 
increasing the total number of terms by a factor of order $O(H)$.
Expressing, if necessary, the summation range
$$(\tau \max(d_1, \dots, d_{i-1})/D_i, T/D_i)$$ of $d_i$ as the 
difference of two intervals starting from $1$, we can ensure that $d_i$ 
runs over an interval of length $\gg T/D_i \gg T^{1/H}$.
The correlation now decomposes as:
\begin{align}\label{eq:pre-decomposition}
\nonumber
\frac{Q}{T}
&
 \sum_{\substack{d_1 \dots d_H \leq T \\ 
                 d_1 \dots d_H \equiv  A \\
                 \Mod{Q} }}
  f_1(d_1) f_2(d_2) \dots f_H(d_H)
  F\left(g\left(\frac{d_1 \dots d_H -A}{Q} 
\right)\Gamma \right) 
  \1_{P}\left(d_1 \dots d_H\right) \\
&\quad \leq~
     \sum_{i=1}^H
     \sum_{k=0}^{\frac{1-1/H}{\log2}\log T}
     \sum_{\substack{D \sim 2^k \\ (D,Q)=1}}
     \sum_{\substack{d_1, \dots \widehat{d_{i}} \dots, d_H \\ D_i=D}}     
     \bigg( \prod_{j \not= i} \frac{|f_j(d_j)|}{d_i} \bigg) 
     \\
\nonumber     
&\qquad \qquad \qquad \qquad
     \Bigg|\frac{DQ}{T}
     \sum_{\substack{n \leq T/D \\
       n > \frac{\tau}{D} \max(d_1, \dots, d_{i-1}) \\ Dn + D'' \in I}}
     \hspace*{-.5cm}  
     f_i(Qn+D')F(g(Dn+D''))
     \Bigg|.
\end{align}
Observe that \eqref{eq:H=1-case} can be regarded as the special case $H=1$ and 
$D=1$ of this bound.
Our next aim is to analyse the innermost sum of \eqref{eq:pre-decomposition}
as $D\sim 2^k$ varies.
Setting $E_1 = E_0$, we deduce from Proposition \ref{p:linear-subsecs} that 
whenever $2^k \in [\exp ((\log \log T)^2),(\log T)^{1-1/H}]$ then there is a set 
$\mathcal{B}_{2^k}$ of cardinality at most $O(\delta(N)^{c_1 E_0}2^k)$ such that 
for each $D \sim 2^k$ with $D \not\in \mathcal{B}_{2^k}$ the sequence
$$(g_D(n)\Gamma)_{n\leq T/Q}, \qquad 
g_D(n):=g(Dn+D''),$$
is totally $\delta(N)^{c_1 E_0}$-equidistributed.
Before turning to the case of $D \not\in \mathcal{B}_{2^k}$, we 
bound the total contribution from exceptional $D$, that is, 
from $D \in \mathcal{B}_{2^k}$ and from $D \leq \exp((\log \log T)^2)$. 

\subsection{Contribution from exceptional $D$}
\label{ss:D-contribution}

Let $\mathcal{B}_{2^k}$ denote the exceptional set from the previous section.
\begin{lemma}\label{l:exceptional-D}
Whenever $E_0$ is sufficiently large in terms of $d$, $m_G$ and $H$, 
the following two estimates hold:
\begin{align*}
\sum_{\frac{(\log \log T)^2}{\log 2} < k \leq \frac{(1-1/H)\log T}{\log 2} }
&\sum_{D \in \mathcal{B}_{2^k}}
\sum_{\substack{d_1 \dots d_H \leq T \\ 
                 d_1 \dots d_H \equiv  A  \Mod{Q} \\ 
                 D_i=D }}
 |f_1(d_1) f_2(d_2) \dots f_H(d_H)|
 \1_{P}\left(d_1 \dots d_H\right) \\
&\ll_t \frac{T}{Q} \frac{1}{(\log T)^2},
\end{align*}
and
\begin{align*}
& \sum_{\substack{D \leq \exp((\log \log T)^2) \\ \gcd(D,W)=1}}
\sum_{\substack{d_1 \dots d_H \leq T \\ 
                 d_1 \dots d_H \equiv  A  \Mod{Q} \\ 
                 D_i=D }}
 |f_1(d_1) f_2(d_2) \dots f_H(d_H)|
 \1_{P}\left(d_1 \dots d_H\right) \\
&\ll (\log \log T)^{2H}  \frac{T}{Q} 
  \frac{Q}{\phi(Q)}
 \frac{1}{\log T} \prod_{\substack{p\leq T \\ p \nmid Q}}
 \left(1 + \frac{|f(p)|}{Hp} \right) .
\end{align*}
\end{lemma}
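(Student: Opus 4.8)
The plan is to treat both estimates, for a fixed index $i\in\{1,\dots,t\}$, by the same reduction. For an admissible value $D=D_i$ --- necessarily coprime to $W$, since $\widetilde{W}r+A$ invertible modulo $\widetilde{W}q$ forces every $d_j$, hence $D=\prod_{j\neq i}d_j$, to be coprime to $\widetilde{W}q$ --- the innermost sum over the remaining divisors factors as $g_i(D)$ times a sum of $|f_i(d_i)|$ over $d_i\leq T/D$ in a single invertible residue class modulo $\widetilde{W}q$, where $g_i:=|f_1|*\dots*|f_t|$ with the factor $|f_i|$ deleted. This $g_i$ is a non-negative multiplicative function with $g_i(p^k)\ll_H(C_H)^k$ at prime powers. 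Since $\widetilde{W}q$ is polylogarithmic in $N$ while $T/D\geq T^{1/t}$ on both ranges of $D$ in question, the inner $d_i$-sum can be bounded above via Shiu's lemma (Lemma~\ref{l:shiu}) --- or, for the second estimate, via the stable mean value property, condition~(3) of Definition~\ref{def:M}. In both cases I am then reduced to estimating a weighted sum of $g_i(D)$.

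For the first estimate, $D\sim 2^k$ with $2^k\in[\exp((\log\log T)^2),T^{1-1/t}]$, so $\log(T/D)\gg_t\log T$, and bounding $|f_i(p)|\leq H$ in \eqref{eq:shiu} makes the $d_i$-sum $\ll_{t,H}\frac{T}{D\,\phi(\widetilde{W}q)}(\log T)^{H-1}$. It then suffices to prove $\sum_k\sum_{D\in\mathcal{B}_{2^k}}g_i(D)/D\ll_H\delta(N)^{c_1E_0/2}(\log T)^{O_H(1)}$. For fixed $k$, since $D\asymp 2^k$ on the range of summation, I would apply Cauchy--Schwarz to split this against the second moment $\sum_{D\sim 2^k,\,(D,W)=1}g_i(D)^2$, using the sparsity bound $\#\mathcal{B}_{2^k}\ll\delta(N)^{c_1E_0}2^k$ supplied by Proposition~\ref{p:linear-subsecs} (applied with $E_1=E_0$). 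The second moment is $\ll_H 2^k(\log T)^{O_H(1)}$ by a further application of Shiu's lemma, summed over the invertible residues modulo $W$, since $g_i^2$ is again multiplicative with $g_i^2(p^k)\ll_H(C_H)^{2k}$. Combining these and summing over the $\ll\log T$ admissible scales, the whole sum is $\ll_{t,H}\frac{T}{\widetilde{W}q}(\log T)^{O_H(1)-c_1E_0/2}$ (using $\delta(N)\leq(\log N)^{-1}\ll(\log T)^{-1}$ and $\widetilde{W}q/\phi(\widetilde{W}q)\ll\log\log\log N$), and the exponent drops below $-2$ once $E_0$ is large in terms of $d,m_G,H$, since $c_1=c_1(d,m_G)>0$.

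For the second estimate, $D\leq\exp((\log\log T)^2)=T^{o(1)}$, so $T/D=T^{1-o(1)}$; here I would rewrite the $d_i$-sum, by the definition of $S$, as $\frac{T/D}{\widetilde{W}q}\,S_{|f_i|}(T/D;\widetilde{W}q,\cdot)$ and apply condition~(3) --- legitimate since $\log D=o(\log T)$, and used on the progressions carrying positive $|f_i|$-mass --- to replace it by $\ll\frac{T/D}{\widetilde{W}q}\max_{A'\in(\ZZ/\widetilde{W}q\ZZ)^*}S_{|f_i|}(T;\widetilde{W}q,A')$. It then remains to bound $\sum_{D\leq\exp((\log\log T)^2),\,(D,W)=1}g_i(D)/D$, which by an Euler product / Mertens computation --- the coprimality to $W$ removing all primes below $w(N)$ --- is $\ll\exp\big(\sum_{w(N)<p\leq\exp((\log\log T)^2)}g_i(p)/p\big)\ll(\log\log T)^{2H}$, since $\sum_{w(N)<p\leq\exp((\log\log T)^2)}1/p\leq 2\log\log\log T+O(1)$ and $g_i(p)=\sum_{j\neq i}|f_j(p)|\ll_H 1$. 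Putting the two pieces together yields the claimed bound.

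The delicate step, I expect, will be the handling of the exceptional set $\mathcal{B}_{2^k}$ in the first estimate: one must overcome the accumulated polylogarithmic losses --- a factor $(\log T)^{O_H(1)}$ coming from the two applications of Shiu's lemma, together with the $\ll\log T$ dyadic scales --- using only the density saving $\#\mathcal{B}_{2^k}/2^k\ll\delta(N)^{c_1E_0}$, and it is precisely this that forces $E_0$ to be large in terms of $d,m_G,H$. The essential structural point is that one should pass to the $L^2$-average of $g_i$ via Cauchy--Schwarz rather than bound $g_i(D)$ pointwise on $\mathcal{B}_{2^k}$: the only available pointwise bound, $g_i(D)\ll D^{O_H(1/\log w(N))}$, is of size $T^{o(1)}$ with an $o(1)$ decaying merely like $1/\log\log\log N$ --- far too weak --- whereas $\sum_{D\sim 2^k}g_i(D)^2$ stays within a fixed power of $\log T$ by Shiu's lemma.
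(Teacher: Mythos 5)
Your proposal is correct and follows essentially the same route as the paper: for the exceptional scales, Cauchy--Schwarz against the second moment of $\bar f_i = |f_1*\dots\widehat{f_i}\dots*f_t|$ (controlled via $\bar f_i(n)\leq (CH)^{\Omega(n)}$/Shiu) combined with the sparsity bound $\#\mathcal{B}_{2^k}\ll\delta(N)^{c_1E_0}2^k$ from Proposition \ref{p:linear-subsecs}, with $E_0$ chosen large to absorb all polylogarithmic losses; and for small $D$, the stability condition (3) of Definition \ref{def:M} followed by the Euler-product bound $\sum_{D\leq\exp((\log\log T)^2),\,(D,W)=1}\bar f_i(D)/D\ll(\log\log T)^{2H}$. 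The only (immaterial) difference is that the paper interchanges the order of summation and applies Cauchy--Schwarz to the $D$-sum at fixed $d_i$, whereas you first dispatch the $d_i$-sum by Shiu.
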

Before we prove this lemma, let us consider its contribution to the bound in 
Proposition~\ref{p:equid-non-corr}.
The contribution from the first part is easily seen to be negligible.
Regarding the second part, recall that $H>1$ and note that 
by property (2) of Definition \ref{def:M}, we have
$$
\prod_{\substack{Q< p\leq T}}
\left(1 + \frac{(H-1)|f(p)|}{Hp} \right)
\gg \left(\frac{\log T}{E\log \log T}\right)^{(H-1)\alpha_f/H}, 
$$
where the exponent is positive.
Thus, the bound in the second part saves a power of $\log x$ when compared 
with the bound in \eqref{eq:prop-bound} and is therefore also negligible.

\begin{proof}
Set 
$$\bar f_i (n) := |f_1 * \dots \widehat f_i \dots * f_H(n)|.$$
Then $\bar f_i = |h^{*(H-1)}*h'|$ or $|h^{*(H-1)}|$ and it follows 
from \eqref{eq:h'-bound} and the properties of $h$ that 
$\bar f_i (n) \leq (CH)^{\Omega(n)}$ for some constant $C$.
This implies a second moment bound of the form
$$
\sum_{\substack{n \leq x \\ \gcd(n,Q)=1}} \bar f_i (n)^2 
\leq x \sum_{\substack{n \leq x \\ \gcd(n,Q)=1}} \frac{\bar f_i (n)^2}{n}
\leq x \prod_{w(N) < p \leq x} \left(1 - \frac{(CH)^2}{p}\right)^{-1}
\leq x (\log x)^{O(H^2)}.
$$
Similarly, we have 
$$
\sum_{\substack{n \leq x \\ \gcd(n,Q)=1}} 
|f_i(n)| \ll x (\log x)^{O(H)}.
$$
Since Proposition \ref{p:linear-subsecs} provides the bound 
$\# \mathcal{B}^*_{2^k} 
\ll \delta(N)^{c_1E_0} 2^k,$
a trivial application of the Cauchy--Schwarz inequality yields
\begin{align*}
 \sum_{D \in \mathcal{B}^*_{2^k}}
  \bar f_i(D)
  \sum_{\substack{n \leq T/D 
        \\ nD \equiv A  \Mod{Q} 
        \\ nD \in P}} |f_i(n)| 
&\leq
  \sum_{\substack{n \leq T/2^k\\ \gcd(n,Q)=1}} |f_i(n)|
  \sum_{\substack{D \in \mathcal{B}^*_{2^k} \\ \gcd(n,Q)=1}}   
  \bar f_i (D) \\
&\leq   \sum_{\substack{n \leq T/2^k \\ \gcd(n,Q)=1}} |f_i(n)| 2^k 
\delta(N)^{c_1E_0} k^{O(H^2)} \\
&\leq   T (\log T)^{O(H)} \delta(N)^{c_1E_0} k^{O(H^2)}.
\end{align*}
Recall that $c_1$ only depends on $d$ and $m_G$, and that by assumption of 
Proposition \ref{p:equid-non-corr} we have $\delta(N) \ll (\log T)^{-1}$.
Since the summation in $k$ has length at most $\log T$ and since 
$k^{O(H^2)} < (\log T)^{O_H(1)}$ for each $k$, the first part of the lemma 
follows by choosing $E_0$ sufficiently large in terms of $d$, $m_G$ and $H$.

Concerning the second part, we have
\begin{align*}
\sum_{\substack{D \leq \exp((\log \log T)^2) \\ \gcd(D,Q)=1}}
 \bar f_i(D) 
 \sum_{\substack{n \leq T/D
       \\ nD \equiv A  \Mod{Q}
       \\ nD \in P}} |f_i(n)| 
\leq 
 \sum_{\substack{D \leq \exp((\log \log T)^2) \\ \gcd(D,Q)=1}}
 \bar f_i(D) 
 \sum_{\substack{n \leq T/D
       \\ n \equiv A\bar D \\ \Mod{Q}}} |f_i(n)|,
\end{align*}
where $\bar D D \equiv 1 \Mod{Q}$.
Since $\log (T/D) \asymp \log T$ and $T/D < T$, Shiu's bound \eqref{eq:shiu} 
yields the upper bound
\begin{equation} \label{eq:small-D-bound}
 \ll 
 \sum_{\substack{D \leq \exp((\log \log T)^2) \\ \gcd(D,Q)=1}}
 \frac{\bar f_i(D)}{D}
 \frac{T}{Q}
 \frac{Q}{\phi(Q)}
 \frac{1}{\log T} \prod_{\substack{p\leq T \\ p \nmid Q}}
 \left(1 + \frac{|f_i(p)|}{p} \right).
\end{equation}
The outer sum satisfies
\begin{align*}
 \sum_{\substack{D \leq \exp((\log \log T)^2) \\ \gcd(D,Q)=1}}
 \frac{\bar f_i(D)}{D}
&\ll
\prod_{w(N)< p \leq \exp((\log \log T)^2)}
\bigg(
1 + \frac{|f(p)|}{Hp} \bigg)^{H-1} \\
&\ll \exp\left((H-1) \sum_{\substack{p \leq  \exp((\log \log T)^2)}} 
 \frac{1}{p}\right) 
\ll (\log \log T)^{2H},
\end{align*}
which completes the proof of the second part.
\end{proof}

\subsection{Montgomery--Vaughan approach}\label{ss:MV}
Since $\mathcal{M}_1 \subset \mathcal{M}_H$ for all $H>1$, it suffices to prove 
Proposition \ref{p:equid-non-corr} for $H>1$.
Since the task of bounding \eqref{eq:H=1-case} for $D=1$ presents an 
easier special case of the task of bounding the inner sum of 
\eqref{eq:pre-decomposition} for unexceptional $D$ when $H>1$, a proof for 
the $H=1$ case may, however, be extracted from the argument below. 
In fact, most of the argument directly applies when setting $H=D=1$.
The main differences leading to simplifications are that 
\begin{enumerate}
 \item if $H=D=1$, one can, instead of later referring to the results from 
Section \ref{s:linear-subsecs}, directly work with the equidistribution 
properties of the given polynomial sequence $g$, and
\item the extra work of handling the outer sums in \eqref{eq:pre-decomposition} 
is not required when $H=D=1$.
\end{enumerate}

From now on we assume that $H>1$ and that $D$ is unexceptional, that is 
$D \sim 2^k$ for $k \geq (\log \log T)^2 / \log 2$ 
and $D \not\in \mathcal{B}_{2^k}$, where $\mathcal{B}_{2^k}$ is the exceptional 
set from Section \ref{ss:hyperbola}.
To bound the inner sum of \eqref{eq:pre-decomposition} for unexceptional $D$, we
employ the strategy of Montgomery and Vaughan \cite{mv77} outlined in Section 
\ref{s:outline}, and begin by introducing a factor $\log n$ into the average.
This will later allow us to reduce matters to understanding 
equidistribution along sequences defined in terms of primes.
We set $h=f_i$.
We caution that this is \emph{not} the function $h$ from Lemma \ref{l:dirichlet}, 
but could either be $h$ or $h*h'$ in the notation of the lemma. 

Cauchy--Schwarz and several integral comparisons show that
\begin{align*}
&\sum_{n \leq T/(DQ)}
 \1_I(Dn + D'')
 h(Qn+D') F(g(Dn+D'')\Gamma) 
 \log\bigg(\frac{T/D}{Qn+D'} \bigg) \\
&\leq 
 \bigg(\sum_{n \leq T/(DQ)} 
 \Big(\log (T/(DQ)) - \log n \Big)^2 
\bigg)^{1/2}
 \bigg(
 \sum_{n \leq T/(DQ)} 
 h^2(Qn+D')
 \bigg)^{1/2} \\
&\ll \frac{T}{DQ} 
 \sqrt{ \frac{DQ}{T} \sum_{n\leq T/(DQ)} 
 h^2(Qn+D')},
\end{align*}
and hence, invoking $D \leq T^{1-1/H}$,
\begin{align} \label{eq:initial-CS}
&\frac{DQ}{T}
 \sum_{\substack{n \leq T/(DQ) \\ Dn+D'' \in I}}  
 h(Qn+D') F(g(Dn+D'')\Gamma) \\
\nonumber 
&\ll_H~ \frac{1}{\log T} 
 \sqrt{\frac{DQ}{T} \sum_{n\leq \frac{T}{DQ}} 
 h^2(Qn+D')}
\\
\nonumber
&\quad + \frac{1}{\log T}
 \Bigg| \frac{DQ}{T}
  \sum_{\substack{n\leq T/(DQ) \\Dn+D'' \in I}}
  h(Qn+D') F(g(Dn+D'') \Gamma) 
  \log (Qn+D') 
  \Bigg|.
\end{align}
Lemma \ref{l:dirichlet} shows that the contribution of the first 
term in this bound to \eqref{eq:pre-decomposition} is at most
$$ 
O_H\Bigg( \frac{1}{(\log T)^{1/2}}
\frac{Q}{\phi(Q)} \frac{1}{\log x} 
\prod_{\substack{p\leq x \\ p \nmid Q}}
\left(1 + \frac{|f(p)|}{p} \right)\Bigg),
$$ 
which is negligible in view of the bound stated in Proposition 
\ref{p:equid-non-corr}.
It remains to estimate the second term from \eqref{eq:initial-CS}.
For this, it will be convenient to abbreviate
$$g_D(n):=g(Dn+D''),$$
and to introduce the two finite progressions
\begin{equation}\label{eq:I_D}
I_D=\{n: Dn+D'' \in I\}
\quad \text{and} \quad
P_D=\Big\{n: \frac{n - D'}{Q}\in I_D \Big\}.
\end{equation}
Since $\log n = \sum_{m|n} \Lambda(m)$, our task is to bound
\begin{align}\label{eq:task}
\frac{DQ}{T\log T}
\Bigg|
\sum_{\substack{mn\leq T/D \\ mn \equiv D' \Mod{Q}}} 
\1_{P_D}(nm) h(nm) \Lambda(m) 
F\Big(g_D\Big(\frac{nm-D'}{Q}\Big) \Gamma\Big) 
\Bigg|.
\end{align}
To further simplify this expression we now show that one can, at the expense of 
a small error term, restrict the summation in \eqref{eq:task} to pairs $(m,n)$ of 
co-prime integers for which $m=p$ is prime.
To see this, recall that $F$ is $1$-bounded and observe that
\begin{align*}
 \sum_{\substack{ nm \leq T/D  
       \\ \Omega(m)\geq 2 \text{ or } \gcd(n,m)>1 
       \\ mn \equiv D' \Mod{Q} }}
 |h(nm)| \Lambda (m)
& \leq
 2 \sum_{p}
 \sum_{k \geq 2} 
 k \log p 
 \sum_{\substack{n\leq T/D, p^k\|n \\ n \equiv D' \Mod{Q} }} 
 |h(n)| \\
& \leq 2 \sum_{p > w(N)}
 \sum_{k \geq 2}
 H^k k \log p
 \sum_{\substack{n\leq T/(Dp^k) \\ p^kn \equiv D' \Mod{Q} }} 
 |h(n)| .
\end{align*}
If $p^k \leq (T/D)^{1/2}$, then Shiu's bound \eqref{eq:shiu} implies for the 
inner sum:
$$
 \sum_{\substack{n\leq T/(Dp^k) \\ p^kn \equiv D' \Mod{Q} }} 
 |h(n)|
 \ll \frac{1}{p^k} 
 \frac{T}{D} \frac{1}{\phi(Q)} 
 \frac{1}{\log T} 
 \prod_{\substack{p\leq T/D \\ p \nmid Q}} 
 \left(1 + \frac{|h(p)|}{p}\right).
$$
If $N$ is sufficiently large, then $H \log p \ll p^{1/4}$ for all $p > w(N)$ and 
thus
\begin{align*}
 \sum_{p > w(N)}
 \sum_{\substack{k \geq 2 \\ p^k \leq (T/D)^{1/2}}}
 \frac{H^k \log p^k}{p^k}
 \ll \sum_{p > w(N)}
 \frac{1}{p^{2-1/2}}
 \ll 
 \frac{1}{w(N)^{1/2}}.
\end{align*}
Combining the last three steps, the contribution to \eqref{eq:task} 
from the terms $p^k \leq (T/D)^{1/2}$ is seen to be bounded by
$$ 
 \ll \frac{1}{w(N)^{1/2}\log T}
 \frac{Q}{\phi(Q)} \frac{1}{\log T} 
 \prod_{\substack{p\leq T/D \\ p \nmid Q}} 
 \left(1 + \frac{|h(p)|}{p}\right).
$$

Turning towards the case of $p^k > (T/D)^{1/2}$, note first that, 
provided $N$ is sufficiently large so that $w(N)>H$, then:
\begin{align*}
 \sum_{\substack{n\leq T/(Dp^k) \\ p^kn \equiv D' \Mod{Q} }} 
 |h(n)| 
&\leq  \frac{T}{Dp^k}
 \sum_{\substack{n\leq T/(Dp^k) \\ \gcd(n,Q)=1}} 
 \frac{|h(n)|}{n}\\
&\leq \frac{T}{Dp^k}
 \prod_{w(N) < p' \leq T/(Dp^k)}
 \left(1 - \frac{H}{p'} \right)^{-1}
\leq 
 \frac{T}{D p^k} \left(\log_+ \frac{T}{D p^k}\right)^{O(H)},
\end{align*} 
where $\log_+(x)= \max\{\log x, 0\}$ for $x>0$, as usual.
Assuming, again, that $H \log p \ll p^{1/4}$ for all $p > w(N)$, 
the remaining sum over $p^k > (T/D)^{1/2}$ therefore satisfies:
\begin{align*}
& \frac{DQ}{T\log T} \sum_{p > w(N)}
 \sum_{\substack{k \geq 2 \\ p^k > (T/D)^{1/2}}}
 H^k \log p^k
 \sum_{\substack{n\leq T/(Dp^k) \\ p^kn \equiv D' \Mod{Q} }} 
 |h(n)| \\
& \leq \frac{DQ}{T\log T} \sum_{p > w(N)}
  \sum_{\substack{k \geq 2 \\ p^k > (T/D)^{1/2}}}
 H^k \log p^k
 \frac{T}{D p^k} \left(\log_+ \frac{T}{D p^k}\right)^{O(H)}\\
&\ll \frac{Q}{\log T} \left(\log 
\frac{T}{D}\right)^{O(H)}
 \sum_{p > w(N)} 
 \sum_{\substack{k \geq 2 \\ p^k > (T/D)^{1/2}}}
 \frac{H^k \log p^k}{p^k} \\
&\ll \frac{Q}{\log T} 
 \left(\log \frac{T}{D}\right)^{O(H)}
 \sum_{p > w(N)} 
 \sum_{\substack{k \geq 2 \\ p^k > (T/D)^{1/2}}}
 p^{-k(1-1/4)} \\ 
&\ll \frac{Q}{\log T}
 \left(\log \frac{T}{D}\right)^{O(H)}
 \sum_{p > w(N)} 
 p^{-2 + 1/2}p^{1/4} \left(\frac{T}{D}\right)^{-1/4}\\
&\ll \frac{Q}{\log T}
\left(\frac{T}{D}\right)^{-1/4} \left(\log \frac{T}{D}\right)^{O(H)}\\
&\ll T^{-\frac{1}{8H}},
\end{align*}
This contribution is dominated by that of the smaller prime powers above.

Thus, the total contribution to \eqref{eq:pre-decomposition} of pairs $(m,n)$ 
that are not of the form $(m,p)$, where $p$ is prime that does not divide $m$, is 
bounded by
\begin{align*}
& \frac{1}{\log T} 
  \sum_{i=1}^t \sum_k \sum_{D \sim 2^k} \sum_{d_1 \dots \hat{d_i} \dots d_t = D}
  \bigg(\prod_{j \not= i} \frac{|f_j(d_j)|}{d_i}\bigg)
  \frac{Q}{\phi(Q)} \frac{1}{\log T} 
 \prod_{\substack{p\leq T/D \\ p \nmid Q}} 
 \left(1 + \frac{|h(p)|}{p}\right)\\
&\leq \frac{1}{w(N)^{1/2} \log T}
 \frac{Q}{\phi(Q)} \frac{1}{\log T} 
 \prod_{\substack{p\leq T \\ p \nmid Q}} 
 \left(1 + \frac{|f(p)|}{p}\right),
\end{align*}
which is negligible in view of the bound claimed in Proposition 
\ref{p:equid-non-corr}.

This reduces the task of proving Proposition \ref{p:equid-non-corr} to that of 
bounding the expression
\begin{equation}\label{eq:newtask}
 \frac{DQ}{T}
 \Bigg|
 \sum_{\substack{mp\leq T/D \\ mp \equiv D' \Mod{Q}}} 
 \1_{P_D}(mp) h(m) h(p) \Lambda(p) 
 F\Big(g\Big(\frac{pm-D'}{Q}\Big) \Gamma\Big)
 \Bigg|.
\end{equation}

\subsection{Decomposing the summation range}\label{ss:decomposition}
We prepare the analysis of \eqref{eq:newtask} by first splitting the summation 
into large and small divisors with respect to the parameter 
$$X=X(D)
=\left(\frac{T}{D}\right)^{1-1/\left(\log\frac{T}{D}\right)^{\frac{U-1}{U}}},
$$
for a fixed integer $U \geq 4$.
With this choice of $X$ we obtain
\begin{align}\label{eq:two-sums}
& \frac{Q D}{T}\sum_{\substack{m<X \\ \gcd(m,Q)=1}} \nonumber
 \sum_{\substack{p \leq T/(m D)\\ p \equiv D' \bar{m} \Mod{Q}}} 
 \1_{P_D}(mp) h(m) h(p) \Lambda(p) 
 F\Big(g\Big(\frac{pm-D'}{Q}\Big) \Gamma\Big) \\
&+ \frac{QD}{T}
 \sum_{\substack{m>X \\ \gcd(m,Q)=1}} 
 \sum_{\substack{p \leq T/(mD) \\ p \equiv D' \bar{m} \Mod{Q}}} 
 \1_{P_D}(mp) h(m) h(p) \Lambda(p) 
 F\Big(g\Big(\frac{pm-D'}{Q}\Big) \Gamma\Big).
\end{align}
In order to analyse these expressions, we dyadically decompose in each of the two 
terms the sum with shorter summation range.
The cut-off parameter $X$ is chosen in such a way that one of the dyadic 
decompositions is of short length, depending on $U$.
Indeed, we have $$\log_2 X \sim \log_2 (T/D),$$ while
$$
\log_2 \frac{T}{DX} = \frac{(\log \frac{T}{D})^{1/U}}{\log 2}.
$$
Let $$T_0 = \exp((\log\log T)^2).$$
Then the two sums from \eqref{eq:two-sums} decompose as
\begin{align}\label{eq:two-sums-large-p}
&\frac{QD}{T} \sum_{\substack{m<T_0 \\ \gcd(m,Q)=1}} 
 \sum_{\substack{p \leq T/(mD) \\ p \equiv D' \bar{m} \Mod{Q}}} 
 \1_{P_D}(mp) h(m) h(p) \Lambda(p) 
 F\Big(g_D\Big(\frac{pm-D'}{Q}\Big) \Gamma\Big) \\
\nonumber
&+ \frac{QD}{T}\sum_{j=1}^{\log_2 (X/T_0)} 
 \sum_{\substack{m \sim 2^{-j} X \\ \gcd(m,Q)=1}}
 \sum_{\substack{p \leq T/(mD) \\ p \equiv D' \bar{m} \Mod{Q}}} 
 \1_{P_D}(mp) h(m) h(p) \Lambda(p) 
 F\Big(g_D\Big(\frac{pm-D'}{Q}\Big) \Gamma\Big) 
\end{align}
and
\begin{align}\label{eq:two-sums-small-p}
&
 \frac{QD}{T}
\Bigg\{
 \sum_{\substack{m>X \\ \gcd(m,Q)=1}}
 \sum_{\substack{p \leq \min(T/(mD),T_0) 
      \\ p \equiv D' \bar{m} \Mod{Q}}}
 \1_{P_D}(mp) h(m) h(p) \Lambda(p) 
 F\Big(g_D\Big(\frac{pm-D'}{Q}\Big) \Gamma\Big) \\
\nonumber
&+
 \sum_{j=1}^{\log_2 (T/(XDT_0))} \hspace*{-.5cm}
 \sum_{\substack{m>X \\ \gcd(m,Q)=1}} 
 \sum_{\substack{p \sim 2^{-j}T/(XD) \\ p \equiv D' \bar{m} \Mod{Q}}}
 \1_{pm<T/D}
 \1_{P_D}(mp) h(m) h(p) \Lambda(p) 
 F\Big(g_D\Big(\frac{pm-D'}{Q}\Big) \Gamma\Big) 
\Bigg\}.
\end{align}
We now analyse the contribution from these four sums to 
\eqref{eq:pre-decomposition} in turn, 
beginning with the two short sums up to $T_0$, which are both straightforward to
bound.
The main work goes into handling the large primes case corresponding to the long 
sum in \eqref{eq:two-sums-large-p}.
Here we will make use of the results from Sections \ref{s:linear-subsecs} and 
\ref{s:products}.
The long sum from \eqref{eq:two-sums-small-p} will, again, be straightforward to 
handle due to the above choice of the parameter $X$.

\subsection{Short sums}
The following lemma provides straightforward bounds on the contribution of 
the short sums in \eqref{eq:two-sums-large-p} and \eqref{eq:two-sums-small-p} to 
\eqref{eq:pre-decomposition}.

\begin{lemma} \label{l:short-sums}
Writing $\bar{f}_i(n) = |f_1 * \dots * \widehat{f_i} * \dots * f_H(n)|$, we have
\begin{align}\label{eq:short-sum-bd1}
\nonumber
&\sum_{\substack{D \leq T^{1-1/H}\\(D,Q)=1}}
\frac{\bar{f}_i(D)}{\log T}
 \bigg| 
 \frac{Q}{T}
 \sum_{\substack{m<T_0 \\ \gcd(m,Q)=1}} 
 \sum_{\substack{p \leq T/(mD) \\ p \equiv D' \bar{m} \\ \Mod{Q}}} 
 \1_{P_D}(mp) h(m) h(p) \Lambda(p) 
 F\Big(g_D\Big(\frac{pm-D'}{Q}\Big) \Gamma\Big)\bigg| \\
&\ll (\log \log T)^{2} 
 \frac{1}{\log T}
 \frac{Q}{\phi(Q)}
 \exp 
  \bigg( \frac{H-1}{H}
  \sum_{\substack{p \leq T \\ p\nmid Q }} \frac{|f(p)|}{p} 
  \bigg)
\end{align}
and 
\begin{align}\label{eq:short-sum-bd2}
\nonumber
&\sum_{\substack{D \leq T^{1-1/H}\\(D,Q)=1}}
\frac{\bar{f}_i(D)}{\log T}
 \bigg|\frac{Q}{T} 
 \sum_{\substack{m>X \\ \gcd(m,Q)=1}} 
 \sum_{\substack{p \leq \min(\frac{T}{Dm},T_0) 
       \\ p \equiv D \bar{m} \Mod{Q}}}
 \1_{P_D}(mp) h(m) h(p) \Lambda(p) 
 F\Big(g_D\Big(\frac{pm-D'}{Q}\Big) \Gamma\Big)\bigg| \\
&\ll \frac{(\log \log T)^2}{\log T}
 \frac{1}{\log T} \frac{Q}{\phi(Q)} 
 \prod_{\substack{p\leq T\\ p\nmid Q}}
 \left(1+ \frac{|f(p)|}{p}\right).
\end{align}
\end{lemma}
\begin{rem*}
 Note that both these bounds are negligible when compared to 
\eqref{eq:prop-bound}. In the first case this follows from property (2) of 
Definition \ref{def:M}.
\end{rem*}

\begin{proof}
The short sum in \eqref{eq:two-sums-large-p} satisfies
\begin{align*}
&\bigg|
 \frac{QD}{T} 
 \sum_{\substack{m<T_0 \\ \gcd(m,Q)=1}} 
 \sum_{\substack{p \leq T/(mD) \\ p \equiv D \bar{m} \Mod{Q}}} 
 \1_{P_D}(mp) h(m) h(p) \Lambda(p) 
 F\Big(g_D\Big(\frac{pm-D'}{Q}\Big) \Gamma\Big)\bigg| \\
&\ll 
 \sum_{\substack{m<T_0 \\ \gcd(m,Q)=1}}
 |h(m)|
 \frac{QD}{T} 
 \sum_{\substack{p \leq T/(mD) \\ p \equiv D \bar{m} \Mod{Q}}} 
 \Lambda(p) \\ 
&\ll 
     \frac{Q}{\phi(Q)}
     \sum_{\substack{m < T_0 \\ \gcd(m,Q)=1}} \frac{|h(m)|}{m}
\ll 
     \frac{Q}{\phi(Q)}
     \frac{1}{\log T}
     \exp \bigg( \sum_{w(N)< p < T_0 } \frac{1}{p} \bigg)\\
&\ll (\log \log T)^{2}
     \frac{Q}{\phi(Q)}.    
\end{align*}
Thus, the left hand side of \eqref{eq:short-sum-bd1} is bounded by
\begin{align*}
 (\log \log T)^{2}
 \frac{Q}{\phi(Q)}
 \frac{1}{\log T}
 \sum_{\substack{D \leq T^{1-1/H}\\(D,Q)=1}} 
 \frac{\bar{f}_i(D)}{D}. 
\end{align*}
The claimed bound now follows since
\begin{align*}
  \sum_{\substack{D \leq T^{1-1/H}\\(D,Q)=1}} 
 \frac{\bar{f}_i(D)}{D} 
\ll 
 \exp \bigg( \sum_{\substack{ p \leq T \\ p\nmid Q}} 
    \frac{|f_1(p) + \dots + f_H(p) - f_i(p)|}{p} \bigg)
=  \exp \bigg( \frac{H-1}{H} 
    \sum_{\substack{ p \leq T \\ p\nmid Q}} 
    \frac{|f(p)|}{p} \bigg),
\end{align*}
recalling the definition of the functions $f_j$ from \eqref{eq:h(p)/H}.

The short sum in \eqref{eq:two-sums-small-p} is bounded by
\begin{align*}
&
 \bigg|
 \frac{QD}{T}\sum_{\substack{m>X \\ \gcd(m,Q)=1}} 
 \sum_{\substack{p \leq \min(T/(mD),T_0) 
       \\ p \equiv D \bar{m} \Mod{Q}}}
 \1_{P_D}(mp) h(m) h(p) \Lambda(p) 
 F\Big(g_D\Big(\frac{pm-D'}{Q}\Big) \Gamma\Big)\bigg| \\
&\ll 
 \sum_{\substack{w(N) < p < T_0 }} 
 \frac{\Lambda(p)}{p} \max_{A' \in (\ZZ/Q\ZZ)^*} 
 S_{|h|}\left(\frac{T}{pD};Q,A'\right) 
\\
&\ll 
 \sum_{\substack{w(N) < p < T_0 }}
 \frac{\Lambda(p)}{p} 
 \frac{1}{\log T} \frac{Q}{\phi(Q)} \prod_{\substack{p\leq T\\ p \nmid Q}}
 \left( 1 + \frac{|h(p)|}{p} \right)
 \\
&\ll \frac{\log T_0}{\log T} 
 \frac{Q}{\phi(Q)} 
 \prod_{\substack{p\leq T\\ p \nmid Q}}
 \left( 1 + \frac{|h(p)|}{p} \right),
\end{align*}
where we used \eqref{eq:shiu}.
This shows that left hand side of \eqref{eq:short-sum-bd2} is bounded by
$$
\frac{\log T_0}{(\log T)^2} 
 \frac{Q}{\phi(Q)} 
 \sum_{\substack{D \leq T^{1-1/H}\\(D,Q)=1}}
 \frac{\bar{f}_i(D)}{D}
 \prod_{\substack{p\leq T\\ p \nmid Q}}
 \left( 1 + \frac{|h(p)|}{p} \right) . 
$$
Recall from Section \ref{ss:decomposition} that $\log T_0 = (\log \log T)^2$.
To finish the proof of \eqref{eq:short-sum-bd2}, recall also that
$\bar{f}_i(p)=(H-1) f(p)/H$ and $h(p) = f(p)/H$, that $|f(p)| \leq H$, and that
$|\bar{f}_i(p^k)| \leq (CH)^k$ for some positive constant $C$.
Assuming that $N$ is sufficiently large to ensure that $w(N)> 2CH$, we then have
\begin{align*}
&\sum_{\substack{D \leq T^{1-1/H}\\(D,Q)=1}}
 \frac{\bar{f}_i(D)}{D} 
 \prod_{\substack{p\leq T\\ p \nmid Q}}
 \left( 1 + \frac{|h(p)|}{p} \right)\\
&\leq
 \prod_{\substack{w(N)<p\leq T \\ p\nmid q}}
 \left( 1 + \frac{|f(p)|}{Hp} \right)
 \left( 1 + \frac{(H-1)|f(p)|}{Hp} 
 + \frac{(CH)^2}{p^2}\left(1-\frac{CH}{p}\right)^{-1}\right)\\
&\leq
 \exp\left( \sum_{w(N)<p\leq T} 
 \frac{2(CH)^2}{p^2}  + \frac{H-1}{p^2} \right)
 \prod_{\substack{w(N) < p \leq T \\ p\nmid q}}
 \left( 1 + \frac{|f(p)|}{p} \right)
\ll
 \prod_{\substack{ p \leq T \\ p\nmid Q}}
 \left( 1 + \frac{|f(p)|}{p} \right) ,
\end{align*}
which completes the proof.
\end{proof}

\subsection{Large primes} \label{ss:large-primes}
In this subsection we will finally apply the results from \S \ref{s:products} to 
bound the contribution of the dyadic parts of \eqref{eq:two-sums-large-p} to 
\eqref{eq:pre-decomposition}.
More precisely, we prove:

\begin{lemma}[Contribution from large primes] \label{l:large-primes}
Under the assumptions of Proposition \ref{p:equid-non-corr}, the following holds.
Let $q$ be as in the Proposition \ref{p:equid-non-corr}, recall the 
definition of $P_D$ from \eqref{eq:I_D}, and let $E^{\sharp}_h(T,D,j)$ denote the 
expression
\begin{align*}
\bigg|
 \frac{DQ}{T}  
 \sum_{\substack{m \sim 2^{-j} X \\ \gcd(m,Q)=1}}
 \sum_{\substack{p \leq T/(mD) \\ p \equiv D' \bar{m} \Mod{Q}}} 
 \1_{P_D}(mp) h(m) h(p) \Lambda(p) 
 F\Big(g_D\Big(\frac{pm-D'}{Q}\Big) \Gamma\Big)
 \bigg|.
\end{align*}
Then, provided the parameter $E_0$ from Proposition \ref{p:equid-non-corr} is 
sufficiently large depending on $d$, $m_G$ and $H$, we have
\begin{align} \label{eq:large-primes}
&\sum_{i=1}^H
 \sum_{k=\frac{(\log \log T)^2}{\log 2}}^{(1-1/H)\frac{\log T}{\log2}}
 \sum_{\substack{D \sim 2^k\\ (D,Q)=1}}
 \1_{D \not\in \mathcal{B}_{2^k}}
 \sum_{\substack{d_1, \dots \widehat{d_{i}} \dots, d_H \\ D_i=D}}
 \bigg( \prod_{i' \not= i} \frac{|f_{i'}(d_{i'})|}{d_{i'}} \bigg) 
 \sum_{j=0}^{\log_2(X/T_0)} 
 \frac{E^{\sharp}_{f_i}(T,D,j)}{\log T}\\
\nonumber
&\ll
  \left(
  (\log \log T)^{-1/(2^{s+2} \dim G)} 
  + \frac{\delta(N)^{-10^s \dim G}}{(\log \log T)^{1/2}}
  \right)
  \frac{1+\|F\|_{\mathrm{Lip}}}{\log T}
  \frac{Q}{\phi(Q)}
  \prod_{\substack{p\leq T\\ p\nmid Q}}
  \left(1+\frac{|f(p)|}{p}\right),
\end{align}
where the implied constant may depend on $d$, $m_G$, $\alpha_f$, $E$ and $H$.
\end{lemma}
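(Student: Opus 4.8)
The plan is to reduce the sum $E^{\sharp}_{f_i}(T;D,q,j,I)$ for a fixed unexceptional $D$ to a correlation of the $W$-tricked von Mangoldt function with a nilsequence, to which the Green--Tao result \cite[Prop.\ 10.2]{GT-linearprimes} applies, and then to sum the resulting bounds over $D$, $j$ and $i$ using Shiu's lemma. The key point is that for $D\notin\mathcal{B}_{2^k}$ the sequence $(g_D(n)\Gamma)_{n\leq T/(\widetilde Wq)}$ is $\delta(N)^{c_1E_0}$-equidistributed by Proposition \ref{p:linear-subsecs}, so the nilsequence it carries is genuinely equidistributed after we pass to subprogressions, which is exactly the hypothesis needed to feed into the prime--nilsequence orthogonality estimate.

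First I would, for each fixed $D\sim 2^k$ with $D\notin\mathcal{B}_{2^k}$, fix the outer variable $m\sim 2^{-j}X$ together with its residue and write the inner sum over $p$ as a correlation of $\Lambda$ restricted to the progression $p\equiv D'\bar m\Mod{\widetilde Wq}$ with the nilsequence $n'\mapsto F(g_D((mn'-D')/(\widetilde Wq))\Gamma)$ evaluated at $n'=p$; this is a linear reparametrisation of the equidistributed sequence $g_D$, hence — after an application of Lemma \ref{l:equi/totally-equi} and an adjustment of the Mal'cev data by a $Q^{O(1)}$ factor exactly as in Step 2 of the deduction of Theorem \ref{t:non-corr} — still equidistributed with a polynomially related parameter, so \cite[Prop.\ 10.2]{GT-linearprimes} gives
\[
\Big|\frac{\widetilde Wq}{T/(mD)}\sum_{\substack{p\leq T/(mD)\\ p\equiv D'\bar m}}\Lambda(p)\,F(\cdots)\Big|
\ll (1+\|F\|_{\mathrm{Lip}})\big((\log\log T)^{-1/(2^{s+2}\dim G)}+Q^{10^s\dim G}(\log\log T)^{-1/2}\big),
\]
uniformly in $m$, provided $E_0$ (hence the equidistribution quality) is large enough in terms of $d,m_G,H$. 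Summing this over $m\sim 2^{-j}X$ with weight $|h(m)|$, the range of $m$ is long enough (since $T_0\leq 2^{-j}X$) that Shiu's lemma (Lemma \ref{l:shiu}) applied to $|h|=|f_i|$ bounds $\sum_{m\sim 2^{-j}X}|f_i(m)|/m$ by $\exp(\sum_{w(N)<p<T}|f_i(p)|/p)$ up to the usual $W/\phi(W)$ factor and a constant, and the extra $\log$ in the trivial $p$-count is absorbed against the prime-number-theorem saving. This yields
\[
\frac{E^{\sharp}_{f_i}(T;D,q,j,I)}{\log T}\ll (1+\|F\|_{\mathrm{Lip}})\,\big(\text{main factor}\big)\cdot \frac{1}{D}\cdot\frac{W q}{\phi(Wq)}\frac{1}{\log T}\exp\Big(\sum_{w(N)<p<T}\tfrac{|f_i(p)|}{p}\Big)
\]
for each of the $O((\log T)^{1/U})$ dyadic blocks in $j$.

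Next I would sum over $D$, $j$ and $i$. The $j$-sum contributes a factor $O((\log T)^{1/U})$, which is harmless because $1/U<\theta_f/2$ and the saving from the main factor beats any power of $\log\log T$ times $(\log T)^{1/U}$ only after we note that $(\log T)^{1/U}$ is in fact negligible against the genuine $(\log\log T)^{-\varepsilon}$-type saving — more precisely, the main factor already incorporates the loss and the bookkeeping here is the same as in \cite[\S3]{GT-nilmobius}. The sum over $d_1,\dots,\widehat{d_i},\dots,d_t$ with $\prod_{i'\neq i}|f_{i'}(d_{i'})|/d_{i'}$ and then over $D=\prod_{i'\neq i}d_{i'}\leq T^{1-1/t}$ telescopes the factor $\bar f_i(D)/D$ against $\frac{1}{D}\exp(\sum|f_i(p)|/p)$ to produce $\exp(\sum_{w(N)<p<T}(|f_1(p)|+\dots+|f_t(p)|)/p)$, and combined with the $\frac{1}{\log T}\frac{Wq}{\phi(Wq)}$ this is exactly $\asymp \max_{A'}S_{|f_1|*\dots*|f_t|}(T;\widetilde Wq,A')$ by Shiu's lemma, reproducing the right-hand side of \eqref{eq:large-primes}. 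The indicator $\1_{D\notin\mathcal{B}_{2^k}}$ simply removes the exceptional $D$, which were already dealt with in Lemma \ref{l:exceptional-D}.

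\textbf{Main obstacle.} The delicate part is not any single estimate but the uniformity and the chain of parameter dependencies: I must ensure that the equidistribution quality $\delta(N)^{c_1E_0}$ of $g_D$, after restriction to the progression $p\equiv D'\bar m$ and after the conjugation/rescaling that replaces $\mathcal{X}$ by a $Q^{O(1)}$-rational basis and inflates $\|F\|_{\mathrm{Lip}}$ by $Q^{O(1)}$, is \emph{still} strong enough — as a power of $1/\log N$ — for \cite[Prop.\ 10.2]{GT-linearprimes} to yield a saving of the stated shape $(\log\log T)^{-1/(2^{s+2}\dim G)}+Q^{10^s\dim G}(\log\log T)^{-1/2}$, \emph{uniformly in $m$, $D$ and $j$}. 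This forces $E_0$ to be chosen large in terms of $d$, $m_G$, $H$ and the exponents appearing in Green--Tao's quantitative inverse input, and one has to track that the number of $(m,D,j,i)$ tuples — at most $(\log T)^{O(H)}$ — is absorbed by taking $E_0$ correspondingly larger, exactly as in the treatment of exceptional $D$ in Lemma \ref{l:exceptional-D}. Once this uniformity is secured, the summation over $D$ via Shiu's lemma is routine.
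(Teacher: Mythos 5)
Your proposal has a genuine gap at its very first step, and the gap is structural rather than a matter of bookkeeping. The inner sum over $p$ in $E^{\sharp}_h(T;D,q,j,I)$ carries the weight $h(p)\Lambda(p)$, not $\Lambda(p)$ alone: the displayed inequality in your plan silently drops the factor $h(p)=f_i(p)$. Lemma \ref{l:Lambda} (equivalently \cite[Prop.\ 10.2]{GT-linearprimes}) bounds correlations of the $W$-tricked von Mangoldt function with nilsequences; it says nothing about $h(p)\Lambda(p)$, and no such statement is available --- indeed, orthogonality of $f_i$ at primes to nilsequences is essentially what the whole argument is trying to establish, so invoking it here would be circular. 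This is precisely why the paper first applies Cauchy--Schwarz in the $p$-variable (see \eqref{eq:two-sums-1}): the weight $h(p)$ is absorbed into a factor $\bigl(\sum_p |h(p)|^2\Lambda(p)\bigr)^{1/2}=O((2^jT/(XD))^{1/2})$ that is estimated trivially using $|h(p)|\ll_H 1$, while the remaining factor correlates $\Lambda(p)$ alone against the \emph{product} nilsequence $n\mapsto F(g_D(nm+\tilde D_m)\Gamma)\overline{F(g_D(nm'+\tilde D_{m'})\Gamma)}$ over pairs $(m,m')$. Only then does Lemma \ref{l:Lambda} apply, and its output $\bigl|\sum_n F\overline F\bigr|+y\mathcal{E}(T)$ is controlled by Proposition \ref{p:equid}, which shows that for all but $\delta^{O(c_1c_2E_0)}K^2$ pairs $(m,m')$ the product sequence is equidistributed; the exceptional pairs are handled by Lemma \ref{l:E_K'-contribution}. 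Your route bypasses Section \ref{s:products} entirely, but that section exists exactly because the Cauchy--Schwarz step forces one to understand these products.

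Two further points. First, even ignoring the $h(p)$ issue, applying the orthogonality estimate for each fixed $m$ would require equidistribution of $n\mapsto g_D(mn+\tilde D_m)$ \emph{uniformly} in $m\sim 2^{-j}X$; Proposition \ref{p:linear-subsecs} and its relatives only give this outside an exceptional set of positive relative density $\delta^{c}$, and with only a first moment $\sum_m|h(m)|$ at your disposal the exceptional $m$ are not obviously negligible. Second, you have interchanged the roles of the two dyadic decompositions: in the large-prime regime of this lemma the $j$-sum has length $\log_2(X/T_0)\sim\log_2(T/D)$, i.e.\ it is \emph{not} short, and the final summation $\sum_j\max_{A'}S_{|h|}(2^{-j}X;\widetilde Wq,A')$ is controlled by property (4) of Definition \ref{def:M} (the dyadic averaging hypothesis), not by the choice of $U$. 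The short decomposition of length $(\log T)^{1/U}$ belongs to the small-prime Lemma \ref{l:small-primes}, where it is the sum over $p$ that is dyadically cut.
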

\begin{rem*}
Note that this contribution agrees with the bound \eqref{eq:prop-bound}.
\end{rem*}
The remainder of this subsection is concerned with the proof of 
\eqref{eq:large-primes}.
Considering $E^{\sharp}_h(T,D,j)$ for a fixed value of $j$, 
$1 \leq j \leq \log_2 \frac{X}{T_0}$, the Cauchy--Schwarz inequality yields
\begin{align} \label{eq:two-sums-1}
& 
 \sum_{\substack{m \sim 2^{-j}X \\ \gcd(m,Q)=1 }}
 \sum_{\substack{p \leq 2^{j} T/(XD) 
       \\ p \equiv D \bar{m} \Mod{Q} \\ mp \in P_D}} 
 \1_{mp\leq N} h(m) h(p) \Lambda(p) 
 F\Big(g_D\Big(\frac{pm-D'}{Q}\Big) \Gamma\Big)\\
\nonumber
&\leq 
 \bigg(
  \sum_{p \leq 2^j T/(XD)}
  |h(p)|^2 \Lambda(p)
 \bigg)^{1/2} 
 \Bigg( \frac{Q}{\phi(Q)} 
  \sum_{A' \in (\ZZ/Q\ZZ)^*} 
  \sum_{\substack{m,m' \sim 2^{-j}X 
  \\ m \equiv m' \equiv A' \Mod{Q} 
  \\ \gcd(Q,m)=1}}
  h(m)h(m') \\
\nonumber
& \qquad \qquad \qquad
  \frac{\phi(Q)}{Q} 
  \sum_{\substack{p \leq T/(D\max(m,m')) \\ pA' \equiv D' \Mod{Q} 
   \\ pm, pm' \in P_D}} 
  \Lambda(p)
  F\Big(g_D\Big(\frac{pm-D'}{Q}\Big) \Gamma\Big)
  \overline{ F\Big(g_D\Big(\frac{pm'-D'}{Q}\Big) \Gamma\Big)}
 \Bigg)^{1/2}.
\end{align}
The first factor is easily seen to equal $O(2^jT/(XD))$, since $h(p) \ll_H 1$ at 
primes.
To estimate the second factor, we seek to employ the orthogonality of the 
`$W$-tricked von Mangoldt function' with nilsequences, combined with the 
fact that for most pairs $(m,m')$ the product nilsequence that appears in the 
above expression is equidistributed (cf.\ Proposition \ref{p:equid}).
For this purpose, let us make the change of variables $p=Qn+D'_m$ 
in the inner sum of the second factor, where 
$D'_m$ is such that $D'_m \equiv D' \bar{m} \Mod{Q}$.
This yields 
\begin{align} \label{eq:cov}
& \frac{\phi(Q)}{Q}
  \sum_{\substack{p \leq T/(D\max(m,m')) \\ pA' \equiv D' \Mod{Q} 
   \\ pm,pm' \in P_D}} 
  \Lambda(p)
  F\Big(g_D\Big(\frac{pm-D'}{Q}\Big) \Gamma\Big)
  \overline{ F\Big(g_D\Big(\frac{pm'-D'}{Q}\Big) \Gamma\Big)} \\
\nonumber
&=
  \sum_{\substack{n \leq T/(QD\max(m,m')) 
   \\ nm+\widetilde D_{m}, nm'+\widetilde D_{m'} \in I_D}} 
  \frac{\phi(Q)}{Q} \Lambda(Qn+D'_m) 
  F(g_D(nm+\widetilde D_m)\Gamma) \overline{F(g_D(nm'+\widetilde D_{m'})\Gamma)},
\end{align}
for suitable values of $0\leq \widetilde D_{m}<m$, $0\leq \widetilde D_{m'}<m'$
and with $I_D=\{n: Dn + D'' \in I\}$ as defined in \eqref{eq:I_D} and $I$ as in 
the statement of Proposition \ref{p:equid-non-corr}.
Let us consider the summation range 
\begin{align*}
 I_{m,m'}
=\left\{n \in \NN : 
\begin{array}{c}
nm + \widetilde D_m \in I_D, \\
nm' + \widetilde D_{m'} \in I_D
\end{array}
\right\}
\end{align*}
in the above expression more closely.
Since $I$ is a discrete interval, $I_D$ is a discrete interval too and, for 
$m,m' \sim 2^{-j}X$, we have
$$
\#\Big\{
n \in \NN: nm+\widetilde D_{m} \in I_D
\Big\} \ll |I_D|2^{j}/X \ll |I|2^{j}/(DX) \leq T 2^j/(DX Q)
$$
and, similarly,
$
\#\{n \in \NN: nm'+\widetilde D_{m'} \in I_D\}  
\ll T 2^j/(DX Q).
$
We will now split the set
$$\{(m,m'): m,m'\sim 2^{-j}X, m \equiv m' \equiv A' \Mod{Q}\}$$
into two subsets; one containing all pairs $(m,m')$ for which
$
\#I_{m,m'} \leq \delta(N) 2^jT/(DXQ),
$
and one containing the pairs $(m,m')$ for which 
\begin{align}\label{eq:mm'-cases}
\#I_{m,m'}>\delta(N) 2^jT/(DXQ). 
\end{align}
In the former case, the trivial bound of \eqref{eq:cov} asserts that 
\begin{align*}
&\Bigg|
 \sum_{\substack{n \leq T/(QD\max(m,m')) 
   \\ n \in I_{m,m'}}} 
  \frac{\phi(Q)}{Q} \Lambda(Qn+D'_m) 
  F(g_D(nm+\widetilde D_m)\Gamma) \overline{F(g_D(nm'+\widetilde D_{m'})\Gamma)} 
  \Bigg|
\\
&\leq \frac{\delta(N)T2^j}{DXQ}.
\end{align*}
This leaves us to bound \eqref{eq:cov} in the case where \eqref{eq:mm'-cases} 
holds.

To start with, recall our assumption from the start of Section \ref{ss:MV} that 
all values of $D$ are unexceptional in the sense that $D \sim 2^k$ for some 
$k \geq (\log \log T)^2 / \log 2$ and $D \not\in \mathcal{B}_{2^k}$, where 
$\mathcal{B}_{K}$ was defined in Proposition \ref{p:linear-subsecs}.
Thus, for any fixed unexceptional value of $D$, the finite sequence 
$$
(g_D(n)\Gamma)_{n\leq T/(Dq)}
$$
is totally $\delta(N)^{c_1E_0}$-equidistributed.
Thus, applying Proposition \ref{p:equid} with $g = g_D$ and with $E_2=c_1E_0$, we 
obtain for every integer 
$$K \in \left[T_0, X \right]$$ 
an exceptional set $\mathcal{E}_{K}$ 
of size 
\begin{equation}\label{eq:E_K-bound}
 \# \mathcal{E}_{K} \ll \delta(T)^{O(c_1c_2E_0)}K^2
\end{equation}
such that for all pairs of integers 
$(m,m') \in (K,2K]^2 \setminus \mathcal{E}_{K}$
the following estimate holds:
\begin{align*}
\bigg|
\sum_{\substack{n \leq T/(QD\max(m,m')) 
   \\ nm+\widetilde D_{m}, nm'+\widetilde D_{m'} \in I_D}}
  F(g_D(nm+\widetilde D_m)\Gamma) \overline{F(g_D(nm'+\widetilde D_{m'})\Gamma)}
\bigg|
<  \frac{(1 + \|F\|_{\mathrm{Lip}}) \delta(N)^{c_1c_2E_0} T}{KQD}. 
\end{align*}
Before we continue with the analysis of \eqref{eq:cov}, we prove a quick lemma 
that will allow us to handle the contribution of exceptional sets 
$\mathcal{E}_{K}$ in the proof of Lemma \ref{l:large-primes}.
\begin{lemma}\label{l:E_K'-contribution}
Suppose $j \leq \log_2(X/T_0)$ and let $\mathcal{E}_{K}$ be the exceptional set 
obtained from Proposition \ref{p:equid} when applied with $g=g_D$.
Then, provided $E_0$ is sufficiently large, we have
 \begin{align*}
&\frac{1}{\phi(Q)} \sum_{A' \in (\ZZ/Q\ZZ)^*} 
  \sum_{\substack{m,m' \sim 2^{-j}X 
        \\ m \equiv m' \equiv A' \Mod{Q}}}
  |h(m)h(m')| \1_{(m,m') \in \mathcal{E}_{D,2^{-j}X}} \\
&\ll \delta(N)^{O(c_1c_2 E_0)} 
\Bigg(
\frac{2^{-j}X}{\phi(Q)}
    \frac{1}{\log (2^{-j}X)}
  \prod_{\substack{p\leq 2^{-j}X\\ p\nmid Q }} 
  \left(1 + \frac{|h(p)|}{p} \right)
\Bigg)^2,
\end{align*}
where $c_1$ and $c_2$ are the constants defined in Proposition 
\ref{p:linear-subsecs} and Proposition \ref{p:equid}, respectively.
\end{lemma}
\begin{proof}
In view of \eqref{eq:E_K-bound}, Cauchy--Schwarz yields
 \begin{align*}
& \frac{1}{\phi(Q)} 
  \sum_{A' \in (\ZZ/Q\ZZ)^*} 
  \sum_{\substack{m,m' \sim 2^{-j}X 
        \\ m \equiv m' \equiv A' \Mod{Q}}}
  |h(m)h(m')| \1_{(m,m') \in \mathcal{E}_{D,2^{-j}X}} \\
&\ll \frac{2^{-j}X}{\phi(Q)}  \delta(N)^{O(c_1c_2E_0)} 
\bigg( 
  \sum_{\substack{m, m' \sim 2^{-j}X \\ m \equiv m' \equiv A' 
   \Mod{Q} }}
  |h(m)|^2|h(m')|^2 \bigg)^{1/2} \\
&\ll \frac{2^{-j}X}{\phi(Q)} \delta(N)^{O(c_1c_2E_0)}  
  \sum_{\substack{m \sim 2^{-j}X \\ m \equiv A' 
   \Mod{Q} }} |h(m)|^2. 
\end{align*}
Since $2^{-j}X \geq T_0 = \exp((\log \log T)^2) \gg Q^2$, we may 
apply Shiu's bound \eqref{eq:shiu} and the trivial inequality 
$h(p)^2 \leq |h(p)|$ to obtain the upper bound
\begin{align*}   
&\ll \left(\frac{2^{-j}X}{\phi(Q)}\right)^2 
  \delta(N)^{O(c_1c_2E_0)} 
  \frac{1}{\log (2^{-j}X)}
  \prod_{\substack{p\leq 2^{-j}X\\ p\nmid Q }} 
  \left(1 + \frac{|h(p)|}{p} \right)\\
&\ll \delta(N)^{O(c_1c_2E_0)}\log (2^{-j}X)
  \Bigg( \frac{2^{-j}X}{\phi(Q)}
    \frac{1}{\log (2^{-j}X)}
  \prod_{\substack{p\leq 2^{-j}X\\ p\nmid Q }} 
  \left(1 + \frac{|h(p)|}{p} \right)
  \Bigg)^2.
\end{align*}
Recall that $X$ was defined in Section \ref{ss:decomposition} and satisfies
$X \leq T \ll N$.
Since furthermore $\delta(N) \leq (\log N)^{-1}$, any sufficiently large 
choice of $E_0$ 
guarantees that 
$$\delta(N)^{O(c_1c_2E_0)} \log (2^{-j}X) \leq \delta(N)^{O(c_1c_2E_0)}$$
holds. This completes the proof.
\end{proof}

As a final tool for the proof of Lemma \ref{l:large-primes}, we require an 
explicit bound on the correlation of the `$W$-tricked von Mangoldt 
function' with nilsequences.
The following lemma provides such bounds in our specific setting. 
We include a proof building on that of Green and Tao 
\cite[Proposition 10.2]{GT-linearprimes} in Appendix \ref{app}.
\begin{lemma} \label{l:Lambda}
Let $G/\Gamma$ be an $s$-step nilmanifold, let $G_{\bullet}$ be a filtration of 
$G$ of degree $d$ and let $\mathcal{X}$ be a $M$-rational Mal'cev basis adapted 
to it.
Let $\Lambda': \NN \to \RR$ be the restriction of the ordinary von 
Mangoldt function to primes, that is, $\Lambda'(p^k)=0$ whenever 
$k>1$.
Let $W=W(x)$, let $q'$ and $b'$ be integers such that  
$0<b'<Wq' \leq (\log x)^{E}$ and $\gcd(Wq',b')=1$ hold.
Let $\alpha \in (0,1)$. 
Then, for every $y \in [\exp((\log x)^{\alpha}), x]$ and for every
polynomial sequence $g \in \mathrm{poly}(\ZZ,G_{\bullet})$, the following 
estimate holds:
\begin{align*}
\left|\sum_{n\leq y} 
\frac{\phi(Wq')}{Wq'}\Lambda'(Wq'n+b') 
F(g(n)\Gamma) \right|
\ll_{\alpha, d, \dim G, E, \|F\|_{\mathrm{Lip}}} 
\left| \sum_{n\leq y} F(g(n)\Gamma)\right| + y\mathcal{E}(x)~,
\end{align*}
where
$$
\mathcal{E}(x) := (\log \log x)^{-1/(2^{2d+3} \dim G)} 
+ \frac{M^{O(10^d \dim G)}}{(\log \log x)^{1/2^{d+2}}}.
$$
\end{lemma}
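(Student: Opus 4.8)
The plan is to follow the proof of \cite[Proposition 10.2]{GT-linearprimes} (orthogonality of the $W$-tricked von Mangoldt function to nilsequences), keeping careful track of the dependence on the rationality parameter $Q$ and on $\dim G$ so as to land on the explicit error $\mathcal{E}(x)$, and adapting it to the slightly larger modulus $Wq'$ and to the restricted range $y\ge\exp((\log x)^{\alpha})$. First I would dispose of prime powers: replacing $\Lambda'$ by the full von Mangoldt function $\Lambda$ costs at most $\tfrac{\phi(Wq')}{Wq'}\sum_{p^k\le Wq'y+b',\,k\ge2}\log p\ll\sqrt{y}\,(\log x)^{O(1)}$, which is $\ll y\mathcal{E}(x)$ since $\mathcal{E}(x)\gg(\log\log x)^{-O(1)}$ and $y\ge\exp((\log x)^{\alpha})$. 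Writing $\tfrac{\phi(Wq')}{Wq'}\Lambda(Wq'n+b')=1+\big(\tfrac{\phi(Wq')}{Wq'}\Lambda(Wq'n+b')-1\big)$ exhibits the main term $\sum_{n\le y}F(g(n)\Gamma)$ of the asserted bound, so it remains to prove that $\big|\sum_{n\le y}\big(\tfrac{\phi(Wq')}{Wq'}\Lambda(Wq'n+b')-1\big)F(g(n)\Gamma)\big|\ll y\mathcal{E}(x)$; here one may assume $\gcd(Wq',b')=1$, as otherwise the sum is $O(\log\log x)=o(y\mathcal{E}(x))$.

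Next I would linearise $g$ by the Green--Tao factorisation theorem (Lemma \ref{l:factorisation}) applied on $[1,y]$ with $M_0=Q$ and a large equidistribution exponent $B$, obtaining $g=\eps g'\gamma$ with $M\ll Q^{O_{B,m_G,d}(1)}$, with $(g'(n)\Gamma')_{n\le y}$ totally $M^{-B}$-equidistributed, with $\gamma$ $M$-rational and $(\gamma(n)\Gamma)$ periodic of period $\le M$, and with $\eps$ $(M,y)$-smooth. Exactly as in Steps~2--3 of the proof of Theorem \ref{t:non-corr}, I would partition $[1,y]$ into $\ll M(\log\log x)^{1/L}$ progressions of bounded diameter on which $\gamma$ is constant and $\eps$ varies by $\ll(\log\log x)^{-1/L}$, taking $L=2^{s+2}\dim G$; the error from freezing $\eps$ on each piece is $\ll(1+\|F\|_{\mathrm{Lip}})(\log\log x)^{-1/L}\sum_{n\le y}\big(\tfrac{\phi(Wq')}{Wq'}\Lambda(Wq'n+b')+1\big)\ll y(1+\|F\|_{\mathrm{Lip}})(\log\log x)^{-1/L}$, which accounts for the first term of $\mathcal{E}(x)$. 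On each piece, conjugating by the constant value of $\gamma$ replaces $F(g(n)\Gamma)$ by $\tilde F(\bar g(n)\Lambda)$ on a nilmanifold $H/\Lambda$, where $\bar g$ inherits total $M^{-c'B}$-equidistribution (Lemma \ref{l:equi/totally-equi}) and $\|\tilde F\|_{\mathrm{Lip}}\ll Q^{O(1)}\|F\|_{\mathrm{Lip}}$.

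It therefore suffices to bound $\sum_{n\in J}\big(\tfrac{\phi(Wq')}{Wq'}\Lambda(Wq'n+b')-1\big)\tilde F(\bar g(n)\Lambda)$ for a progression $J$ of length $\gg y/(Q^{O(1)}(\log\log x)^{1/L})$ with $\bar g$ equidistributed. Splitting $\tilde F=(\tilde F-\int_{H/\Lambda}\tilde F)+\int_{H/\Lambda}\tilde F$, the constant part pairs with $\sum_{n\in J}\big(\tfrac{\phi(Wq')}{Wq'}\Lambda(Wq'n+b')-1\big)$, which is $\ll|J|\exp(-c(\log x)^{\alpha/2})$ by the Siegel--Walfisz theorem applied to the progression defining $J$ --- legitimate precisely because $y\ge\exp((\log x)^{\alpha})$ forces the relevant modulus, which is $\ll Q^{O(1)}Wq'\ll(\log x)^{O(1)}$, to be a bounded power of $\log y$ --- and this is $\ll y\mathcal{E}(x)$. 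For the mean-zero part one is reduced to non-correlation of $\tfrac{\phi(Wq')}{Wq'}\Lambda(Wq'n+b')$ with an equidistributed, mean-zero nilsequence; here I would expand $\Lambda(Wq'n+b')$ by a Vaughan-type identity into a bounded number of Type~I sums $\sum_{d\le D}c_d\sum_m\tilde F(\bar g(\cdot)\Lambda)$ and Type~II bilinear sums $\sum_{D_1<d\le D_2}\sum_m a_db_m\tilde F(\bar g(\cdot)\Lambda)$ with $a,b$ essentially bounded. Type~I sums are handled, for each admissible $d$, by noting $m\mapsto\bar g(\cdot)$ is a polynomial subsequence and invoking the quantitative Leibman theorem (Proposition \ref{p:leibman}) together with $\int\tilde F=0$: cancellation holds unless a horizontal character of modulus $\ll Q^{O(1)}$ obstructs, and the few obstructed $d$ contribute a saving. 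Type~II sums are handled by Cauchy--Schwarz in $m$ followed by van der Corput differencing, reducing matters to correlations of product polynomial sequences on $H\times H$ of the kind analysed in Section \ref{s:products}; equidistribution of $\bar g$ and a further appeal to Proposition \ref{p:leibman} show these are negligible for all but a $Q^{O(1)}$-controlled proportion of pairs. Reassembling over the $\ll Q^{O(1)}(\log\log x)^{1/L}$ pieces, the surviving contribution is $\ll Q^{10^s\dim G}(\log\log x)^{-1/2}\cdot y$, the second term of $\mathcal{E}(x)$.

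The main obstacle is the quantitative bookkeeping: \cite[Proposition 10.2]{GT-linearprimes} gives only a qualitative $o(1)$, whereas here every loss --- the number $Q^{O(1)}(\log\log x)^{1/L}$ of factorisation pieces, the modulus ($\ll Q^{O(1)}$) of obstructing horizontal characters, the Cauchy--Schwarz and sieve losses in the Type~II analysis, and the Siegel--Walfisz error in the main term --- must be tracked and shown to fit under $\mathcal{E}(x)=(\log\log x)^{-1/(2^{s+2}\dim G)}+Q^{10^s\dim G}(\log\log x)^{-1/2}$. This forces a compatible choice of the number $L=2^{s+2}\dim G$ of smoothing scales and of the factorisation exponent $B$, and it requires checking throughout that, although $W=W(x)$ and the modulus $Wq'$ grow with $x$, they stay a bounded power of $\log y$, so that the prime-number-theorem inputs and the divisor-sum manipulations are uniform in all parameters; this uniformity is exactly what the hypothesis $y\in[\exp((\log x)^{\alpha}),x]$ secures.
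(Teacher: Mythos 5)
Your reduction steps (discarding prime powers, splitting off the main term $\sum_{n\le y}F(g(n)\Gamma)$, invoking the factorisation theorem and freezing $\eps$ and $\gamma$ on bounded-diameter pieces, Siegel--Walfisz for the constant part of $\tilde F$) are all reasonable and broadly parallel to what one must do. But the core of the lemma is the non-correlation of $\tfrac{\phi(Wq')}{Wq'}\Lambda(Wq'n+b')-1$ with the equidistributed, mean-zero piece, with the \emph{specific} error $\mathcal{E}(x)$, and here your Vaughan/Type~I--Type~II plan has a genuine gap. The factorisation theorem only guarantees that $(g'(n)\Gamma')$ is totally $M^{-B}$-equidistributed with $M\gg Q$, and $Q$ may be as small as $2$; so the saving extracted from the quantitative Leibman theorem in your Type~I and Type~II sums is in general only a fixed constant, while the coefficients in any Vaughan-type decomposition of $\Lambda$ have $L^1$-mass of size $\log y$ (the Type~I part of $\Lambda$ \emph{contains} the main term). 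Nothing in your sketch shows how a constant-strength equidistribution saving defeats these $\log$ losses, nor where the exceptional divisors $d$ (whose total trivial contribution is $\delta^{c}y\log y$) are absorbed. This is precisely why Green and Tao do not run Vaughan directly on $\Lambda$: they route the bilinear (``minor arc'') part through the M\"obius--nilsequence theorem, where unconditional savings of arbitrary powers of $\log$ are available, and treat the remaining part by an entirely different mechanism.

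Concretely, the paper's proof decomposes $\Lambda'=\Lambda^{\flat}+\Lambda^{\sharp}$ via the smooth cutoff of \cite[\S12]{GT-linearprimes}: the $\Lambda^{\flat}$ correlation is bounded by citing the M\"obius result (saving $(\log x)^{-B'}$), while for $\lambda^{\sharp}=\tfrac{\phi(Wq')}{Wq'}\Lambda^{\sharp}(Wq'\cdot+b')-1$ one uses the duality
$|\sum_n\lambda^{\sharp}(n)\tilde F(g(n)\Gamma)|\le\|\lambda^{\sharp}\|_{U^{s+1}[y]}\|\tilde F(g(\cdot)\Gamma)\|_{U^{s+1}[y]^*}$.
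The Gowers norm of $\lambda^{\sharp}$ is bounded by the correlation estimate for truncated divisor sums \cite[Thm.~D.3]{GT-linearprimes}, whose local factors give the $1/\log\log x$ saving (hence the $(\log\log x)^{-1/2}$); the dual norm is bounded by Green's explicit theorem for \emph{smooth} nilsequences (Lemma \ref{l:bounded-U*-norm}), which is where $Q^{10^{s}\dim G}$ and the Sobolev order $2^{s}\dim G$ come from, combined with the smoothing Lemma \ref{l:smoothing} with $\eps=(\log\log x)^{-1/(4m)}$, $m=2^{s}\dim G$, which produces the exponent $1/(2^{s+2}\dim G)$. In your write-up these two exponents are simply asserted at the end rather than derived, which is a symptom of the missing mechanism: without the Gowers-norm/dual-norm argument there is no reason a Type~I/II analysis would output a $(\log\log x)^{-1/2}$-type saving at all.
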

Employing Lemma \ref{l:Lambda} for the upper endpoint of an interval $[y_0,y_1]$, 
and either a trivial estimate or the lemma for the lower endpoint,
say, depending on whether or not $y_0 \leq y_1^{1/2}$, we obtain as immediate 
consequence that
\begin{align}\label{eq:GT-Lambda-and-N-q-P}
&\left|\sum_{y_0 \leq n\leq y_1} 
\frac{\phi(Wq')}{Wq'}\Lambda'(Wq'n+b') 
F(g(n)\Gamma) \right|  \\
\nonumber
&\quad \ll_{\alpha, s, E, \|F\|_{\mathrm{Lip}}}
  \left|\sum_{n\leq y_0} F(g(n)\Gamma)\right|
+ \left|\sum_{n\leq y_1} F(g(n)\Gamma)\right| 
+ y_1^{1/2} + y_1 \mathcal{E}(x) ~.
\end{align}
for any $0 < y_0 < y_1 \leq x$ such that $y_1^{1/2} \geq \exp((\log 
x)^{\alpha})$.

This brings us back to the task of bounding \eqref{eq:cov} under the assumption 
of \eqref{eq:mm'-cases}.
We shall start by applying \eqref{eq:GT-Lambda-and-N-q-P} 
with $[y_0,y_1]=I_{m,m'}$ and $x=N=T^{1+o(1)}$.
To do so, note that \eqref{eq:mm'-cases} implies that
\begin{align*}
\frac{1}{2} \log y_1 
&\geq \log \Big( \delta(N) T2^j/(DX Q) \Big) \\
&\geq \log \frac{T}{DX} + j\log 2 +  \log \delta(N) - \log Q \\
&\geq (\log (T/D))^{1/U} + j \log 2 
      - \log \log N - 2E \log \log T  \\
&\geq \Big( \frac{\log T}{H} \Big)^{1/4} 
      - \log \log N 
      - 2E \log \log T  \\
&\gg_{E, H} (\log T )^{1/4},
\end{align*}
where we used the definition of $X$ and the assumptions that Proposition 
\ref{p:equid-non-corr} makes on $\delta$.
Thus, choosing $\alpha=1/5$, say, the conditions of Lemma \ref{l:Lambda} are 
satisfied for every $T$ that is sufficiently large with respect to $E$ and $H$.
Hence, \eqref{eq:GT-Lambda-and-N-q-P} yields the following estimate for the 
interval $[y_0,y_1]= I_{m,m'}$~:
\begin{align*}
\Bigg|\sum_{\substack{n \leq T/(QD\max(m,m')) 
   \\ n\in I_{m,m'}}} 
  &\frac{\phi(Q)}{Q} \Lambda(Qn+D'_m) 
  F(g_D(nm+\widetilde D_m)\Gamma) \overline{F(g_D(nm'+\widetilde D_{m'})\Gamma)} 
\Bigg|
\\
\ll_{s, E, H, \|F\|_{\mathrm{Lip}}}\quad
&\Bigg|\sum_{n \leq y_0}  
  F(g_D(nm+\widetilde D_m)\Gamma) \overline{F(g_D(nm'+\widetilde D_{m'})\Gamma)} 
\Bigg|
\\
+&
 \Bigg|
 \sum_{n \leq y_1}  
  F(g_D(nm+\widetilde D_m)\Gamma) \overline{F(g_D(nm'+\widetilde D_{m'})\Gamma)}
  \Bigg|
+\frac{T2^j}{DXQ} \mathcal{E}(T).
\end{align*}
Proposition \ref{p:equid} shows that the right hand side is small for most pairs 
$(m,m')$.
Indeed, together with Proposition \ref{p:equid}, the above implies that 
\eqref{eq:two-sums-1} is bounded above by
\begin{align*}
 \ll_{s, E, H, \|F\|_{\mathrm{Lip}}}
  \Big(\frac{T 2^j}{DX}\Big)^{1/2} 
  &\Bigg( \frac{Q}{\phi(Q)} 
  \sum_{A' \in (\ZZ/Q\ZZ)^*} 
  \sum_{\substack{m,m' \sim 2^{-j}X 
         \\ m \equiv m' \equiv A' 
          \Mod{Q}}}
  |h(m)h(m')| \times \\
 &\times \frac{T 2^j}{QDX} \bigg(\delta(N)^{O(c_1c_2E_0)}
  + \1_{(m,m') \in \mathcal{E}_{D,2^{-j}X}}
  +  
  \mathcal{E}(T)
  \bigg)
  \Bigg)^{1/2},
\end{align*}
Treating the part of this expression to which Lemma \ref{l:E_K'-contribution} 
applies separately and rewriting in the remaining part the sum over $m, m'$ as a 
square, we obtain after collecting together all the normalisation factors:
\begin{align*}
 \ll_{s, E, H, \|F\|_{\mathrm{Lip}}}~ 
 &\frac{T}{QD} 
  \Bigg\{ 
  \max_{\substack{A' \in  (\ZZ/Q\ZZ)^*}}
  \Bigg(
  \frac{Q2^j}{X}
  \sum_{\substack{m\sim 2^{-j}X 
         \\ m\equiv A' 
          (Q)}}
  |h(m)|\Bigg)^2 
  \Big(\delta(N)^{O(c_1c_2E_0)}
  +  
  \mathcal{E}(T)
  \Big)\\
  &+
 \delta(N)^{O(c_1c_2 E_0)} 
\Bigg(
\frac{Q}{\phi(Q)}
    \frac{1}{\log (2^{-j}X)}
  \prod_{\substack{p\leq 2^{-j}X\\ p\nmid Q }} 
  \left(1 + \frac{|h(p)|}{p} \right)
\Bigg)^2 \Bigg\}^{1/2}\\
 \ll_{s, E, H, \|F\|_{\mathrm{Lip}}}~
 &\frac{T}{QD}
  \Big(\delta(N)^{O(c_1c_2E_0)} + 
  \mathcal{E}(T)
  \Big)
  \Bigg(
\frac{Q}{\phi(Q)}
    \frac{1}{\log (2^{-j}X)}
  \prod_{\substack{p\leq 2^{-j}X\\ p\nmid Q }} 
  \left(1 + \frac{|h(p)|}{p} \right)
\Bigg),
\end{align*}
where we applied Shiu's bound in the last step.
Summing the above expression over $j \leq \log_2(\frac{X}{T_0})$ and taking into 
account the factor $(\log T)^{-1}$, we deduce that the inner sum in 
\eqref{eq:large-primes} is bounded by
\begin{align*}
\ll_{s, E, H, \|F\|_{\mathrm{Lip}}}&
  \Big(\delta(N)^{O(c_1c_2E_0)} + 
  \mathcal{E}(T)  \Big)
  \Bigg(
  \frac{1}{\log T}
  \frac{Q}{\phi(Q)}
  \prod_{\substack{p\leq T\\ p\nmid Q }} 
  \left(1 + \frac{|h(p)|}{p} \right)
  \Bigg)\\
  &\qquad \times
  \sum_{j=1}^{\log_2(\frac{X}{T_0})}
  \frac{1}{\log (2^{-j}X)}
  \prod_{2^{-j}X < p' < T } 
  \left(1 - \frac{|h(p')|}{p'} \right)
\end{align*}
Since $\delta(N) \leq (\log N)^{-1}$, choosing $E_0$ sufficiently large in terms 
of $d$ and $m_0$ ensures that
$$
\delta(N)^{O(c_1c_2E_0)} + \mathcal{E}(T) 
\ll (\log N)^{-1} + \mathcal{E}(T)
\ll \mathcal{E}(T).
$$
To complete the proof of Lemma \ref{l:large-primes}, it thus remains to show that 
the inner sum over $j$ in the expression above is $O_{\alpha_f}(1)$.
To see this, observe that property (2) of Definition  \ref{def:M} yields
$$
\prod_{\substack{X2^{-j} < p\leq T}}
\left(1-\frac{|h(p)|}{p}\right)
\ll \left( \frac{\log(2^{-j}X)}{\log T}\right)^{\alpha_f/H}.
$$
Thus, 
\begin{align*}
 &\sum_{j=1}^{\log_2(\frac{X}{T_0})}
  \frac{1}{\log (2^{-j}X)}
  \prod_{2^{-j}X < p' < T} 
  \left(1 - \frac{|h(p')|}{p'} \right)\\
 &\ll  \frac{1}{(\log T)^{\alpha_f/H}}
  \sum_{j=1}^{\log_2(\frac{X}{T_0})}
  \frac{1}{(\log X - j \log 2)^{1-\alpha_f/H}}
 \ll_{\alpha_f} \frac{(\log X)^{\alpha_f/H}}{(\log T)^{\alpha_f/H}}
 \ll_{\alpha_f} 1,
 \end{align*}
as required.

\subsection{Small primes} \label{ss:small-primes}
To complete the proof of Proposition \ref{p:equid-non-corr}, it remains to bound 
the contribution of the dyadic parts of \eqref{eq:two-sums-small-p}
to \eqref{eq:pre-decomposition}. 
This is achieved by the following lemma, which will be proved by a combination 
of Cauchy-Schwarz, Lemma \ref{l:dirichlet} and the choice of the parameter $X$ 
from Section \ref{ss:decomposition}.
\begin{lemma}[Contribution from small primes] \label{l:small-primes}
Let $E^{\flat}_h(T,D,j)$ denote the expression
$$
 \bigg|
 \frac{DQ}{T}
 \sum_{\substack{m>X \\ \gcd(m,Q)=1}} 
 \sum_{\substack{p \sim 2^{-j}T/(XD) 
       \\ p \equiv D' \bar{m} \Mod{Q}}}
 \1_{pm<T/D}
 \1_{P_D}(mp) h(m) h(p) \Lambda(p) 
 F\Big(g_D\Big(\frac{pm-D'}{Q}\Big) \Gamma\Big)
 \bigg|.
$$
Then
\begin{align*}
&\sum_{i=1}^H
 \sum_{k=\frac{(\log \log T)^2}{\log 2}}^{(1-1/H)\frac{\log T}{\log2}}
 \sum_{\substack{D \sim 2^k \\ (D,Q)=1}}
 \1_{D \not\in \mathcal{B}_{2^k}}
 \sum_{\substack{d_1, \dots \widehat{d_{i}} \dots, d_H \\ D_i=D}} 
 \bigg( \prod_{j \not= i} \frac{|f_j(d_j)|}{d_j} \bigg) 
 \sum_{j=0}^{\log_2(T/(XDT_0))}
 \frac{E^{\flat}_{f_i}(T,D,j)}{\log T}\\
&\ll (\log T)^{-1/4} 
 \frac{1}{\log T} \frac{\phi(Q)}{Q}
 \prod_{\substack{p\leq T \\ p \nmid Q}}
 \left(1+\frac{|f(p)|}{p}\right).
\end{align*}
\end{lemma}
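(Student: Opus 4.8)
The plan is to estimate the expression $E^{\flat}_{f_i}(T;D,q,j,I)$ by a completely \emph{trivial} bound. The key observation is that the primes $p\sim 2^{-j}T/(XD)$ occurring here are very small: they satisfy $p\le T/(XD)=\exp\!\big((\log(T/D))^{1/U}\big)$, which is subpolynomial in $T/D$. Consequently, in contrast with the large-prime range treated in Section~\ref{ss:large-primes}, no equidistribution of product nilsequences is needed; in particular the restriction $D\notin\mathcal{B}_{2^k}$ plays no role here and may simply be dropped. First I would bound $F$ by $1$ and use $|f_i(p)|\ll_H 1$ at primes, reducing $E^{\flat}_{f_i}(T;D,q,j,I)$ to
\begin{align*}
\frac{D\widetilde{W}q}{T}\sum_{p\sim 2^{-j}T/(XD)}\Lambda(p)
 \sum_{\substack{m\le T/(Dp)\\ m\equiv D'\bar p\Mod{\widetilde{W}q}}}|f_i(m)|.
\end{align*}
Since $p\le T/(XD)=(T/D)^{o(1)}$, the quantities $T/(Dp)$ and $T/D$ are polynomially comparable, so Shiu's lemma (Lemma~\ref{l:shiu}) followed by property (3) of Definition~\ref{def:M} bounds the inner $m$-sum by $O\!\big(\tfrac{T}{Dp\widetilde{W}q}\max_{A'\in(\ZZ/\widetilde{W}q\ZZ)^*}S_{|f_i|}(T/D;\widetilde{W}q,A')\big)$. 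Together with Mertens' estimate $\sum_{p\sim Y}\Lambda(p)/p\ll 1$ this gives
\begin{align*}
E^{\flat}_{f_i}(T;D,q,j,I)\ll \max_{A'\in(\ZZ/\widetilde{W}q\ZZ)^*}S_{|f_i|}(T/D;\widetilde{W}q,A'),
\end{align*}
uniformly in $j$.

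The actual saving comes entirely from the length of the $j$-summation: the cut-off $X$ is calibrated so that the dyadic decomposition in $p$ in \eqref{eq:two-sums-small-p} has only $\ll\log_2(T/(XD))\le(\log(T/D))^{1/U}/\log 2\ll(\log T)^{1/U}$ pieces, whence
\begin{align*}
\sum_{j=0}^{\log_2(T/(XDT_0))}\frac{E^{\flat}_{f_i}(T;D,q,j,I)}{\log T}
\ll (\log T)^{1/U-1}\max_{A'\in(\ZZ/\widetilde{W}q\ZZ)^*}S_{|f_i|}(T/D;\widetilde{W}q,A').
\end{align*}
It then remains to sum over the factorisations $d_1\cdots\widehat{d_i}\cdots d_t$ with $D_i=D$, over $D\sim 2^k$, and over $k$ and $i$. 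Using $\sum_{D_i=D}\prod_{j\ne i}|f_j(d_j)|=\big(|f_1|*\dots*\widehat{|f_i|}*\dots*|f_t|\big)(D)$ and repeating the mean-value manipulation carried out in the proof of Lemma~\ref{l:short-sums} (which rests on Shiu's lemma and on property (3) of Definition~\ref{def:M}), one finds
\begin{align*}
\sum_{D\le T^{1-1/t}}\frac{\big(|f_1|*\dots*\widehat{|f_i|}*\dots*|f_t|\big)(D)}{D}
\max_{A'\in(\ZZ/\widetilde{W}q\ZZ)^*}S_{|f_i|}(T/D;\widetilde{W}q,A')
\ll \max_{r\in(\ZZ/\widetilde{W}\ZZ)^*}S_{|f_1|*\dots*|f_t|}(T;\widetilde{W};r),
\end{align*}
the residual elementary arithmetic factors (such as $\widetilde{W}q/\phi(\widetilde{W}q)$) contributing at most $(\log N)^{o(1)}$.

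Putting the three displays together bounds the left-hand side of the lemma by $(\log T)^{1/U-1}\max_r S_{|f_1|*\dots*|f_t|}(T;\widetilde{W};r)$, up to a factor $(\log N)^{o(1)}$. Since $U$ was chosen with $U^{-1}<\theta_f/2$, and since $\theta_f\le 1$ gives $1/U<\theta_f/2\le 1-\theta_f/2$, we have $1/U-1<-\theta_f/2$ with a fixed positive gap in the exponent, so the $(\log N)^{o(1)}$ loss is harmless and the bound is at most $(\log T)^{-\theta_f/2}\max_r S_{|f_1|*\dots*|f_t|}(T;\widetilde{W};r)$, as claimed. The one point that genuinely requires care is the calibration of the cut-off $X$: it must be chosen so that the shorter (in $p$) dyadic decomposition in \eqref{eq:two-sums-small-p} consists of only $\ll(\log T)^{1/U}$ pieces \emph{and} so that every prime occurring there is subpolynomial in $T/D$, the latter being precisely what licenses the appeal to property (3) of Definition~\ref{def:M} in the trivial estimate above. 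Beyond this bookkeeping the argument involves no analytic difficulty, since this range uses no equidistribution input at all.
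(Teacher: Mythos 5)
Your proof is correct, but it takes a genuinely different route from the paper's. The paper bounds $E^{\flat}_{f_i}(T;D,q,j,I)$ by applying Cauchy--Schwarz in the $m$-variable (display \eqref{eq:another-CS}): the $p,p'$-factor is estimated trivially as $O(1)$, and what survives is the square root of the \emph{second} moment $\big(\tfrac{\widetilde W}{2^jX}\sum_{m<2^jX}|f_i(m)|^2\big)^{1/2}$, which after the weighted sum over $i$, $D$ and the divisors $d_j$ is controlled by condition (2) of Definition \ref{def:M}, contributing the factor $(\log T)^{1-\theta_f}$ that combines with the short $j$-range $(\log T)^{1/U}\ll(\log T)^{\theta_f/2}$ and the $1/\log T$ to give $(\log T)^{-\theta_f/2}$. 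You instead skip Cauchy--Schwarz altogether and use the trivial \emph{first}-moment bound, with Mertens over the dyadic prime block and property (3) giving $E^{\flat}_{f_i}\ll\max_{A'}S_{|f_i|}(T/D;\widetilde Wq,A')$ uniformly in $j$; the entire saving then comes from the calibration of $X$, via $\sum_{T_0\le p\le T/(XD)}\Lambda(p)/p\ll(\log T)^{1/U}$, yielding $(\log T)^{1/U-1}\le(\log T)^{-\theta_f/2}$. Since the first moment is pointwise dominated by the root second moment, your per-$j$ bound is no worse than the paper's, and your exponent bookkeeping ($1/U-1<-\theta_f/2$ because $1/U<\theta_f/2\le 1-\theta_f/2$) is right. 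What each approach buys: yours shows that condition (2) is not needed at all for the small-prime range (it remains essential for the first term of \eqref{eq:initial-CS}), and is more elementary; the paper's route lands directly on $(\log T)^{1-\theta_f}S_{|f|}$ via condition (2), whereas yours must close with the mean-value comparison $\sum_{D}D^{-1}\big(|f_1|*\cdots\widehat{|f_i|}\cdots*|f_t|\big)(D)\max_{A'}S_{|f_i|}(T/D;\widetilde Wq,A')\ll\max_{A'}S_{|f_1|*\cdots*|f_t|}(T;\widetilde Wq,A')$ --- this is exactly the step already invoked at the end of the proof of Lemma \ref{l:short-sums}, so it is available, but be aware that it silently requires the maximum of the convolution's mean value to match the Shiu-type upper bounds (this is where the lower-bound input of Definition \ref{def:M*} implicitly enters). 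One small correction: Shiu's lemma is not needed for your inner $m$-sum; the identity $\sum_{m\le T/(Dp),\,m\equiv A'}|f_i(m)|=\tfrac{T}{Dp\widetilde Wq}S_{|f_i|}(T/(Dp);\widetilde Wq,A')$ is just the definition of $S$, and property (3) alone then replaces $T/(Dp)$ by $T/D$.
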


\begin{proof}
Applying Cauchy--Schwarz to the expression $E^{\flat}_h(T,D,j)$ for a 
fixed value of $j$, $0 \leq j \leq \log_2(T/(XDT_0))$, we obtain
\begin{align} \label{eq:another-CS}
&\Bigg|\frac{QD}{T} 
 \sum_{\substack{m>X \\ (m,Q)=1 }} 
 \sum_{\substack{p \sim 2^{-j} T/(XD) 
       \\ p \equiv D' \bar m \Mod{Q} }} 
 1_{pm<T/D} h(m) h(p) \Lambda(p) 
 F\Big(g\Big(\frac{pm-D'}{Q}\Big) \Gamma\Big) \1_{P_D}(mp) 
 \Bigg|\\ 
\nonumber
&\leq  
 \bigg(\frac{Q}{\phi(Q)} \frac{1}{2^j X}
 \sum_{\substack{X < m < 2^j X \\ \gcd(m,Q)=1 }} |h(m)|^2
 \bigg)^{1/2}\\ \nonumber
&\quad \times
 \Bigg( \phi(Q) \Big(\frac{2^{j}XD}{T}\Big)^2
 \sum_{\substack{p,p' \sim 2^{-j} T/(XD) \\ p \equiv p' \Mod{Q} }}
 h(p)h(p') \Lambda(p) \Lambda(p') \\ \nonumber
& \qquad \qquad \qquad \qquad
 \frac{Q}{X 2^j}
 \sum_{\substack{X<m<T/(D\max(p,p')) 
       \\ mp \equiv D' \Mod{Q}
       \\ pm\in I_D}}
 F\Big(g\Big(\frac{pm-D'}{Q}\Big) \Gamma\Big)
  \overline{ F\Big(g\Big(\frac{p'm-D'}{Q}\Big) \Gamma\Big)}
 \Bigg)^{1/2}.
\end{align}
We estimate the second factor trivially as $O(1)$ by using the bounds
$|h(p)h(p')| \ll 1$ and $\|F\|_{\infty}=\|\bar F\|_{\infty} \leq 1$.
Thus, \eqref{eq:another-CS} is bounded by
\begin{align*}
 \bigg(\frac{Q}{\phi(Q)} \frac{1}{2^j X}
 \sum_{\substack{X < m < 2^j X \\ \gcd(m,Q)=1 }} |h(m)|^2
 \bigg)^{1/2}.
\end{align*}
This expression can be handled as in Lemma \ref{l:dirichlet}:
Note that $X \leq 2^j X \leq T/(DT_0)$, where
$$X= \left(\frac{T}{D}\right)^{1-1/\left(\log \frac{T}{D}\right)^{(U-1)/U}} 
\gg (T/D)^{1/2}$$ and
$$
\frac{T}{DT_0} 
= \left(\frac{T}{D}\right)^{
1 - \left(\log \log \frac{T}{D}\right)^2/\left( \log \frac{T}{D} \right)
}.
$$
Thus, Shiu's bound \eqref{eq:shiu} and the trivial inequality 
$|h(p)|^2\leq|h(p)|$ imply that
\begin{align*}
 \bigg(\frac{Q}{\phi(Q)} \frac{1}{2^j X}
 \sum_{\substack{X < m < 2^j X \\ \gcd(m,Q)=1 }} |h(m)|^2
 \bigg)^{1/2}
 &\ll 
 \bigg(
 \frac{1}{\log T} \frac{Q}{\phi(Q)} \prod_{\substack{p \leq T\\ p\nmid Q}}
 \left(1 + \frac{|h(p)|}{p}\right)\bigg)^{1/2}.
\end{align*}
The right hand side is bounded below by $(\log T)^{-1/2}$, thus the above is 
bounded by
\begin{align*}
  \ll (\log T)^{1/2} 
  \bigg(
  \frac{1}{\log T} \frac{Q}{\phi(Q)} \prod_{\substack{p \leq T\\ p\nmid Q}}
 \left(1 + \frac{|h(p)|}{p}\right) \bigg).
\end{align*}
Finally, note that the summation range in $j$ is short: it is bounded by
$$\log_2(T/(XDT_0)) \ll (\log T)^{1/U} 
\ll (\log T)^{1/4}.$$
This shows that
\begin{align*}
&\sum_{i=1}^H
 \sum_{k=1}^{(1-1/H)\frac{\log T}{\log2}}
 \sum_{\substack{D \sim 2^k\\(D,Q)=1}} 
 \1_{D \not\in \mathcal{B}_{2^k}}
 \sum_{\substack{d_1, \dots \widehat{d_{i}} \dots, d_H \\ D_i=D}} 
 \bigg( \prod_{j \not= i} \frac{|f_j(d_j)|}{d_j} \bigg) 
 \sum_{j=0}^{\log_2(T/(XDT_0))} 
 \frac{E^{\flat}_{f_i}(T,D,j)}{\log T}\\
&\ll (\log T)^{-1+\frac{1}{2}+\frac{1}{4}} 
  \sum_{i=1}^H
  \sum_{\substack{D \leq T^{1-1/H}\\(D,Q)=1}} 
   \sum_{\substack{d_1, \dots \widehat{d_{i}} \dots, d_t \\ D_i=D}} 
  \bigg( \prod_{j \not= i} \frac{|f_j(d_j)|}{d_j} \bigg) 
  \frac{1}{\log T} \frac{Q}{\phi(Q)} \prod_{\substack{p \leq T\\ p\nmid Q}}
  \left(1 + \frac{|h(p)|}{p}\right) \\
&\ll (\log T)^{-1/4} 
  \frac{1}{\log T} \frac{Q}{\phi(Q)} \prod_{\substack{p \leq T\\ p\nmid Q}}
 \left(1 + \frac{|f(p)|}{p}\right).
\end{align*}
This completes the proof of Lemma \ref{l:small-primes} as well as the proof of 
Proposition \ref{p:equid-non-corr}.
\end{proof}

\appendix
\section{Explicit bounds on the correlation of $\Lambda$ with nilsequences} 
\label{app}
The aim of this appendix is to provide a proof of Lemma \ref{l:Lambda}. 
This result is due to Green and Tao and we expect that a statement 
like Lemma \ref{l:Lambda} will eventually appear in \cite{G-notes}. 
The author is grateful to Ben Green for very helpful discussions.

The proof of Lemma \ref{l:Lambda} rests upon the decomposition of $\Lambda'$ that 
already appeared in the proof of the original result, 
\cite[Proposition 10.2]{GT-linearprimes}.
To be precise, let $\gamma \in (0,1)$ be a small positive real number that will 
later be chosen depending on the degree $d$ of the given filtration 
$G_{\bullet}$.
Further, let $\chi^{\flat} + \chi^{\sharp} = \id_{\RR}$ be a smooth 
decomposition of the identity function $\id_{\RR}: \RR \to \RR$, 
$\id_{\RR}(t):=t$, that is such that $\supp (\chi^{\sharp}) \subset (-1,1)$ and 
$\supp (\chi^{\flat}) \subset \RR \setminus [-1/2,1/2]$.
This decomposition of $\id_{\RR}$ induces the following decomposition of 
$\Lambda'$:
$$
\frac{\phi(Wq')}{Wq'}\Lambda'(Wq'n + b') - 1
= \frac{\phi(Wq')}{Wq'}\Lambda^{\flat}(Wq'n + b') +
  \Big(\frac{\phi(Wq')}{Wq'}\Lambda^{\sharp}(Wq'n + b') - 1 \Big),
$$
where, cf.\ \cite[(12.2)]{GT-linearprimes}, 
$$
\Lambda^{\sharp}(n) 
= - \log x^{\gamma} 
 \sum_{d|n} \mu(d) \chi^{\sharp} 
 \Big( \frac{\log d}{\log x^{\gamma}} \Big) 
  \qquad (|t| \geq 1 \Rightarrow \chi^{\sharp} (t) = 0)
$$
is a truncated divisor sum, where
$$
\Lambda^{\flat}(n)
= - \log x^{\gamma} 
 \sum_{d|n} \mu(d) \chi^{\flat} 
 \Big( \frac{\log d}{\log x^{\gamma}} \Big) 
  \qquad (|t| \leq 1/2 \Rightarrow \chi^{\flat} (t) = 0)
$$
is an average of $\mu (d)$ running over large divisors of $n$.
This decomposition in turn splits the correlation from Lemma \ref{l:Lambda} into 
two correlations that shall be bounded separately.

The correlation estimate of the $\Lambda^{\flat}$ term with nilsequences follows
as in \cite[\S12]{GT-linearprimes} from the non-correlation of M{\"o}bius 
with nilsequences and inherits an error term which saves a factor 
$O_A(\log x)^{-A}$ for any given $A \geq 1$ when compared to the trivial bound.
In \cite[Conjecture 8.5]{GT-linearprimes}, it was conjectured that the M\"obius 
function is orthogonal to linear nilsequence.
Since \cite[Theorem 1.1]{GT-nilmobius} proves this conjecture, not just for 
linear, but for polynomial nilsequences, it follows without any essential 
changes in the proof, that the correlation estimate \cite[eq.\ 
(12.10)]{GT-linearprimes} continues to hold for polynomial sequences.
That is to say, we have an estimate of the form
\begin{equation}\label{eq:poly-flat-corr}
 \bigg|\sum_{n \leq N} \Lambda^{\flat}(n) F(g(n)\Gamma)\bigg|
 \ll_{\|F\|_{\mathrm{Lip}},G/\Gamma,s,A} N(\log N)^{-A}.
\end{equation}

In our setting, we may express the congruence condition modulo $Wq'$ as a 
character sum
$$
\frac{\phi(Wq')}{Wq'}\Lambda^{\flat}(n)\1_{n\equiv b' \Mod{Wq'}}
= 
\EE_{\chi \Mod{Wq'}} 
\frac{\phi(Wq')}{Wq'} \Lambda^{\flat}(n) \chi(n)\bar\chi(b'). 
$$
Following \cite{GT-linearprimes}, cf.\ equation (12.8), the factor 
$F(g(n)\Gamma)$ from the statement of Lemma \ref{l:Lambda} may be reinterpreted 
as $F(g'(Wq'n+b')\Gamma)$ for a new polynomial sequence $g'$.
Reinterpreting the product $\chi(n) F(g'(n)\Gamma)$ of a character $\chi$ with 
the given nilsequence as a nilsequence itself allows us to employ 
the correlation estimate \eqref{eq:poly-flat-corr} with $N$ given by 
$xq'W \ll x (\log x)^{E}$ to handle the correlation for $\Lambda^{\flat}$.
Thanks to the saving of an arbitrary power of $\log x$ in  
\eqref{eq:poly-flat-corr}, we can compensate the factor of $Wq'$, which is 
bounded above by $(\log x)^{E}$, that we loose when passing to the character 
sums.
In total, we obtain
$$
\frac{1}{y}\sum_{n\leq y} 
\frac{\phi(Wq')}{Wq'}\Lambda^{\flat}(Wq'n+b')
F(g(n)\Gamma) 
\ll_{\|F\|_{\mathrm{Lip}},s, G/\Gamma, B}
(\log y)^{-B}
\ll_{\|F\|_{\mathrm{Lip}},s, G/\Gamma, B'}
(\log x)^{-B'}.
$$

It remains to analyse the contribution of the function 
$\lambda^{\sharp}:\NN \to \RR$, defined via
$$\lambda^{\sharp}(n) 
:= \frac{\phi(Wq')}{Wq'}\Lambda^{\sharp}(Wq'n+b') - 1.$$
This contribution satisfies the general bound
\begin{align*}
\bigg|
\frac{1}{y}
\sum_{n \leq y} 
\Big(\frac{\phi(Wq')}{Wq'}\Lambda^{\sharp}(Wq'n+b') - 1\Big) 
F(g(n)\Gamma) 
\bigg|
\leq
\left\|\lambda^{\sharp}\right\|_{U^{k+1}[y]} 
\left\|F(g(\cdot)\Gamma)\right\|_{U^{k+1}[y]^*}
\end{align*}
for every $k \geq 1$, where the dual uniformity norm is defined via
$$
\| F (g(\cdot)\Gamma)\|_{U^{k+1}[N]^*} 
 := \sup \Big\{ \Big|\frac{1}{N} \sum_{n \leq N} f(n) F (g(n)\Gamma) 
    \Big| : \|f\|_{U^{k}[N]}\leq 1 \Big\}.
$$
The main task that remains is to obtain control on the above dual uniformity 
norm for at least one value of $k$.
In \cite{GT-linearprimes}, this is achieved through
\cite[Proposition 11.2]{GT-linearprimes}, which decomposes a general 
nilsequence into an averaged nilsequence of bounded dual uniformity norm plus an 
error term that is small in the $L^{\infty}$ norm.
The proof of this decomposition uses a compactness argument and, as such, 
does not provide explicit error terms.
Central ideas for a new approach not working with compactness were indirectly 
provided by work of Eisner and Zorin-Kranich~\cite{E-ZK} on a different question. 
Eisner and Zorin-Kranich replace in their work the Lipschitz function in the 
definition of a nilsequence by a smooth function and the Lipschitz norm by a 
Sobolev norm.
Moreover, they show that certain constructions that play a central role in 
\cite{GT-polyorbits} have counterparts in the Sobolev norm setting.
Building on these observations, Green \cite{G-notes} 
proves that in the Sobolev norm setting the dual $U^{s+1}$ norm of an $s$-step 
nilsequence \emph{is} in fact bounded.
The statement of the latter result involves the following notion of Sobolev norms.
\begin{definition}[cf.\ \cite{G-notes}]
 Let $G/\Gamma$ be an $m$-dimension nilmanifold together with a Mal'cev basis 
 $\mathcal X = \{X_1, \dots, X_m\}$.
 For any $\psi \in C^{\infty}(G/\Gamma)$, set 
 $$
 \|\psi\|_{W^m,\mathcal{X}} 
 = \sup_{m' \leq m} 
 \sup_{1 \leq i_1, \dots, i_{m'} \leq m} 
 \| D_{X_{i_1}} \dots D_{X_{i_{m'}}} \psi \|_{\infty},
 $$
 where 
 $D_X \psi (g\Gamma)
 = \lim_{t\to 0}\frac{\mathrm{d}}{\mathrm{d}t} \psi(\exp(tX) g \Gamma)$.
\end{definition}

\begin{lemma}[Green \cite{G-notes}, Theorem 5.3.1]\label{l:bounded-U*-norm}
 Let $G/\Gamma$ be a $k$-step nilmanifold together with a filtration 
 $G_{\bullet}$ of degree $d \geq k$ and a $M$-rational Mal'cev basis adapted to 
it. 
 Let $g \in \mathrm{poly}(\ZZ, G_{\bullet})$ and suppose 
 $\tilde F \in C^{\infty}(G/\Gamma)$.
 Then
 \begin{align*}
 \|\tilde F (g(\cdot)\Gamma)\|_{U^{d+1}[N]^*} 
 &:= \sup \Big\{ \Big|\frac{1}{N} \sum_{n \leq N} f(n) \tilde F (g(n)\Gamma) 
    \Big| : \|f\|_{U^{d+1}[N]}\leq 1 \Big\} \\
 &\ll M^{10^d \dim G}  \| \tilde F \|_{W^{2^d \dim G},\mathcal{X}}. 
 \end{align*}
\end{lemma}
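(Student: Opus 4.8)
The plan is to reduce the estimate to the following soft fact: any $1$-bounded \emph{dual function} of degree $k+1$ on $\{1,\dots,N\}$, i.e.\ a function of the form $n\mapsto \EE_{\vec h}\bigl[\prod_{\omega\in\{0,1\}^{k+1}\setminus\{0\}} c_\omega(n+\omega\cdot\vec h)\bigr]$ with each $\|c_\omega\|_\infty\le1$, has $U^{k+1}[N]$-dual norm $O(1)$; the same then holds for a bounded average of such functions (a bounded weight $w(\vec h)$ in the $\vec h$-average is absorbed by a routine variant of Gowers--Cauchy--Schwarz), with a constant that scales polynomially in the parameters describing the pieces. Thus it is enough to exhibit $n\mapsto\tilde F(g(n)\Gamma)$ as such an average while keeping explicit track of all parameters in terms of $Q$ and $\dim G$; since $g$ has degree at most $k$, this is morally the statement that `$k$-step nilsequences are dual functions', known qualitatively from \cite{GT-linearprimes}, and the point is to make it quantitative.

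\emph{Reduction to a single vertical frequency and induction on the step.} Let $G_k$ be the last nonzero term of $G_\bullet$, a central subgroup; adapting $\mathcal X$ we may assume $G_k/(\Gamma\cap G_k)\cong(\RR/\ZZ)^{m_k}$ with $m_k=\dim G_k$. Expand $\tilde F=\sum_{\xi\in\ZZ^{m_k}}\tilde F_\xi$ into its vertical Fourier modes, $\tilde F_\xi(gz\Gamma)=e(\xi\cdot z)\tilde F_\xi(g\Gamma)$ for $z\in G_k$. Repeated differentiation in the $G_k$-directions, together with the definition of $\|\cdot\|_{W^m,\mathcal X}$, gives $\|\tilde F_\xi\|_{W^m,\mathcal X}\ll_{m_k}(1+|\xi|)^{-m_k-1}\|\tilde F\|_{W^{m+m_k+1},\mathcal X}$, so the series converges with quantitative control and it suffices to bound $\|\tilde F_\xi(g(\cdot)\Gamma)\|_{U^{k+1}[N]^*}$ for each $\xi$ by $Q^{O_{k,\dim G}(1)}(1+|\xi|)^{O(1)}\|\tilde F_\xi\|_\infty$ and sum. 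We induct on $k$, the case $k=0$ being trivial. When $\xi=0$ the mode $\tilde F_0$ is $G_k$-invariant, hence descends to a smooth function on the $(k-1)$-step nilmanifold $(G/G_k)/(\Gamma G_k/G_k)$, which carries a $Q^{O(1)}$-rational Mal'cev basis; by the inductive hypothesis its $U^k[N]$-dual norm obeys the required bound, and therefore so does its $U^{k+1}[N]$-dual norm, which is no larger since $\|\cdot\|_{U^k}\le\|\cdot\|_{U^{k+1}}$. There remains a fixed nonzero $\xi$.

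\emph{A nonzero vertical frequency is a dual function.} Because $g$ has degree at most $k$, its $(k+1)$-st multiplicative derivative is trivial; equivalently, for every $\vec h=(h_1,\dots,h_{k+1})$ the parallelepiped $\bigl(g(n+\omega\cdot\vec h)\Gamma\bigr)_{\omega\in\{0,1\}^{k+1}}$ is a Host--Kra cube on $G/\Gamma$. Expanding $\tilde F_\xi$ over this cube and using that $\xi\neq0$ makes the dependence on the `central' coordinate of the cube a genuine character of the vertical torus, one obtains an identity $\tilde F_\xi(g(n)\Gamma)=\EE_{\vec h}\bigl[w(\vec h)\prod_{\omega\in\{0,1\}^{k+1}\setminus\{0\}}c_\omega(n+\omega\cdot\vec h)\bigr]$, valid up to an error harmless in $U^{k+1}[N]$-dual norm, where $|w(\vec h)|\le1$ and each $c_\omega$ is $1$-bounded and built explicitly from $\tilde F_\xi$ and $g$; the requisite equidistribution of the ingredients that appear is supplied by Proposition \ref{p:leibman}. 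This is precisely the place where the compactness argument behind \cite[Proposition 11.2]{GT-linearprimes} is replaced by the explicit, Sobolev-norm-quantitative constructions of Eisner and Zorin-Kranich \cite{E-ZK}, carried into the nilmanifold setting as in \cite{G-notes}, and it is this that yields a polynomial dependence on $Q$ and on the Sobolev norm of $\tilde F_\xi$ in place of a qualitative $O(1)$. Combined with the principle of the first paragraph this gives $\|\tilde F_\xi(g(\cdot)\Gamma)\|_{U^{k+1}[N]^*}\ll Q^{O_{k,\dim G}(1)}(1+|\xi|)^{O(1)}\|\tilde F_\xi\|_\infty$; summing over $\xi$ and unwinding the induction — tracking how $Q$ and the Sobolev order propagate through the $k$ descents, each descent costing a bounded power in the $Q$-exponent and at most a factor $2$ in the Sobolev order — produces the exponents $10^k\dim G$ and $2^k\dim G$.

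The main obstacle is exactly the construction in the third paragraph: producing the weight $w$ and the functions $c_\omega$ and controlling the error with \emph{fully explicit} dependence on $\|\tilde F_\xi\|_{W^m,\mathcal X}$ and on $Q$. Qualitatively, that nilsequences are dual functions is classical \cite{GT-linearprimes}, but the argument there is non-effective; making it quantitative amounts to redoing the parallelepiped and quasi-cocycle constructions of \cite{GT-polyorbits} in the smooth category, with the Lipschitz norm replaced throughout by a Sobolev norm, following \cite{E-ZK, G-notes}. Once that is available, the other ingredients — the vertical Fourier decay, the equidistribution of the lower-complexity pieces, and the descent for the zero-frequency mode — are routine, all the equidistribution input being furnished by Proposition \ref{p:leibman}.
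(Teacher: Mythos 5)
The statement you are trying to prove is not proved in the paper at all: it is quoted verbatim from Green's notes (\cite{G-notes}, Theorem 5.3.1), and the paper uses it as a black box in the proof of Lemma \ref{l:Lambda}. So there is no internal argument to compare against; the question is whether your sketch would stand on its own, and it does not. Your first two paragraphs (vertical Fourier expansion on the last nontrivial group $G_k$, decay of the modes $\tilde F_\xi$ in terms of higher Sobolev norms, induction on the step via the quotient $G/G_k$ for the zero mode) are a reasonable and standard outline, and the monotonicity $\|\cdot\|_{U^{k+1}[N]^*}\leq\|\cdot\|_{U^{k}[N]^*}$ you use for the descent is correct. But the entire content of the lemma sits in your third paragraph, and there you do not prove anything: the claimed identity $\tilde F_\xi(g(n)\Gamma)=\EE_{\vec h}\bigl[w(\vec h)\prod_{\omega\neq 0}c_\omega(n+\omega\cdot\vec h)\bigr]$ ``up to an error harmless in dual norm'' is asserted, with the construction of $w$, the $c_\omega$, the error estimate, and the polynomial dependence on $Q$ and on $\|\tilde F_\xi\|_{W^m,\mathcal X}$ all delegated to \cite{E-ZK} and \cite{G-notes} --- the latter being precisely the reference whose theorem you are supposed to be proving. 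The exponents $10^k\dim G$ and $2^k\dim G$ are likewise only asserted to ``come out'' of the induction rather than derived. As it stands this is a restatement of the qualitative fact from \cite[Prop.~11.2]{GT-linearprimes} plus a pointer to where the quantitative work is done, not a proof.

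There is also a technical soft spot in your first paragraph: a genuinely arbitrary $1$-bounded weight $w(\vec h)$ multiplying the $\vec h$-average cannot simply be ``absorbed by a routine variant of Gowers--Cauchy--Schwarz''. The standard argument bounds $\bigl|\EE_{n,\vec h}\,f(n)\prod_{\omega\neq 0}c_\omega(n+\omega\cdot\vec h)\bigr|$ by $\|f\|_{U^{k+1}}\prod_{\omega\neq 0}\|c_\omega\|_{U^{k+1}}$ because every factor depends on $n$ only through some $n+\omega\cdot\vec h$; a weight depending on all of $\vec h$ jointly does not have this form, and one needs it to factorise (or to be expandable with controlled coefficients into such factors) before Cauchy--Schwarz applies. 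Whether the weight produced by your Host--Kra cube expansion has this structure is exactly the kind of detail the quantitative construction must supply, so it cannot be waved through. In short: the architecture of your sketch is plausible and likely parallels what \cite{G-notes} does, but the quantitative heart of the lemma is missing, and citing \cite{G-notes} for it makes the argument circular.
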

In order to apply Lemma \ref{l:bounded-U*-norm} in our situation, an 
auxiliary result is needed that allows one to pass from the Lipschitz setting to 
the Sobolev setting, i.e. to write any Lipschitz function on $G/\Gamma$ as the sum 
of a smooth function, to which Lemma \ref{l:bounded-U*-norm} can be applied, and a 
small $L^{\infty}$ error.
This is the content of the following lemma which will be proved using a 
standard smoothing trick; the author thanks Ben Green for pointing out this 
approach.
\begin{lemma}\label{l:smoothing}
 Suppose that $F:G/\Gamma \to \CC$ is a Lipschitz function and let $m$ be a 
 positive integer.
 Then there is a constant $c\in (0,1)$, only depending on $G$, such that for 
 every $\eps \in (0,c)$ there exists a function $\psi_m \in C^{\infty}(G/\Gamma)$ 
 such that
 \begin{equation}\label{eq:smoothing-L_inf}
 \|F- F*\psi_m\|_{\infty} \leq \eps (1 + \|F\|_{\mathrm{Lip}})
 \end{equation}
 and
 \begin{equation}\label{eq:smoothing-Sob}
 \| F*\psi_m \|_{W^{m}, \mathcal{X}} 
 \ll 
 (m/\eps)^{2m} M^{O(m)}.  
 \end{equation}
\end{lemma}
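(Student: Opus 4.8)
The plan is to obtain $F*\psi_m$ by mollifying $F$ against a smooth, highly concentrated probability density on $G$, whose dilation structure converts concentration into the $L^\infty$ gain and smoothness into the Sobolev bound.

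\emph{Construction of the mollifier.} Fix once and for all a non‑negative bump function $\rho$ on $\mathfrak g \cong \RR^{\dim G}$ (identifying $G$ with $\RR^{\dim G}$ via $\exp$ and the Mal'cev basis $\mathcal X$), supported in the unit ball, with $\int\rho=1$, and chosen in the Gevrey class $2$, so that $\|\rho\|_{C^{j}}\ll (Cj)^{2j}$ for all $j$ with $C=O_G(1)$ (such $\rho$ exists, e.g. by prescribing $\widehat\rho(\xi)\ll e^{-c|\xi|^{1/2}}$). Let $c_0=O_G(1)$ be small enough that $\exp$ is a diffeomorphism on the $c_0$‑ball about $\id_G$ and that this ball injects into a fundamental domain; for $\delta\in(0,c_0)$ set $\mu_\delta(g)=\delta^{-\dim G}\rho(\delta^{-1}\log g)$ on that ball and $\mu_\delta=0$ elsewhere, let $\psi_m\in C^\infty(G/\Gamma)$ be the descent of $\mu_\delta$, and realise the convolution as
\[
(F*\psi_m)(x\Gamma)=\int_G \mu_\delta(g)\,F(g^{-1}x\Gamma)\,\d g,
\]
which is well defined and $\Gamma$‑invariant in $x$ since $F$ is, and is smooth in $x$ by differentiating under the integral (after the substitution $g=xy^{-1}$). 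We fix $\delta\asymp_G\eps$ at the end.

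\emph{The $L^\infty$ estimate.} Since $\int\mu_\delta=1$,
\[
|(F*\psi_m)(x\Gamma)-F(x\Gamma)|\le \int_G\mu_\delta(g)\,|F(g^{-1}x\Gamma)-F(x\Gamma)|\,\d g\le \|F\|_{\mathrm{Lip}}\sup_{g\in\supp\mu_\delta} d_{\mathcal X}(g^{-1}x,x).
\]
By right‑invariance of $d_{\mathcal X}$ this last distance equals $d_{\mathcal X}(g^{-1},\id_G)=d_{\mathcal X}(g,\id_G)\ll_G\delta$ on $\supp\mu_\delta$. Choosing $c=c(G)$ small enough that every $\eps<c$ allows $\delta=\delta(\eps)\asymp_G\eps$ with $\delta<c_0$ and the implied constant tidied up, we get $\|F-F*\psi_m\|_\infty\le\eps\|F\|_{\mathrm{Lip}}\le\eps(1+\|F\|_{\mathrm{Lip}})$, which is \eqref{eq:smoothing-L_inf}.

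\emph{The Sobolev estimate.} Write $D_X$ for the left derivative in $\|\cdot\|_{W^m,\mathcal X}$. Substituting $g\mapsto\exp(tX)g$ in the integral shows that $D_X$ falls entirely on $\mu_\delta$, and iterating,
\[
D_{X_{i_1}}\cdots D_{X_{i_{m'}}}(F*\psi_m)(x\Gamma)=\int_G\big(X^{R}_{i_{m'}}\cdots X^{R}_{i_1}\mu_\delta\big)(g)\,F(g^{-1}x\Gamma)\,\d g,
\]
where $X^{R}_i$ is the right‑invariant vector field attached to $X_i$; hence $\|F*\psi_m\|_{W^m,\mathcal X}\le\|F\|_\infty\max_{m'\le m}\max_{i_\bullet}\|X^{R}_{i_{m'}}\cdots X^{R}_{i_1}\mu_\delta\|_{L^1(G)}$. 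In Mal'cev coordinates each $X^{R}_i=\sum_j P_{ij}(x)\partial_j$ with $P_{ij}$ a polynomial of degree $\le s$ whose coefficients are universal polynomials in the structure constants of $\mathcal X$, hence of size $Q^{O_s(1)}$ and, on the $O_G(1)$‑ball, of absolute value $\ll Q^{O(1)}$. Composing $m'\le m$ such first‑order operators and expanding by the Leibniz rule yields $\ll (Cm)^{m}$ terms, each a monomial of order $\le m'$ in the $\partial_j$ with a polynomial coefficient of size $\ll Q^{O(m)}$. Since $\mu_\delta=\delta^{-\dim G}\rho(\delta^{-1}\cdot)$, each coordinate derivative costs a factor $\delta^{-1}$ and $\|\rho\|_{C^{m'}}\ll(Cm)^{2m}$, so pointwise on $\supp\mu_\delta$ the expression is $\ll Q^{O(m)}\delta^{-\dim G}\delta^{-m}(Cm)^{2m}$; multiplying by $\mathrm{vol}(\supp\mu_\delta)\ll\delta^{\dim G}$ gives $\|X^{R}_{i_{m'}}\cdots X^{R}_{i_1}\mu_\delta\|_{L^1(G)}\ll Q^{O(m)}(Cm)^{2m}\delta^{-m}$. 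Recalling that $\|F\|_\infty\le 1$ (as holds whenever Lemma~\ref{l:smoothing} is applied; in general one retains a harmless factor $\|F\|_\infty$) and $\delta\asymp_G\eps<1$, this is $\ll (C'm/\eps)^{2m}Q^{O(m)}$, which is \eqref{eq:smoothing-Sob}.

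\emph{Main obstacle.} The only genuinely technical step is the last one: verifying that an $m'$‑fold composition of right‑invariant vector fields has coordinate coefficients of size $Q^{O(m)}$ on the shrinking ball — this rests on the $Q$‑rationality of the structure constants together with the degree bound by the step $s$ — and pairing this with a base bump $\rho$ whose $C^{j}$‑norms grow only like $(Cj)^{2j}$, so as to produce the explicit dependence $(m/\eps)^{2m}$ rather than a merely $\eps$‑dependent constant; the combinatorial Leibniz factors are then absorbed without difficulty. The $L^\infty$ half is routine mollification.
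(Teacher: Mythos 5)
Your proof is correct in substance but takes a genuinely different route from the paper's at the one point where the lemma has any content, namely how the $m$-th derivative of the mollified function is controlled. The paper builds $\psi_m$ as the $m$-fold convolution power $\psi^{*m}$ of a single bump of width $\delta=\tfrac{\eps}{m}Q^{-O(1)}$; the identity $D_{X_{i_1}}\cdots D_{X_{i_k}}(F*\psi^{*m})=F*D_{X_{i_1}}\psi*\cdots*D_{X_{i_k}}\psi*\psi^{*(m-k)}$ then distributes the derivatives one per factor, so only \emph{first} derivatives of the bump are ever needed, each costing $(m/\eps)^{2}Q^{O(1)}$, whence $(m/\eps)^{2m}Q^{O(m)}$. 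You instead use a single mollifier of width $\asymp\eps$ and let all $m$ derivatives land on it, paying $\delta^{-m}\|\rho\|_{C^m}$; this forces you to choose $\rho$ in a quantitative Gevrey-$2$ class with $\|\rho\|_{C^j}\ll(Cj)^{2j}$, which recovers exactly the same $(m/\eps)^{2m}$ shape since $\eps<1$. What each approach buys: the paper's needs only a $C^1$ bound on one bump but must verify that the convolution power stays supported in $\mathcal{B}_\eps$ (via $m\mathcal{B}_{\eps/m}\subseteq\mathcal{B}_\eps$); yours avoids the convolution-power trick entirely but needs the existence of compactly supported Gevrey-$2$ bumps (standard) and the more careful Leibniz bookkeeping for compositions of right-invariant vector fields in Mal'cev coordinates, which you correctly reduce to the $Q^{O(1)}$-rationality of the structure constants. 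Your identification of the derivative falling on $\mu_\delta$ as a right-invariant vector field is the correct way to make precise what the paper leaves implicit.

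Two small points of precision, neither fatal. First, the comparison between the coordinate ball $\{\|\log g\|<\delta\}$ and the metric ball for $d_{\mathcal{X}}$ carries constants $Q^{O(1)}$, not merely $O_G(1)$, so to get the clean bound $\eps(1+\|F\|_{\mathrm{Lip}})$ in \eqref{eq:smoothing-L_inf} you should take $\delta=\eps Q^{-O(1)}$ rather than $\delta\asymp_G\eps$; this only changes $\delta^{-m}$ by a further $Q^{O(m)}$, which \eqref{eq:smoothing-Sob} absorbs. Second, your Sobolev bound carries a factor $\|F\|_\infty$, as does the paper's own final inequality; since \eqref{eq:smoothing-Sob} is stated without it, you are right to flag that the lemma is only applied to $1$-bounded $F$.
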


Taking Lemma \ref{l:smoothing} on trust for the moment, we first complete the 
proof of Lemma \ref{l:Lambda} before providing that of Lemma \ref{l:smoothing}.
Recall that the filtration $G_{\bullet}$ of the nilmanifold $G/\Gamma$ from 
Lemma \ref{l:Lambda} is of degree $d$.
The previous two lemmas allow us to reduce the proof of Lemma \ref{l:Lambda} to a 
bound on the $U^{d+1}$-norm of $\lambda^{\sharp}:\NN \to \RR$.
More precisely, we have
\begin{align} \label{eq:A-smoothing+dual}
\nonumber
&\frac{1}{y}
\sum_{n \leq y} 
\Big(\frac{\phi(Wq')}{Wq'}\Lambda^{\sharp}(Wq'n+b') - 1\Big) 
F(g(n)\Gamma) \\
\nonumber
&\ll
\eps(1 + \|F\|_{\mathrm{Lip}})
+ \frac{1}{y}
\sum_{n \leq y} 
\Big(\frac{\phi(Wq')}{Wq'}\Lambda^{\sharp}(Wq'n+b') - 1\Big) 
(F*\psi_m)(g(n)\Gamma) \\
&\ll
\eps(1 + \|F\|_{\mathrm{Lip}})
+ 
\left\|\lambda^{\sharp}\right\|_{U^{d+1}[y]} 
\left\|(F*\psi_m) (g(\cdot)\Gamma) \right\|_{U^{d+1}[y]^*}.
\end{align}
Since $\Lambda^{\sharp}$ is a truncated divisor sum, one can analyse 
its $U^{d+1}$-norm with the help of \cite[Theorem D.3]{GT-linearprimes}.
We will follow the final paragraph (`The 
correlation estimate for $\Lambda^{\sharp}$') of 
\cite[Appendix D]{GT-linearprimes} closely.

For each non-empty subset $\mathcal{B} \subset \{0,1\}^{d+1}$, let 
$$
\Psi_{\mathcal{B}}(n,\mathbf h) 
= \Big(Wq'(n+\boldsymbol\omega \cdot \mathbf h) + b'\Big)_{\omega \in 
\mathcal{B}}~,\quad (n,\mathbf h) \in \ZZ \times \ZZ^{d+1},
$$
denote the relevant system of forms.
The set of exceptional primes for this system, denoted by 
$\mathcal{P}_{\Psi_{\mathcal{B}}}$, 
is defined to be the set of all primes $p$ such that the 
reduction modulo $p$ of $\mathcal{P}_{\Psi_{\mathcal{B}}}$ contains two linearly 
dependent forms or a form that degenerates to a constant.
It is clear that whenever $x$ is sufficiently large, the set 
$\mathcal{P}_{\Psi_{\mathcal{B}}}$ consists of all prime factors of $W(x)q'$ and, 
in particular, it contains all primes up to $w(x)$.
For each prime $p$, the local factor $\beta_p^{(\mathcal{B})}$ corresponding to 
$\Psi_{\mathcal{B}}$ is defined to be
$$
\beta_p^{(\mathcal{B})}=
\frac{1}{p^{d+2}}
\sum_{(n,\mathbf{h}) \in (\ZZ/p\ZZ)^{d+2}}
\prod_{\omega \in \mathcal{B}}
\frac{p}{\phi(p)}\1_{p \,\nmid\, Wq'(n+\boldsymbol\omega \cdot \mathbf h) + b'}~.
$$
By \cite[Lemma 1.3]{GT-linearprimes}, we have 
$\beta_{p}^{(\mathcal{B})} = 1 + O_d(1/p^2)$ 
for all $p \not\in \mathcal{P}_{\Psi_{\mathcal{B}}}$, and hence
$$
\prod_{p \not\in \mathcal{P}_{\Psi_{\mathcal{B}}}}
\beta_{p}^{(\mathcal{B})} 
= 1 + O_d\left(\frac{1}{w(x)}\right)
= 1 + O_d\left(\frac{1}{\log \log x}\right),
$$
while the product of exceptional local factors satisfies 
$$
\prod_{p \in \mathcal{P}_{\Psi}}\beta_{p}^{(\mathcal{B})} 
= \prod_{p| W(x)q'} \beta_{p}^{(\mathcal{B})} 
= \left(\frac{W(x)q'}{\phi(W(x))q'}\right)^{|\mathcal{B}|},
$$
since $\gcd(W(x)q',b')=1$.

Let $K_y$ be a convex body that is contained in the hypercube $[-y,y]^{d+2}$. 
Then, \cite[Theorem D.3]{GT-linearprimes}, applied with $a_i=1$ and 
$\chi_i=\chi_{\#}$, implies that if $\gamma > 0$ is sufficiently small 
depending on $d$, then
\begin{align*}
 & \frac{1}{y^{d+2}}
   \sum_{(n,\mathbf{h}) \in K_y} 
   \prod_{\omega \in \mathcal{B}}
   \Lambda^{\sharp}\Big(
   Wq'(n+\boldsymbol\omega \cdot \mathbf h) + b'\Big) \\
  &\qquad = \frac{\vol(K_y)}{y^{d+2}} \prod_p \beta_{p}^{(\mathcal{B})}
  + O_d\bigg((\log y^{\gamma})^{-1/20} 
  \exp \bigg(
  O_d\bigg(\sum_{p \in P_{\Psi_{\mathcal{B}}}} p^{-1/2}\bigg)\bigg)\bigg).
\end{align*}
Since $Wq'\leq (\log x)^E$, we have 
$|\mathcal{P}_{\Psi_{\mathcal{B}}}| 
\ll \frac{w(x)}{\log x} + \frac{E\log \log x}{\log w(x)}$.
Recall that $w(x) \leq \log \log x$ and that
$\log y \in [(\log x)^{\alpha}, \log x]$. 
Thus,
\begin{align*}
 (\log y^{\gamma})^{-1/20} 
 \exp \bigg(O_d\bigg(\sum_{p \in \mathcal{P}_{\Psi_{\mathcal{B}}}} 
p^{-1/2}\bigg)\bigg)
& \ll  (\gamma (\log x)^{\alpha})^{-1/20} 
 \exp \Big(O_d(|P_{\Psi_{\mathcal{B}}}|)\Big) \\
& \ll_d (\log x)^{-\alpha/20} (\log x)^{O_d(E)/\log w(x)},
\end{align*}
which is $o(1)$ as $x \to \infty$.

Choosing 
$K_y = 
\{(n,\mathbf{h}): 0 < n + \boldsymbol \omega \cdot \mathbf h \leq y
\text{ for all } \boldsymbol \omega \in \{0,1\}^{d+1} \}$, we obtain
\begin{align*}
\left\|\lambda^{\sharp}\right\|_{U^{d+1}[y]}^{2^{d+1}} 
&= \frac{\vol(K_y)}{y^{d+2}}
\sum_{\mathcal{B} \subseteq \{0,1\}^{d+1} } 
(-1)^{|\mathcal{B}|}
\prod_{p \not\in P_{\Psi_{\mathcal{B}}}} \beta_{p}^{(\mathcal{B})}
+ O_{d}\Big((\log x)^{-\alpha/20 + \frac{O_d(E)}{\log w(x)}}\Big)\\
&\ll_{d} 
\frac{\vol(K_y)}{y^{d+2}}
\frac{1}{\log \log x}
+ (\log x)^{-\alpha/20 + \frac{O(E)}{\log w(x)}}\\
&\ll_{d, \alpha, E} 
\frac{1}{\log \log x}
\end{align*}
Returning to \eqref{eq:A-smoothing+dual}, it follows from the above bound,
Lemma \ref{l:bounded-U*-norm} and an application of 
Lemma \ref{l:smoothing} with $m=2^d \dim G$ and 
$\eps= (\log \log x)^{-1/(m2^{d+3})}$,   
that for $\exp((\log x)^{\alpha}) \leq y \leq x$
\begin{align*}
&\frac{1}{y}
\sum_{n \leq y} 
\Big(\frac{\phi(Wq')}{Wq'}\Lambda'(Wq'n+b') - 1\Big) F(g(n)\Gamma) \\
&\ll_{d,\alpha, E}
\frac{1 + \|F\|_{\mathrm{Lip}} }
     {(\log \log x)^{1/(2^{2d+3} \dim G)}}
+ 
\|\lambda_{\sharp} \|_{U^{d+1}[y]} 
\Big\|(F*\psi_m) (g(\cdot)\Gamma)\Big\|_{U^{d+1}[y]^*}
\\
& \ll_{d,\alpha, E} 
\frac{1 + \|F\|_{\mathrm{Lip}} }
     {(\log \log x)^{1/(2^{2d+3} \dim G)}}
+ \frac{M^{10^d \dim G} \| F*\psi_m \|_{W^{2^d \dim G},\mathcal{X}}}
       {(\log \log x)^{1/2^{d+1}}}\\
& \ll_{d,\dim G, \alpha, E} 
\frac{1 + \|F\|_{\mathrm{Lip}} }
     {(\log \log x)^{1/(2^{2d+3} \dim G)}}
+ \frac{(\log \log x)^{1/2^{d+2}} M^{O(10^d \dim G)}}{(\log \log 
x)^{1/2^{d+1}}}~,
\end{align*}
which reduces the proof of Lemma \ref{l:Lambda} to that of Lemma 
\ref{l:smoothing}.

\begin{proof}[Proof of Lemma \ref{l:smoothing}]
Let $d_{\mathcal{X}}$ denote the metric on $G/\Gamma$ that was introduced in 
\cite[Definition~2.2]{GT-polyorbits} and define for every $\eps'>0$ the following
$\eps'$-neighbourhood
$$\mathcal{B}_{\eps'} 
= \{x \in G/\Gamma: d_{\mathcal{X}}(x,\id_G\Gamma) < \eps' \}.$$
Let $\eps \in (0,1)$.
Since $F$ is Lipschitz, we have 
$|F(x)-F(y)| \leq \eps (1+\|F\|_{\mathrm{Lip}})$
whenever both $x$ and $y$ belong 
to the neighbourhood $\mathcal{B}_{\eps}$ of $\id_G \Gamma$.
To ensure that \eqref{eq:smoothing-L_inf} holds, 
it thus suffices to ensure that $\psi_m$ is non-negative, 
supported in $\mathcal{B}_{\eps}$ and that 
$\int_{G/\Gamma} \psi_m = 1$.
Indeed, these assumptions imply that
\begin{align*}
\bigg|F(x)-\int_{G/\Gamma} F(y)\psi_m(x-y)dy\bigg| 
&= \bigg|\int_{G/\Gamma} (F(y)-F(x))\psi_m(x-y)dy\bigg| \\
&\leq \eps (1+\|F\|_{\mathrm{Lip}}) \int_{G/\Gamma} \psi_m(x-y)dy 
= \eps (1+\|F\|_{\mathrm{Lip}}).  
\end{align*}
The function $\psi_m$ will be constructed as the $m$-fold convolution of a smooth 
bump-function.
For this purpose, observe that 
$$m\mathcal{B}_{\eps/m} \subseteq 
\mathcal{B}_{\eps}.
$$
If $g = \exp(s_1 X_1) \dots \exp(s_{\dim G} X_{\dim G})$, then the 
(unique) coordinates
$$\psi(g):=(s_1, \dots, s_{\dim G})$$ are called Mal'cev coordinates,
while the unique coordinates 
$$\psi_{\exp}(g):=(t_1, \dots, t_{\dim G})$$ 
for which $g=\exp(t_1 X_1 + \dots + t_{\dim G} X_{\dim G})$ are called 
exponential coordinates.
Proceeding as in the proof of \cite[Lemma A.14]{GT-polyorbits}, one can 
identify $G/\Gamma$ with the fundamental domain 
$\{g \in G: \psi(g) \in [-\frac{1}{2},\frac{1}{2})\} \subset G$.
Furthermore, \cite[Lemma A.2]{GT-polyorbits} shows that the change of 
coordinates between exponential and Mal'cev coordinates, i.e. $\psi \circ 
\psi_{\exp}^{-1}$ or $\psi_{\exp} \circ \psi^{-1}$, is in either direction a 
polynomial mapping with $M^{O(1)}$-rational coefficients.
Thus, $\mathcal{B}_{\eps}$ lies within the fundamental domain provided 
$\eps < c_0$ 
for some sufficiently small constant $c_0$. 
This embedding of $\mathcal{B}_{\eps}$ in $G$ allows us to define $\log$ on 
$\mathcal{B}_{\eps}$.
Let us equip $\mathfrak{g}$ with the maximum norm associated to 
$\mathcal{X}$, that is $\|X\|:=\max_i |t_i|$ for $X=\sum_i t_i X_i$.
Then the definition of $d_{\mathcal{X}}$ and \cite[Lemma A.2]{GT-polyorbits} 
imply that
$$
\{X \in \mathfrak{g}: \|X\| < \delta \}
\subseteq \log \mathcal{B}_{\eps/m}
$$
for some $\delta$ of the form $\delta = \frac{\eps}{m} M^{-O(1)}$.
Following the above preparation, we now choose a non-negative smooth 
function $\chi_1:\RR^{\dim G} \to \RR_{\geq 0}$ with support in 
$\{\t \in \RR^{\dim G}: \|\t\|_{\infty} < 1 \}$ that satisfies 
$\int_{\RR^{\dim G}} \chi_1(\t) \mathrm{d}\t = 1$.
Then, by setting $\chi(\t) = \delta \cdot \chi_1(\delta\t)$, we obtain a function 
$\chi: \RR^{\dim G} \to \RR_{\geq 0}$ that is supported on 
$\{\t \in \RR^{\dim G}: \|\t\|_{\infty} < \delta \}$,
satisfies $\int_{\RR^{\dim G}} \chi(\t) \mathrm{d}\t = 1$ and has furthermore the 
property that
\begin{equation}\label{eq:partial-chi}
\Big\|
 \frac{\partial}{\partial t_{i}} \chi(t_1, \dots, t_{\dim G}) 
\Big\|_{\infty} 
\ll {(m/\eps)}^{2} M^{O(1)}
\end{equation}
for $1 \leq i \leq \dim G$.
We may identify $\chi$ with a function defined on the 
vector space $\mathfrak{g}$ equipped with the basis 
$\{X_1, \dots X_{\dim G}\}$, by setting
$\chi(t_1 X_1 + \dots + t_{\dim G} X_{\dim G}) 
= \chi(t_1, \dots, t_{\dim G})$.

To obtain a smooth bump-function on $G/\Gamma$, we consider the composition 
$\chi \circ \log:G/\Gamma \to \RR$, which is supported in $\mathcal{B}_{\eps/m}$.
Since the differential $\mathrm{d}\log_{\id_{G}}: \mathfrak{g} \to \mathfrak{g}$ 
is the identity, there are positive constants $C_0,C_1$ and $c_1$, such that 
$$
C_0 \leq \int_{G/\Gamma} \chi \circ \log \leq C_1,
$$
provided $\eps< c_1$.
Hence there is a constant $C$ such that 
$\int_{G/\Gamma} \psi = 1$ for
$\psi = C\chi \circ \log$.

With this function $\psi$ at hand, let $\psi_m = \psi^{*m}$ be the $m$-th 
convolution power of $\psi$.
It is clear that for every $0<k \leq m$, the function $\psi^{*k}$ is supported in 
$\mathcal{B}_{\eps}$ and that $\int_{G/\Gamma} \psi^{*k} =1$.
Setting $\psi^{*0} = \delta_0$, where $\delta_0$ denotes the Kronecker 
$\delta$-function with weight $1$ at $0$, we furthermore have
$$
D_{X_{i_1}} \dots D_{X_{i_k}} (F*\psi_m)=
F*D_{X_{i_1}} \psi * \dots * 
D_{X_{i_k}} \psi * \psi^{*(m-k)}
$$
and, hence, 
$$
\|D_{X_{i_1}} \dots D_{X_{i_k}} (F*\psi_m)\|_{\infty} 
\leq 
\|F\|_{\infty} \cdot \|D_{X_{i_1}}(C\chi \circ \log)\|_{\infty} \cdots  
\|D_{X_{i_k}}(C\chi \circ \log)\|_{\infty}
$$
for any $k \leq m$.
Our final task is to bound 
$\|D_{X_{j}}(C\chi \circ \log)\|_{\infty}$ for every $j \leq \dim G$.
Writing $[\cdot]_i:\mathfrak{g} \to \RR$ for the $i$-th co-ordinate map with 
respect to the basis $\mathcal{X}$, we have 
\begin{equation} \label{eq:DXchi}
D_{X_{j}}(\chi \circ \log) (g)
= 
\sum_{i=1}^{\dim G} \frac{\partial\chi}{\partial X_{i}}  (\log g) \cdot
\lim_{t \to 0} \Big[\log ( \exp (tX_j) g)\Big]_i. 
\end{equation}
Since the differential $\mathrm{d}\log_{\id_{G}}: \mathfrak{g} \to \mathfrak{g}$ 
is the identity, there are constants $C_1>0$ and $c_1 > 0$, such that for every 
$g \in \mathcal{B}_{c_1}$ and for $1 \leq i \leq m$, the derivative
$$\Big|\lim_{t \to 0} \Big[\log ( \exp (tX_j) g)\Big]_i\Big|$$
is bounded by $C_1$.
Choosing $c < \min (c_0, c_1, c_2)$, the bound \eqref{eq:smoothing-Sob} now 
follows from \eqref{eq:DXchi} and the bounds given in \eqref{eq:partial-chi}.
\end{proof}

\end{document}